\pgfplotsset{compat=1.17}
\definecolor{coralred}{rgb}{1.0, 0.25, 0.25}
\DeclareMathOperator{\vinf}{inf}
\newcolumntype{Y}{>{\centering\arraybackslash}X}
\newcolumntype{Z}{>{\centering\arraybackslash}p{0.15\hsize}}
\newcolumntype{A}{>{\centering\arraybackslash}p{0.05\hsize}}
\newcolumntype{B}{>{\centering\arraybackslash}p{0.175\hsize}}
\newcommand{\eps}{\varepsilon}
\newcommand{\dr}{{\rm{d}}_r}
\def\build#1_#2^#3{\mathrel{\mathop{\kern 0pt#1}\limits_{#2}^{#3}}}
\numberwithin{equation}{section}
\theoremstyle{plain}
\newtheorem{thm}{Theorem}[section]
\newtheorem{conj}[thm]{Conjecture}
\newtheorem{lemma}[thm]{Lemma}
\newtheorem{proposition}[thm]{Proposition}
\newtheorem{cor}[thm]{Corollary}
\newtheorem{remark}[thm]{Remark}
\theoremstyle{definition}
\newtheorem{defn}{Definition}
\newtheorem{assu}{Assumption}
\theoremstyle{plain}
\title[Freely floating body on a 3D fluid]{Freely floating cylinder on a 3D fluid governed by the Boussinesq equations in the axisymmetric without swirl case}
\author[G. Beck]{Geoffrey Beck$^\ast$}
\address{Univ Rennes, IRMAR UMR 6625 \& Centre Inria de l'Universit\'e de Rennes (MINGuS) \& ENS Rennes, France}
\email{$^{*}$Corresponding author: \texttt{geoffrey.a.beck@inria.fr}} 
\author[E. Contentin]{Ewan Contentin}
\address{Univ Rennes, IRMAR UMR 6625 \& ENS de Physique de Lyon, France}
\email{ewan.contentin@univ-rennes.fr} 
\author[L. Martaud]{Ludovic Martaud}
\address{Univ Rennes, IRMAR UMR 6625 \& Centre Inria de l'Universit\'e de Rennes (MINGuS) \& ENS Rennes, France}
\email{ludovic.martaud@inria.fr} 
\begin{document}
 
\begin{abstract}

This paper deals with the interactions of waves governed by a non-linear dispersive Boussinesq type system with the vertical displacement of a cylindrical floating structure in an axisymmetric without swirl situation. The Boussinesq regime is a good approximation of free surface Euler's equations when the non-linear parameter and the shallowness parameter are small. The vertical motion of the floating body is governed by the Newton equation.
The full coupled wave-structure interaction problem under consideration is reduced to a boundary problem. The boundary condition satisfied by the discharge is given in terms of the vertical displacement of the floating cylinder. The latter is calculated using an ODE, which requires knowledge of the trace of the surface elevation and its second-time derivative. We use the dispersion in order to exhibit a hidden second order ODE on the trace of the surface elevation. This finally allows us to rewrite the waves-structure interaction problem as a system of non-local conservative PDEs plus bounded radial terms with a dispersive boundary layer, combined with an ODE at the boundary. This is what we call the Augmented formulation. Afterwards we showed that this formulation is well-posed with two different methods.  In our proof, we have tracked down the explicit dependence on the shallowness parameter, which is the small parameter representing dispersion. The first method gives a continuous solution with a small existence time, i.e. proportional to the product of the inverse of the non-linear parameter and the square of the shallowness parameter,  whereas the second one requires higher regularity of the solution but with a larger existence time, i.e. proportional to the product of the inverse of the non-linear parameter and the shallowness parameter. Finally, we study the return to equilibrium situation in the linear regime. In particular, we have improved previous results on the explicit time decay. We have shown that the center mass of the floating body cannot converge to its equilibrium faster than $\mathcal{O}(t^{-1/2})$ in 2D without viscosity and faster than $\mathcal{O}(t^{-3/2})$ with viscosity.

%
%
\end{abstract}
 
\maketitle
 
\smallskip\noindent\textbf{Keywords:}
Fluid-structure interaction, Dispersive perturbation of initial boundary value hyperbolic problems, Wave Energy Converters, Decay test.
 
\tableofcontents
 
\section*{Introduction and motivations}

We can mention four types of nearshore wave energy facilities: the bottom-hinged flaps which convert oscillations into energy, the oscillating water columns which convert the variations of the pressure in a chamber into electric current (see \cite{Bocchi3}) and floating buoys which convert the translation or rotation induced by waves into electricity (see \cite{BeckLannes,Bocchi,Lannes_float,Maity}). Here we are interested in the latter type: floating structures. Understanding floating structures models is also a keystone for iceberg and sea-iced models.
While the case of fully immersed bodies in a fluid has been studied for years \cite{Glass-Sueur-Takahashi}, the case of partially immersed bodies in an incompressible perfect fluid such as water with a free surface is rather new for mathematicians, even if this topic has been well-studied in engineering and oceanographic communities. This problem is particularly difficult to tackle, in part because several free surfaces are involved: the surface of the sea and the contact surface between the structure and the fluid. Usual CFD methods such as Reynolds Averaged Navier-Stokes are not able to account for the motion of one freely floating object for long times. Some effort have been done in this direction, but solving a boundary integral on the fluctuating contact surface requires lengthy computations. This is why simpler models are considered in engineering.

The simplest way to deal with this problem is to consider it in a linear regime. In that case, the motion of the body can be described with an integro-differential ODE, the so-called Cummin's equation \cite{Cummins}, instead of Newton's law with a hydrodynamic force coming from the pressure exerted by the fluid. The coefficients in Cummin's equation are usually calibrated via the decay test in which the body is dropped on a fluid at rest. In such a configuration, the solid oscillates and its energy is spread into the waves. In this paper, one derives such an integro-differential ODE in the decay test situation in some shallowness dispersive regimes with or without viscosity. One hopes that at infinite time, the solution converges to the equilibrium and one would like to quantify the decay in the Boussinesq regime. The first author shows in \cite{Lannes_float} that in 1D without viscosity this decay cannot be faster than $\mathcal{O}(t^{-3/2})$. Note that the decay test was studied from a mathematical point of view in the linear shallow-water regime \cite{Bocchi2,Maity}, in the non-linear shallow-water regime \cite{BeckLannes,Lannes_float} and in the linear Boussinesq regime \cite{BeckLannes}.
In this paper we improve the 1D results with a greater precision in the description of the decay than in \cite{BeckLannes}, and we extend those results to the 2D configuration. In particular we show that it is not possible to obtain exponential decay at infinity, even by adding viscosity.

In the linear regime, the wave-structure can be described by John's model \cite{John1}.  This model can be derived from the Euler equations if we additionally assume that the motion of the body is of small amplitude and the time variation of the contact surface is neglected. This situation for 2D perfect fluid is studied from a mathematical point of view in \cite{lannes2025fjohnmodelcummins}. In particular, the authors introduce appropriate functional framework to deal with this model and show its well-posedness. Even if John's model misses the evolution of the contact surface, it still is the most widely used model for floating structures since numerical simulations can easily be performed (software WAMIT \cite{Wamit}). 
 
Unfortunately, the linear regime is not necessarily a good regime when we consider wave energy converters located near the shore. This is why we are interested in considering non-linear terms. In all the literature mentioned before, one difficulty came from the computation of the pressure exerted by the fluid on the body.  In \cite{Lannes_float}, the pressure is seen as a Lagrange multiplier associated to the constraint that the surface of the fluid coincides with the contact surface. In this work, a system of equations that describes the fluid-floating structure system was established avoiding the assumptions in John's model. Moreover this system exposes an added-mass effect. When a solid moves in a fluid, not only must it accelerate its own mass but also the mass of the fluid around it. This is what we call added-mass phenomenon. Indeed, some components of the hydrodynamic force applied on the body act as inertia in Newton's law. The added-mass can actually be larger than the proper mass of the solid, a fact that has been noticed in ocean engineering \cite{Zhou}. In the case of totally immersed bodies, the added-mass effect plays a crucial role to show well-posedness \cite{Glass-Sueur-Takahashi} and stability of numerical schemes \cite{Gerbeau}. Thus it is not surprising to see that it also plays a crucial role in the case of a partially immersed body. 
 
Many reduced models are used to describe the evolution of waves at the surface of a 2D incompressible perfect fluid in shallow water (see \cite{LannesModeling}). The most famous one is given by the so-called non-linear shallow-water equation (or Saint-Venant equation). It can be derived from the full free surface Euler equation when the shallowness parameter (the ratio between the depth of water at rest and the typical horizontal scale of the wave) tends to zero. However it omits the dispersive effects that play an important role in coastal areas. In order to account for these effects one can consider more precise models, the most simple ones being the so-called Boussinesq models. Indeed there are actually many asymptotically equivalent Boussinesq models, among which we choose the one whose structure is a dispersive perturbation of the non-linear shallow-water equation. Dispersive perturbations of hyperbolic systems with boundary conditions have been studied from a mathematical \cite{LannesBressanone} and numerical \cite{LannesWeynans} point of view. In order to obtain regular solutions in boundary hyperbolic systems one needs compatibility conditions of the initial conditions at the boundary. Such compatibility conditions are sometimes not needed when we consider dispersive perturbations of hyperbolic systems. Indeed, the dispersion creates a boundary layer that regularizes the solution. The Boussinesq system can be written as a conservative law with a non-local flux and a certain source term that exponentially decreases (in space) far from the boundary. This source term is the effect of the dispersive boundary layer. In \cite{LannesWeynans,BeckLannesWeynans}, this non-local conservative law formulation was used to perform numerical simulations when the surface elevation is known at the boundary. In the case of a floating body, the boundary conditions are related to the motion of the body forced by the fluid. In \cite{BeckLannes,BeckLannesWeynans}, the full coupled system can be reduced to a transmission problem for the Boussinesq equations with transmission conditions given in terms of the vertical displacement of the object and of the average horizontal discharge beneath it. These two quantities are in turn determined by two nonlinear ODEs with forcing terms coming from the exterior wave-field. We manage to use the same strategy for the 2D situation under the assumption that the flow is axisymmetric without swirl and we show some well-posedness results.
 
\begin{figure}[h]
\begin{picture}(200,100)
    \tdplotsetmaincoords{75}{110} 
        \put(100,50){\makebox(0,0){         

\begin{tikzpicture}[scale=2, tdplot_main_coords] 

	\draw[thick] (0, 0.2, -0.5) -- (4, 0.2, -0.5);
	\draw[thick] (4, 0.2, -0.5) -- (4, 2.3, -0.5);
	\draw[thick] (4, 2.3, -0.5) -- (0, 2.3, -0.5);
	\draw[thick] (0, 2.3, -0.5) -- (0, 0.2, -0.5);
        
    \def\xcenter{2.5}
    \def\ycenter{1.25}
    \def\amplitude{0.1}
    \def\sigma{0.05}

    \foreach \x in {0.5,0.6,...,4}
        \foreach \y in {0,0.1,...,2.5}
            \draw (\x,\y,{\amplitude * sin(deg(sqrt((\x-\xcenter)^2 + (\y-\ycenter)^2))/(2*\sigma))}) -- ++(0,0,0.01);

    \foreach \x in {0.5,0.6,...,4}
        \foreach \y in {0,0.1,...,2.5}
            \draw (\x,\y,{\amplitude * sin(deg(sqrt((\x-\xcenter)^2 + (\y-\ycenter)^2))/(2*\sigma))}) -- (\x+0.1,\y,{\amplitude * sin(deg(sqrt((\x+0.1-\xcenter)^2 + (\y-\ycenter)^2))/(2*\sigma))});

    \foreach \x in {0.5,0.6,...,4}
        \foreach \y in {0,0.1,...,2.5}
            \draw (\x,\y,{\amplitude * sin(deg(sqrt((\x-\xcenter)^2 + (\y-\ycenter)^2))/(2*\sigma))}) -- (\x,\y+0.1,{\amplitude * sin(deg(sqrt((\x-\xcenter)^2 + (\y + 0.1 - \ycenter)^2))/(2*\sigma))});

    \node[dashed,
    line width=1.5pt,
        cylinder, 
        draw = black, 
        aspect = 1, 
        minimum width = 0.75cm,
        minimum height = 0.5cm,
        shape border rotate = 90] (c) at (2.5,1.25,-0.055) { };

    \node[cylinder, 
    line width=1.5pt,
        draw = black, 
        text = purple,
        cylinder uses custom fill, 
        cylinder body fill = white, 
        cylinder end fill = white,
        aspect = 1, 
        minimum width = 0.75cm,
        minimum height = 1cm,
        shape border rotate = 90] (c) at (2.5,1.25,0.225) { };
    \end{tikzpicture}
}}
\end{picture}
\caption{\centering Waves propagation with a floating cylinder.}
\label{fig:1}
\end{figure}
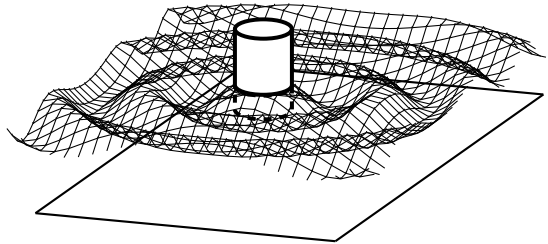
 
\section{Modelling  in coastal area and main results}
 
\subsection{Presentation of the fluid-structure problem}
The mathematical study of floating structures is a keystone to understand wave energy facilities. Let a {\it{partially immersed rigid object in a 3D fluid}} (see above Figure \ref{fig:1}) allowed to move only vertically. We will consider that the basin is infinite and have "small" finite depth such that we can assume being in some {\it{shallow-water regimes}}. All the equations of the paper will be dimensionless equations. 
 
The domain of wave propagation $\mathbb{R}^2$ is divided into
\begin{itemize}
\item an {\it{interior region}} $\mathcal{I}$ where is located the floating body,
\item and an {\it{exterior region}} $\mathcal{E} = \mathbb{R}^2 / \mathcal{I}$. 
\end{itemize}
Since one considers a floating object with vertical wall and since the solid is constrained to move only vertically, the interior domain $\mathcal{I}$ is fixed. The case of non-vertical wall in shallow-water regime is studied in \cite{IguchiLannes} and \cite{Lannes20232d}.

\subsubsection{Rigid solid dynamic}
The dynamic of a floating body is given by the {\it{Newton's equation}}:
\begin{equation}\label{Newton}
\tau_{\rm buoy}^2   \ddot{\delta} + \frac{1}{\eps}m= \displaystyle \frac{1}{| \mathcal{I} | }\int_{\mathcal{I}}  \frac{P_{\rm i}}{\eps}dx + F_{\rm ext},
\end{equation}
where the unknown $\delta$ refers to the difference between the vertical displacement of the body's center of mass and its equilibrium's ordinate. The notation $\ddot{\cdot}$ refers to the second derivative respect to time. $F_{\rm ext}$ stands for a given external force continuous with respect to time. In the hydrodynamic force, the integrand $P_{\rm i}$ is the {\it{pressure exerted by the fluid under the body}}. It is clearly the difficult part since it requires to compute the fluid variables under the object. Moreover the wet surface (i-e the contact surface between the structure and the fluid), on which the fluid exerts the pressure, moves with respect to the time.
 
Note that \eqref{Newton} is the dimensionless version of the fondamental principle of the dynamics (see appendix A in \cite{BeckLannes}). The mass factor has been therefore replaced by a dimensionless buoyancy factor $\tau_{\rm buoy}^2$ and $\eps^{-1}m$ stands for the adimensional gravity constant. The notation $| \mathcal{I} |$ stands for the area of the interior domain $\mathcal{I}$. In dimensionless variable, the diameter of $\mathcal{I}$ is proportional to the Helmholtz number.
 
\subsubsection{Waves' dynamic}
In shallow water regime the fluid's domain is rescaled such that the height of the waves at rest is $1$. Also the surface elevation is of amplitude $\eps$.
Thus the surface elevation (i-e the difference between the water column at time $t$ and the water column at the equilibrium) is parametrized at time $t$ by the function $x\in \mathbb{R}^2 \mapsto \zeta(t, x)$, and the {\it{water column}} (i-e the height of waves from the bottom of the basin) is parametrized at time $t$ by the function $x\in \mathbb{R}^2 \mapsto h(t, x) = 1 + \eps \zeta(t,x)$. The {\it{discharge}} (i-e the vertical integral of the horizontal component of the velocity field) at time $t$ and position $x$ is denoted $Q(t,x) \in \mathbb{R}^2$. Finally we denote by $P(t,x)$ the {\it{pressure at the surface}} of the fluid. One considers that the motion of waves in coastal area is governed by a viscous perturbation of the {\it{Boussinesq-Abbott equations}}:
\begin{empheq}[left=\empheqlbrace]{align}
& \partial_t \zeta +\mbox{div}\, Q=0, \label{AbbottDL:eq1} \\
& (1- \kappa^2 \nabla \mbox{div} )\partial_t Q+\varepsilon \mbox{div}\, \big( \frac{Q \otimes Q}{h} \big)+h \nabla  \zeta= - h \nabla \frac{P}{\varepsilon}  + h \nabla \left(\frac{\nu}{h} \mbox{div} Q \right),  \label{AbbottDL:eq2} \quad r > 0.\\
& h = 1 + \varepsilon \zeta,
\end{empheq}\label{AbbottDL}
In these equations, the dimensionless parameters $\kappa, \varepsilon $ and $\nu$ are small. $\kappa$ refers to the shallowness parameter, $\varepsilon$ is the non-linear parameter and $\nu$ is related to viscous effect. Strictly speaking, $\nu$ is not related to the inverse of the Reynolds number but to the 2D equivalent of the nonstandard viscosity term introduced in \cite{Maity}. The dispersion is due to the presence of $(1- \kappa^2 \nabla \mbox{div} )$ operator and the fact that the propagation will be radial. \\

It is essential to separate the exterior and the interior regions because they are constrained in different ways. To underline the differences we will use the index $\cdot_{\rm e}$ and respectively $\cdot_{\rm i}$ to refer to the restriction on the exterior and respectively interiors regions, i-e for any functions $f$, one has
 $$
 f_{\rm e} := f_{| \mathcal{E}}
 \quad \text{and} \quad
 f_{\rm i} := f_{| \mathcal{I}}.
 $$
\begin{itemize}
\item In the {\it{interior region}}: the surface elevation must fit the wet surface (i-e the bottom of the object),
\begin{equation}\label{constrain-interior}
h_{\rm i}(t,x)= \delta(t) + h_{\rm i,eq}(x) \quad \mbox{ for } t>0, \quad x\in \mathcal{I},
\end{equation}
where $h_{\rm i,eq}$ is the {\it{parametrization of the wet surface at equilibrium from the bottom of the basin}}.
In consequence the surface's elevation $\zeta_{\rm i}$ does not depend on the space variable and by the equation \eqref{AbbottDL:eq2} of radial Boussinesq-Abbott system one has:
\begin{equation}\label{QintBefore}
\mbox{div } Q_{\rm i}(t, \cdot) = - \dot{\delta}(t) \quad \mbox{ for } t>0.
\end{equation}
In particular in $\mathcal{I}$, one has
\begin{equation}\label{lapQ}
\nabla \mbox{div} Q_{\rm i} = 0.
\end{equation}
However there is no constraint on the surface pressure $P_{\rm i}$ which, under the general approach of \cite{Lannes_float}, can be understood as the {\it{Lagrange multiplier associated with the constraint on the surface elevation}}.
\item In the {\it{exterior region}}, the surface elevation $\zeta_{\rm e}$ is free, but the surface pressure $P_{\rm e}$ is constrained, assumed to be constant 
\begin{equation}\label{constrain-exterior}
{P}_{\rm e}(t,x) = 0 \quad \mbox{ for } t>0, \quad x \in \mathcal{E},
\end{equation}
the gradient of pressure in equation \eqref{AbbottDL:eq2} therefore vanishes. 
Note that in reality, ${P}_{\rm e}(t, x) = P_{\rm atm}$ but since we only consider the gradient of the pressure, without loss of generalities one can consider $P_{\rm atm} = 0$.
In this region both the surface's elevation and the discharge are not constraint and are determined by the Boussinesq-Abbott system. 
\end{itemize}
However the system is not complete and needs transmission conditions at the contact line $\Gamma = \overline{\mathcal{E}} \cap \overline{\mathcal{I}}$, which are given by:
\begin{itemize}
\item (the conservation of the fluid volume)
\begin{equation}\label{ConsFluidVol}
[ Q \cdot e_r ]_{\Gamma} = 0,
\end{equation}
where $e_r$ is the unitary radial vector.
\item (the conservation of the energy)
\begin{equation}\label{ConservationOfEnergy}
\left[ \frac{P}{\varepsilon}  + \zeta + \nu \dot{\zeta} + \kappa^2 \ddot{\zeta}+ \frac{\varepsilon}{2}\frac{| Q |^2}{h^2} \right]_{\Gamma} = 0.
\end{equation}
\item (the hypothesis that there is no vortex line at the contact line)
\begin{equation}\label{vortex-line}
\left[ \frac{Q}{h} \times n_\Gamma \right]_{\Gamma} = 0,
\end{equation}
where $n_\Gamma$ is the normal vector at the contact line. 
\end{itemize}
where the jump over the contact line is defined by
$$
[ f ]_{\Gamma} := \lim_{\eta^+ \to 0} f(R+ \eta^+) - \lim_{\eta^- \to 0} f(R- \eta^-).
$$
The link between the condition \eqref{ConservationOfEnergy} on the pressure at the concat line and the conservation of energy will be explained in Subsection \ref{sec:EqOnPressure}. Note that the energy of the Boussinesq-Abbott system is not conserved. This is why, in fact, \eqref{ConservationOfEnergy} is not truly linked to the conservation of energy but with an approximation in $\mathcal{O}(\varepsilon (\kappa^2+\nu))$ of the energy. In \cite{Lannes20232d} the authors show that if we do not make the non vorticity hypothesis on $\frac{Q}{h}$ at the surface, translated by \eqref{vortex-line}, one can construct multiple solutions of the system. To avoid these multiple solutions we impose the hypothesis \eqref{vortex-line}. 
 
In ''shallow'' water without viscosity $\nu = 0$, one can formally take $\kappa=0$ to recover the Saint-Venant's equations. This case is well-known and has been treated in \cite{Bocchi} in 2D but the wave's system neglects some important effects due to dispersion.
In \cite{GPSW2} the boundary problem is treated as a transition between a congested zone, under the floating object, and a free zone in the exterior domain. 
However, the issue to use these methods here is the vertical walls at the contact line, which prevent the continuity of the surface elevation. 

\subsection{Main result in the axisymmetric without swirl case}
 
We consider the case where the floating object is a cylinder of radius $R$, and there is axisymmetry about the axis going down the middle of the cylinder. Furthermore we consider that there is no swirl in the fluid and at its surface, and that the fluid fits the bottom of the solid in the inner domain $\mathcal{I}$ and at the contact line $\Gamma$. In that case, the flow is axisymmetric without swirl and therefore $(\zeta$, $Q)$ reads
$$
\zeta = \zeta(t,r)
\quad \text{and} \quad
Q = q (t,r) e_r,
$$
where $r$ stands for the radial coordinate (see Figure \ref{fig:2}). Moreover we will assume that $h$ does not go to 0. In such a situation, the condition \eqref{vortex-line} is satisfied. 
 
\begin{assu}\label{hyp-system}
We will assume that
\begin{itemize}
\item (axisymmetric without swirl)
$$
\zeta = \zeta(t,r)
\quad \text{and} \quad
Q = q (t,r) e_r, 
\quad t \geq 0, \quad r \in \left [ 0, + \infty \right ),
$$
\item (floating cylinder)
$\mathcal{I} $ is the ball of center $0$ and radius $R >0$,  in particular the contact line $\Gamma$ is the circle of center $0$ with radius $R$,
\item (flat bottom of the cylinder) $h_{\rm i,eq}$ is a positive constant,
\item (compatibility condition) at $t= 0$, the discharge $Q$ is such that $\mbox{div} Q _{|_{t= 0}} = - \dot{\delta}_{|_{t= 0}}$,
\item (floating cylinder initially does not touch the bottom) $\delta_{|_{t=0}} \in \mathbb{R}$ such that $\delta_{|_{t=0}} + h_{\rm i,eq}(x) > 0$,
\item (Boussinesq-Abbott regime) $\varepsilon,\nu \in [0,1)$ and $\kappa \in (0,1)$ are such that $\varepsilon(\kappa^2 + \nu) \ll 1$,
\item (regularity of external forces) $F_{\rm ext}$ is at least bounded and continuous.
\end{itemize}
\end{assu}

\begin{figure}
\begin{tikzpicture}		
	\put(0, 0){\makebox(0,0){	
    	\draw[thick, red] (0,0) -- (3,0);
    	\draw[thick, ->, blue](3,0) -- (10,0) node[right] {$r$};
    	\draw[thick, ->] (0,0) -- (0,4.2) node[above] {$h(t,r)$};

		\draw[violet] (3,-0.1) -- (3,0.1);
		\node[below] at (1.5, -0.1) {{\color {red}$\mathcal{I}$}};

		\draw (0,1) -- (3,1);
		\draw (3,1) -- (3,4);
		
		\draw (3, 2) -- (10, 2);
		\node at (-0.2, 2) {\small 1};
		\node at (0, 2) {-};
		\draw[<->] (4, 2) -- (4, 2.8);
		\node at (4.7, 2.4) {$\eps \zeta(r, t)$};
		
		\draw[->] (3.5, 1) -- (3.8, 1);
		\draw[->] (3.5, 0.8 ) -- (3.8, 0.8);	
		\draw[->] (3.5, 1.2) -- (3.8, 1.2);
		\node at (4.6, 1) {$q(r, t)$};

		\draw[domain=0:2*pi+0.7,smooth,variable=\x,blue,thick] plot ({3 + \x},{2 - exp(-\x/5)*sin(deg(\x- 2.5))});
		\node[below] at (6.5, -0.1) {{\color {blue} $\mathcal{E}$}};
		\node[above] at (1.5, 0.1) {{\color {red}$f_i$}};
		\node[above] at (6.5,0.1){{\color {blue}$f_e$}};
		\node[above] at (2.8,0.1) {{\color {red}$\underline{f_i}$}};
		\node[below] at (3.2,-0.1) {{\color {blue}$\underline{f_e}$}};	
		
} }	

\end{tikzpicture}
\caption{\centering Illustration of a slice of the problem. The function $f$ refers to any spatial function.}
\label{fig:2}
\end{figure}
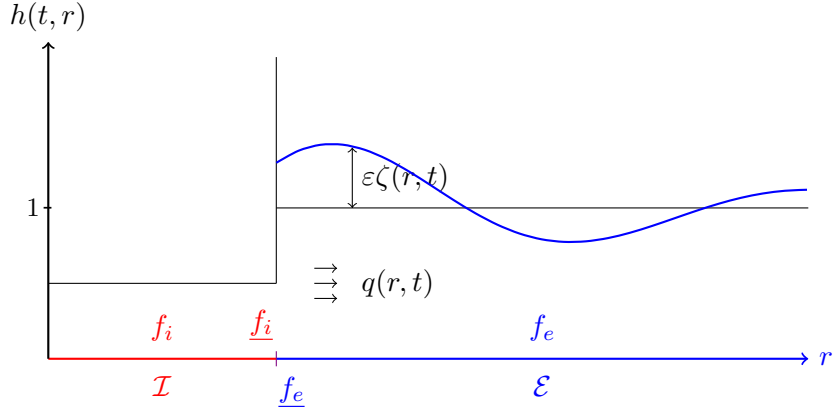

The assumption that the bottom of the object is flat is not necessary. However it simplifies the calculations. 
With these considerations, \eqref{QintBefore} gives
\begin{equation}\label{Qint}
q_{\rm i}(t, r) = - \frac{r}{2}\dot{\delta}(t) \quad \mbox{ for } t>0, \quad 0 \leq r \leq R.
\end{equation}
In \cite{Bocchi} such a configuration was considered for the hyperbolic nonlinear shallow water equations (equations \eqref{AbbottDL:eq1} and \eqref{AbbottDL:eq2} with $\kappa=0$ and $\nu = 0$). The considered Boussinesq \eqref{AbbottDL:eq1} and \eqref{AbbottDL:eq2} equations are a dispersive and viscous perturbation of the hyperbolic nonlinear shallow water equations. This perturbation induces drastic changes on the behavior and analysis of the associated initial boundary value problem. 
Thanks to the symmetries of the problem and to the polar coordinates, for any radial vector field without azimuthal component $Q(r,\theta) = q(r) e_r$, the divergence operator is simply given by
$$
{\rm{div}}Q = {\rm{d}}_{r} q := \partial_r q + \frac{q}{r}.
$$
Thus the Boussinesq system can be written as the radial Boussinesq-Abbott system
\begin{empheq}[left=\empheqlbrace]{align}
&\partial_t \zeta +\mbox{d}_r\, q=0, \label{BAr:eq1} \\
&(1- \kappa^2 \partial_r \mbox{d}_r )\partial_t q+\varepsilon  \mbox{d}_r\, \left(  \frac{|q|^2}{h} \right)+ h \partial_r  \zeta = - h \partial_r \frac{P}{\varepsilon}  + \nu h \partial_r \frac{\dr q}{h}, \label{BAr:eq2} \quad r > 0,\\
& h = 1 + \varepsilon \zeta.
\end{empheq}
We refer to this system as \hypertarget{BAr}{(BAr)}.
The main results of the paper are the following three:
\begin{enumerate}
\item a convenient way to rewrite the waves-structure system as a non-local conservation law plus bounded perturbation,
\item a well-posedness theorem,
\item a more detailed study of the decrease in the position of the center of mass of the floating solid in the linear regime.
\end{enumerate}

\begin{thm}[Augmented formula]\label{thmf:Aug} 
Let Assumption \ref{hyp-system} be satisfied.
Let $(\zeta, q)$ and $(\delta, \dot{\delta})$ be a regular enough solution of the radial Boussinesq-Abbott system {\rm \hyperlink{BAr}{(BAr)}} combined to the Newton's equation \eqref{Newton} and the transmission's conditions \eqref{ConsFluidVol}, \eqref{ConservationOfEnergy} and \eqref{vortex-line}. Then $u := (\zeta_{\rm e}, q_{\rm e})$ and $Z := (\delta, \dot{\delta}, \underline{\zeta_{\rm e}}, \underline{\dot{\zeta}_{\rm e}})$ are solutions of the Augmented formulation Boussinesq system
\begin{empheq}[left=\empheqlbrace]{align}
&\partial_t u + \partial_r \widetilde{\mathfrak{F}}_\kappa [u, Z] = \mathcal{S}_r(u) + \nu \eps S_\nu(u) , & \text{ in } (R, \infty), \label{Boussi-augmented1}\\
&\partial_t Z + \widetilde{\mathfrak{D}}(Z, \mathcal{I}_{\rm hyd} u, F_{\rm ext}) = 0, & \text{ for } t > 0, \label{Boussi-augmented2}\\
&\underline{u_1} = Z_3, \underline{u_2} = -\frac{R}{2} Z_2 , & \text{ at } r = R,  \label{Boussi-augmented3}\\
&u_{|_{t=0}} = (\zeta_{{\rm e}|_{t=0}}, q_{{\rm e}|_{t=0}}), Z_{|_{t=0}} = (\delta_{|_{t=0}}, {\dot{\delta}}_{|_{t=0}}, \underline{\zeta_{\rm e }}_{|_{t=0}}, \underline{\dot{\zeta}_{\rm e}}_{|_{t=0}}), & \text{ at } t = 0,\label{Boussi-augmented4}
\end{empheq}
where $\widetilde{\mathfrak{F}}_\kappa, \mathcal{I}_{\rm hyd}$ and $S_\nu$ are non-local and non-linear operators, and $S_r$ is a non-linear operator. All of them are detailed in remark \ref{rk:DefAugF}. 
Reciprocally, let $(u, Z)$ be a regular enough solution of the Augmented formula. One introduces $(\zeta_{\rm i}, q_{\rm i}, P_{\rm i})$ given by
$$
\zeta_{\rm i} = \varepsilon^{-1} ( Z_1 + h_{\rm i, eq} -1)
\, , \, 
q_{\rm i}(r) = - \frac{r}{2} Z_2,
$$
and
$$
\frac{P_{\rm i}(r)}{\varepsilon} = \frac{r^2-R^2}{1 + \varepsilon \zeta_{\rm i} } \left( 
 \frac{\dot{Z_2}}{2}  - \frac{3 \varepsilon | Z_2 |^2}{4 (1 + \varepsilon \zeta_{\rm i})} \right)
 + Z_3 - \underline{\zeta_{\rm i}} + 
\nu(Z_4 - \underline{\dot{\zeta}_{\rm i}}) + 
\kappa^2 (\dot{Z}_4 - \underline{\ddot{\zeta}_{\rm i}}) + 
\frac{\varepsilon}{2} \left (\frac{| u_2|^2}{(1 + \varepsilon u_1)^2} - \frac{| \underline{q_{\rm i}} |^2}{(1 + \varepsilon \zeta_{\rm i})^2}\right),
$$
for $0 \leq r \leq R$.
Then $(\delta, \dot{\delta}, P_{\rm i}) := (Z_1, Z_2, P_{\rm i})$ solves the Newton's equation \eqref{Newton}, and 
$$
(\zeta, q, P) := \begin{cases}
(u_1,u_2,0) & \text{ in } \mathcal{E}, \\
(\zeta_{\rm i}, q_{\rm i}, P_{\rm i}) & \text{ in } \mathcal{I},
 \end{cases}
$$
solves the radial Boussinesq-Abbott system of equations \eqref{BAr:eq1} and \eqref{BAr:eq2} with the transmission's conditions \eqref{ConsFluidVol}, \eqref{ConservationOfEnergy} and \eqref{vortex-line}. 
\end{thm}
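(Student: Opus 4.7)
The strategy is to reduce the interior dynamics to ODEs for a few scalar quantities, obtain a closed-form expression for the interior pressure, identify a hidden second-order ODE for $\underline{\zeta_{\rm e}}$ by exploiting dispersion, and rewrite the exterior PDE as a non-local conservation law. I would run this program in both directions.

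\textbf{Direct implication.} The flat-bottom hypothesis and the constraint \eqref{constrain-interior} together with $h = 1 + \varepsilon \zeta$ force $\zeta_{\rm i}(t,r)$ to be independent of $r$, so the mass equation \eqref{BAr:eq1} gives $\dr q_{\rm i} = -\dot\delta$, and axisymmetry without swirl (whence $q_{\rm i}(t,0) = 0$) yields \eqref{Qint}. In particular $\partial_r \dr q_{\rm i} = 0$ and $\partial_r \zeta_{\rm i} = 0$, so the dispersive and viscous contributions in \eqref{BAr:eq2} vanish inside $\mathcal{I}$, leaving a first-order ODE in $r$ for $P_{\rm i}$. Integrating this ODE from $r$ to $R$ with the terminal value fixed through \eqref{ConservationOfEnergy}, which couples $\underline{P_{\rm i}}$ to the exterior traces $\underline{\zeta_{\rm e}}, \underline{\dot\zeta_{\rm e}}, \underline{\ddot\zeta_{\rm e}}, \underline{q_{\rm e}}$, yields the closed-form expression for $P_{\rm i}/\varepsilon$ displayed in the statement. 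Substituting this into Newton's equation \eqref{Newton} produces an ODE for $\delta$ that is not closed in $(\delta,\dot\delta)$ alone because $\underline{\ddot\zeta_{\rm e}}$ persists on the right-hand side.

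\textbf{Hidden ODE and non-local rewriting.} To close the system I promote $Z_3 := \underline{\zeta_{\rm e}}$ and $Z_4 := \underline{\dot\zeta_{\rm e}}$ to unknowns of the ODE block. Differentiating \eqref{BAr:eq1} in time and evaluating at $r = R$ gives $\underline{\ddot\zeta_{\rm e}} = -\underline{\dr \partial_t q_{\rm e}}$; then applying $\dr$ to \eqref{BAr:eq2} and evaluating at $r = R$ allows one, since $\kappa > 0$, to solve for $\underline{\dr \partial_t q_{\rm e}}$ in terms of trace operators applied to $u$. This is the hidden second-order ODE on $\underline{\zeta_{\rm e}}$, and together with the Newton ODE it constitutes $\widetilde{\mathfrak D}$, while the hydrodynamic trace operator $\mathcal{I}_{\rm hyd}$ collects the required traces of $u$ and its derivatives. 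On the exterior domain $(R,\infty)$ the operator $(1 - \kappa^2 \partial_r \dr)$ in \eqref{BAr:eq2} is then inverted with the Dirichlet boundary value $\underline{q_{\rm e}} = -\frac{R}{2} Z_2$ coming from \eqref{ConsFluidVol}; this inversion produces the non-local flux $\widetilde{\mathfrak{F}}_\kappa[u,Z]$ together with the dispersive-boundary-layer source $\mathcal{S}_r(u) + \nu \varepsilon S_\nu(u)$, delivering \eqref{Boussi-augmented1}--\eqref{Boussi-augmented4}.

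\textbf{Reciprocal direction.} Given a regular solution $(u,Z)$ of the augmented system I define $(\zeta_{\rm i}, q_{\rm i}, P_{\rm i})$ by the explicit formulas in the statement and piece together $(\zeta, q, P)$ as indicated. Mass conservation in $\mathcal{I}$ reduces to $\dot Z_1 = Z_2$ (a component of $\widetilde{\mathfrak D} = 0$) together with the identity $\dr(-rZ_2/2) = -Z_2$. The momentum equation in $\mathcal{I}$ holds by design: the formula for $P_{\rm i}$ was obtained by integrating \eqref{BAr:eq2} in $\mathcal{I}$, and the converse integration gives it back. Newton's law is recovered by integrating $P_{\rm i}/\varepsilon$ over $\mathcal{I}$ and invoking the remaining component of $\widetilde{\mathfrak D} = 0$. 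Among the transmission conditions, \eqref{ConsFluidVol} follows from \eqref{Boussi-augmented3}, \eqref{vortex-line} is automatic in the axisymmetric-without-swirl setting, and \eqref{ConservationOfEnergy} is read off the value of the explicit formula for $P_{\rm i}$ at $r = R$ once one substitutes $u_{|r=R} = (Z_3, -RZ_2/2)$.

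\textbf{Main obstacle.} I expect the hardest step to be the derivation of the hidden second-order ODE on $\underline{\zeta_{\rm e}}$ and the precise identification, inside $\widetilde{\mathfrak F}_\kappa$ and $\mathcal S_r$, of the terms that recombine (after the inversion of $1 - \kappa^2 \partial_r \dr$) into the transport-pressure contribution $h \partial_r P/\varepsilon$ and the viscous contribution $\nu h \partial_r(\dr q/h)$. Tracking the powers of $\kappa$, $\varepsilon$ and $\nu$ in each trace manipulation is essential, since the explicit parameter dependence is what feeds into the subsequent well-posedness and return-to-equilibrium results.
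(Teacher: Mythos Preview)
Your overall architecture matches the paper's: reduce the interior, derive the explicit formula for $P_{\rm i}$, substitute into Newton to get a modified ODE still containing $\kappa^2\underline{\ddot\zeta_{\rm e}}$, then close the system via a hidden ODE on $\underline{\zeta_{\rm e}}$ and a non-local rewriting of the exterior PDE. The reciprocal direction is also handled correctly.

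The genuine gap is in how you obtain the hidden ODE. You propose to apply $\dr$ to \eqref{BAr:eq2} and evaluate at $r=R$ to solve for $\underline{\dr\partial_t q_{\rm e}}$. This does not close: applying $\dr$ to $(1-\kappa^2\partial_r\dr)\partial_t q$ produces $\dr\partial_t q - \kappa^2\dr\partial_r\dr\partial_t q$, and the second term is a \emph{third}-order spatial trace of $q$ (equivalently $\underline{\partial_r\ddot\zeta_{\rm e}}$), which cannot be expressed through $u$, $Z$, or first-order traces. The paper avoids this by a different mechanism: it first applies the non-local inverse $\mathfrak R_0$ (with \emph{homogeneous} Dirichlet data, not the physical value $-\tfrac{R}{2}\dot\delta$) to the momentum equation, which yields $\partial_t q + \partial_r\mathfrak R_1 f + \mathfrak R_0 g = \underline{\dot q}\,K$, the mismatch of traces being absorbed by the Bessel boundary layer $K$. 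Only \emph{after} this inversion does one apply $-\kappa^2\dr$ and use mass conservation to obtain $\kappa^2\ddot\zeta + f + \kappa^2(\dr K)\underline{\dot q} = \mathfrak R_1 f - \kappa^2\dr\mathfrak R_0 g$; the trace of this is the hidden ODE, whose right-hand side $\underline{f_{\rm hyd}}=\mathcal I_{\rm hyd}u$ is a genuine integral over all of $(R,\infty)$, not a local trace. This non-locality is essential and is precisely what your local derivation cannot produce. A minor additional slip: in the exterior domain $P_{\rm e}=0$, so there is no ``transport-pressure contribution $h\partial_r P/\varepsilon$'' to recombine there.
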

 
In this formula, $\widetilde{\mathfrak{F}}_\kappa$ and $\mathcal{I}_{\rm hyd}u$ present a non-local effect due to the dispersion. This non-locality does not appear in the hyperbolic case \cite{Bocchi}. 
This theorem is equivalent to the one we can find in \cite{BeckLannes} but with radiative terms due to the 2D situation, represented in equation \eqref{Boussi-augmented1} by $S_r(u)$, and loss terms due to the presence of viscosity.
This Augmented formula is the keystone to show wellposedness. 
Section \ref{sec:AugFor} is devoted to the proof of this theorem. 
 
\begin{defn}\label{def:WP}
A problem given by a PDE is said to be well-posed in the Banach space $X$ for time of order $\mathcal{O}(T_{\varepsilon, \kappa, \nu, R}^+)$ if there exists $c >0$ uniform with respect to $\varepsilon, \kappa, \nu$ and $R$ such that for any initial condition in $X$ and any continuous source term, the problem admits a unique solution which lies in  $\mathscr{C}^0 \left( [0, cT_{\varepsilon, \kappa, \nu,R}^+ ) \, , \,  X \right)$.
\end{defn}
 
\begin{thm}[Wellposedness of the Augmented formula]\label{thmf:WP}
Let Assumption \ref{hyp-system} be satisfied. 
 
\begin{itemize}
\item[(i)]\label{thmf:pt1} Let $\nu \geq  0$. The Augmented formulation \eqref{Boussi-augmented1}, \eqref{Boussi-augmented2}, \eqref{Boussi-augmented3}, \eqref{Boussi-augmented4} of the waves-structure system is well-posed in $L^\infty_r \times W^{1, \infty}_{r, \kappa} \times \mathbb{R}^4 $ for time at order $\mathcal{O}(T_{\rm ODE})$ where the existence time is given by 
$$T_{\rm ODE} = \frac{\kappa^2 \eps^{-1}}{(1+\nu \kappa^{-1})(1+\nu \kappa^{-1}+R^{-1})},$$ 
and the functional frameworks are
$$
L^\infty_r := L^\infty( [R, \infty) \, , \, rdr)
\quad \text{and} \quad
W^{1,\infty}_{r, \kappa} := \{ f \in L^\infty_r \, | \,  \kappa\partial_r f \in L^\infty_r \}.
$$
\item[(ii)]\label{thmf:pt2} Let $\nu = 0$. The Augmented formulation \eqref{Boussi-augmented1}, \eqref{Boussi-augmented2}, \eqref{Boussi-augmented3}, \eqref{Boussi-augmented4} of the waves-structure system is well-posed in $ H^1_r \times H^2_\kappa \times \mathbb{R}^4$ for time at order $\mathcal{O}(T_{\varepsilon, \kappa, R})$
where the existence time is given by
$$
T_{\varepsilon, \kappa,R}:= \frac{-(\sqrt{\alpha} +\max(1, \frac{1}{R^2})\kappa^{-1}) + \sqrt{(\sqrt{\alpha} +\max(1, \frac{1}{R^2})\kappa^{-1})^2 + 4\eps^{-1}(1+\frac{1}{R})^{-1}}}{2},
$$
for $\alpha := \max(2R^+, 9)$, $R^+ = R + \rho$ for $0<\rho \ll 1$, and the functional frameworks are defined by
$$
H^1_r := \{ f \in L^2( [R, \infty) \, , \, rdr) \, | \, || f ||_{H^1_r}^2 := \int_R^\infty |f(r)|^2 r dr +  \int_R^\infty |\partial_r f(r)|^2 r dr  < \infty \},
$$
and
$$
H^2_\kappa := \{ f \in H^1_r  \, | \, || f ||_{H^2_\kappa}^2 := || f ||_{H^1_r}^2  + \kappa^2 \int_R^\infty |\partial_r {\rm d}_r f(r)|^2  r dr < \infty \}.
$$
\end{itemize}
Moreover in both situations, if $T^\ast$ denotes the maximal existence time and $T^\ast < \infty$, one has
\begin{equation}\label{Blow-up-crit}
\lim \sup_{t \to T^\ast} \|\frac{1}{h_{\rm e}(t)}, \zeta_{\rm e}(t), q_{\rm e}(t)\|_{L^\infty_r} + |\dot{\delta}(t)| + \frac{1}{|h_{\rm i}(t)|} + |F_{\rm ext}(t)| = + \infty.
\end{equation}
\end{thm}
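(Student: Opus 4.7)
The plan is to view the augmented formulation \eqref{Boussi-augmented1}--\eqref{Boussi-augmented4} as a coupled PDE--ODE system and to build solutions by a Banach fixed-point argument in the relevant functional framework. Given a pivot $(u^\sharp, Z^\sharp)$, I would freeze the nonlinear coefficients and the right-hand side of \eqref{Boussi-augmented1}, obtaining a linear non-local transport--dispersion problem on $[R,\infty)$ whose boundary value at $r=R$ is supplied by $Z^\sharp$ through \eqref{Boussi-augmented3}, while \eqref{Boussi-augmented2} is integrated as a linear system of ODEs forced by the trace of $u^\sharp$ via $\mathcal{I}_{\rm hyd}$. One then shows that the associated map $(u^\sharp,Z^\sharp)\mapsto (u,Z)$ is a strict contraction on a closed ball of $\mathscr{C}^0([0,T];X)$ for $T$ small enough in the regime specified by the theorem, and the local solution is continued as long as the norm in \eqref{Blow-up-crit} stays finite.

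For part (i), I would take $X = L^\infty_r \times W^{1,\infty}_{r,\kappa} \times \mathbb{R}^4$ and exploit the smoothing of the resolvent $(1-\kappa^2 \partial_r \dr)^{-1}$, which on $[R,\infty)$ with boundary trace $-\frac{R}{2}Z_2$ at $r=R$ is bounded from $L^\infty_r$ to $W^{1,\infty}_{r,\kappa}$ with operator norm of order $\kappa^{-1}$. This makes the non-local flux $\widetilde{\mathfrak{F}}_\kappa[u,Z]$, the interaction operator $\mathcal{I}_{\rm hyd}u$, and the viscous correction $\eps S_\nu(u)$ Lipschitz on bounded sets of $X$, with constants one can track explicitly in $\eps$, $\kappa$, $\nu$ and $R$. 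Balancing the nonlinear flux cost $\eps$, the inversion cost $\kappa^{-2}$, the viscous cost $1+\nu\kappa^{-1}$, and the geometric cost $R^{-1}$ coming from the radial term $r^{-1}q$ in $\dr q$, one recovers exactly the scale $T_{\rm ODE}$ on which the contraction property holds.

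For part (ii) with $\nu=0$, the $L^\infty$ framework is replaced by energy estimates in $X = H^1_r \times H^2_\kappa \times \mathbb{R}^4$. Testing \eqref{Boussi-augmented1} against $(\zeta_{\rm e}, q_{\rm e})$ with the radial weight $r\,dr$, the self-adjointness of $(1-\kappa^2\partial_r\dr)$ with respect to this measure yields an energy identity whose dispersive part controls $\|q_{\rm e}\|_{H^2_\kappa}^2$, while the boundary contributions at $r=R$ are absorbed into the ODE \eqref{Boussi-augmented2} through \eqref{Boussi-augmented3}. The radial source $S_r(u)$ is handled as a lower-order term thanks to the uniform bound $r^{-1}\leq R^{-1}$ on $[R,\infty)$, and the embedding $H^1_r \hookrightarrow L^\infty_r$ (valid since $r\geq R>0$) controls all nonlinear products. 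Setting $\mathcal{N}(t)=\|u(t)\|_{H^1_r\times H^2_\kappa}^2 + |Z(t)|^2$, a Gronwall argument produces a differential inequality of the form $\dot{\mathcal{N}} \lesssim \bigl(\sqrt{\alpha}+\max(1,R^{-2})\kappa^{-1}\bigr)\mathcal{N} + \eps\,\mathcal{N}^{3/2} + (1+R^{-1})$, whose positive blow-up time is precisely the root $T_{\eps,\kappa,R}$ announced in the statement.

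The blow-up criterion \eqref{Blow-up-crit} then follows by a standard continuation argument: as long as the five quantities listed stay finite, the Lipschitz and energy constants in (i)--(ii) remain uniform, so the local solution can be restarted at $T^*-\eta$ and extended past $T^*$, contradicting maximality. The main obstacle will be the sharp tracking of the $\kappa$- and $R$-dependence of the resolvent $(1-\kappa^2\partial_r\dr)^{-1}$ with the inhomogeneous boundary condition at $r=R$: unlike the one-dimensional case of \cite{BeckLannes}, the extra term $r^{-1}q$ in the radial divergence prevents any use of translation-invariant Fourier estimates, and one must instead analyse the modified Bessel Green's function associated with $(1-\kappa^2\partial_r\dr)$ on $[R,\infty)$ to recover the precise $R^{-1}$ scaling appearing in both $T_{\rm ODE}$ and $T_{\eps,\kappa,R}$.
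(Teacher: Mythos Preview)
Your treatment of part~(i) is essentially the paper's own argument: rewrite the augmented system as an abstract ODE in $(L^\infty_r)^3\times\mathbb{R}^4$ for $V=(\zeta,q,\kappa\dr q,Z)$, track the Lipschitz constants of $\mathfrak{R}_0,\mathfrak{R}_1$ using the modified-Bessel Green's function, and apply Cauchy--Lipschitz. The balance of costs you describe is exactly how $T_{\rm ODE}$ arises.

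For part~(ii), however, there is a real gap. Testing \eqref{Boussi-augmented1} against $(\zeta_{\rm e},q_{\rm e})$ with weight $r\,dr$ yields control of $\|\zeta_{\rm e}\|_{L^2_r}^2+\|q_{\rm e}\|_{H^1_\kappa}^2$ only; the dispersive term $\kappa^2\partial_r\dr q$ produces $\kappa^2\|\dr q\|_{L^2_r}^2$, \emph{not} $\kappa^2\|\partial_r\dr q\|_{L^2_r}^2$, so you do not close in $H^1_r\times H^2_\kappa$. The paper obtains the missing derivative by applying $(\partial_r,\dr)$ to the system and running a second energy estimate on $(\zeta',q'):=(\partial_r\zeta,\dr q)$ with the new boundary condition $\underline{q'}=-\underline{\dot\zeta}$. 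At this differentiated level the boundary flux is \emph{not} absorbed by the ODE \eqref{Boussi-augmented2}: one must take the trace of the $\partial_r$-derivative of the hidden equation, express $\ddot\delta$ through the SHODE, and this leaves a residual cross term $\tfrac{R}{2\tau_\kappa^2(0)}\delta\,\underline{\zeta}$ in the energy identity for $E'$. That term cannot be Gronwalled away; it is split by a Young inequality (this is where $\alpha=\max(2R^+,9)$ enters) and the piece in $\delta$ is then bounded using the \emph{base-level} conservation of $E$, which is exact when $\eps=0$.

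This two-tier structure --- $E$ essentially conserved, $E'$ controlled modulo $\int_0^T|\dot\delta|\lesssim T\sqrt{E(0)}+\eps T^2\|\mathfrak F\|_{L^\infty_t}$ --- is what produces a contraction condition \emph{quadratic} in $T$, namely $\eps T\bigl(T+\sqrt{\alpha}+\max(1,R^{-2})\kappa^{-1}\bigr)\lesssim 1$, and hence the root formula for $T_{\eps,\kappa,R}$. Your single-energy inequality $\dot{\mathcal N}\lesssim A\mathcal N+\eps\mathcal N^{3/2}+B$ has linear growth in the $\eps=0$ limit and would give a lifespan of order $A^{-1}$, not the announced expression; it also cannot recover the $\eps^{-1/2}$ scaling in the regime $\eps\ll\kappa^2$. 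The missing ingredient is precisely the differentiated energy and the Young-inequality handling of the boundary cross term.
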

The blow-up criterium \eqref{Blow-up-crit} is rather similar to the one given in \cite{BeckLannes} Theorem 3.3. In preparation for future works on asymptotic analysis, we were careful about the dependence on the small parameters $\varepsilon, \kappa, R$ or $R^{-1}$, and $\nu$. Different regimes with interest are summed up in the following table (see Table \ref{tab:T}), with $\lambda = \sqrt{\alpha} + \max(1,\frac{1}{R^2})\kappa^{-1}$ and $\nu = 0$.

\begin{center}
\begin{tabularx}{\textwidth}{|Y|Y|Y|Y|Y|Y|}
\hline
\textbf{Regime} & $\eps \ll \kappa^2$ and $R = \mathcal{O}(1)$ & $\eps \gg \kappa^2$ & $\eps \sim \kappa^2$ &  $R \to 0$ & $R^{-1} \to 0$ \\ \hline
\begin{center}
$T_{\rm ODE} \sim$ 
\end{center}
 & \begin{center} $\kappa^2\eps^{-1}$ \end{center}
 & \begin{center} $\kappa^2\eps^{-1}$ \end{center}
 & \begin{center} $(1+ R^{-1})^{-1}$ \end{center}
 & \begin{center} $\kappa^2\eps^{-1}R$ \end{center}
 & \begin{center} $\kappa^2\eps^{-1}$ \end{center}\\ \hline

\begin{center}
$T_{\eps, \kappa, R} \sim$
\end{center}
 & \begin{center} $\eps^{-1/2}$ \end{center}
 & \begin{center} $\eps^{-1}\lambda^{-1}$ \end{center}
 & \begin{center} $\eps^{-1}\lambda^{-1}$ \end{center}
 & \begin{center} $R$ \end{center}
 & \begin{center} $R^{-1}$ \end{center}\\ \hline
\end{tabularx}
\captionof{table}{Form of the existence time of the solution in the different regimes.}
\label{tab:T}
\end{center}

We can obtain the limit for high Helmholtz numbers ($R \to \infty$) with Theorem \ref{thmf:WP}-(i), but the limits for weak Helmholtz numbers $R \to 0$ (with the second method) or high Helmholtz numbers $R \to \infty$ do not commute with the limit $\eps \to 0$ with both points of the theorem, in the meaning that $\underset{\eps \to 0}{\lim}\underset{\tilde{R} \to 0}{\lim} T_{\eps, \kappa, R} \neq \underset{\tilde{R} \to 0}{\lim}\, \underset{\eps \to 0}{\lim}T_{\eps, \kappa, R}$ with $\tilde{R} = \min(R, R^{-1})$.\newline
\indent When the dispersion vanishes $\kappa \to 0$, the lifetime follows it. This is because we have used the regularization by dispersion in our proof. 
In the hyperbolic case $(\kappa = 0)$ the wellposedness is shown with Kreiss symmetrizers techniques (see \cite{Bocchi}). In the linear case those techniques consist in the multiplication of the system by a symmetric matrix, which satisfies some properties that take care of the boundary conditions. From this symmetrized system one can obtain energy inequalities and show the wellposedness of the system. In non-linear regime those techniques are adapted with an iterative quasi-linear scheme.
However when $\kappa > 0$ those symmetrizers will not commute with the dispersion $\kappa^2 {\rm div} \nabla$ and thus similar techniques cannot be applied. To circumvent the absence of a symmetrization method, we will use the Augmented formulation \eqref{Boussi-augmented1}, \eqref{Boussi-augmented2}, \eqref{Boussi-augmented3}, \eqref{Boussi-augmented4}. This is why we do not have the same results for the wellposedness. In particular, Kreiss symmetrizer method requires compatibility conditions which are written in the form of an equality on the initial conditions. This is not the case for the dispersive case. We believe that in order to reach the dispersionless limit, we should introduce compatibility conditions. These compatibility conditions in dispersion regime would be written under the form of inequalities, which converge to hyperbolic compatibility conditions inequalities when the dispersion vanishes, see \cite{BLM}.\newline
\indent In the linear case ($\eps = 0$) Theorem \ref{thmf:WP} gives that $\delta$ belongs to $\mathscr{C}^0(\mathbb{R}^+)$. Therefore, similarly to \cite{BeckLannes}, we can study the behavior of $\delta$ in the return to the equilibrium situation for large time scale. It is the case where the solid is dropped with no speed and the waves are at rest at $t = 0$. The behavior of $\delta$ is given by the following theorem. 
\begin{thm}[Return to the equilibrium]\label{thmf:RTTE}
Let Assumption \ref{hyp-system} be satisfied except for the last point. One assumes that $\varepsilon = 0$ and $\kappa, \nu \geq 0$ instead of this last point. One also assumes the technical Assumption \eqref{conjecture-denom}.
If the initial conditions are given by $(\zeta, q, \delta, \dot{\delta}) = (0, 0, \delta_0 \neq 0, 0)$ then $\delta \in H^2(\mathbb{R}^+)$ solves an integro-differential equation which does not require the computation of $\zeta_e$ or $q_e$ (see equation \eqref{eq:CumminsDelta}).
On one hand, if $\nu = 0$ and $\kappa >0$, then the decay of $\delta$ cannot be stronger than $\mathcal{O}(t^{-1/2^+})$. On the other hand, if $\nu > 0$ or $\kappa = 0$, then the decay of $\delta$ cannot be stronger than $\mathcal{O}(t^{-1^+})$, and the slowest decay is in $\mathcal{O}(t^{-3/2^+})$.
\end{thm}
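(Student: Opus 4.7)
The plan is to exploit linearity ($\varepsilon = 0$) to solve the exterior linear Boussinesq--Abbott system explicitly, reduce the wave--structure problem to a scalar integro-differential equation on $\delta$ (the Cummins equation \eqref{eq:CumminsDelta}), and then read off the asymptotic behavior of $\delta$ from a singularity analysis of its Laplace transform $\widehat\delta(s)$.

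First I would apply the Laplace transform in time to the linearized exterior system. With vanishing initial data in the exterior and the boundary datum $\underline{q_e}(t) = -\tfrac{R}{2}\dot\delta(t)$ inherited from \eqref{Boussi-augmented3}, the divergence of the discharge satisfies a radial modified Helmholtz equation governed by the effective dispersion relation $\Lambda(s)^2 = s^2/(1 + \nu s + \kappa^2 s^2)$. The unique outgoing-radiation solution is proportional to the modified Bessel function $K_0(\Lambda(s)\, r)$, which provides an explicit Dirichlet-to-Neumann operator on $r = R$ expressing $\widehat{\underline{\zeta_e}}(s)$ as a multiple of $\widehat\delta(s)$ via the ratio $K_0(R\Lambda(s))/K_1(R\Lambda(s))$. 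Substituting this expression together with the pressure trace formula of Theorem \ref{thmf:Aug} into the linearized Newton equation \eqref{Newton} in the decay-test setting $F_{\rm ext} = 0$, $\dot\delta(0) = 0$, $\delta(0) = \delta_0$, I obtain the Cummins equation \eqref{eq:CumminsDelta}, whose Laplace transform has the algebraic form $\mathcal{D}(s)\widehat\delta(s) = \delta_0\,\mathcal{N}(s)$ with $\mathcal{D}, \mathcal{N}$ explicit in $s, \tau_{\rm buoy}^2, \kappa^2, \nu$ and $K_0/K_1$. Assumption \eqref{conjecture-denom} is precisely the non-vanishing of $\mathcal{D}$ on the imaginary axis away from the origin, which permits the inverse Laplace contour to be deformed onto that axis and proves in passing that $\delta \in H^2(\mathbb{R}^+)$.

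The long-time behavior of $\delta$ is then governed by the singular expansion of $\widehat\delta(s)$ near $s = 0$. In 2D, the logarithmic singularity of $K_0$ at the origin is the main obstruction to fast decay: using $K_0(z) \sim -\log z$ and $K_1(z) \sim 1/z$ for small $z$, one finds in the dispersive, non-viscous regime $(\nu = 0, \kappa > 0)$ that $\widehat\delta(s)$ carries a purely logarithmic singularity at $0$, which by a Hardy--Littlewood--Karamata Tauberian theorem obstructs any decay faster than $\mathcal{O}(t^{-1/2^+})$. When $\nu > 0$ or $\kappa = 0$, the effective symbol $\Lambda(s)$ develops a square-root branch at the origin (coming from the viscous factor $1+\nu s$ or from the absence of the dispersive regularization), which replaces the logarithmic singularity by a $\sqrt{s}$-type singularity and yields a decay in the range between $\mathcal{O}(t^{-1^+})$ and $\mathcal{O}(t^{-3/2^+})$, the upper rate being forced by the polynomial degree of $\mathcal{N}$ at $0$.

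The main obstacle will be this singularity/Tauberian step: one must carefully list all branch cuts of $K_0/K_1$ and of $\Lambda$, confirm via Assumption \eqref{conjecture-denom} that no residue is collected during the contour deformation, and apply a sharp Watson--Erd\'elyi-type lemma to translate the local singular expansion of $\widehat\delta$ at $s = 0$ into matching two-sided asymptotics for $\delta(t)$. The ``cannot be faster than'' bounds then follow by contradiction: a faster pointwise decay of $\delta$ would force $\widehat\delta$ to be more regular at $s = 0$ than the explicit formula above permits whenever $\delta_0 \neq 0$.
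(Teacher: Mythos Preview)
Your derivation of the Cummins equation via Laplace transform and the explicit Bessel-function solution of the exterior problem is essentially the paper's approach, and correct in outline. The gap is in the singularity analysis, where you have misidentified both the location of the relevant singularity and the analytical tool.

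In the dispersive non-viscous case $(\nu=0,\ \kappa>0)$, the obstruction to fast decay is \emph{not} at $s=0$. The transfer function $\widehat\delta(s)/\delta_0$ is actually continuous at $0$ (one has $B(0)=0$ since $K_0(z)/K_1(z)\sim -z\log z$). The decisive singularities sit at $s=\pm i\kappa^{-1}$, the zeros of $1+\kappa^2 s^2$ on the imaginary axis: the derivative of the transfer function contains a factor $(1+\kappa^2 s^2)^{-1/2}$ with non-vanishing coefficient at those points, and \emph{this} is what blocks membership in weighted spaces and forces the $t^{-1/2^+}$ lower bound. A Tauberian argument based on behavior near $s=0$ would miss this entirely. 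Conversely, in the viscous or non-dispersive case $(\nu>0$ or $\kappa=0)$, the zeros of $1+\nu s+\kappa^2 s^2$ move into $\{\Re s<0\}$, so the square-root factor becomes harmless on $\overline{\mathbb{C}_0}$; what remains at $s=0$ is a \emph{logarithmic} singularity of $B'$ (not a square-root branch of $\Lambda$, which satisfies $\Lambda(s)\sim s$ near $0$ in all regimes), and this milder obstruction is what produces the improved $t^{-3/2^+}$ range.

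The paper does not use Tauberian or Watson--Erd\'elyi lemmas. Instead it invokes a Paley--Wiener theorem for Bergman spaces: $\mathcal{L}:L^2(t^{-(\alpha+1)}dt)\to\mathcal{B}_\alpha$ is an isomorphism, so $t\delta\notin L^2(t^{-(\alpha+1)}dt)$ is equivalent to $\widehat\delta\,{}'\notin\mathcal{B}_\alpha$. The pointwise lower bounds then follow from an elementary contradiction lemma (if $|u(t)|\le Ct^{-\gamma/2-1/2-\rho}$ eventually, then $u\in L^2(t^\gamma dt)$). To repair your argument you would need to replace the $s=0$ Tauberian step by this Bergman-space mechanism, and in particular track the branch points $\pm i\kappa^{-1}$ rather than the origin in the $\nu=0$ case.
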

In the previous theorem, for any $\rho \in \mathbb{R}$, $\rho^\pm$ denotes any number
$ \rho^\pm = \eta \pm \beta$ for any arbitrary small real number $0 < \beta \ll 1$. Let us denote
$$
\eta_0 := \begin{cases}
\frac{\nu}{2\kappa^2} & \text{ if } \nu \geq 0 , \kappa>0, \\
\nu^{-1} & \text{ if } \nu >0, \kappa=0, \\
\frac{R}{2\tau_0^2(0)} & \text{ if } \nu =\kappa=0.
\end{cases}
$$
For $k \in \left \{ 0, 1\right \}$, the behavior of $\delta^{(k)}$ is summed in Table \ref{tab:delta}.
 
\begin{center}
\begin{tabularx}{\textwidth}{|Y|Y|Y|Y|Y|}
\hline
\textbf{Parameters} & Dimension & $\delta \in$ & $\delta^{(k)} \notin$ & Decay of $\delta, \dot{\delta}$ \\ \hline
 
\begin{center}
$\nu = 0$ and $\kappa > 0$
\end{center}
 & \begin{center} 2D \end{center}
 & \begin{center} $H^2$ \end{center}
 & \begin{center} $L^2(t^\beta dt)$, $\beta \in (0, 2]$ \end{center}
 & \begin{center} Slower than $\mathcal{O}(t^{-1/2^+})$ \end{center}\\ \hline
 
\begin{center} $\nu > 0$ or $\kappa = 0$ \end{center}
 & \begin{center} 2D \end{center}
 & \begin{center} $H^2(t^\beta dt)$, $\beta \in (0, 2)$ \end{center}
 & \begin{center} $L^2(t^{2k + 2}dt)$ \end{center}
 & \begin{center} Slower than $\mathcal{O}(t^{-(3/2+k)^+})$ and faster than $\mathcal{O}(t^{-(k+1)})$ \end{center}  \\ \hline
 
\begin{center}
$\nu \geq 0$ and $\kappa \geq 0$
\end{center}
 & \begin{center} 1D \end{center}
 & \begin{center} $H^2(e^{\eta_0 t}dt)$ \end{center}
 & \begin{center} $L^2(e^{\eta_0 t}t^\beta dt)$, $\beta \in (0, 2]$ \end{center}
 & \begin{center} Slower than $\mathcal{O}(e^{-\frac{\eta_0 t}{2}}t^{-1/2^-})$ \end{center}\\ \hline
 
\end{tabularx}
\captionof{table}{Properties of $\delta$ depending on the dimension $\&$ on the small parameters $\kappa$ and $\nu$.}
\label{tab:delta}
\end{center}
 
We believe that the technical Assumption \eqref{conjecture-denom} is verified based on numerical evidence. It is also motivated in Section \ref{sec:RTTE}.
In the dispersive ($\kappa >0$) non-viscous ($\nu=0$) case in 1D or 2D, the strongest decay is in $\mathcal{O}(t^{-1/2^+})$. 
This is an improvement of one of the results from \cite{BeckLannes}. Indeed in \cite{BeckLannes} the authors show that the strongest decay was in $\mathcal{O}(t^{-3/2^+})$ for the 1D case. 
Furthermore we show that in 2D, even with viscosity one cannot have exponential decreasing of $\delta$ as in the 1D  case.
 
The following three sections are each dedicated to the demonstration of one of Theorems \ref{thmf:Aug}, \ref{thmf:WP} and \ref{thmf:RTTE}. They can be read independently of each other.
 
\subsection{Notations}
\begin{itemize}
\item $\varepsilon$ is the non-linearity parameter, $\kappa$ is the dispersion or shallowness parameter, $\nu$ is the viscosity parameter and $R$ is the  Helmholtz number. 
\item For any $a, b \in \mathbb{R}$ 
$$a \lesssim b \Leftrightarrow \exists C > 0 \, ; \,  a \leq C b, $$ with $C$ uniform with respect to $\kappa,\eps,\nu$ and $R$.
\item For any function $u : (R, \infty) \to \mathbb{R}$, one denotes its trace when it is well-defined by $$\underline{u} = \lim_{\eta \to 0} u(R+ \eta).$$
\item $(e_r,  e_\theta)$ is the orthonormal basis of $\mathbb{R}^2$ in polar coordinates. The divergence operator in polar coordinates reads, for $u = (u_r, u_\theta)$
$${\rm div}u := {\rm{d}}_{r} u_r + \frac{1}{r} \partial_\theta u_\theta
\quad \text{where} \quad
{\rm{d}}_{r} \cdot= \partial_r \cdot+ \frac{\cdot}{r}.$$
\item The radial Sobolev spaces are defined by
$$
W^{s,n}_r = W^{s,n}([R, +\infty), rdr) , \text{ for } s,n \in \mathbb{N},
$$
in particular
$$ 
L^2_r = W^{2,0}_r = L^2([R, +\infty), r dr), \,
H^n_r = W^{2,n}_r= H^n([R, +\infty), r\rm{dr}), \quad n \in \mathbb{N}.
$$
\item We note the weighted Sobolev space $H^{1}_\kappa$ defined as the functionnal space equipped with the norm $$||\cdot||_{H^{1}_\kappa}^2 := ||\cdot||_{L_{r}^2}^2 + \kappa^2 || {\rm{d}}_{r} \cdot||_{L^2_r}^2.$$
\item We note the weighted Sobolev space $H^{2}_\kappa$ the functionnal space equipped with the norm $$||\cdot||_{H^{2}_\kappa}^2 := ||\cdot||_{H^1_r}^2 + \kappa^2 || \partial_r {\rm{d}}_{r} \cdot||_{L^2_r}^2.$$ 
\item We note $\mathbb{H} = L^2_r \times H^1_\kappa$ and $\mathbb{H}^1 = H^1_r \times H^2_\kappa$.
\item $I_0$, $I_1$, $K_0$, $K_1$ are modified Bessel functions of order $0$ or $1$ of first and second kind (see Definition \ref{def:K}). 
\item For any $f : \mathbb{R} \to \mathbb{R}$ one denotes 
 $$
 f_{\rm e} := f_{| \mathcal{E}},
 \quad 
 f_{\rm i} := f_{| \mathcal{I}},
$$
and the traces when they are well-defined
$$
  \underline{f_{\rm e}} = \lim_{\eta \to 0} f(R+\eta),
   \quad 
   \underline{f_{\rm i}} = \lim_{\eta \to 0} f(R-\eta).
 $$
\item For any time function $f$ regular enough 
$$ 
\dot{f} = \frac{\partial f}{\partial t}
\quad \text{and} \quad
\ddot{f} = \frac{\partial^2 f}{\partial t^2}.
$$
 
\item For any $\rho \in \mathbb{R}$, $\rho^\pm$ denotes any real number strictly superior/inferior to $\rho$. More precisely, $\rho^\pm$ is the number
$ \rho^\pm = \eta \pm \beta$ for an arbitrary small real number $0 < \beta \ll 1$.
 
\item For any finite real number $\eta_0$ we define the complex half-plane $\mathbb{C}_{\eta_0}$ as
$$ \mathbb{C}_{\eta_0} = \left\{ \eta + i \omega | \, \eta > \eta_0, \omega \in \mathbb{R}\right\}. $$

\end{itemize}
 
\section{Augmented formulation}\label{sec:2}
Firstly we want to rewrite the waves-structure system as in the first Theorem \ref{thmf:Aug}. To do so, the method will be the following:
\begin{enumerate}
\item {\it{(Equation on the pressure)}}.
Firstly we aim at getting an elliptic equation for the surface pressure in the inner domain $\mathcal{I}$. 
Since we constraint the fluid to fit the bottom of the solid one can obtain by taking the space divergence of equation \eqref{BAr:eq2}:
\begin{equation}\label{elliptic-pressure}
\begin{cases}
-\ddot{\delta} + \frac{3 \eps}{2h_i}\dot{\delta}^2 = - h {\rm{d}}_{r}(\partial_r \frac{P_{\rm i}}{\varepsilon})& \text{ in $\mathcal{I}$},\\
\frac{\underline{P_{\rm i}}}{\varepsilon}
= \left[ \zeta + \nu \dot{\zeta} + \kappa^2 \ddot{\zeta} + \frac{\varepsilon}{2}\frac{| Q |^2}{h^2} \right]_{\Gamma} & \text{ on $\Gamma$},
\end{cases}
\end{equation}  
with the boundary conditions given by the energy balance of the fluid-structure system. 
 
\item {\it{(Eliminating the Lagrange multiplier)}}.\label{ElimLagM} 
If we inject this elliptic problem in the Newton's equation \eqref{Newton}, one gets nonlinear system of ODEs
\begin{equation}\label{ODEIBVP}
{\tau^2_\kappa}(\varepsilon \delta)  \ddot{\delta}
+ \beta(\delta, \dot{\delta}, \zeta_{\rm e})
= \mathfrak{f}( \underline{\zeta_{\rm e}}, \kappa\underline{\dot{\zeta_{\rm e}}}, \kappa^2\underline{\ddot{\zeta_{\rm e}}}),
\end{equation}
with forcing term $\mathfrak{f}(\underline{\zeta_{\rm e}}, \kappa\underline{\dot{\zeta_{\rm e}}}, \kappa^2\underline{\ddot{\zeta_{\rm e}}})$ coming from the wave-field in the free area
that do not need the computation of the surface pressure. 
The coefficients ${\tau^2_\kappa}(\varepsilon \delta)$, $ \beta(\delta, \dot{\delta}, \zeta_{\rm e})$ and the forcing term $\mathfrak{f}( \underline{\zeta_{\rm e}}, \kappa\underline{\dot{\zeta_{\rm e}}}, \kappa^2\underline{\ddot{\zeta_{\rm e}}})$ will be explicited as in \cite{BeckLannes}. 
We want to underline that the coefficient in front of $\ddot{\delta}$ is higher than the intrinsic buoyancy $\tau^2_{\rm buoy}$ of the solid. This is due to the fact that the pressure force is exerted on the solid by the fluid and it involves acceleration term (see \eqref{elliptic-pressure}). This is what we call the added-mass effect. The equations of waves-structure system are now completed but one needs to avoid higher order terms $\kappa^2\underline{\ddot{\zeta_{\rm e}}}$.
 
\item {\it{(Hidden equation)}}\label{EqOnTrace}
In order to replace $\kappa^2\underline{\ddot{\zeta_{\rm e}}}$ we will work with non-local dispersive operators which reverse the dispersive effect in equation \eqref{BAr:eq2}. It leads to another nonlinear equation of type:
\begin{equation}
\kappa^2 \underline{\ddot{\zeta_{\rm e}}} + \beta'(\delta, \dot{\delta}, q_{\rm e}, \zeta_{\rm e}, \underline{\zeta_{\rm e}}, \kappa\underline{\dot{\zeta_{\rm e}}}) = \kappa\ddot{\delta}C_\kappa.
\end{equation}
The term $C_\kappa$ represents a dispersive boundary layer effect as in \cite{BeckLannes} which decays at least exponentialy.
The dependance on $\zeta_{\rm e}$ and $q_{\rm e}$ in $\beta'$ comes from the non-local effect of inverting the operator $(1-\kappa^2 \nabla \mbox{div})$ which requires to compute those quantities over the exterior domain $\mathcal{E}$.
If we sum up the three previous steps we complete the waves-structure system and we can obtain the Augmented formulation.
 
\item {\it{(Augmented formulation)}}
In order to do so we consider the following unknown
$$
Z := (\delta, \dot{\delta}, \underline{\zeta_{\rm e}}, \kappa\dot{\underline{\zeta_{\rm e}}}),
$$
and by combining (\ref{ElimLagM}) and (\ref{EqOnTrace}) our new unknown solves the following system of ODE's
\begin{equation}\label{ODEaugm}
\mathcal{M} (Z) \,  \frac{d}{dt}Z={\mathcal G}\left (Z ,   \mathfrak{Q} \right ), 
\end{equation}
where the source $ \mathfrak{Q}$ coming from the computation of the waves' unknowns on the exterior domain. Combining that with \eqref{AbbottDL:eq1} and \eqref{AbbottDL:eq2} leads to the Augmented formula and Theorem \ref{thmf:Aug}.
\end{enumerate}
 
As it was explained in the introduction, the difficulty raises in the computation of the pressure in the inner domain $P_{\rm i}$. In order to avoid this problem in the two first subsections we manage to eliminate this factor in the description of the system by rewritting it as a transmission problem.
\subsection{Equation on the pressure}\label{sec:EqOnPressure}
In the inner domain $\mathcal{I}$, the unknown here are the pressure $P_{\rm i}$ and the discharge $q_{\rm i}$ (and $(\delta, \dot{\delta})$). The expression of the pressure $P_{\rm i}$ is given by the following proposition.
\begin{proposition}
If $(\zeta, q)$ is a regular enough solution of {\rm \hyperlink{BAr}{(BAr)}} then the pressure in the interior domain $P_{\rm i}$ is given by
\begin{equation}\label{eq:pi(r)}
\forall r  \leq R, \, \frac{P_{\rm i}(r)}{\varepsilon} - \frac{P_{\rm i}(R)}{\varepsilon}  = \frac{r^2-R^2}{2h_{\rm i}} \left( 
 \frac{\ddot{\delta}}{2}  - \frac{3 \varepsilon | \dot{\delta}|^2}{4 h_{\rm i}} \right).
\end{equation}
\end{proposition}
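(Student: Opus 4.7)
The plan is to restrict the radial momentum equation \eqref{BAr:eq2} to the interior domain $\mathcal{I}$, observe that most terms either vanish or simplify drastically thanks to the rigid-body constraints, and then integrate the resulting relation radially from $R$ down to $r\leq R$.

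First I would exploit the combination of \eqref{constrain-interior} and the flat-bottom hypothesis in Assumption \ref{hyp-system}: since $h_{\rm i,eq}$ is constant, the interior surface elevation $\zeta_{\rm i}$ depends only on $t$, hence $h_{\rm i}=1+\varepsilon \zeta_{\rm i}$ is space-independent and $\partial_r \zeta_{\rm i}=0$. By the interior discharge formula \eqref{Qint}, $q_{\rm i}(t,r)=-\tfrac{r}{2}\dot{\delta}(t)$, so $\dr q_{\rm i}=-\dot{\delta}(t)$ is also space-independent. Three consequences follow: the hyperbolic gradient $h_{\rm i}\partial_r \zeta_{\rm i}$ vanishes; the dispersive contribution $\kappa^2\partial_r\dr\partial_t q_{\rm i}$ vanishes because $\dr \partial_t q_{\rm i}=-\ddot{\delta}$ depends only on $t$; the viscous contribution $\nu h_{\rm i}\partial_r(\dr q_{\rm i}/h_{\rm i})=\nu h_{\rm i}\partial_r(-\dot{\delta}/h_{\rm i})$ vanishes as well.

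Equation \eqref{BAr:eq2} in $\mathcal{I}$ then collapses to the scalar ODE
$$\partial_t q_{\rm i} + \varepsilon\, \dr \left( \frac{|q_{\rm i}|^2}{h_{\rm i}}\right) = -h_{\rm i}\, \partial_r \frac{P_{\rm i}}{\varepsilon}.$$
Substituting $q_{\rm i}=-\tfrac{r}{2}\dot{\delta}$ gives $\partial_t q_{\rm i}=-\tfrac{r}{2}\ddot{\delta}$, while $|q_{\rm i}|^2/h_{\rm i}=r^2\dot{\delta}^2/(4h_{\rm i})$ together with the elementary identity $\dr(r^2\alpha)=3r\alpha$, valid for any space-independent $\alpha$, yields $\dr(|q_{\rm i}|^2/h_{\rm i})=3r\dot{\delta}^2/(4h_{\rm i})$. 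Rearranging, I obtain
$$\partial_r \frac{P_{\rm i}}{\varepsilon}= \frac{r}{h_{\rm i}}\left(\frac{\ddot{\delta}}{2} - \frac{3\varepsilon\dot{\delta}^2}{4 h_{\rm i}}\right).$$

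The last step is to integrate this relation from $r$ to $R$: since the bracketed factor is independent of the spatial variable, the primitive is simply $(r^2-R^2)/(2h_{\rm i})$ multiplied by that factor, giving exactly \eqref{eq:pi(r)}. There is no genuine obstacle here; the content of the proposition is essentially that the rigid-body ansatz $q_{\rm i}=-\tfrac{r}{2}\dot{\delta}$ kills every non-Euler, non-flux term in \eqref{BAr:eq2}, and the main verification is the algebraic identity $\dr(r^2\alpha)=3r\alpha$ which produces the $3$ in the $\dot{\delta}^2$-coefficient.
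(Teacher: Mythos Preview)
Your proof is correct and follows essentially the same route as the paper: reduce \eqref{BAr:eq2} in the interior using the rigid-body constraints (so that the dispersive, viscous, and $\partial_r\zeta_{\rm i}$ terms all vanish), substitute $q_{\rm i}=-\tfrac{r}{2}\dot{\delta}$, and integrate radially from $r$ to $R$. Your explicit justification of the identity $\dr(r^2\alpha)=3r\alpha$ is slightly more detailed than the paper's version, but the argument is the same.
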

In the expression of the inner pressure, the constant $P_{\rm i}(R)$ is later computed thanks to the energy balance of the waves-structure system.
\begin{proof}
Let $(\zeta, q)$ be a regular enough solution of the radial Boussinesq-Abbott system {\rm \hyperlink{BAr}{(BAr)}}.
In the interior domain $\mathcal{I}$ as a consequence of Assumption \ref{hyp-system}, \eqref{Qint} and \eqref{constrain-interior}, both viscous term $\nu \partial_r \frac{\dr q_{\rm i}}{h_{\rm i}}$ and dispersive term $\kappa^2 \partial_r {\rm d}_r q_{\rm i}$ are null. The space derivative of $\zeta_{\rm i}$ is also null. Thus $(\zeta, q)$ satisfies
$$
\partial_r  \zeta_{\rm i} = \kappa^2 \partial_r {\rm d}_r q_{\rm i} = \nu \partial_r \frac{\dr q_{\rm i}}{h_{\rm i}}  = 0.
$$
Thus, equation \eqref{BAr:eq2} of the radial Boussinesq-Abbott system in the interior domain reads
$$
\partial_t q_{\rm i}+\varepsilon  \mbox{d}_r\, \left(  \frac{|q_{\rm i}|^2}{h_{\rm i}} \right) = - h_{\rm i} \partial_r \frac{P_{\rm i}}{\varepsilon}.
$$
By replacing $q_{\rm i}$ by its value, given by \eqref{Qint}, one obtains
$$
- \frac{r}{2} \ddot{\delta} + \frac{3 \varepsilon r | \dot{\delta}|^2}{ 4 h_{\rm i}}  = -  h_{\rm i} \partial_r \frac{P_{\rm i}(r)}{\varepsilon},
$$
and by integration on $[r,R]$ it becomes
\begin{equation}\label{eq:pi(r)-pi(R)}
 \frac{P_{\rm i}(r)}{\varepsilon} -  \frac{P_{\rm i}(R)}{\varepsilon} = \frac{r^2-R^2}{h_{\rm i}} \left( 
 \frac{\ddot{\delta}}{2}  - \frac{3 \varepsilon | \dot{\delta}|^2}{4 h_{\rm i}} \right).
\end{equation}
\end{proof}
Moreover one can express $\frac{P_{\rm i}(R)}{\varepsilon}$ in function of the different quantities through the study of the energy balance of the waves-structure system. 
 
\subsubsection{Energy balance}
Inner and exterior domains are coupled by two transmission conditions. Firstly the conservation of the volume implies the continuity of the discharge in $r = R$
\begin{equation}\label{eq:ConsMass}
\underset{r\to R^-}{\lim}q_{\rm i}:= \underline{q_{\rm i}} = \underline{q_{\rm e}} := \underset{r\to R^+}{\lim}q_{\rm e},
\end{equation}
and secondly a condition over the pressure at $r = R$, linked to the balance of the energy, is satisfied at order $\mathcal{O}(\varepsilon (\kappa^2+\nu))$. More precisely, if this second condition is satisfied, then the balance of energy of the waves-structure system reads:
\begin{equation}\label{eq:ConsEnGlob}
\frac{d}{dt} {\mathfrak{E}}_{\rm tot} =  F_{\rm ext}  \dot\delta  - \int_{\mathbb{R}} \frac{\nu}{h} | {\rm d}_r q |^2 + \mathcal O(\varepsilon (\kappa^2+\nu)),
\end{equation}
where ${\mathfrak{E}}_{\rm tot}$ stands for the energy of the shallow water-waves / floating structure system in the Boussinesq regime. In this energy balance, $F_{\rm ext}  \dot\delta$ stands for the energy given to the system by the external force $F_{\rm ext}$ and $ - \int_{\mathbb{R}^2} \frac{\nu}{h} | {\rm d}_r q |^2$ is the loss due to the viscosity $\nu$.
It is not strictly speaking an energy balance, as there are $\mathcal O(\varepsilon (\kappa^2+\nu))$ remainder terms. However the Boussinesq-Abbott model is a good approximation of the free-surface Euler equations at order  $\mathcal O(\varepsilon(\kappa^2+\nu))$. Thus, the error in these remainder terms is of the order of the model and the total energy considered $\mathfrak{E} _{\rm tot}$ is a good approximation of the mechanical energy of the concrete waves-structure system.
 
\subsubsection{Local energy balance of the fluid}\label{EnLocFluid}
If we define the local energy density $\mathfrak e$ and the local energy flux $\mathfrak f$ as 
\begin{equation}
\begin{cases}
\mathfrak e:= \frac{1}{2} \left( \zeta^2 + \frac{q^2}{h} + \kappa^2 \frac{({\rm{d}}_{r} q)^2}{h} \right),\\
\mathfrak f:= q\left( \kappa^2 \ddot{\zeta} + \nu \dot{\zeta} + \zeta + \frac{\eps}{2}\frac{q^2}{h^2} + \frac{P}{\varepsilon} \right),
\end{cases}
\end{equation}
then the local energy balance of the fluid is
\begin{equation}\label{eq:ConsOfTheEnergy}
\partial_t \mathfrak e + {\rm{d}}_{r} \mathfrak f = \frac{P}{\varepsilon} {\rm{d}}_{r} q - \nu \frac{|\dr q|^2}{h}
+ \varepsilon (\kappa^2 \mathfrak{r} - \nu \dr  \mathfrak{r}_\nu),
\end{equation}
where
$$
\mathfrak{r}:= \frac{\zeta \ddot{\zeta}}{h} - \frac{({\rm{d}}_{r} q)^3}{2h^2} - q \ddot{\zeta} \frac{\partial_r \zeta}{h}
\quad \text{and} \quad 
\mathfrak{r}_\nu = \left(\frac{q \dot{\zeta}\zeta}{h}\right).
$$
\begin{remark}
The case $\kappa = 0$, $\nu = 0$ and $P = 0$ finds out the exact local conservation of the energy of shallow-water equations.
\end{remark}
We obtain this local balance by multiplying equations of the radial Boussinesq-Abbott system {\rm \hyperlink{BAr}{(BAr)}} by $\zeta_{\rm i}$ and $\frac{q_{\rm i}}{h_{\rm i}}$.
The proof of the lemma is given in appendix \ref{app:B}. 
 
\subsubsection{Energy of the solid}
The energy associated with the solid, denoted by $\mathfrak E _{\rm solid}$, is simply given by the sum of the potential and the kinetic energies:
\begin{equation}\label{def:EnSolid}
\mathfrak E_{\rm solid} = R^2 \left(\frac{1}{\varepsilon} m \delta + \frac{1}{2}\tau^2_{\rm buoy}\dot{\delta}^2\right),
\end{equation}
and by derivation with respect to the time
\begin{equation}
\frac{d}{dt}\mathfrak E_{\rm solid} = R^2 \left(\frac{1}{\varepsilon} m + \tau^2_{\rm buoy}\ddot{\delta}\right)\dot\delta.
\end{equation}
Thanks to Newton's motion law \eqref{Newton}, it is exactly
\begin{equation}\label{eq:EnSolid}
\frac{d}{dt}\mathfrak E_{\rm solid} = \int_0^R \frac{P_{\rm i}}{\varepsilon} \dot\delta dx + F_{\rm ext}  \dot\delta.
\end{equation}
As a consequence of the balance energy of the fluid \eqref{eq:ConsOfTheEnergy} and the balance energy of the solid \eqref{eq:EnSolid} the energy of the waves-structure system satisfies the following balance.
\begin{proposition}[Energy balance of the waves-structure system]\label{prop:ConsEn}
Any regular enough solution of the radial Boussinesq-Abbott system {\rm \hyperlink{BAr}{(BAr)}} coupled to the Newton's equation \eqref{Newton} satisfying the transmission's condition \eqref{eq:ConsMass} satisfies the following energy balance
\begin{equation}
\frac{d}{dt} {\mathfrak{E}}_{\rm tot} = R \left(\underline{\mathfrak{f}_{\rm e}} - \underline{\mathfrak{f}_{\rm i}} \right) 
+ F_{\rm ext}  \dot\delta
- \int_\mathcal{\mathbb{R}} \frac{ \nu  |\dr q|^2}{h}
+ \varepsilon \mathfrak{R}_{\kappa^2, \nu} ,
\end{equation}
where the total energy $\mathfrak{E}_{\rm tot}$ of the waves-structure system is given by
\begin{align*}
{\mathfrak{E}}_{\rm tot} &= {\mathfrak{E}}_{\rm fluid} + {\mathfrak{E}}_{\rm solid}\\
&= \frac{1}{2} \int_{\mathbb{R}} \left( \zeta^2 + \frac{q^2}{h} + \kappa^2 \frac{({\rm{d}}_{r} q)^2}{h} \right) + 2\pi R^2 \left(\frac{1}{\varepsilon} m \delta + \frac{1}{2}\tau^2_{buoy}\dot{\delta}^2\right),
\end{align*}
and the remainder is defined by
$$
\varepsilon \mathfrak{R}_{\kappa^2, \nu} := \varepsilon \kappa^2 \left( \int_0^R \mathfrak r_{\rm i} + \int_R^{+\infty} \mathfrak r_{\rm e} \right) - \eps \nu \int_\mathbb{R} \mathfrak r_\nu.
$$
and the local energy $\mathfrak e$, the local energy flux $\mathfrak f$ and the local remainders $\mathfrak r_i, \mathfrak r_e, \mathfrak r_\nu $ are defined in Subsection \ref{EnLocFluid}.
\end{proposition}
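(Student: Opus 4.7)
The plan is to combine three ingredients: the local fluid energy balance \eqref{eq:ConsOfTheEnergy} integrated separately over the interior $(0,R)$ and the exterior $(R,\infty)$ against the radial measure $r\,dr$, together with the solid energy identity \eqref{eq:EnSolid}. The key observations that make this work are: (i) in the interior, the constraint \eqref{Qint} gives $q_{\rm i}=-\frac{r}{2}\dot{\delta}$, so $\dr q_{\rm i}=-\dot{\delta}$ is independent of $r$; (ii) in the exterior, $P_{\rm e}=0$ by \eqref{constrain-exterior}, so the pressure source $\tfrac{P}{\eps}\dr q$ in \eqref{eq:ConsOfTheEnergy} is active only in $\mathcal{I}$; (iii) multiplication of the radial divergence $\dr$ by $r$ produces $\partial_r$ acting on $r\mathfrak{f}$, which turns each integrated flux into a clean boundary contribution at $r=R$ and $r\to\infty$.

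Concretely, I would first integrate \eqref{eq:ConsOfTheEnergy} against $r\,dr$ on $(R,\infty)$. The flux term contributes $-R\,\underline{\mathfrak{f}_{\rm e}}$ (the term at infinity vanishes under the reasonable decay assumed in ``regular enough'' solutions); the pressure source vanishes by \eqref{constrain-exterior}; and the remaining dissipation and $\eps$-remainder terms produce a piece of $-\int_{\mathcal{E}}\nu|\dr q|^2/h$ and of $\eps\mathfrak{R}_{\kappa^2,\nu}$. Then I would integrate on $(0,R)$: the flux boundary now yields $+R\,\underline{\mathfrak{f}_{\rm i}}$ (the $r=0$ contribution vanishes thanks to $q_{\rm i}(0)=0$), while the pressure source rewrites as $-\dot{\delta}\int_0^R \frac{P_{\rm i}}{\eps}\,r\,dr$ because $\dr q_{\rm i}=-\dot\delta$ is constant in space. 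The dissipative and remainder terms on the interior side complete the corresponding contributions of $-\int_{\mathcal{I}}\nu|\dr q|^2/h$ and of $\eps\mathfrak{R}_{\kappa^2,\nu}$.

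Finally I would add these two integrated identities to \eqref{eq:EnSolid}. The crux is that the interior pressure source cancels the pressure work exerted on the solid in \eqref{eq:EnSolid}, thanks to the factor $1/|\mathcal{I}|=1/(\pi R^2)$ in Newton's equation \eqref{Newton} matching the $R^2$ prefactor in \eqref{def:EnSolid}. What survives is exactly $R(\underline{\mathfrak{f}_{\rm e}}-\underline{\mathfrak{f}_{\rm i}})$, the external power $F_{\rm ext}\dot\delta$, the global viscous dissipation $-\int_{\mathbb{R}}\nu|\dr q|^2/h$, and the remainder $\eps\mathfrak{R}_{\kappa^2,\nu}$ as defined. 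Transmission \eqref{eq:ConsMass} is used only implicitly to guarantee that the flux jump $\underline{\mathfrak{f}_{\rm e}}-\underline{\mathfrak{f}_{\rm i}}$ depends linearly on the jump of the bracketed quantity in the definition of $\mathfrak{f}$. I expect the only real obstacle to be book-keeping: tracking the geometric factors so the pressure-work cancellation is exact, and carefully checking that the viscous $-\nu\dr\mathfrak{r}_\nu$ contributions on each side assemble into the stated $-\eps\nu\int_{\mathbb{R}}\mathfrak{r}_\nu$ piece of the remainder (possibly after an integration by parts absorbed into $\mathfrak{f}$). No regularity difficulty arises beyond what ``regular enough'' already provides.
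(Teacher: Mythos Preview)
Your proposal is correct and follows essentially the same approach as the paper: integrate the local energy balance \eqref{eq:ConsOfTheEnergy} separately over $(0,R)$ and $(R,\infty)$ against $r\,dr$, use $P_{\rm e}=0$ and $\dr q_{\rm i}=-\dot\delta$ to reduce the pressure source to $-\dot\delta\int_0^R\tfrac{P_{\rm i}}{\eps}r\,dr$, and then add the solid identity \eqref{eq:EnSolid} so that this term cancels. Your treatment of the boundary terms (via $r\,\dr\mathfrak f=\partial_r(r\mathfrak f)$, vanishing at $r=0$ because $q_{\rm i}(0)=0$) and your flagging of the book-keeping on the $\eps\nu\,\dr\mathfrak r_\nu$ piece are exactly the points the paper's short proof leaves implicit.
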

\begin{proof}
By the local energy balance \eqref{eq:ConsOfTheEnergy} we have the relation
\begin{align*}
\frac{d}{dt}\mathfrak E _{\rm fluid} &=\int_0^R\frac{d}{dt}\mathfrak e_{\rm i} + \int_R^{+\infty} \frac{d}{dt}\mathfrak e_{\rm e} \\
& = -\int_0^R \left( {\rm{d}}_{r}\mathfrak f_{\rm i} - \frac{P_{\rm i}}{\varepsilon}{\rm{d}}_{r} q_{\rm i} \right) - \int_R^{+\infty} {\rm{d}}_{r} \mathfrak f_{\rm e} + \int_{\mathbb{R}} \frac{\nu  |\dr q|^2}{h} + \varepsilon \mathfrak{R}_{\kappa^2, \nu}\\
&= - \underline{\mathfrak f_{\rm i}} R - \int_0^R \frac{P_{\rm i}}{\varepsilon} \dot{\delta} + \underline{\mathfrak f_{\rm e}} R + \int_{\mathbb{R}} \frac{\nu  |\dr q|^2}{h} + \varepsilon \mathfrak{R}_{\kappa^2, \nu}.
\end{align*}
Adding this to \eqref{eq:EnSolid} leads to the result.
\end{proof}
In particular the condition $\underline{\mathfrak f_{\rm e}} - \underline{\mathfrak f_{\rm i}} = 0$ is sufficient to satisfy the energy balance \eqref{eq:ConsEnGlob}.
\subsubsection{Elliptic problem over the inner pressure}
Finally, thanks to Proposition \ref{prop:ConsEn}, we chose $\frac{P_{\rm i}(R)}{\eps}$ in \eqref{eq:pi(r)} such that the flux defined in Section \ref{EnLocFluid} satisfies $\underline{\mathfrak f_{\rm e}} - \underline{\mathfrak f_{\rm i} } = 0$. Indeed it verifies the energy balance with an approximation at order $\eps \kappa^2$ if we assume that $\nu$ is of the same order as $\kappa^2$. Since the Boussinesq-Abbott system is already such an approximation of Euler's equations we consider that this approximation of the energy is good enough. We interpret this with the following proposition. 
\begin{proposition}\label{prop:pi(R)}
Let $(\zeta, q)$ be a regular enough solutions of the radial Boussinesq-Abbott system {\rm \hyperlink{BAr}{(BAr)}} coupled to the Newton's equation \eqref{Newton} satisfying the transmission's condition \eqref{eq:ConsMass}. 
If the inner pressure $P_{\rm i}$ satisfies
\begin{equation}\label{eq:P_bord}
\frac{P_{\rm i}(R)}{\varepsilon}
= \underline{\zeta_{\rm e}} - \underline{\zeta_{\rm i}} + 
\nu(\underline{\dot{\zeta}_{\rm e}} - \underline{\dot{\zeta}_{\rm i}}) + 
\kappa^2 (\underline{\ddot{\zeta}_{\rm e}} - \underline{\ddot{\zeta}_{\rm i}}) + 
\frac{\varepsilon}{2} \left (\frac{| \underline{q_{\rm e}} |^2}{\underline{h_{\rm e}}^2} - \frac{| \underline{q_{\rm i}} |^2}{\underline{h_{\rm i}}^2}\right),
\end{equation}
then one has
\begin{equation}
\frac{d}{dt} {\mathfrak{E}}_{\rm tot} = F_{\rm ext}  \dot\delta
- \int_{\mathbb{R}} \frac{\nu  |\dr q|^2}{h}
+ \eps\left (\kappa^2\left ( \frac{\zeta \ddot{\zeta}}{h} - \frac{({\rm{d}}_{r} q)^3}{2h^2} - q \ddot{\zeta} \frac{\partial_r \zeta}{h}\right )
-\nu \frac{q \dot{\zeta}\zeta}{h}\right).
\end{equation}
\end{proposition}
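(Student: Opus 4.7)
The strategy is to start from the global energy balance already established in Proposition \ref{prop:ConsEn} and show that the prescribed choice \eqref{eq:P_bord} of $P_{\rm i}(R)/\varepsilon$ is precisely what makes the boundary flux term $R(\underline{\mathfrak f_{\rm e}} - \underline{\mathfrak f_{\rm i}})$ vanish. Once this is done, the remaining terms in the balance reduce to exactly the right-hand side claimed in the proposition (the remainder $\varepsilon \mathfrak R_{\kappa^2,\nu}$ being, by definition, the integral of the local residuals $\kappa^2 \mathfrak r - \nu \dr \mathfrak r_\nu$).

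Concretely, I would first expand $\underline{\mathfrak f_{\rm e}} - \underline{\mathfrak f_{\rm i}}$ using the definition of $\mathfrak f$ from Subsection \ref{EnLocFluid}. Since $P_{\rm e} \equiv 0$ in the exterior region by the constraint \eqref{constrain-exterior}, the exterior flux contains no pressure contribution, whereas the interior flux carries a $\underline{q_{\rm i}}\, \underline{P_{\rm i}}/\varepsilon$ term. Then I would invoke the volume-conservation transmission condition \eqref{eq:ConsMass}, namely $\underline{q_{\rm e}} = \underline{q_{\rm i}}$, to factor this common boundary value out, yielding
\begin{equation*}
\underline{\mathfrak f_{\rm e}} - \underline{\mathfrak f_{\rm i}} = \underline{q_{\rm i}}\!\left[\,(\underline{\zeta_{\rm e}} - \underline{\zeta_{\rm i}}) + \nu(\underline{\dot\zeta_{\rm e}} - \underline{\dot\zeta_{\rm i}}) + \kappa^2(\underline{\ddot\zeta_{\rm e}} - \underline{\ddot\zeta_{\rm i}}) + \frac{\varepsilon}{2}\!\left(\frac{|\underline{q_{\rm e}}|^2}{\underline{h_{\rm e}}^2} - \frac{|\underline{q_{\rm i}}|^2}{\underline{h_{\rm i}}^2}\right) - \frac{\underline{P_{\rm i}}}{\varepsilon}\right].
\end{equation*}
By inspection, the bracket vanishes identically exactly when $P_{\rm i}(R)/\varepsilon$ is given by the formula \eqref{eq:P_bord}, and hence $R(\underline{\mathfrak f_{\rm e}} - \underline{\mathfrak f_{\rm i}}) = 0$.

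Substituting this cancellation into the identity provided by Proposition \ref{prop:ConsEn} leaves only the external-forcing work $F_{\rm ext}\dot\delta$, the viscous dissipation $-\int_{\mathbb R} \nu|\dr q|^2/h$, and the $\mathcal O(\varepsilon(\kappa^2+\nu))$ remainder $\varepsilon\mathfrak R_{\kappa^2,\nu}$, whose integrand coincides with the expression displayed in the statement. This gives the desired energy balance.

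There is no real obstacle: the choice \eqref{eq:P_bord} has been engineered precisely to match the transmission condition \eqref{ConservationOfEnergy} at the order of the Boussinesq-Abbott approximation, so the heart of the proof is the factorisation enabled by $\underline{q_{\rm e}} = \underline{q_{\rm i}}$. The only minor care point is bookkeeping: tracking that each term in \eqref{eq:P_bord} corresponds term-by-term to one of the non-pressure contributions of $\mathfrak f$, and verifying that the sign of the pressure contribution in $\mathfrak f_{\rm i}$ (with $P_{\rm e}=0$) makes the bracket indeed cancel rather than double. The regularity of $(\zeta,q,\delta)$ assumed in the hypothesis is enough to make all the traces at $r=R$ meaningful and all the integrations by parts performed in Proposition \ref{prop:ConsEn} justified, so no additional analytical input is needed.
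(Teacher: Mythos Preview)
Your proposal is correct and follows exactly the approach the paper takes: invoke Proposition~\ref{prop:ConsEn}, expand $\underline{\mathfrak f_{\rm e}} - \underline{\mathfrak f_{\rm i}}$ using the definition of $\mathfrak f$ together with $P_{\rm e}=0$ and the transmission condition $\underline{q_{\rm e}}=\underline{q_{\rm i}}$, and observe that the choice \eqref{eq:P_bord} kills the bracket. The paper does not spell out the factorisation step as explicitly as you do, but the argument is the same.
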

If we take the equation \eqref{eq:pi(r)-pi(R)} that the inner pressure $P_{\rm i}$ verifies and Proposition \ref{prop:pi(R)} we obtain the expression of $P_{\rm i}$. If we replace the known quantities in $P_{\rm i}(R)$ we have:
\begin{proposition}[Expression of the inner pressure $P_{\rm i}$]\label{prop:Pi}
Let $(\zeta, q)$ be a regular enough solutions of {\rm \hyperlink{BAr}{(BAr)}}, and $P_{\rm i}$ such that condition \eqref{eq:P_bord} is satisfied. Then the inner pressure is given for all $r \leq R$ by
\begin{equation}\label{eq:P_i}
 \frac{P_{\rm i}(r)}{\varepsilon}  = \frac{r^2-R^2}{2h_{\rm i}} \left( 
 \frac{\ddot{\delta}}{2}  - \frac{3 \varepsilon | \dot{\delta}|^2}{4 h_{\rm i}} \right)
+ \underline{\zeta_{\rm e}} - \underline{\zeta_{\rm i}} +
\nu(\underline{\dot{\zeta}_{\rm e}} - \dot{\delta}) +
\kappa^2 (\underline{\ddot{\zeta}_{\rm e}} - \ddot{\delta}) +
\frac{\varepsilon R^2 \dot{\delta}^2}{8}\left (\frac{1}{\underline{h_{\rm e}}^2} - \frac{1}{\underline{h_{\rm i}}^2}\right).
\end{equation} 
\end{proposition}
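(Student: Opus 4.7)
The plan is to combine the two preceding ingredients---the radial profile \eqref{eq:pi(r)} derived from the momentum equation in $\mathcal{I}$, and the boundary-value expression \eqref{eq:P_bord} supplied by Proposition \ref{prop:pi(R)}---and then to simplify the resulting trace terms using the interior constraints inherited from Assumption \ref{hyp-system}. The argument is essentially bookkeeping and poses no real analytical obstacle.

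First, I would split
$$\frac{P_{\rm i}(r)}{\varepsilon} = \left(\frac{P_{\rm i}(r)}{\varepsilon} - \frac{P_{\rm i}(R)}{\varepsilon}\right) + \frac{P_{\rm i}(R)}{\varepsilon},$$
substitute \eqref{eq:pi(r)} for the bracketed difference, and replace $P_{\rm i}(R)/\varepsilon$ by the right-hand side of \eqref{eq:P_bord}. This already produces a formula for $P_{\rm i}(r)$ in terms of one-sided boundary traces of $\zeta$, $\dot\zeta$, $\ddot\zeta$, $q$ and $h$ on either side of $\Gamma$.

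The second step is to eliminate the interior traces. Since $h_{\rm i,eq}$ is a positive constant and \eqref{constrain-interior} reads $h_{\rm i}(t,x) = \delta(t) + h_{\rm i,eq}$, the elevation $\zeta_{\rm i}$ is spatially constant on $\mathcal{I}$, so $\underline{\zeta_{\rm i}}$, $\underline{\dot\zeta_{\rm i}}$ and $\underline{\ddot\zeta_{\rm i}}$ coincide with $\zeta_{\rm i}$ and its time derivatives; through $h_{\rm i}=1+\varepsilon\zeta_{\rm i}$ they are expressible directly in terms of $\delta$, $\dot\delta$ and $\ddot\delta$. Likewise \eqref{Qint} gives $\underline{q_{\rm i}} = -R\dot\delta/2$, and the mass-conservation condition \eqref{eq:ConsMass} propagates this value to $\underline{q_{\rm e}}$. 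Plugging these identities into the quadratic boundary term
$$\frac{\varepsilon}{2}\left(\frac{|\underline{q_{\rm e}}|^2}{\underline{h_{\rm e}}^2} - \frac{|\underline{q_{\rm i}}|^2}{\underline{h_{\rm i}}^2}\right)$$
yields precisely the factor $\frac{\varepsilon R^2 \dot\delta^2}{8}\bigl(1/\underline{h_{\rm e}}^2 - 1/\underline{h_{\rm i}}^2\bigr)$ appearing in the claimed formula.

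The only care required in practice is the tracking of the $\varepsilon$-prefactors that show up when converting $\underline{\dot\zeta_{\rm i}}$ and $\underline{\ddot\zeta_{\rm i}}$ into time derivatives of $\delta$, and the checking of the $1/2$ coefficient in front of the $r^2-R^2$ term (which should be read from \eqref{eq:pi(r)}). With both ingredients already established, nothing beyond direct algebraic substitution remains.
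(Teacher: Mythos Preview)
Your proposal is correct and coincides with the paper's argument, which is simply the one-line observation preceding the proposition: combine \eqref{eq:pi(r)} with the boundary value \eqref{eq:P_bord} and substitute the known interior traces. The only minor adjustment worth noting is that the replacements $\underline{\dot\zeta_{\rm i}}=\dot\delta$ and $\underline{\ddot\zeta_{\rm i}}=\ddot\delta$ follow most cleanly from the mass equation \eqref{BAr:eq1} together with \eqref{QintBefore} (i.e.\ $\dot\zeta_{\rm i}=-\mathrm{d}_r q_{\rm i}=\dot\delta$), so no $\varepsilon$-prefactor actually appears in that step.
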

 
\begin{remark}\label{rk:EnBudg}
One could also consider the energy flux defined by
$$ \tilde{\mathfrak{f}} =
q\left( \kappa^2 \frac{\ddot{\zeta}}{h} + \nu \frac{\dot{\zeta}}{h} + \zeta + \frac{\eps}{2}\frac{q^2}{h^2} + \frac{P}{\varepsilon} \right),$$
and the balance of local energy is
\begin{equation}\label{eq:ConsOfTheEnergy-BeckLannes}
\partial_t \mathfrak e + {\rm{d}}_{r}  \tilde{\mathfrak{f}} = \frac{P}{\varepsilon} {\rm{d}}_{r} q - \nu \frac{|d_r q|^2}{h}+ \varepsilon \kappa^2 \tilde{\mathfrak{r}},
\end{equation}
where
$$ \tilde{\mathfrak{r}}:= - \frac{({\rm{d}}_{r} q)^3}{2h^2} - q \ddot{\zeta} \frac{\partial_r \zeta}{h}.$$
Therefore the conservation of the energy leads to choose 
$$\frac{P_{\rm i}(R)}{\varepsilon} = \underline{\zeta_{\rm e}} - \underline{\zeta_{\rm i}} + \nu \left( \underline{\dot{\zeta_{\rm e}}} - \dot{\delta} \right) + \kappa^2 \left(\frac{\underline{\ddot{\zeta_{\rm e}}}}{\underline{h_{\rm e}}} - \frac{\ddot{\delta}}{\underline{h_{\rm i}}} \right) + \frac{\varepsilon R^2}{8}\dot{\delta}^2\left( \frac{1}{\underline{h_{\rm e}}^2} - \frac{1}{\underline{h_{\rm i}}^2} \right).$$
This is the choice in 1D in \cite{BeckLannes} for $\nu = 0$. 
In our work, we have chosen \eqref{eq:ConsOfTheEnergy} instead of \eqref{eq:ConsOfTheEnergy-BeckLannes} in order to simplify the proof of well-posedness (see Remark \ref{rk:WPSL_ZetaOvH}). 
\end{remark}
Now we can modify the Newton's equation \eqref{Newton} to avoid to compute the inner pressure $P_{\rm i}$.
\subsection{Eliminating the Lagrange multiplier}\label{sec:NML}
In \eqref{Newton} the Lagrange multiplier $\frac{P_{\rm i}}{\varepsilon}$ can be eliminated by injecting the elliptic equation over the pressure \eqref{eq:P_i}. In the new equation obtained from this, we will obtain the intertia of the floating body. This inertia will be a part of the pressure exerced by the water on the bottom of the solid.
\begin{proposition}\label{prop:AddedMass}
Let $(\zeta, q)$ be a smooth enough solution of the radial Boussinesq-Abbott system {\rm \hyperlink{BAr}{(BAr)}} and $(\delta, \dot \delta, P_{\rm i})$ solution of the Newton's equation \eqref{Newton}, such that $P_{\rm i}$ satisfies the transmission condition \eqref{eq:P_bord}. The vertical motion of the cylinder is solution of the second order ODE
\begin{equation}\label{AddedMass}
\tau_\kappa^2( \varepsilon \delta) \ddot{\delta}  + \nu \dot{\delta}  + \delta -  \varepsilon a( \varepsilon \delta, \underline{\zeta_{\rm e}}) \, \dot{\delta}^2
= \kappa^2 \underline{\ddot{\zeta_{\rm e}}} + \nu \underline{\dot{\zeta_{\rm e}}} + \underline{\zeta_{\rm e}} + F_{\rm ext},
\end{equation}
where
$$
\tau_\kappa^2(\varepsilon \delta) := \tau_{\rm buoy}^2 + \kappa^2 +  \frac{ R^2}{8 h_{\rm i}}  \quad \text{ with } \quad h_{\rm i} = 1 + \varepsilon \delta,
$$
and
$$
a(\varepsilon \delta, \underline{\zeta_{\rm e}}) :=\frac{R^2}{16 h_{\rm i}^2} - \frac{R^2}{8\underline{h_{\rm e}}^2}. 
$$ 
\end{proposition}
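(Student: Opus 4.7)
The argument is a pure substitution: Proposition \ref{prop:Pi} already provides an explicit pointwise expression for $P_{\rm i}(r)/\varepsilon$ on $[0,R]$, so the only step left is to insert it into the right-hand side of the Newton equation \eqref{Newton}, average over the disk $\mathcal{I}$, and regroup the terms so that the motion of the body $(\delta,\dot\delta,\ddot\delta)$ ends up on the left and the traces $(\underline{\zeta_{\rm e}}, \underline{\dot\zeta_{\rm e}}, \underline{\ddot\zeta_{\rm e}})$ together with $F_{\rm ext}$ on the right. All of the analytic content -- the elliptic computation for $P_{\rm i}$ from the divergence of \eqref{BAr:eq2}, the transmission \eqref{eq:P_bord} fixing $P_{\rm i}(R)$, and the explicit profile \eqref{Qint} of the interior discharge -- is already absorbed by Proposition \ref{prop:Pi}, so only elementary computations remain.

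The concrete plan is the following. First I pass to polar coordinates and use $|\mathcal{I}|=\pi R^2$ to rewrite $\frac{1}{|\mathcal{I}|}\int_{\mathcal{I}} P_{\rm i}/\varepsilon\, dx = \frac{2}{R^2}\int_0^R (P_{\rm i}(r)/\varepsilon)\, r\, dr$. Among the five groups of terms of the formula \eqref{eq:P_i}, the flat-bottom assumption makes $h_{\rm i}$ constant in $r$, so only the bracket $(r^2-R^2)/(2h_{\rm i})\,\bigl(\ddot\delta/2 - 3\varepsilon\dot\delta^2/(4h_{\rm i})\bigr)$ carries $r$-dependence; the four remaining groups $\underline{\zeta_{\rm e}}-\underline{\zeta_{\rm i}}$, $\nu(\underline{\dot\zeta_{\rm e}}-\dot\delta)$, $\kappa^2(\underline{\ddot\zeta_{\rm e}}-\ddot\delta)$ and the quadratic jump in $\frac{\varepsilon}{2}|q|^2/h^2$ are constants in $r$ and pass through the averaging unchanged. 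The elementary identity $\int_0^R (r^2-R^2)\, r\, dr = -R^4/4$ then yields
\begin{equation*}
\frac{2}{R^2}\int_0^R \frac{r^2-R^2}{2h_{\rm i}}\left(\frac{\ddot\delta}{2} - \frac{3\varepsilon \dot\delta^2}{4h_{\rm i}}\right) r\, dr = -\frac{R^2\ddot\delta}{8h_{\rm i}} + \frac{3\varepsilon R^2 \dot\delta^2}{16h_{\rm i}^2},
\end{equation*}
and the volume-conservation condition \eqref{eq:ConsMass} combined with \eqref{Qint} gives $|\underline{q_{\rm e}}|^2 = |\underline{q_{\rm i}}|^2 = R^2\dot\delta^2/4$, making the quadratic-jump contribution fully explicit in $\dot\delta^2$.

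Injecting into \eqref{Newton} and moving to the left-hand side every term depending on the body's motion produces the announced ODE. The coefficient of $\ddot\delta$ collects $\tau_{\rm buoy}^2$ (from Newton), $\kappa^2$ (from the dispersive jump in $P_{\rm i}(R)$), and the added-mass contribution $R^2/(8h_{\rm i})$ (from the radial integration), which assemble into $\tau_\kappa^2(\varepsilon\delta)$. The coefficient of $\dot\delta$ is $\nu$, inherited from the viscous jump, while $\delta$ comes from $-\underline{\zeta_{\rm i}}=-\delta$. Regrouping the two $\dot\delta^2$ contributions -- the $3\varepsilon R^2/(16 h_{\rm i}^2)$ produced by the radial integration and the $\frac{\varepsilon R^2}{8}\bigl(1/\underline{h_{\rm e}}^2-1/h_{\rm i}^2\bigr)$ from the quadratic jump -- identifies the added-mass coefficient $\varepsilon a(\varepsilon\delta,\underline{\zeta_{\rm e}})$; the residual constant $m/\varepsilon$ is absorbed by the hydrostatic equilibrium of the configuration. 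There is no deep analytical obstacle: the whole proof is elementary polynomial integration plus careful sign-bookkeeping in the quadratic $\dot\delta^2$ terms, where the $1/h_{\rm i}^2$ piece of the quadratic jump must be correctly combined with the $3\varepsilon R^2/(16 h_{\rm i}^2)$ of the integration before the $\underline{h_{\rm e}}^{-2}$ contribution to $a$ can be read off.
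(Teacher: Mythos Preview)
Your approach is essentially the paper's: average the explicit $P_{\rm i}$ from Proposition~\ref{prop:Pi} over the disk and substitute into Newton's law, using Archimedes to eliminate $m/\varepsilon$. One bookkeeping clarification: the identification ``$\delta$ comes from $-\underline{\zeta_{\rm i}}=-\delta$'' is not correct on its own, and the absorption of $m/\varepsilon$ by hydrostatic equilibrium is not a separate cancellation. Archimedes' principle at equilibrium reads $m/\varepsilon = \delta - \underline{\zeta_{\rm i}}$, so it is the \emph{combination} $m/\varepsilon + \underline{\zeta_{\rm i}}$ on the left that collapses to the bare $\delta$; the paper makes this transparent by first rewriting Newton's law as $\tau_{\rm buoy}^2\ddot\delta+\delta=\frac{1}{|\mathcal{I}|}\int_{\mathcal{I}}\bigl(P_{\rm i}/\varepsilon+\zeta_{\rm i}\bigr)+F_{\rm ext}$ before integrating the pressure. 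Otherwise the radial integration and the regrouping of the $\ddot\delta$, $\dot\delta$, and $\dot\delta^2$ coefficients proceed exactly as you describe.
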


It is quite similar to the buoyancy factor found in 1D, up to the choice of the trace of the interior pressure that we did (see Remark \ref{rk:EnBudg}, and Remark 2.5 in \cite{BeckLannes}). When $\kappa=0$, one recovers the added-mass coefficient given in \cite{Bocchi}.
Furthermore, $\tau_\kappa^2(\eps\delta) > \tau_{\rm buoy}^2$, which means in particular that, even for a very light object, the second order term $\ddot{\delta}$ cannot be neglected. 
We name this equation \textit{modified Newton equation}.
This equation is non-linear and contains second order terms in $\kappa^2 \underline{\ddot{\zeta_{\rm e}}}$. 
\begin{proof}
The method to obtain the modified Newton's equation consists in two steps. Firstly we inject the Archimede's principle into equation \eqref{Newton} and then we replace the inner pressure $P_{\rm i}$ by its expression given in \eqref{eq:P_i}.
The Newton's equation at the equilibrium, known as the Archimede's principle, gives
\begin{equation}
\frac{1}{\varepsilon} m = \frac{1}{| \mathcal{I} |} \int_{\mathcal{I}} (\delta - \zeta_{\rm i} ) = \delta - \frac{1}{| \mathcal{I} |} \int_{\mathcal{I}} \zeta_{\rm i}. 
\end{equation}
Injecting this into the Newton's equation \eqref{Newton} brings us to
\begin{equation}\label{eq:NewPArchi}
\tau_{\rm buoy}^2 \ddot{\delta} + \delta = \displaystyle \frac{1}{| \mathcal{I} | }\int_{\mathcal{I}} \left( \frac{P_{\rm i}}{\varepsilon} + \zeta_{\rm i}\right) + F_{\rm ext}.
\end{equation}
Nonetheless thanks to Proposition \ref{prop:Pi} the inner pressure $P_{\rm i}$ reads
\begin{equation}
\frac{P_{\rm i}(r)}{\varepsilon}  = \frac{r^2-R^2}{2h_{\rm i}} \left( 
\frac{\ddot{\delta}}{2}  - \frac{3 \varepsilon | \dot{\delta}|^2}{4 h_{\rm i}} \right) + 
\underline{\zeta_{\rm e}} - \underline{\zeta_{\rm i}} + 
\nu(\underline{\dot{\zeta}_{\rm e}} - \dot{\delta}) + 
\kappa^2 (\underline{\ddot{\zeta}_{\rm e}} - \ddot{\delta}) + 
\frac{\varepsilon R^2 \dot{\delta}^2}{8}\left (\frac{1}{\underline{h_{\rm e}}^2} - \frac{1}{\underline{h_{\rm i}}^2}\right).
 \end{equation}
Therefore we can compute the integration of $\frac{P_{\rm i}}{\eps} + \zeta_{\rm i}$. Indeed, considering that the solid is a cylinder, $\zeta_{\rm i}$ does not depend on $r$, the computation of the integral leads to
\begin{align*}
\frac{1}{|\mathcal{I}|}\int_\mathcal{I} \left(\frac{P_{\rm i}(r)}{\varepsilon} + \zeta_{\rm i}\right) &= \int_0^R \left(\frac{P_{\rm i}(r)}{\varepsilon} + \zeta_{\rm i}\right)r{\rm d}r \\
&= -\frac{1}{h_{\rm i}}\left( 
 \frac{\ddot{\delta}}{2}  - \frac{3 \varepsilon | \dot{\delta}|^2}{4 h_{\rm i}} \right)\frac{R^2}{4} +\underline{\zeta_{\rm e}} + \nu(\underline{\dot{\zeta}_{\rm e}} - \dot{\delta}) + 
\kappa^2 (\underline{\ddot{\zeta}_{\rm e}} - \ddot{\delta}) + 
\frac{\varepsilon R^2 \dot{\delta}^2}{8}\left (\frac{1}{\underline{h_{\rm e}}^2} - \frac{1}{\underline{h_{\rm i}}^2}\right).
\end{align*} 
Injecting this into equation \eqref{eq:NewPArchi} leads to the modified Newton's equation \eqref{AddedMass}.
\end{proof}
\begin{remark}
If we had choosen $\underline{P_{\rm i}}$ as in \cite{BeckLannes} (see Remark \ref{rk:EnBudg}), the modified Newton's equation would be
\begin{equation}
\tilde{\tau}_\kappa^2( \varepsilon \delta) \ddot{\delta}  + \nu \dot{\delta}  + \delta -  \varepsilon \tilde{a}( \varepsilon \delta, \underline{\zeta_{\rm e}}) \, \dot{\delta}^2
= \kappa^2 \frac{\underline{\ddot{\zeta_{\rm e}}}}{\underline{h_{\rm e}}} + \nu \underline{\dot{\zeta_{\rm e}}} + \underline{\zeta_{\rm e}} + F_{\rm ext},
\end{equation}
with $$
\tilde{\tau}_\kappa^2(\varepsilon \delta) := \tau_{\rm buoy}^2 + \kappa^2\frac{1}{\underline{h_{\rm e}}} +  \frac{ R^2}{8 h_{\rm i}},
$$ 
but it respects $\tilde{\tau}_\kappa^2(\varepsilon \delta) - \tau_{\kappa}^2(\eps\delta) = \mathcal{O}(\eps\kappa^2)$. However it would induce another nonlinearity with the factor $\frac{\underline{\ddot{\zeta_{\rm e}}}}{\underline{h_{\rm e}}}$, which is something that we want to avoid (see Remark \ref{rk:WPSL_ZetaOvH}).
\end{remark}
One has to mention that $\tau_\kappa^2(\varepsilon\delta)$ does not contain the whole ''added mass effect''. Indeed to match the Newton's equation we consider that every coefficient in front of $\ddot{\delta}$ is part of the intertia brought by the pressure. 
Hence another part of it will come from the computation of $\kappa^2 \underline{\ddot{\zeta_{\rm e}}}$. It will be detailed in the next Subsection \ref{sec:HidEq}.
 
\subsection{Hidden equation}\label{sec:HidEq}
In this section we want to get two points. On one hand we aim at getting a conservative system of PDE's with a bounded source term from the radial Boussinesq-Abbott system {\rm\hyperlink{BAr}{(BAr)}}. It will enable us to show Theorem \ref{thmf:Aug}. Another motivation to do this is to try to fit the 1D case. 
On the other hand we desire to have another equation over $\kappa^2 \underline{\ddot{\zeta}_{\rm e}}$, to link it with the modified Newton's equation \eqref{AddedMass}. 
If we catch up an equation of this kind then $\left(\delta, \underline{\zeta_{\rm e}}\right)$ will satisfy an ODE system of order 2. It is essential to consider that $\kappa >0$ since this approach is useless in the hyperbolic case. 
Then, by \eqref{Qint}, we have access to $q_{\rm i}$. By equation \eqref{eq:pi(r)} it also allows to get $P_{\rm i}$. Therefore such a system of ODE's describes the behavior of all quantities of interest in the inner domain $\mathcal{I}$.
This subsection is the third point (\ref{EqOnTrace}) of the method presented at the beginning of Section \ref{sec:2}
 
\subsubsection{Inversion of dispersive operator}\label{sec:InvDispOpe}
 
To obtain a conservative system of PDE's plus bounded perturbations from the radial Boussinesq-Abbott {\rm\hyperlink{BAr}{(BAr)}} it is necessary to reverse the dispersive operator $(1-\kappa ^2 \partial_r {\rm{d}}_{r})$. 
To do so we introduce regularizing operators $\mathfrak{R}_0$ and $\mathfrak{R}_1$. They are defined for $\kappa >0$ as the inverse operators of $(1-\kappa ^2 \partial_r {\rm{d}}_{r})$ and $(1 - \kappa ^2 {\rm{d}}_{r} \partial_r)$ with homogeneous Dirichlet's and Neumann's boundary conditions at $r = R$, that is for any $f \in L^2_r$
\begin{equation}\label{def:R0}
u = \mathfrak{R}_0 f \Leftrightarrow
\begin{cases}
 (1 - \kappa^2 \partial_r {\rm{d}}_{r})u = f   & \text{in } (R, \infty),\\ 
  \underline{u} = 0 & \text{at } r=R, \\
  \underset{r \rightarrow + \infty }{\lim} u = 0, &
\end{cases}
\end{equation}
and
\begin{equation}\label{def:R1}
v = \mathfrak{R}_1 f \Leftrightarrow 
\begin{cases}
 (1 - \kappa^2 {\rm{d}}_{r} \partial_r )v = f   & \text{in } (R, \infty),\\ 
  \underline{\partial_r v} = 0 & \text{at } r=R, \\
  \underset{r \rightarrow + \infty }{\lim} v = 0. & 
\end{cases}
\end{equation}
We call these operators regularizing operators because they are defined from $H^n_r$ into $H^{n+2}_r$, as it will be explained by Lemmata \ref{lem:OpDisp} and \ref{lem:R1}.
In a first place we need to study these operators. To do so we introduce the modified Bessel functions, defined in \cite{A&S} as following. 
\begin{defn}[Modified Bessel functions]\label{def:K}
Let $\alpha \in \mathbb{R}$. We define the modified Bessel functions of first kind and second kind, $I_\alpha$ and $K_\alpha$, as the solutions of the ODE of unknown $y$
$$
\forall x > 0, \, x^2\frac{d^2 y}{dx^2} +x \frac{dy}{dx} - (x^2 + \alpha^2)y = 0.
$$
$I_\alpha$ is the solution such that $I_\alpha(x) \underset{x\to \infty}{\longrightarrow} +\infty$ and $K_\alpha$ is the solution such that $K_\alpha(x) \underset{x\to \infty}{\longrightarrow} 0$.
\end{defn}
Thanks to these functions one considers the following lemma: 
\begin{lemma}\label{lem:R0Homog}
The elliptic problem 
\begin{equation}\label{eq:R0Homog}
\begin{cases}
 (1 - \kappa^2 \partial_r {\rm{d}}_{r}) K = 0 & \text{for } r > R, \\ 
  \underline{K} = 1 & \text{at } r = R, \\
  \underset{r \rightarrow + \infty }{\lim} K = 0. &
\end{cases}
\end{equation}
admits a unique solution given by $K : r \mapsto  \frac{K_1(r/\kappa)}{K_1(R/\kappa)}$.
\end{lemma}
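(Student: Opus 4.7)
The strategy is to recognize the ODE as a modified Bessel equation after rescaling, then read off the unique decaying solution using Definition~\ref{def:K}.

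First, I would expand the differential operator. Since $\mathrm{d}_r K = \partial_r K + K/r$, a direct computation gives
$$\partial_r \mathrm{d}_r K = K'' + \frac{K'}{r} - \frac{K}{r^2},$$
so the equation $(1-\kappa^2 \partial_r \mathrm{d}_r)K = 0$ is equivalent, after multiplying by $-r^2/\kappa^2$, to
$$r^2 K'' + r K' - \Bigl(1 + \frac{r^2}{\kappa^2}\Bigr) K = 0, \qquad r > R.$$

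Next, I would perform the change of variable $x = r/\kappa$ and set $y(x) = K(\kappa x)$. A chain-rule computation turns the equation into
$$x^2 y'' + x y' - (1 + x^2)\,y = 0, \qquad x > R/\kappa,$$
which is exactly the modified Bessel equation of order $\alpha = 1$ from Definition~\ref{def:K}. Its space of solutions is two-dimensional, spanned by $I_1$ and $K_1$, so the general solution reads $K(r) = c_1 I_1(r/\kappa) + c_2 K_1(r/\kappa)$. The decay condition at infinity forces $c_1 = 0$ because $I_1$ diverges at infinity, and the Dirichlet condition $\underline{K} = 1$ then fixes $c_2 = 1/K_1(R/\kappa)$, which is well-defined because $K_1$ is strictly positive on $(0,\infty)$. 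This yields both existence and the claimed explicit formula.

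For uniqueness, I would either invoke directly the fact that the two integration constants have been completely pinned down by the two boundary conditions, or give an energy argument. In the latter case, any solution of the homogeneous problem (with $\underline{K} = 0$ and vanishing at infinity) satisfies, after multiplying by $K$ and integrating by parts with the natural weight $r\,dr$,
$$\int_R^\infty K^2\, r\,dr + \kappa^2 \int_R^\infty |K'|^2\, r\,dr + \kappa^2 \int_R^\infty \frac{K^2}{r}\,dr = 0,$$
forcing $K \equiv 0$. The only mildly delicate point is the vanishing of the boundary term $[K K' r]_R^\infty$ at infinity; this is legitimate in the class of solutions we consider because any $L^\infty$ solution inherits the exponential decay (together with its derivative) from the explicit form of $K_1$. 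This is essentially the only obstacle, and it is routine.
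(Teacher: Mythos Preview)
Your proof is correct and follows essentially the same approach as the paper: both reduce the equation to the modified Bessel equation of order $1$ via the change of variable $x=r/\kappa$ and then select $K_1$ as the unique decaying solution, normalizing with the Dirichlet condition. Your version is simply more detailed (explicit expansion of $\partial_r\mathrm{d}_r$, and an optional energy argument for uniqueness that the paper omits).
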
 
\begin{proof}
A change of parameters in $x = r/\kappa$ in the elliptic problem \eqref{eq:R0Homog} gives that the solution satisfies
\begin{equation}
x^2 \partial_x ^2 K + x \partial_x K -(x^2 + 1) K = 0.
\end{equation}
The solutions of this equation are exactly given by the modified Bessel functions $I_1$ and $K_1$. In particular we keep only $K_1$ which is in $L^2_r$. Therefore the solution is given by $K: r \mapsto  \frac{K_1(r/\kappa)}{K_1(R/\kappa)}$.
\end{proof}
\begin{remark}
In 1D the solutions are given by the exponential function instead of modified Bessel functions, see \cite{BeckLannes}.
\end{remark}
Therefore by a variation of parameters over the solution of the homogeneous problem given by Lemma \ref{lem:R0Homog} one obtain the following expression for $\mathfrak R_0$, and the following properties.
\begin{lemma}\label{lem:OpDisp}
Let $u\in L^2_r$. Then there exists a unique solution to the problem \eqref{def:R0} given by
\begin{align*}\label{prop:exprR0}
\mathfrak R_0 u : r \mapsto &\frac{1}{\kappa^2}  \left( \int_r^{+\infty} r' K_1(r'/\kappa) u(r') \rm{dr'} \right) I_1(r/\kappa) - \frac{1}{\kappa^2} \left( \int_R^r r' I_1(r'/\kappa) u(r') \rm{dr'} \right) K_1(r/\kappa) \\&- \frac{1}{\kappa^2}  \left( \int_R^{+\infty} r' K_1(r'/\kappa) u(r') \rm{dr'} \right) I_1(R/\kappa).
\end{align*}
Furthermore the operator $\mathfrak R_0$ satisfies:
\begin{enumerate}
\item For any $u \in L^2_r$, one has
\begin{equation}
\| \mathfrak R_0 u \|_{L^2_r}^2 + \kappa^2 \| \dr \mathfrak R_0 u \|_{L^2_r}^2 \leq \| u \|_{L^2_r}^2.
\end{equation}
\item For any $u \in L^\infty_r$, one has
$$ \| \mathfrak R_0 u \|_{L^\infty_r} \lesssim \| u \|_{L^\infty_r}
\quad \text{and} \quad
 \| \partial_r \mathfrak R_0 u\|_{L^\infty_r} \lesssim \frac{1}{\kappa} \|u\|_{L^\infty_r}.$$ 
\item For any $u, v \in L^\infty_r$, one has
\begin{equation}
|\mathfrak{R}_0(uv) | \leq \|u\|_{L^\infty_r} |\mathfrak{R}_0 v|.
\end{equation}
\end{enumerate}
\end{lemma}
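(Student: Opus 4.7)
I would prove the four claims of the lemma (explicit formula plus estimates (1)--(3)) in the following order: first establish existence and uniqueness in a variational framework, then derive the explicit formula by variation of parameters on the Bessel basis supplied by Lemma \ref{lem:R0Homog}, and finally prove the three estimates.

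\textbf{Existence, uniqueness, and explicit formula.} I would solve \eqref{def:R0} via Lax--Milgram on the Hilbert space $H^1_{0,\kappa}:=\{u\in H^1_\kappa : \underline{u}=0\}$, using the coercive bilinear form $a(u,v)=\int_R^\infty (uv+\kappa^2(\dr u)(\dr v))\,r\,dr$ and the continuous linear form $v\mapsto (f,v)_{L^2_r}$; elliptic interior regularity then upgrades the weak solution to $H^2_\kappa$ and gives the pointwise PDE. For the formula I would invoke Lemma \ref{lem:R0Homog}: the homogeneous solutions on $(R,\infty)$ are $I_1(r/\kappa)$ and $K_1(r/\kappa)$, with Wronskian $W(I_1(\cdot/\kappa),K_1(\cdot/\kappa))(r)=-1/r$ obtained from the standard relation $I_1 K_1'-I_1'K_1=-1/x$. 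Variation of parameters, with integration bounds chosen to eliminate the blowing-up contribution at $+\infty$, produces a particular solution of the form $\frac{1}{\kappa^2}I_1(r/\kappa)\int_r^\infty r' K_1(r'/\kappa)u(r')\,dr'-\frac{1}{\kappa^2}K_1(r/\kappa)\int_R^r r' I_1(r'/\kappa)u(r')\,dr'$. One then adds the unique multiple of $K_1(r/\kappa)$ that forces the trace at $R$ to vanish, which yields the announced formula.

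\textbf{Estimate (1).} Multiply the PDE by $ur$ and integrate on $(R,\infty)$. The key identity is $\partial_r(ur)=r\,\dr u$, so integration by parts of $-\kappa^2\int (\partial_r \dr u)\,u\,r\,dr$ produces $-\kappa^2[r\,u\,\dr u]_R^\infty+\kappa^2\int r (\dr u)^2\,dr$, where both boundary terms vanish ($\underline{u}=0$ and decay at infinity, the latter being a consequence of the $L^2_r$ regularity and the explicit formula). Cauchy--Schwarz on the right-hand side followed by Young's inequality yields exactly (1). For (3), the estimate is read off the explicit formula: since $u$ enters linearly in each integrand, $|u(r')|\leq\|u\|_{L^\infty_r}$ can be pulled out pointwise, reproducing $|\mathfrak R_0 v|$ in which $v$ has replaced $uv$; the three terms have mutually compatible signs, so no cancellation is lost.

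\textbf{Estimates (2).} For the $L^\infty$ bound, the cleanest route is the maximum principle: expanding $\partial_r\dr = \partial_r^2+r^{-1}\partial_r-r^{-2}$ rewrites the operator as $(1+\kappa^2/r^2)u-\kappa^2\Delta_{\mathrm{rad}}u = f$, whose zero-order coefficient is strictly positive, so the standard comparison principle with the boundary datum $\underline{u}=0$ and decay at infinity gives $\|u\|_{L^\infty_r}\leq\|f\|_{L^\infty_r}$. For the derivative bound $\|\partial_r\mathfrak R_0 u\|_{L^\infty_r}\lesssim \kappa^{-1}\|u\|_{L^\infty_r}$, I would differentiate the explicit formula and bound each resulting term using the identities $(xK_1(x))'=-xK_0(x)$ and $(xI_1(x))'=xI_0(x)$, combined with the uniform boundedness on $(0,\infty)$ of the two products $y\mapsto yI_0(y)K_1(y)$ and $y\mapsto yI_1(y)K_0(y)$ (both bounded by the standard asymptotics of Bessel functions at $0$ and $+\infty$). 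The main obstacle is precisely this last bound: the boundary-correction piece of the formula carries the ratio $I_1(R/\kappa)/K_1(R/\kappa)$, which behaves like $(R/\kappa)^2/2$ when $R\ll\kappa$ but grows like $e^{2R/\kappa}$ when $R\gg\kappa$, and one must check that it is systematically multiplied by factors with matching decay so that the final $\kappa^{-1}$ scaling is uniform in both $\kappa$ and $R$.
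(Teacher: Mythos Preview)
Your proposal is correct and tracks the paper closely on the existence/uniqueness, the explicit formula via variation of parameters on the $I_1,K_1$ basis, and estimates (1) and (3): the paper proves (1) by the same energy identity (multiply by $r\mathfrak R_0 u$, integrate, Cauchy--Schwarz) and dismisses (3) in one line from the explicit expression, just as you do.

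The one genuine methodological difference is the first half of estimate~(2). You obtain $\|\mathfrak R_0 u\|_{L^\infty_r}\lesssim\|u\|_{L^\infty_r}$ by the maximum principle, after rewriting the operator as $(1+\kappa^2/r^2)u-\kappa^2\Delta_{\mathrm{rad}}u=f$ with strictly positive zero-order coefficient. The paper instead plugs the explicit Bessel representation and proves a dedicated auxiliary lemma (their Lemma~\ref{lem:ineg}) showing that the products $zI_1(z)K_0(z)$ and $zI_0(z)K_1(z)$ are uniformly bounded on $(0,\infty)$, via the identity $I_1K_0+I_0K_1=1/z$ together with Debye asymptotics. Your maximum-principle route is shorter and more robust for this particular bound; the paper's route is more computational but has the payoff that exactly those product bounds are reused for the $\partial_r\mathfrak R_0$ estimate, where no maximum-principle shortcut is available. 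So the paper's detour amortises over both halves of~(2), while yours needs the Bessel analysis only once.

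On the $\partial_r$ bound you and the paper agree: differentiate the formula and control each piece by the boundedness of $zI_1(z)K_1(z)$ and its companions. Your concern about the boundary-correction term carrying the ratio $I_1(R/\kappa)/K_1(R/\kappa)$ is well placed but resolves exactly as you anticipate: that ratio always comes multiplied by $\int_R^\infty r'K_1(r'/\kappa)\,dr'=\kappa R K_0(R/\kappa)$ and by $K_1(r/\kappa)\le K_1(R/\kappa)$, collapsing to $(R/\kappa)I_1(R/\kappa)K_0(R/\kappa)$, which is precisely one of the uniformly bounded products. No uniformity in $\kappa$ or $R$ is lost.
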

In particular we recover the properties of $\mathfrak{R}_0$ from \cite{BeckLannes}, that is to say $\mathfrak{R}_0, \, \partial_r \mathfrak{R}_0$ are bounded operators of both $L^2_r$ and $L^\infty_r$.
\begin{proof}
By definition for any $u \in L^2_r$, 
$$
\mathfrak R_0 u - \kappa^2 \partial_r \dr \mathfrak R_0 u = u.
$$
We multiply this equation by $r\mathfrak R_0 u$, then we integrate on $(R, +\infty)$ to obtain
\begin{equation}
\| \mathfrak R_0 u \|_{L^2_r}^2 + \| \kappa \dr \mathfrak R_0 u \|_{L^2_r}^2 \leq \int_R^{+\infty} (u\mathfrak R_0 u) r dr.
\end{equation}
With the Cauchy-Schwartz inequality the first point is proved. \\
 
By using the expression of $\mathfrak R_0$, one has for any $u \in L^\infty_r$
\begin{align*}
\| \mathfrak R_0 u \|_{L^\infty_r} \leq & \|u\|_{L^\infty_r} \left[   \| \left(  \frac{1}{\kappa^2} \int_r^{+\infty} K_1(r'/\kappa) \rm{dr'} \right) I_1(r/\kappa) \|_{L^\infty_r} + \right. \| \left(\frac{1}{\kappa^2} \int_R^r I_1(r'/\kappa) \rm{dr'} \right) K_1(r/\kappa)\|_{L^\infty_r} \\&\left.  +   \| \left( \frac{1}{\kappa^2} \int_R^{+\infty} K_1(r'/\kappa) \rm{dr'} \right) I_1(R/\kappa)\|_{L^\infty_r}\right].
\end{align*}
With the following lemma, one obtains $\| \mathfrak R_0 u \|_{L^\infty_r} \lesssim \| u \|_{L^\infty_r}$.
\begin{lemma}\label{lem:ineg}
The quantities
$$
\|  \left( \frac{1}{\kappa^2} \int_r^{+\infty} K_1(r'/\kappa) \rm{dr'} \right) I_1(r/\kappa) \|_{L^\infty_r} 
\quad
\text{and} \quad
\| \left(\frac{1}{\kappa^2} \int_R^r I_1(r'/\kappa) \rm{dr'} \right) K_1(r/\kappa)\|_{L^\infty_r},
$$
are bounded uniformaly with respect to $\kappa$ and $R$.
\end{lemma}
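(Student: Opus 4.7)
The plan is to rescale both expressions via $x := r/\kappa$ and $x_0 := R/\kappa$ together with the change of variable $s = r'/\kappa$ in the inner integral. The combined factors of $\kappa$ (from the prefactor $\kappa^{-2}$ and from the Jacobian) reduce each of the two quantities to an explicit function of $x \in [x_0, +\infty)$ built only out of the modified Bessel functions $I_1, K_1$ (and their close relatives $I_0, K_0$). Uniform boundedness in $\kappa$ and $R$ thus becomes uniform boundedness in $x_0 \geq 0$ and $x \geq x_0$, so it suffices to show that the resulting functions are bounded on $(0,+\infty)$.

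For the first quantity, one is led to a product of the form $I_1(x)\,\mathcal{J}(x)$, where $\mathcal{J}(x)$ is a tail integral involving $K_1$. The key identities are $K_0' = -K_1$ and, when a weight appears, $(sK_1)' = -sK_0$, so that $\mathcal{J}$ collapses to $K_0(x)$ plus a remainder with the same exponential decay at infinity. Boundedness of $I_1\cdot\mathcal{J}$ on $(0,+\infty)$ then follows by combining the small-$x$ asymptotics $I_1(x)\sim x/2$, $K_0(x)\sim -\log x$ (the product vanishes at the origin) with the large-$x$ asymptotics $I_1(x)\sim e^x/\sqrt{2\pi x}$, $K_0(x)\sim\sqrt{\pi/(2x)}\,e^{-x}$ (the product is $\sim 1/(2x)$). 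A cleaner, non-asymptotic shortcut is the Wronskian identity
\[
I_1(x)\,K_0(x) + I_0(x)\,K_1(x) = \frac{1}{x},
\]
which, since $I_0, K_1 > 0$, immediately yields $I_1(x)K_0(x) \leq 1/x$ on $(0,+\infty)$; global boundedness then follows from continuity at the origin.

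For the second quantity, the same rescaling produces a product $K_1(x)\,\mathcal{I}(x)$ with $\mathcal{I}(x) = \int_{x_0}^{x} s\,I_1(s)\,ds$ (or its unweighted variant). At $x = x_0$ the product vanishes. At $x \to +\infty$, integration by parts using $(sI_1)' = sI_0$ shows $\mathcal{I}(x)\sim \sqrt{x/(2\pi)}\,e^x$, and this exponential growth is exactly cancelled by the decay $K_1(x)\sim \sqrt{\pi/(2x)}\,e^{-x}$ in front, leaving a finite limit of order $1/2$. Continuity on compact sub-intervals of $(0,+\infty)$ closes the uniform bound.

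The main technical obstacle will be the exact matching of exponential growth and decay at $x \to +\infty$: each Bessel factor is individually unbounded, so the cancellation requires either the first subleading term in the uniform asymptotic expansion or, preferably, a non-asymptotic identity. I would therefore push the Wronskian route as far as possible, falling back on the recurrences $(xI_1)' = xI_0$ and $(xK_1)' = -xK_0$ together with the standard asymptotics only when no closed-form identity is available.
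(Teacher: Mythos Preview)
Your proposal is correct and follows essentially the same route as the paper: rescale via $z=r/\kappa$ to a $\kappa$-free expression, then combine the small-$z$ and large-$z$ asymptotics of $I_0,I_1,K_0,K_1$ with the Wronskian identity $I_1K_0+I_0K_1=1/z$ to obtain a uniform bound. The only notable difference is cosmetic: you use the Wronskian to get the pointwise bound $I_1(z)K_0(z)\le 1/z$ directly, whereas the paper uses it to write $g=1-f$ and then argues monotonicity of $f$ from the asymptotic expansion of $f'$; your shortcut is slightly cleaner but the content is the same.
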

\begin{proof}
To prove this lemma one needs the  following Debye's asymptotic behavior of modified Bessel function for large $z \gg 1$, see \cite{A&S}:
\begin{equation}\label{eq:AsympBesselReels}
\begin{cases}
I_0(z) = \frac{e^z}{\sqrt{2 \pi z}} \left( 1 + \frac{1}{8z}+ \mathcal{O}(z^{-2}) \right), \\
I_1(z) = \frac{e^z}{\sqrt{2 \pi z}} \left( 1 - \frac{3}{8z} + \mathcal{O}(z^{-2}) \right), \\
K_0(z) = \frac{e^{-z}}{\sqrt{2 \pi^{-1} z}} \left( 1 - \frac{1}{8z}+ \mathcal{O}(z^{-2}) \right), \\
K_1(z) = \frac{e^{-z}}{\sqrt{2 \pi^{-1} z}} \left( 1 + \frac{3}{8z} + \mathcal{O}(z^{-2}) \right).
\end{cases}
\end{equation}

By a change of variable  in $z = r/\kappa$, the first term that we want to bound is the supremum of the function $f$:
$$
 \|  \left( \frac{1}{\kappa^2} \int_r^{+\infty} r' K_1(r'/\kappa) {\rm dr'} \right) I_1(r/\kappa) \|_{L^\infty_r} \leq
 \sup_{z \geq 0} f(z),
$$ 
where $f(z)= z K_0(z) I_1(z)$.
Firstly, according to \cite{A&S}, the modified Bessel function $K_0$ behaves as $\ln(z)$ in $0$ and therefore $f(z) \to 0$ when $z \to 0$.
Since modified Bessel functions are regular, $f$ is differentiable and for any $z > 0$ and one deduces for large $z \gg 1$ from the asymptotic behaviors \eqref{eq:AsympBesselReels} that 
$$
\lim_{z \to 0} f(z) \to 0, \quad
\lim_{z \to \infty} f(z) \to \frac{1}{2}
\quad \text{and} \quad
f'(z) = \frac{1}{16z} + \mathcal{O}(z^{-2}).
$$

For large enough $z$, the continuous function $f$ increases. It shows that $f$ admits an upper finite bound, and that independently of $\kappa$ and $R$, $\|  \left( \frac{1}{\kappa^2} \int_r^{+\infty} r' K_1(r'/\kappa) {\rm dr'} \right) I_1(r/\kappa) \|_{L^\infty_r}$ is bounded.
This is consistent with numerical evidence that claims that $f$ is increasing in fact for all $z>0$ and converges towards $\frac{1}{2}$ (see Figure \ref{fig:drR0_infty}).

Secondly, as previously by a change of variable	
$$
 \|  \left( \frac{1}{\kappa^2} \int_R^{r} I_1(r'/\kappa) {\rm dr'} \right) K_1(r/\kappa) \|_{L^\infty_r} \leq
 \sup_{z \geq 0} g(z),
 $$
where $g(z)= z I_0(z) K_1(z)$.
By the well-known identity $K_0 I_1 + K_1 I_0 = z^{-1}$ for all $z > 0$, one has
$$
\forall z > 0, \, g(z) = 1 - f(z).
$$
In particular $g$ admits finite limit in $0$ and $+\infty$, which are respectivly $0$ and $\frac{1}{2}$. Furthermore one deduces that $g$ is decreasing for large $z$. Thus its supremum is reached at a finite point $z_\ast$ and $ \|\left( \frac{1}{\kappa^2} \int_R^{r} I_1(r'/\kappa) {\rm dr'} \right) K_1(r/\kappa) \|_{L^\infty_r}  \leq g(z^*)$ independently of $\kappa$ or $R$. It is also consistent with the numerical evidence that claims that $g$ is decreasing for all $z >0$ (see Figure \ref{fig:drR0_infty})	.
\end{proof}
Also the expression of $\mathfrak R_0 u$ is differentiable and 
\begin{align*}
\partial_r \mathfrak R_0 u &= \frac{1}{\kappa^3}  \left( \int_r^{+\infty} r' K_1(r'/\kappa) u(r') \rm{dr'} \right) I_1'(r/\kappa) - \frac{1}{\kappa^3} \left( \int_R^r r' I_1(r'/\kappa) u(r') \rm{dr'} \right) K_1'(r/\kappa) \\& - \frac{2r}{\kappa^2}K_1(r/\kappa)I_1(r/\kappa)u(r).
\end{align*}
As previously one can show that
$$
\| \frac{2r}{\kappa}K_1(r/\kappa)I_1(r/\kappa) \|_{L^\infty_r} \leq \sup_{z \geq 0} k(z) \text{ where } k(z)= 2z  I_1(z) K_1 (z),
$$
and
$$
\lim_{z \to 0} k(z) \to 0, \quad
\lim_{z \to \infty} k(z) \to 1
\quad \text{and} \quad
k'(z) = \frac{3}{4z} + \frac{2}{32 z} + O(z^{-3}).
$$
It proves that $k$ is uniformly bounded with respect to $\kappa$ and $R$ (see Figure \ref{fig:drR0_infty}).
\begin{figure}[h!]
\includegraphics[scale=0.8]{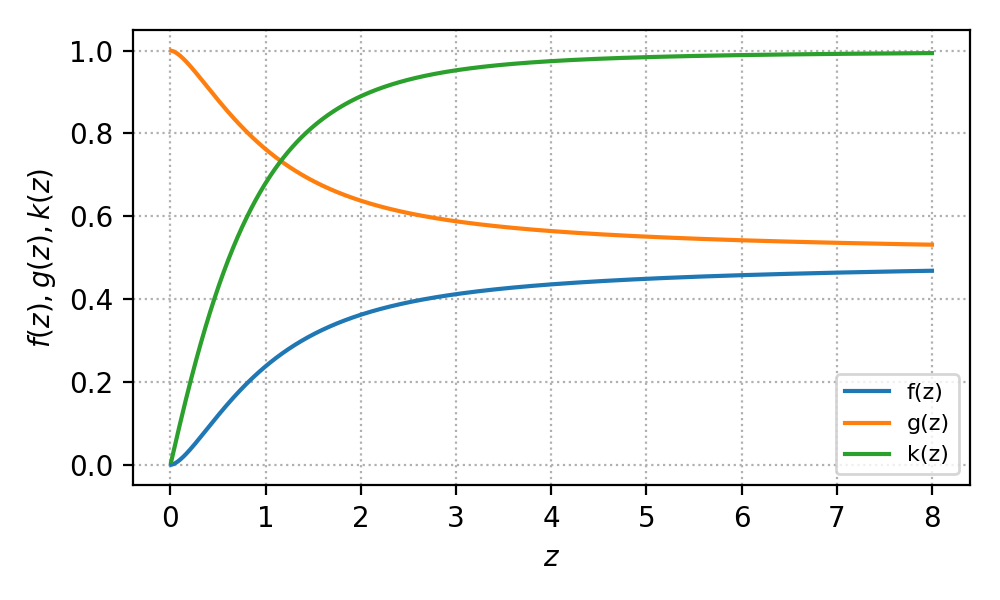}
\caption{Illustration of $f$, $g$, $k$ (defined in the proof of Lemma \ref{lem:ineg} and above	), with $R = 1$ and $\kappa = 0.1$.}
\label{fig:drR0_infty}
\end{figure}
 
The last point of the lemma is trivial from the expression of $\mathfrak{R}_0(uv)$ for any $u, v \in L^\infty_r$.
\end{proof}
By the same reasoning over $\mathfrak{R}_1$, one has that $\mathfrak{R}_1$ and $\partial_r \mathfrak{R}_1$ are bounded operators of both $L^2_r$ and $L^\infty_r$, and the following lemma stands.
\begin{lemma}\label{lem:R1} 
One has the following properties:
\begin{enumerate}
\item Let $u \in L^2_r$, then the problem \eqref{def:R1} admits a unique solution given for $r \geq R$ by
\begin{align*}
\mathfrak{R}_1 u(r) =& \left[ I_1\left(\frac{R}{\kappa}\right) \int_R^{+\infty} \frac{\rho}{\kappa} K_0(\rho/\kappa) u(\rho) d\rho + 2 R \, I_0(R/\kappa)K_0(R/\kappa) u(R)\right]\frac{1}{K_1(R/\kappa)} \\& + K_0(r/\kappa) \int_R^{r} \frac{s}{\kappa} I_0(s/\kappa) u(s) ds + I_0 (r/\kappa)\int_r^{+\infty} \frac{s}{\kappa} K_0(s/\kappa) u(s) ds.
\end{align*}
\item For any $u \in L^2_r$,
\begin{equation}
\| \mathfrak R_1 u \|_{L^2_r}^2 + \kappa^2 \| \partial_r \mathfrak R_1 u \|_{L^2_r}^2 \leq \| u \|_{L^2_r}^2.
\end{equation}
\item For any $u \in L^\infty_r$,
$$ \| \mathfrak R_1 u\|_{L^\infty_r} \lesssim \|u\|_{L^\infty_r}
\quad \text{and} \quad
\| \partial_r \mathfrak R_1 u\|_{L^\infty_r} \lesssim {\kappa}^{-1} \|u\|_{L^\infty_r}. $$ 
\end{enumerate}
\end{lemma}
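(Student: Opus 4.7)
The plan is to mirror the proof of Lemma~\ref{lem:OpDisp} step by step, replacing the operator $(1-\kappa^2 \partial_r \dr)$ by $(1-\kappa^2 \dr \partial_r)$. Since, in polar coordinates, $\dr \partial_r v = \partial_r^2 v + r^{-1}\partial_r v$ is the radial Laplacian, the rescaling $x = r/\kappa$ converts the homogeneous equation $(1 - \kappa^2 \dr \partial_r) v = 0$ into the modified Bessel equation of order zero. A fundamental system is therefore $\{I_0(\cdot/\kappa), K_0(\cdot/\kappa)\}$, with $K_0$ the only $L^2_r$-solution. A variation of parameters ansatz $v = A(r)\,I_0(r/\kappa) + B(r)\,K_0(r/\kappa)$, together with the Wronskian identity $I_0'(z)K_0(z) - I_0(z)K_0'(z) = z^{-1}$ and the derivative relations $K_0' = -K_1$, $I_0' = I_1$, yields the particular solution. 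The two free constants are then fixed by enforcing $\lim_{r\to\infty} v = 0$ and $\underline{\partial_r v} = 0$ at $r = R$, which produces the explicit formula of point (1); it is precisely the Neumann condition that brings in the prefactor $1/K_1(R/\kappa)$ together with the accompanying boundary correction.

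For the energy estimate of point (2), multiply $(1-\kappa^2 \dr \partial_r) v = u$ by $r v$ and integrate on $(R, +\infty)$. The algebraic identity $r\,\dr \partial_r v = \partial_r(r\,\partial_r v)$ allows an integration by parts
\begin{equation*}
-\kappa^2 \int_R^{+\infty} \partial_r(r\,\partial_r v)\,v\,dr = \kappa^2 \int_R^{+\infty} r\,(\partial_r v)^2\,dr - \kappa^2 \bigl[r\, v\,\partial_r v\bigr]_R^{+\infty},
\end{equation*}
whose boundary contribution vanishes thanks to $\underline{\partial_r v} = 0$ and the decay at infinity. One obtains $\|v\|_{L^2_r}^2 + \kappa^2 \|\partial_r v\|_{L^2_r}^2 = \int_R^{+\infty} u\,v\,r\,dr$, and Cauchy--Schwarz followed by division by $\|v\|_{L^2_r}$ yields the claimed inequality.

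For the $L^\infty$ bounds of point (3), the method of Lemma~\ref{lem:ineg} applies with only small adjustments: the kernels $K_0(r/\kappa) \int_R^r (s/\kappa)\,I_0(s/\kappa)\,ds$ and $I_0(r/\kappa) \int_r^{+\infty} (s/\kappa)\,K_0(s/\kappa)\,ds$ play the roles of the functions $g$ and $f$ there, and the Debye asymptotics \eqref{eq:AsympBesselReels} show they are uniformly bounded in $\kappa$ and $R$. When differentiating the explicit formula, the two diagonal contributions $\pm (r/\kappa)\,I_0(r/\kappa)\,K_0(r/\kappa)\,u(r)$ cancel exactly, leaving only terms in which the derivatives $K_0' = -K_1$ and $I_0' = I_1$ contribute a factor $\kappa^{-1}$, which accounts precisely for the scaling $\|\partial_r \mathfrak R_1 u\|_{L^\infty_r} \lesssim \kappa^{-1} \|u\|_{L^\infty_r}$. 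The main technical obstacle, exactly as in Lemma~\ref{lem:OpDisp}, is to control the boundary contribution featuring $I_1(R/\kappa)/K_1(R/\kappa)$ uniformly in the parameters; one resolves this by rewriting the factor through the Wronskian identity $I_0 K_1 + I_1 K_0 = z^{-1}$, which absorbs the potentially singular ratio and reduces the analysis to products of the form $z\,I_\alpha(z)\,K_\beta(z)$ that converge to finite limits both at $0$ and at $+\infty$.
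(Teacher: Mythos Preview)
Your proposal is correct and follows exactly the route the paper takes: the paper simply states ``By the same reasoning over $\mathfrak{R}_1$'' and leaves the details implicit, so your explicit reduction to the order-zero modified Bessel equation via $x=r/\kappa$, the variation-of-parameters construction, the integration by parts using $r\,\dr\partial_r v = \partial_r(r\,\partial_r v)$ with the Neumann boundary term vanishing, and the Debye-asymptotics argument for the $L^\infty$ bounds are precisely the adaptations of the $\mathfrak{R}_0$ proof that the authors have in mind.
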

 
Finally operators $\mathfrak R_0$ and $\mathfrak R_1$ satisfy the following commutation formula
\begin{lemma}\label{lem:commut1}
For any $u \in L^2_r$, one has
\begin{equation}
\mathfrak R_0 \partial_r u = \partial_r \mathfrak R_1 u.
\end{equation}
\end{lemma}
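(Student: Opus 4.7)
The natural approach is to differentiate the defining equation of $\mathfrak R_1$ and recognize that the derivative solves the elliptic problem associated with $\mathfrak R_0$. First I would reduce to the smooth case: since both sides depend continuously on $u$ in the appropriate topology (as a consequence of the a priori estimates in Lemmata \ref{lem:OpDisp} and \ref{lem:R1}), it suffices to prove the identity for $u$ smooth and compactly supported in $(R,+\infty)$, and then conclude by density in $L^2_r$.

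For such a test function $u$, set $v = \mathfrak R_1 u$, so that by \eqref{def:R1} one has $v - \kappa^2 {\rm d}_r \partial_r v = u$, together with $\underline{\partial_r v} = 0$ and $v(r)\to 0$ as $r\to\infty$. Applying $\partial_r$ to the elliptic equation and using the trivial operator identity
\begin{equation*}
\partial_r \bigl({\rm d}_r \partial_r v\bigr) = \partial_r {\rm d}_r \bigl(\partial_r v\bigr),
\end{equation*}
which holds by mere associativity of operator composition, I obtain
\begin{equation*}
\bigl(1 - \kappa^2 \partial_r {\rm d}_r\bigr) (\partial_r v) = \partial_r u.
\end{equation*}
Thus $w := \partial_r v$ satisfies the PDE appearing in the definition \eqref{def:R0} of $\mathfrak R_0$ with right-hand side $\partial_r u$.

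To conclude by uniqueness, I would check that $w$ also satisfies the correct boundary and decay conditions: the Neumann condition $\underline{\partial_r v} = 0$ built into the definition of $\mathfrak R_1$ is exactly the Dirichlet condition $\underline{w}=0$ required by $\mathfrak R_0$, and $w(r)\to 0$ as $r\to\infty$ follows from the explicit representation formula of Lemma \ref{lem:R1}, which expresses $v$ in terms of the exponentially decaying Bessel functions $K_0$ and $K_1$ (and whose $r$--derivative therefore also decays). Uniqueness of the solution to \eqref{def:R0} then gives $w = \mathfrak R_0 \partial_r u$, which is precisely the claimed identity. The main delicate point is not the differentiation step itself but the justification of the density argument, which requires Lemma \ref{lem:OpDisp}(1) and Lemma \ref{lem:R1}(2) to ensure that $\partial_r \mathfrak R_1$ and $\mathfrak R_0 \partial_r$ are continuous from a dense subspace of $L^2_r$ into a common functional space on which both sides of the identity make sense.
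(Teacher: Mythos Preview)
Your proposal is correct and follows essentially the same approach as the paper: differentiate the defining equation of $\mathfrak R_1$, observe that $\partial_r(\mathfrak R_1 u)$ then solves the boundary value problem \eqref{def:R0} with right-hand side $\partial_r u$, and conclude by uniqueness. You are in fact slightly more careful than the paper, which simply takes $u\in H^1_r$ without addressing the extension to $L^2_r$ by density or the decay at infinity.
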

\begin{proof}
Let $u \in H^1_r$. Then $\mathfrak R_0 \partial_r u$ is defined by
\begin{equation}
\begin{cases}
 (1 - \kappa^2 \partial_r {\rm{d}}_{r}) (\mathfrak R_0 \partial_r u) = \partial_r u   & \text{in } (R, \infty),\\ 
  \underline{(\mathfrak R_0 \partial_r u)} = 0 & \text{at } r=R.
  \end{cases}
\end{equation}
Furthermore applying $\partial_r$ to the definition of $\mathfrak R_1 u$ leads to 
\begin{equation}
\begin{cases}
 (1 - \kappa^2 \partial_r {\rm{d}}_{r} ) \partial_r (\mathfrak R_1 u) = \partial_r u   & \text{in } (R, \infty),\\ 
  \underline{\partial_r (\mathfrak R_1 u)} = 0 & \text{at } r=R. 
\end{cases}
\end{equation}
By unicity of the solution of $\mathfrak{R}_0$ and $\mathfrak{R}_1$ we obtain the lemma.
\end{proof}
In the same way we show that
\begin{lemma}\label{lem:commut2}
For any $u \in L^2_r$, one has
\begin{equation}
\dr \mathfrak R_0 u = \mathfrak R_1 \dr u + \kappa \underline{u} \frac{K_0(r/\kappa)}{K_1(R/\kappa)}.
\end{equation}
\end{lemma}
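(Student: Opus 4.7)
The plan is to follow the same template as Lemma \ref{lem:commut1}: show that the two sides solve the same elliptic PDE on $(R,\infty)$, then match boundary data at $r=R$ and the decay at infinity. Concretely, I set $v := \mathfrak R_0 u$, so that $v - \kappa^2 \partial_r \dr v = u$ on $(R,\infty)$ with $\underline{v}=0$ and $v\to 0$ at infinity, and I apply $\dr$ to this identity. A short commutator computation (writing everything in terms of $\partial_r^k v$ and the weights $r^{-k}$) gives
\[
\dr(\partial_r \dr v) \;=\; \dr \partial_r (\dr v) \;=\; \partial_r^3 v + \frac{2\partial_r^2 v}{r} - \frac{\partial_r v}{r^2} + \frac{v}{r^3},
\]
so that $(1-\kappa^2 \dr \partial_r)(\dr v) = \dr u$ on $(R,\infty)$. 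Thus $\dr \mathfrak R_0 u$ and $\mathfrak R_1 \dr u$ satisfy the same equation.

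Next I introduce the difference $w := \dr v - \mathfrak R_1 \dr u$. By construction, $w$ is in the kernel of $(1-\kappa^2 \dr \partial_r)$ on $(R,\infty)$ and tends to $0$ at infinity. Rescaling $x=r/\kappa$ reduces the homogeneous equation to the modified Bessel equation of order $0$, whose only decaying solution is a multiple of $K_0$. Hence there exists a scalar $C$ (independent of $r$) such that $w(r) = C\, K_0(r/\kappa)$ for $r\geq R$.

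It remains to determine $C$ from the boundary data at $r=R$. Evaluating $v - \kappa^2 \partial_r \dr v = u$ at $r=R$ and using $\underline{v}=0$ yields $\underline{\partial_r \dr v} = -\underline{u}/\kappa^2$. On the other hand, the Neumann condition defining $\mathfrak R_1$ in \eqref{def:R1} gives $\underline{\partial_r (\mathfrak R_1 \dr u)} = 0$. Subtracting, and using $K_0' = -K_1$, the identity $\underline{\partial_r w} = -C\, K_1(R/\kappa)/\kappa$ fixes $C$ in the form $\kappa \underline{u}/K_1(R/\kappa)$, which is exactly the stated formula.

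The step I expect to be the most delicate is not the Bessel-equation or the boundary-matching, but the first one: verifying rigorously that $\dr$ intertwines $(1-\kappa^2\partial_r\dr)$ on the left with $(1-\kappa^2 \dr\partial_r)$ on the right, since $\partial_r$ and $\dr$ do \emph{not} commute (one checks $\partial_r \dr - \dr \partial_r = -1/r^2$, a nonzero multiplication operator). Once this commutator identity is in place, the rest is standard. A secondary point is that the statement implicitly requires enough regularity on $u$ for $\dr u$ and $\underline{u}$ to make sense; one proves the identity first for $u$ smooth with compact support in $[R,\infty)$ (where everything is classical) and then extends by density using the $L^2_r$ and $L^\infty_r$ bounds on $\mathfrak R_0,\mathfrak R_1,\partial_r\mathfrak R_0,\partial_r\mathfrak R_1$ provided by Lemmata \ref{lem:OpDisp} and \ref{lem:R1}.
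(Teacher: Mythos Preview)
Your approach is exactly the paper's: show that $\dr\mathfrak R_0 u$ minus the $K_0$ correction satisfies the defining problem \eqref{def:R1} for $\mathfrak R_1(\dr u)$, which you carry out via the operator identity $\dr\circ(1-\kappa^2\partial_r\dr)=(1-\kappa^2\dr\partial_r)\circ\dr$ (note this is just associativity of composition, so your ``delicate step'' is in fact immediate and needs no commutator analysis), kernel identification, and Neumann matching at $r=R$. One arithmetic slip to revisit: from $-C\,K_1(R/\kappa)/\kappa=-\underline u/\kappa^2$ one obtains $C=\underline u/(\kappa\,K_1(R/\kappa))$, not $\kappa\,\underline u/K_1(R/\kappa)$; your method is correct, and the prefactor in the lemma as stated appears to be off by $\kappa^2$.
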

The proof of this lemma only consists in verifying that ${\rm d}_r \mathfrak{R}_0 u - \kappa \underline{u} \frac{K_0(r/\kappa)}{K_1(R/\kappa)}$ is a solution of $(1-\kappa^2{\rm d}_r\partial_r)v = {\rm d}_r u$ with boundary condition $\underline{\partial_r v} = 0$.
Inter alia it allows to show the following proposition by induction on $s$.
\begin{proposition}\label{prop:commut}
Let $s \in \mathbb{N}$ and $p \geq 2$. Let $u \in W^{s,p}_r$. Then one has
\begin{equation}
\| \mathfrak R_i u \|_{W^{s,p}_r} \lesssim \| u \|_{W^{s,p}_r}
\quad
\text{ and }
\quad
\| \mathfrak R_i u \|_{W^{s+1,p}_r} \lesssim {\kappa}^{-1} \| u \|_{W^{s,p}_r},
\end{equation}
for $i = 0, 1$.
\end{proposition}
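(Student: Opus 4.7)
The plan is to argue by induction on $s$, using the commutation formulas of Lemmas \ref{lem:commut1} and \ref{lem:commut2} together with the endpoint estimates of Lemmas \ref{lem:OpDisp} and \ref{lem:R1}. The design of the induction is that each step transfers exactly one derivative through the regularizing operator, which accounts for the single factor $\kappa^{-1}$ in the second estimate.

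For the base case $s=0$, the two target inequalities reduce to $L^p_r$ boundedness of $\mathfrak{R}_i$ and $\kappa\partial_r \mathfrak{R}_i$. These are provided for $p=2$ and $p=\infty$ by Lemmas \ref{lem:OpDisp} and \ref{lem:R1} (the $L^2$ statements are phrased with $\dr$ rather than $\partial_r$, but $\dr - \partial_r = r^{-1}\leq R^{-1}$ yields only a harmless zero-order correction). For intermediate $p\in(2,\infty)$, I would appeal to Riesz--Thorin interpolation on the weighted spaces $L^p_r=L^p([R,\infty),r\,dr)$ applied to the bounded linear maps $\mathfrak{R}_i$ and $\kappa\partial_r\mathfrak{R}_i$.

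For the inductive step, assume both estimates hold at level $s$ for $i=0,1$. For $\mathfrak{R}_1$ the argument is essentially algebraic: Lemma \ref{lem:commut1} gives $\partial_r \mathfrak{R}_1 u = \mathfrak{R}_0 \partial_r u$, so applying the induction hypothesis for $\mathfrak{R}_0$ to $\partial_r u \in W^{s,p}_r$ yields $\|\partial_r\mathfrak{R}_1 u\|_{W^{s,p}_r}\lesssim \|u\|_{W^{s+1,p}_r}$ and $\|\partial_r\mathfrak{R}_1 u\|_{W^{s+1,p}_r}\lesssim \kappa^{-1}\|u\|_{W^{s+1,p}_r}$, which together with the base-case $L^p_r$-bound on $\mathfrak{R}_1 u$ closes the step for $i=1$. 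For $\mathfrak{R}_0$, Lemma \ref{lem:commut2} combined with $\dr = \partial_r + r^{-1}$ rewrites as
$$\partial_r \mathfrak{R}_0 u = \mathfrak{R}_1 \partial_r u + \mathfrak{R}_1\!\bigl(\tfrac{u}{r}\bigr) - \tfrac{\mathfrak{R}_0 u}{r} + \kappa\,\underline{u}\,\frac{K_0(r/\kappa)}{K_1(R/\kappa)}.$$
The first three terms are controlled by the induction hypothesis, using that $v\mapsto v/r$ is bounded on $W^{s,p}_r$ since $r\geq R>0$. For the last one, the one-dimensional trace inequality gives $|\underline{u}|\lesssim \|u\|_{W^{s+1,p}_r}$, and the $W^{s,p}_r$-norm of the profile $\kappa K_0(\cdot/\kappa)/K_1(R/\kappa)$ is estimated uniformly in $\kappa$ via the Debye asymptotics \eqref{eq:AsympBesselReels} already exploited in Lemma \ref{lem:ineg}.

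The main obstacle I expect is precisely this boundary contribution: one must verify that every derivative $\partial_r^k$ of $\kappa K_0(r/\kappa)/K_1(R/\kappa)$ stays uniformly bounded in $L^p_r$, exploiting both the exponential decay of $K_0$ at infinity and the precise matching between $K_0(R/\kappa)$ and $K_1(R/\kappa)$ near $r=R$ across the regimes $R/\kappa\ll 1$ and $R/\kappa\gg 1$. Once this profile estimate is secured, the induction propagates cleanly, and the extra $\kappa^{-1}$ factor in the second bound corresponds exactly to the single derivative transferred through the commutation formulas at each step.
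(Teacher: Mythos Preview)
Your approach is essentially the same as the paper's: induction on $s$ using the commutation Lemmas \ref{lem:commut1}--\ref{lem:commut2}, with the base case supplied by Lemmas \ref{lem:OpDisp} and \ref{lem:R1}, and interpolation between $p=2$ and $p=\infty$ for intermediate exponents. The only organizational difference is that the paper runs the full induction at $p=2$ and $p=\infty$ separately and interpolates at the very end (which slightly simplifies the handling of the boundary profile $\kappa K_0(\cdot/\kappa)/K_1(R/\kappa)$ since you only need its $L^2_r$ and $L^\infty_r$ bounds with derivatives), whereas you interpolate at the base case; both routes work.
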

By induction for $s \in \mathbb{N}$, thanks to the previous lemmata \ref{lem:commut1} and \ref{lem:commut2} we show that the proposition is true for $p = 2$. The lemmata also allows to show the proposition in $W^{s, \infty}_r$ and by Sobolev's interpolations we show that it is true in $W^{s, p}_r$ for any $p \geq 2$. 
 
\subsubsection{Hidden equation at the boundary}
The second step is to apply the dispersive operator $\mathfrak R_0$ to obtain a conservative system plus bounded perturbations from the radial Boussinesq-Abbott system {\rm \hyperlink{BAr}{(BAr)}}. In fact thanks to the commutation property between $\mathfrak R_0$ and $\mathfrak R_1$ one has the following lemma over a more general system.
\begin{lemma}\label{lem:HidEq}
For any $f, g$ regular enough the system of equations of unknown $(\zeta, q)$ given for $r > R$ by
\begin{empheq}[left=\empheqlbrace]{align}
&\partial_t \zeta + {\rm d}_r\, q=0,\label{BAinit1}  \\
&(1-\kappa ^2 \partial_r {\rm{d}}_{r}) \partial_t q + \partial_r f +g =0 , \label{BAinit} 
\end{empheq}
is equivalent to 
\begin{equation}\label{eq:3.34}
\begin{cases}
\kappa^2 {\ddot{\zeta}} +  f + \kappa^2 {{\rm{d}}_{r} K} \underline{\dot{q}} =   \mathfrak{R}_1 f - \kappa^2 {{\rm{d}}_{r} \mathfrak{R}_0 g} ,\\
\partial_t q + \partial_r \mathfrak{R}_1 f + \mathfrak{R}_0 g = \underline{\dot{q}} K.
\end{cases}
\quad \forall r \in (R, + \infty),
\end{equation}
where the dispersive boundary layer $K$ is defined in Lemma \ref{lem:R0Homog}.
\end{lemma}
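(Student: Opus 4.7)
The plan is to prove both directions of the equivalence separately, the common backbone being the inversion of the dispersive operator $(1-\kappa^2\partial_r{\rm d}_r)$ via $\mathfrak R_0$ combined with the commutation identities of Lemmata \ref{lem:commut1}--\ref{lem:commut2}.

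For the forward direction (system \eqref{BAinit1}--\eqref{BAinit} $\Rightarrow$ system \eqref{eq:3.34}), I would first establish the second equation of \eqref{eq:3.34}. The key subtlety is that $\partial_t q$ need not vanish at $r=R$, so $\mathfrak R_0$ cannot be applied directly to \eqref{BAinit}. Following the boundary-layer trick, I introduce the corrector $\underline{\dot q}\,K$, where $K$ is the dispersive boundary profile of Lemma \ref{lem:R0Homog}; since $(1-\kappa^2\partial_r{\rm d}_r)K=0$ and $\underline K=1$, the shifted function $\partial_t q-\underline{\dot q}\,K$ has zero trace at $r=R$ and satisfies $(1-\kappa^2\partial_r{\rm d}_r)(\partial_t q-\underline{\dot q}\,K)=-\partial_r f-g$. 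Applying $\mathfrak R_0$ and then the commutation $\mathfrak R_0\partial_r=\partial_r\mathfrak R_1$ from Lemma \ref{lem:commut1} yields directly the second line of \eqref{eq:3.34}.

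To obtain the first line of \eqref{eq:3.34} (the \emph{hidden equation}), I would differentiate \eqref{BAinit1} in time to obtain $\ddot\zeta=-{\rm d}_r\dot q$, substitute the expression for $\dot q$ just derived, apply ${\rm d}_r$ and multiply by $\kappa^2$. The algebraic identity $\kappa^2{\rm d}_r\partial_r\mathfrak R_1 f=\mathfrak R_1 f-f$, which is just the defining relation of $\mathfrak R_1$, collapses the result into the claimed equation linking $\kappa^2\ddot\zeta$ to $f$, $\mathfrak R_1 f$, $\mathfrak R_0 g$ and the boundary contribution $\kappa^2\,{\rm d}_r K\,\underline{\dot q}$.

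For the reverse direction (system \eqref{eq:3.34} $\Rightarrow$ system \eqref{BAinit1}--\eqref{BAinit}), I would apply $(1-\kappa^2\partial_r{\rm d}_r)$ to the second line of \eqref{eq:3.34}: the term $\underline{\dot q}\,K$ is annihilated, $(1-\kappa^2\partial_r{\rm d}_r)\mathfrak R_0 g=g$ by definition of $\mathfrak R_0$, and $(1-\kappa^2\partial_r{\rm d}_r)\partial_r\mathfrak R_1 f=\partial_r f$ by the commutation of Lemma \ref{lem:commut1} combined with the defining property of $\mathfrak R_0$. This recovers \eqref{BAinit}. For the conservation law \eqref{BAinit1}, I would take ${\rm d}_r$ of the second line, again use $\kappa^2{\rm d}_r\partial_r\mathfrak R_1 f=\mathfrak R_1 f-f$, and compare with the first line of \eqref{eq:3.34}; this produces $\partial_t\bigl(\partial_t\zeta+{\rm d}_r q\bigr)=0$, so the conservation law holds for all times provided it is imposed initially. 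The main bookkeeping obstacle is the careful tracking of the boundary corrector $\underline{\dot q}\,K$ on both sides of the equivalence, and in the reverse direction the need for an initial compatibility condition $\partial_t\zeta+{\rm d}_r q=0$ at $t=0$ to upgrade the time-derivative of the conservation law to the conservation law itself.
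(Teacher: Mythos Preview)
Your proposal is correct and follows essentially the same route as the paper. The forward direction is identical: subtract the boundary corrector $\underline{\dot q}\,K$ so that $\mathfrak R_0$ applies, use the commutation $\mathfrak R_0\partial_r=\partial_r\mathfrak R_1$ to obtain the second line of \eqref{eq:3.34}, then apply $-\kappa^2{\rm d}_r$ together with $\partial_t\zeta=-{\rm d}_r q$ and the defining identity $\kappa^2{\rm d}_r\partial_r\mathfrak R_1 f=\mathfrak R_1 f-f$ to obtain the first line.

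For the reverse direction there is a slight difference worth noting. The paper simply \emph{imposes} the conservation law $\partial_t\zeta+{\rm d}_r q=0$ and then recovers \eqref{BAinit} from the first line of \eqref{eq:3.34} by integrating in $r$ and applying $(1-\kappa^2\partial_r{\rm d}_r)$. You instead recover \eqref{BAinit} directly from the second line of \eqref{eq:3.34} (which is cleaner, since that line already contains $\partial_t q$), and then derive the conservation law by comparing ${\rm d}_r$ of the second line with the first line, obtaining $\partial_t(\partial_t\zeta+{\rm d}_r q)=0$ and invoking an initial compatibility. Your version is arguably more transparent about what is actually being proved: the equivalence holds only modulo the initial constraint $\partial_t\zeta+{\rm d}_r q=0$ at $t=0$, a point the paper's ``force'' phrasing leaves implicit but which is made explicit elsewhere (see the compatibility conditions \eqref{CondCompat}).
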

In 1D in \cite{BeckLannes}, $K$ is replaced by an exponential and the same kind of lemma stands. 
The radial Boussinesq-Abbott system {\rm\hyperlink{BAr}{(BAr)}} is just the system \eqref{BAinit1}--\eqref{BAinit} with specific choice 
$$
f = \zeta + \eps \left( \frac{\zeta^2}{2} + \frac{q^2}{h} \right) - \nu \dr q,
$$
and
$$
g = \eps \frac{q^2}{rh} - (\nu \partial_r(\ln(h)) + \partial_r  \nu  )\dr q.
$$ 
In the 1D case with $\nu = 0$ (see \cite{BeckLannes}), one has $g = 0$ and $f = \zeta + \eps \left( \frac{\zeta^2}{2} + \frac{q^2}{h} \right)$. 
Indeed the nonviscous term corresponding to $g$ in the radial Boussinesq-Abbott system {\rm \hyperlink{BAr}{(BAr)}} comes from the radial effect in the definition of ${\rm{d}}_r$. 
However because of the dispersive operators the system of equation \eqref{eq:3.34} is nonlocal, and requires the computation of the quantities in the domain $r> R$, contrary to the modified Newton's equation \eqref{AddedMass}. 
\begin{proof}
Applying $\mathfrak{R}_0$ to equation \eqref{BAinit} for $r > R$, it leads to
\begin{equation}\label{traceproof1}
\partial_t q - \underline{\partial_t q} K + \mathfrak{R}_0 \partial_r f + \mathfrak R _0 g = 0,
\end{equation}
where $\underline{\partial_t q} K$ belongs to the kernel of the operator $(1-\kappa^2 \partial_r \dr)$ with homogeneous Dirichlet boundary condition (see Lemma \ref{lem:R0Homog}). Then applying Lemma \ref{lem:commut1} to equation \eqref{traceproof1} gives the wanted equation
\begin{equation}\label{traceproof2}
\partial_t q + \partial_r \mathfrak{R}_1 f  + \mathfrak R _0 g = \underline{\dot{q}} K.
\end{equation}
The next step is to apply $-\kappa^2 {\rm{d}}_{r}$ on \eqref{traceproof2} and to use the fact that $\partial_t \zeta + \dr q = 0$ to get
\begin{equation}\label{traceproof3}
\kappa^2 {\ddot{\zeta}} +  f + \kappa^2 {{\rm{d}}_{r} K} \underline{\dot{q}} =   \mathfrak{R}_1 f -\kappa^2 {{\rm{d}}_{r} \mathfrak{R}_0 g}.
\end{equation}
To come back to the system \eqref{BAinit1}, \eqref{BAinit} one can force $\partial_t \zeta + {\rm d}_{r} q = 0$ and integrate equation \eqref{traceproof3} in space. Then applying $(1 - \kappa^2 \partial_r {\rm{d}}_{r})$ brings to \eqref{BAinit}. 
\end{proof} 
The final step is to use this lemma over the radial Boussinesq Abbott system {\rm \hyperlink{BAr}{(BAr)}} to obtain an ODE on the trace satisfied by the surface elevation. Afterwards we will combine it with the modified Newton's equation \eqref{AddedMass} to show that the displacement of the solid $\delta$ and the surface elevation $\zeta$ satisfy an ODE of order 2 with a nonlocal term due to the dispersion. \\
\indent Let $(\zeta, q)$ be a solution of the radial Boussinesq-Abbott system {\rm \hyperlink{BAr}{(BAr)}} and $(\delta, \dot{\delta})$ be a solution of the added mass equation \eqref{AddedMass} such that the transmission condition \eqref{eq:ConsMass} is satisfied.
Applying Lemma \ref{lem:HidEq} leads to the \textit{hidden equation}, defined for $r > R$ as
\begin{align*}\label{eq:HidEq}
\kappa^2 {\ddot{\zeta_{\rm e}}} + \zeta_{\rm e} + \eps \left( \frac{\zeta_{\rm e}^2}{2} + \frac{q_{\rm e}^2}{h_{\rm e}} \right) - \nu \dr q_{\rm e} + \kappa^2 {{\rm{d}}_{r} K} \underline{\dot{q}_{\rm e}} = & \mathfrak{R}_1 \left (\zeta_{\rm e} + \eps \left( \frac{\zeta_{\rm e}^2}{2} + \frac{q_{\rm e}^2}{h_{\rm e}} \right) - \nu \dr q_{\rm e} \right )\\
& - \kappa^2 {{\rm{d}}_{r} \mathfrak{R}_0 \left (\eps \frac{q_{\rm e}^2}{rh_{\rm e}} - (\nu \partial_r(\ln(h_{\rm e})) + \partial_r \nu   )\dr q_{\rm e}\right )}.
\end{align*}
Furthermore, through the transmission condition \eqref{eq:ConsMass}, the boundary condition reads $\underline{\partial_t q_{\rm e}} = -\frac{R}{2}\ddot{\delta}$. From this we obtain a second equation linking $\ddot{\delta}$ to other quantities of interest and in particular to $\kappa^2 \underline{\ddot{\zeta}_{\rm e}}$.
In particular, taking the previous equation at the trace $r \to R$ leads to the following lemma.
\begin{lemma}\label{lem:EqTrace}
Let $(\zeta, q)$ be a solution of the radial Boussinesq-Abbott system {\rm \hyperlink{BAr}{(BAr)}} and $(\delta, \dot{\delta})$ be a solution of the added mass equation \eqref{AddedMass}, such that the transmission condition \eqref{eq:ConsMass} is satisfied. Then the following equation is satisfied
\begin{equation}\label{EqTrace}
\kappa^2 \underline{\ddot{\zeta}_{\rm e}} + \nu \underline{ \dot{\zeta}_{\rm e} } + \underline{\zeta_{\rm e}} + \eps \underline{\mathfrak{f}} = \underline{f_{\rm{hyd}}} - \kappa \ddot{\delta} G(R),
\end{equation}
with the following quantities:
\begin{itemize}
\item $\mathfrak{f} := \frac{\zeta_{\rm e}^2}{h_{\rm e}} + \frac{|q_{\rm e}|^2}{h_{\rm e}} $,
\item $f_{\rm{hyd}} := \mathfrak{R}_1 \left( \zeta_{\rm e} - \nu \dr q_{\rm e} + \varepsilon \mathfrak{f} \right) + \varepsilon \kappa^2 {\rm{d}}_{r} \mathfrak{R}_0 \left( \frac{q_{\rm e}^2}{r h_{\rm e}} \right) - \nu \kappa^2{\rm{d}}_{r} \mathfrak{R}_0 \partial_r \left( \rm{ln}(h)\right) {\rm{d}}_{r} q$,
\item $G : r \mapsto \frac{R}{2} \frac{K_0(r/\kappa)}{K_1(R/\kappa)}.$
\end{itemize}
\end{lemma}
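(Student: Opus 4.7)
The plan is to cast the exterior Boussinesq--Abbott equation in the general form required by Lemma \ref{lem:HidEq}, use that lemma to obtain the hidden equation on $(R,+\infty)$, and then pass to the trace $r\to R^+$ combining the transmission condition \eqref{eq:ConsMass} with an explicit computation of $\underline{{\rm d}_r K}$ via the modified Bessel functions.

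First, I would set $P_{\rm e}=0$ in \eqref{BAr:eq2} and rewrite each nonlinear/dispersive term so as to isolate a perfect $\partial_r$ derivative from a bounded remainder. Using $h\partial_r\zeta=\partial_r(\zeta+\varepsilon\zeta^2/2)$, $\varepsilon\,{\rm d}_r(|q|^2/h)=\varepsilon\,\partial_r(|q|^2/h)+\varepsilon |q|^2/(rh)$, and $\nu h\partial_r(\dr q/h)=\nu\partial_r(\dr q)-\nu(\dr q)\partial_r(\ln h)$, the momentum equation reads
$$(1-\kappa^2\partial_r{\rm d}_r)\partial_t q_{\rm e}+\partial_r f_{\rm e}+g_{\rm e}=0,$$
with $f_{\rm e}=\zeta_{\rm e}+\varepsilon\bigl(\tfrac{\zeta_{\rm e}^2}{2}+\tfrac{|q_{\rm e}|^2}{h_{\rm e}}\bigr)-\nu\,\dr q_{\rm e}$ and $g_{\rm e}=\varepsilon\tfrac{|q_{\rm e}|^2}{r h_{\rm e}}+\nu(\dr q_{\rm e})\partial_r(\ln h_{\rm e})$, which is exactly the structure needed to apply Lemma \ref{lem:HidEq}. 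Applying that lemma yields the hidden equation displayed in the text.

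Next, I would take the trace at $r=R^+$. The transmission condition \eqref{eq:ConsMass} combined with \eqref{Qint} gives $\underline{q_{\rm e}}=-\tfrac{R}{2}\dot\delta$, hence $\underline{\dot q_{\rm e}}=-\tfrac{R}{2}\ddot\delta$; this is what turns the boundary-layer contribution $\kappa^2\underline{{\rm d}_r K}\,\underline{\dot q_{\rm e}}$ into a term involving $\ddot\delta$. To identify $G(R)$, one uses $K(r)=K_1(r/\kappa)/K_1(R/\kappa)$ from Lemma \ref{lem:R0Homog} together with the Bessel recurrence $K_1'(z)=-K_0(z)-K_1(z)/z$, from which
$$\underline{{\rm d}_r K}=\underline{\partial_r K}+\frac{\underline{K}}{R}=\frac{K_1'(R/\kappa)}{\kappa K_1(R/\kappa)}+\frac{1}{R}=-\frac{K_0(R/\kappa)}{\kappa K_1(R/\kappa)}.$$
Thus $\kappa^2\underline{{\rm d}_r K}\,\underline{\dot q_{\rm e}}=\kappa\,\ddot\delta\,G(R)$ with $G(R)=\tfrac{R}{2}K_0(R/\kappa)/K_1(R/\kappa)$, which we move to the right-hand side.

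Finally, using the continuity equation $\partial_t\zeta_{\rm e}+\dr q_{\rm e}=0$ to replace $-\nu\,\underline{\dr q_{\rm e}}$ by $\nu\underline{\dot\zeta_{\rm e}}$, the trace of the hidden equation rearranges into
$$\kappa^2\underline{\ddot\zeta_{\rm e}}+\nu\underline{\dot\zeta_{\rm e}}+\underline{\zeta_{\rm e}}+\varepsilon\underline{\mathfrak f}=\underline{f_{\rm hyd}}-\kappa\ddot\delta\,G(R),$$
with $\mathfrak f$ and $f_{\rm hyd}$ identified by collecting the local nonlinear terms and the terms coming from $\mathfrak{R}_1 f_{\rm e}$ and $\kappa^2\dr\mathfrak{R}_0 g_{\rm e}$ respectively. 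The main obstacle is the Bessel computation showing $\kappa\underline{{\rm d}_r K}=-K_0(R/\kappa)/K_1(R/\kappa)$: the $1/R$ coming from the geometric part of $\dr$ must cancel exactly with the $1/R$ coming from the recurrence relation of $K_1'$, which is what allows the boundary-layer contribution to be repackaged cleanly as the added-mass type term $\kappa\ddot\delta\,G(R)$; the rest is bookkeeping.
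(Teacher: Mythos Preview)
Your proposal is correct and follows essentially the same route as the paper: identify the specific $f$ and $g$ so that the exterior momentum equation fits the framework of Lemma \ref{lem:HidEq}, apply that lemma, use the transmission condition \eqref{eq:ConsMass} together with \eqref{Qint} to replace $\underline{\dot q_{\rm e}}$ by $-\tfrac{R}{2}\ddot\delta$, and take the trace at $r=R$. The paper's own argument is exactly this, stated more tersely (the text immediately preceding Lemma \ref{lem:EqTrace} records the hidden equation on $(R,\infty)$ and then simply says ``taking the previous equation at the trace $r\to R$'' gives the result). Your explicit Bessel computation showing $\kappa\,\underline{{\rm d}_r K}=-K_0(R/\kappa)/K_1(R/\kappa)$ via the recurrence $K_1'(z)=-K_0(z)-K_1(z)/z$, with the geometric $1/R$ from $\dr$ cancelling the $1/R$ from the recurrence, is a detail the paper leaves implicit but which is indeed the key identity behind the definition of $G$.
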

It is important to mention that on one hand $f_{\rm hyd}$ is defined as a non-local term which requires the computation of $(\zeta_{\rm e},q_{\rm e})$ in the full exterior domain $r > R$. On the other hand $\underline{\mathfrak{f}}$ only depends on $\delta$, $\dot{\delta}$, $\underline{\zeta_{\rm e}}$ and $\underline{\dot{\zeta_{\rm e}}}$. Indeed, because of the conservation of the mass and \eqref{Qint}, one has $\underline{q_{\rm e}} = \underline{q_{\rm i}} = - \frac{R}{2}\dot{\delta}$. The quantity $G$ represents the dispersive boundary layer effect and is defined such that $G(R)$ is a positive quantity. Indeed $G(r) = \mathcal{O}(r^{-1/2}e^{-r}) $ when $r \to \infty$. This makes the decreasing on $G$ even faster than the boundary layer in \cite{BeckLannes} which decreases exponentialy.
 
Coupling this to the modified Newton's equation \eqref{AddedMass} gives the system of ODEs, called SHODE (system of hidden ODE, see \cite{Beck-Martaud-disp}):
\begin{equation}\label{eq:ODETrace}
\begin{cases}
\tau_\kappa^2(\varepsilon \delta)\ddot{\delta} + \delta  + \nu \dot{\delta}-  \varepsilon a( \delta, \underline{\zeta}) \, \dot{\delta}^2= \kappa^2 \underline{\ddot{\zeta}} + \nu\underline{\dot{\zeta}} + \underline{\zeta} + F_{\rm ext}, \\
\kappa^2 \underline{\ddot{\zeta}_e} + \nu \underline{\dot\zeta} +\underline{\zeta} + \eps \underline{\mathfrak{f}} = \underline{f_{\rm{hyd}}} - \kappa \ddot{\delta} G(R),
\end{cases}
\end{equation}
where the added-mass $\tau_\kappa^2(\varepsilon \delta)$ is defined in Proposition \ref{prop:AddedMass}.
Also to simplify the notations we note $G := G(R)$. 
It is easy to see that this system of ODEs can be written with the SHODE variable $Z = (\delta, \dot{\delta}, \underline{\zeta}, \underline{\dot{\zeta}})^T$. If we write $f_{\rm hyd} = \mathbf{f}_{\rm hyd} + \eps \mathtt{f}_{\rm hyd}$, with $\mathbf{f}_{\rm hyd}$ linear in $\zeta, q$ and $\mathtt{f}_{\rm hyd}$ nonlinear in $\zeta, q$, one has
\begin{equation}\label{eq:ODEOper}
\mathcal{M} \partial_t Z + \mathcal{T} Z = P\left(\underline{\textbf{f}_{\rm hyd}}, F_{\rm ext} \right) + \varepsilon \tilde{P}\left(Z, \underline{\mathtt{f}_{\rm hyd}}, F_{\rm ext}\right),
\end{equation}
with 
$$
\mathcal{M} = \begin{pmatrix}
 1& 0 & 0 & 0 \\
 \nu & \tau_\kappa^2(0) & -\nu & -\kappa^2 \\
 0& 0 & 1 & 0 \\
 0& \kappa G & \nu & \kappa^2 \\
\end{pmatrix},
\quad
\mathcal{T} = 
\begin{pmatrix}
 0& -1 & 0 & 0 \\
 1& 0 & -1 & 0 \\
 0& 0 & 0 & -1 \\
 0& 0 & 1 & 0 \\
\end{pmatrix},$$
$$
P\left(\underline{f_{\rm hyd}}, F_{\rm ext} \right) = \begin{pmatrix}
0 \\
F_{\rm ext}\\
0\\
\underline{\mathbf{f}_{\rm hyd}}
\end{pmatrix},
\quad \text{and} \quad
\tilde{P}(Z, \underline{f_{\rm hyd}}, F_{\rm ext}) = \begin{pmatrix}
0 \\
\gamma(Z, \underline{f_{\rm hyd}}, F_{\rm ext}) \\
0 \\
\underline{\mathtt{f}_{\rm hyd}} \\
\end{pmatrix},$$
where $\gamma$ is defined in appendix \ref{app:LinAddM}.
 
From now on we introduce the augmented variable $(u, Z)$. With respect to our notations $u$ stands for the wave variables in the exterior domain $u = (\zeta_{\rm e}, q_{\rm e})$ and $Z =(\delta, \dot{\delta}, \underline{\zeta_{\rm e}}, \underline{\dot{\zeta}_{\rm e}})$ is the variable of the SHODE. Now we dispose  of a system of equations coupled with a system of ODEs at the boundary. It totally describes the behavior of the waves-structure system. It is summed up in the next Subsection \ref{sec:AugFor}.
 
\subsection{Augmented formula}\label{sec:AugFor}
From now on we study the problem in the exterior domain $r > R$ and for any quantity $u$ we use the notation $u$ rather than $u_{\rm e}$.\newline
\indent Now we define the \textit{Augmented formula}. Firstly it combines the conservative system of PDEs plus bounded perturbations given by the application of the dispersive operator $\mathfrak{R}_0$ on the radial Boussinesq-Abbott {\rm \hyperlink{BAr}{(BAr)}}, and secondly the SHODE \eqref{eq:ODEOper}. Those systems of equations link the wave variables and the SHODE variables. Therefore it describes all the quantities of the problem in the exterior domain, and to reconstruct the different quantities in the inner domain, through $\delta$ and equation on the pressure \eqref{eq:pi(r)}.  
\begin{defn}[Augmented formula]
The augmented variables $(u,Z)$ are solutions of the following system 
\begin{equation}\label{eq:AUG}
\begin{cases}
\partial_t u + \partial_r \mathcal{F}_\kappa \left[ u \right] = \mathcal{D}(\underline{f_{\rm hyd}}, Z) \mathcal{S}_{\kappa}(r) + \mathcal{S}_{r}(u) + \eps \nu \mathcal{S}_{\nu}(u) \quad & \text{ for } r  > R,\\ 
 \underline{u} =  (\underline{\zeta}, -\frac{R}{2}\dot{\delta}) & \text{ for } r = R,\\
\mathcal{M} \partial_t Z + \mathcal{T} Z = P\left(\underline{f_{\rm hyd}}, F_{\rm ext}\right) + \varepsilon \tilde{P}\left(Z, \underline{f_{\rm hyd}}, F_{\rm ext}\right) \quad  & \text{ for } t > 0,
\end{cases} \tag{AUG}
\end{equation}
with initial conditions
\begin{equation}
\begin{cases}
u_{\vert_{t=0}}= (\zeta_0, q_0) & \text{ for } r > R, \\
Z_{\vert_{t=0}} = (\delta_0, \dot\delta_0, \underline{\zeta_0}, \underline{\dot\zeta_0}), \\
\underline{u}_{\vert_{t=0}} =  (\underline{\zeta_0}, -\frac{R}{2}\dot{\delta}_0) & \text{ for } r = R.
\end{cases}
\end{equation}
where:
\begin{itemize}
\item $f_{\rm hyd}$ is defined previously in Lemma \ref{lem:EqTrace} as 
\begin{equation}\label{eq:fhyd}
f_{\rm{hyd}} := \mathfrak{R}_1 \left( \zeta - \nu \dr q + \varepsilon ( \frac{\zeta^2}{2} + \frac{q^2}{h} ) \right) + \varepsilon \kappa^2 {\rm{d}}_{r} \mathfrak{R}_0 \left( \frac{q^2}{r h} \right) + \varepsilon \nu {\rm{d}}_{r} \mathfrak{R}_0 \partial_r \left( \rm{ln}(h)\right) {\rm{d}}_{r} q,
\end{equation}
\item $\mathcal{D}(\underline{f_{\rm hyd}}, Z) := \frac{1}{\tau_{\kappa}^2(0)+G} \left(\eps \gamma + \underline{f_{\rm{hyd}}} - \eps \underline{\mathfrak f} - \delta -  \nu \dot{\delta} + F_{\rm ext} \right)$ with $\gamma$ defined in appendix \ref{app:LinAddM},
\item the flux is
 $$\mathcal{F}_\kappa\left[ u \right] = \begin{pmatrix} q\\
\mathfrak{R}_1 (\zeta - \nu\dr q + \varepsilon \mathfrak f)
\end{pmatrix},$$
\item the boundary layer effect (due to the dispersion) is
 $$\mathcal{S}_{\kappa}(r) = \begin{pmatrix}
0 \\
-\frac{R}{2}K(r)\end{pmatrix},$$ 
\item the viscous term is
$$\mathcal{S}_{\nu}(u) =
\begin{pmatrix}
0\\
-\mathfrak R _0 \left( \partial_r \frac{\left( \rm{ln}(h)\right)}{\eps} {\rm{d}}_{r} q \right)
\end{pmatrix},$$
\item the radial term (due to the 2D situation) is
$$\mathcal{S}_{r}(u) =- \begin{pmatrix}
\frac{q}{r}\\
\eps \mathfrak R _0 \left[\frac{q^2}{rh} \right]
\end{pmatrix},$$
\item the terms of the ODE $\mathcal{M}$, $\mathcal{T}$, $P$ and $\tilde{P}$ are defined previously in equation \eqref{eq:ODEOper}.
\end{itemize}

\end{defn}
Now that the Augmented formula is defined one can add compatibility conditions on the initial conditions to fit with the initial problem: if the initial conditions satisfy compatibility conditions
\begin{equation}\label{CondCompat}
\begin{cases}
\zeta_{0}( r = R ) = \underline{\zeta_{0}}, \\
\underline{\dr q_{0}} = -\underline{\dot{\zeta}_{0}},
\end{cases}
\end{equation}
then any solution of \eqref{eq:AUG} is a solution of {\rm \hyperlink{BAr}{(BAr)}}. Thanks to the previous results, we proved Theorem \ref{thmf:Aug}. 
\begin{remark}\label{rk:DefAugF}
This Augmented formula is exactly the one described in Theorem \ref{thmf:Aug}, with the notations:
\begin{itemize}
\item $\tilde{\mathfrak{F}}_\kappa[u, Z] = \mathcal{F}_\kappa[u] - \mathcal{D}\left (\underline{f_{\rm hyd}}, Z\right )\begin{pmatrix}
0 \\
\frac{\kappa R}{2}\frac{K_0(r/\kappa)}{K_1(R/\kappa)}\end{pmatrix}$, where $\kappa \partial_r K_0(r/\kappa) = - K_1(r/\kappa)$,
\item $\mathcal{I}_{\rm hyd} u = \underline{f_{\rm hyd}}$,
\item $\tilde{\mathcal{D}}\left ( Z, \mathcal{I}_{\rm hyd} u, F_{\rm ext}\right ) = \mathcal{M}^{-1} \left( \mathcal{T} Z - P(\mathcal{I}_{\rm hyd} u, F_{\rm ext}) - \eps \tilde{P}\left ( Z, \mathcal{I}_{\rm hyd} u, F_{\rm ext}\right )\right )$.
\end{itemize}
\end{remark}
The operators $S_r$ and $\nu \eps S_\nu$ can be seen as \textit{bounded perturbations} of the main operator $\partial_r \mathcal{F}_\kappa$. In the 1D situation \cite{BeckLannes} the radial term $S_{r}$ is null. Therefore in 1D without the viscosity, $\nu S_{\nu} = S_r = 0$ and the Augmented formula describes a conservative equation.
The term $ S_{\nu}$ only contains the smaller part of what is brought by the viscosity. Indeed $\ln(h)$ is of order $\eps$ and this is why we separated this term from the one in the flux $\mathcal{F}_\kappa$. 
\begin{remark}\label{rk:AUGdr}
We decided to write the main operator as $\partial_r \mathcal{F}_\kappa$, but one could also write the Augmented formula with a main operator of the form ${\rm d}_r \breve{\mathcal{F}}_\kappa$. If we do so we obtain different perturbations, but it will still be bounded on $[R, +\infty)$. 
\end{remark}
This formula gives several results. Firstly, one has the wellposedness of the waves-structure system, given by Theorem \ref{thmf:WP}. Furthemore in the linear case, one finds out that $\delta \in H^2(\mathbb{R}^+)$, and we can study its asymptotic behavior in a certain case (see Theorem \ref{thmf:RTTE}). Moreover, the Augmented formula allows to compute numerical simulations, as it was done in 1D in \cite{BeckLannesWeynans}. Future work will examine this issue. 
 
\section{Wellposedness of the Augmented formula}\label{sec:WP}
 
We assume that Assumption \ref{hyp-system} is satisfied. We define the weighted space $H^2_\kappa$ as the Sobolev space $H^2_r$ equipped with the norm 
$$
||\cdot||_{H^{2}_\kappa}^2 := ||\cdot||_{H^1_r}^2 + \kappa^2 || \partial_r {\rm{d}}_{r} \cdot||_{L^2_r}^2.
$$
 
We work with the augmented variables $u = (\zeta, q)$ and $Z = (\delta, \dot{\delta}, \underline{\zeta_{\rm e}}, \underline{\dot{\zeta}_{\rm e}})$.
To show Theorem \ref{thmf:WP} we use a different method for each point of the theorem.
\begin{itemize}
\item {\it{(ODE type iterative scheme)}}
Firstly, the Augmented formula \eqref{eq:AUG} is treated as an ODE system of type 
$$ \partial_t V = B[V],$$
where $V = \left( \zeta, q, \kappa{\rm{d}}_{r} q, \delta, \dot{\delta}, \underline{\zeta}, \kappa\underline{\dot{\zeta}}\right)$ and $ B : \mathscr{C}_t^0\left((L^\infty_r)^3 \times \mathbb{R}^4\right) \to \mathscr{C}_t^0\left((L^\infty_r)^3 \times \mathbb{R}^4\right)$ is a nonlinear locally Lipschitz field. Here, thanks to the local Cauchy-Lipschitz theorem, one obtains a time existence of order $\mathcal{O}(T_{\rm ODE})$, where $T_{\rm ODE}$ is defined in Theorem \ref{thmf:WP} and is of the form $\eps^{-1}\kappa^2$. Now we want to obtain a time existence with weaker power of $\kappa$. To do so one needs higher regularity on initial conditions.
\item {\it{(Semi-linear type iterative scheme)}} 	
In a second step the wellposedness is shown with the following iterative scheme
\begin{equation}
\begin{cases}
\begin{pmatrix}
1 & 0 \\ 0 & (1-\kappa^2 \partial_r \rm{{\rm{d}}_{r}})
\end{pmatrix} \partial_t u^{k+1} + \begin{pmatrix}
0 & \rm{{\rm{d}}_{r}} \\
\partial_r & 0
\end{pmatrix} u^{k+1} = \varepsilon \partial_r \mathcal F[u^{k}] + \varepsilon \mathcal G[u^{k}] + S(Z^k), \\
\underline q^{k+1} = - \frac{R}{2}\dot\delta^{k+1}, \\
u_{|_{t = 0}} = u_0.
\end{cases}
\end{equation}
$\mathcal{F}$ and $\mathcal{G}$ are nonlinear operators of rational fraction type. This system is coupled with a SHODE like system, giving an ODE of type 
\begin{equation}
\partial_t Z^{k+1} + \mathfrak{D}(u^{k+1}, Z^{k+1}) + \eps \mathfrak{N}(u^k, Z^k) = 0, 
\end{equation}
where $\mathfrak{D}$ is a nonlocal but linear operator and $\mathfrak{N}$ is a nonlocal and nonlinear operator.
The key step is to obtain energy estimates for the linear system, i.e. the system above where nonlinearities have been replaced by bounded perturbations. 
It is modeled in the iterative scheme by the fact that nonlinearities are considered at step $k$ whereas linearities are determined at step $k+1$.
The energy in question is equivalent to $|| \cdot ||^2_{H^2_\kappa}$.
Unfortunately with this method, it is impossible to obtain energy estimates for the linear system which are uniform with respect to $\kappa$. 
Afterwards we will use those energy estimates with a Picard fixed point type argument to obtain an estimate of the existence time of the wellposedness which is better than with the ODE scheme. 
Finally, because of the boundary conditions, this time existence depends on the relation between $\kappa$ and $\varepsilon$, and because of the radiation the Helmholtz number will also have an impact in the time existence through $R$. 
\item {\it{(Quasi-linear type iterative scheme.)}}
Another method would be to treat the problem with its quasilinear formulation
\begin{equation*}
\begin{pmatrix}
1 & 0 \\ 0 & (1-\kappa^2 \partial_r {\rm{d}}_{r})
\end{pmatrix}  \partial_t u^{k+1} + A\left[u^{k}\right] \begin{pmatrix}
0 & \rm{{\rm{d}}_{r}} \\
\partial_r & 0
\end{pmatrix}u^{k+1} = 0.
\end{equation*}
This one is described in \cite{BeckLannes} in 1D. 
We will not do this case but we can suppose that in our case we also reach an existence time uniform in $\kappa$ independently of the relation between $\varepsilon$ and $\kappa$.
Therefore this hypothetical time existence is larger than the time existence of the semi-linear type scheme.
\end{itemize}
 
To show the different estimates, one needs the Moser-type inequalities: for any $n \in \mathbb{N}$ and $u, v \in H^n_r$, one has
\begin{equation}\label{eq:MTineq}
\| u v \|_{H^n_r} \lesssim \|u\|_{L^\infty_r} \| v\|_{H^n_r} + \|v\|_{L^\infty_r} \| u\|_{H^n_r}.
\end{equation}
 
\subsection{The ODE's perspective}\label{sec:estimODE}
We can formulate the Augmented formula \eqref{eq:AUG} as an ODE system of 7 variables. Indeed the dispersive operator $\mathfrak{R}_0$ allows to write that any solution $(\zeta, q)$ of the radial Boussinesq-Abbott {\rm \hyperlink{BAr}{(BAr)}} satisfies
\begin{equation}\label{eq:estim_3EDO}
\begin{cases}
\partial_t \zeta + {\rm{d}}_{r} q = 0,\\
\partial_t q + \partial_r \mathfrak R _1 \left( \zeta - \nu {\rm{d}}_r q\right) + \eps \partial_r \mathfrak R _1 \mathfrak{f} + \mathfrak R _0 \left( \varepsilon \frac{q^2}{rh} - \nu \partial_r \ln(h) {\rm{d}}_r q \right) = -\frac{R}{2}\ddot{\delta} K, \\
\kappa^2 \partial_t {\rm{d}}_r q + \nu {\rm{d}}_r q - \zeta - \eps \mathfrak{f} + f_{\rm{hyd}} = \kappa \ddot{\delta} G,
\end{cases}
\end{equation}
where $\mathfrak{f}$ stands for $\frac{\zeta^2}{2} + \frac{q^2}{h}$, $K$ is defined by Lemma \ref{lem:R0Homog}, $G$ in Lemma \ref{lem:EqTrace} and the non-local term $f_{\rm{hyd}}$ is given by \eqref{eq:fhyd}.
Also thanks to what was done in Section \ref{sec:AugFor}, this system is coupled with the SHODE
\begin{equation}\label{eq:ode}
\begin{cases}
\tau^2_\kappa (0)  \ddot{\delta} + \nu\dot\delta + \delta = \kappa^2 \underline{\ddot{\zeta}} + \nu\underline{\dot{\zeta}} +  \underline{\zeta} + \eps \gamma + F_{\rm ext}, \\
\kappa^2 \underline{\ddot{\zeta}} + \nu \underline{\dot\zeta} +\underline{\zeta} + \eps \underline{\mathfrak{f}} = \underline{f_{\rm{hyd}}} - \kappa \ddot{\delta} G(R).
\end{cases}
\end{equation}
Therefore systems \eqref{eq:estim_3EDO} and \eqref{eq:ode} are coupled and we can treat the problem as if $\kappa\dr q$ was a seventh variable.
It leads to consider the problem as an ODE system 
\begin{equation}\label{eq:ODEgene}
\begin{cases}
\partial_t V = M^{-1}\mathcal{B}[V], \\
V_{|_{t = 0}} = V_0.
\end{cases}
\end{equation}
of unknown $V = \left(
\zeta,
q,
\kappa{\rm{d}}_{r} q,
\delta,
\dot{\delta},
\underline{\zeta},
\kappa\underline{\dot{\zeta}}
\right)$, with $ M$ the diagonal matrix of diagonal $\left( 1, 1, \kappa, 1,  \tau^2_\kappa(0), 1, \kappa\right)$ and the operator
$$
\mathcal{B}[V] = 
\begin{pmatrix}
- {\rm{d}}_{r} q\\
- \left[ \partial_r \mathfrak R _1 \left( \zeta - \nu {\rm{d}}_r q + \eps \mathfrak{f} \right) + \mathfrak R _0 \left( \varepsilon \frac{q^2}{rh} - \nu \partial_r \ln(h) {\rm{d}}_r q \right) + \frac{R}{2}\mathcal{D} K \right]\\
\zeta - \nu {\rm{d}}_r q + \eps \mathfrak{f} - f_{\rm{hyd}} + \kappa \mathcal{D} G\\
\dot{\delta}\\
\mathcal{D}\\
\dot{\underline{\zeta}}\\
\underline{f_{\rm{hyd}}} - \kappa \mathcal{D}G(R) - \underline{\zeta} - \nu \underline{\dot{\zeta}} - \eps \underline{\mathfrak f}
\end{pmatrix},
$$
where $\mathcal{D}$ is given by $\mathcal{D} = \mathcal{D}(u, Z)  = (\tau_{\kappa}^2(0)+G)^{-1}\left(\eps \gamma + \underline{f_{\rm{hyd}}} - \eps \underline{\mathfrak f} - \delta -  \nu \dot{\delta} + F_{\rm ext} \right)$. Note that $\mathcal{D}$ is exactly $\ddot{\delta}$ but we want to underline that there is no term of second order derivative in $\mathcal{B}$.
 
As explained earlier we want to show the first point of Theorem \ref{thmf:WP}. In fact it is equivalent to the following proposition, according to Definition \ref{def:WP}.
\begin{proposition}\label{prop:WPODE}
Let $\eps \in [0, 1)$, $\kappa > 0$ and $\nu \geq 0$.
If the initial condition belongs to $(L^\infty_r)^3 \times \mathbb{R}^4$, then there exists $c >0$ independent of $\eps, \kappa, R$ such that the system \eqref{eq:ODEgene} is well-posed in $\mathscr{C}^0( [0, cT_{\rm ODE}), \left(L^\infty_r \right)^3 \times \mathbb{R}^{4} )$ with $T_{\rm ODE} = \frac{\kappa^2 \eps^{-1}}{(1+\nu \kappa^{-1})(1+\nu \kappa^{-1}+R^{-1})}$. 
\end{proposition}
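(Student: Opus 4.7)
The plan is to apply the classical Cauchy--Lipschitz theorem in the Banach space $X := (L^\infty_r)^3 \times \mathbb{R}^4$ to the system \eqref{eq:ODEgene}, viewed as $\partial_t V = F(V)$ with $F := M^{-1}\mathcal{B}$. I would show that $F$ is locally Lipschitz on the open subset
$$
\mathcal{U} := \{V \in X \mid 1+\varepsilon\zeta \geq h_{\min} > 0 \text{ and } 1 + \varepsilon\delta \geq h_{\min} > 0\},
$$
with a Lipschitz constant, on a fixed ball around the initial datum, of order $T_{\rm ODE}^{-1}$. The standard Picard iteration then yields a unique local solution on $[0, c T_{\rm ODE})$ with $c > 0$ independent of $\varepsilon, \kappa, R, \nu$.

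First I would check that $F$ is well-defined from $\mathcal{U}$ into $X$. The only delicate contributions come from (i) the non-local operators $\mathfrak R_0$ and $\mathfrak R_1$, which by Lemmas \ref{lem:OpDisp} and \ref{lem:R1} preserve $L^\infty_r$ with $\mathcal{O}(1)$ norm and satisfy $\|\partial_r \mathfrak R_i u\|_{L^\infty_r} \lesssim \kappa^{-1}\|u\|_{L^\infty_r}$; (ii) the traces entering $\underline{f_{\rm hyd}}$ and $\underline{\mathfrak f}$, well-defined since they act on the image of $\mathfrak R_i$; (iii) the radial factor $1/r$, controlled by $1/R$ on $[R,+\infty)$; (iv) the denominator $\tau_\kappa^2(0) + G$ of $\mathcal{D}$, bounded below by $\tau_{\rm buoy}^2 + \kappa^2 > 0$. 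Note that $\mathcal{D}$ encodes the value of $\ddot\delta$ produced by the SHODE, but appears as a zeroth-order quantity in $V$, so no second-order time derivative ever enters $\mathcal{B}$.

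The heart of the argument is the quantitative Lipschitz estimate $\|\mathcal B[V] - \mathcal B[\widetilde V]\|_X \leq L\,\|V - \widetilde V\|_X$. I would treat $\mathcal B$ line by line, applying Moser-type inequalities together with the mapping properties recalled above. The factors to be tracked are the following: $\partial_r \mathfrak R_1 \zeta$ costs $\kappa^{-1}$; the viscous contribution $\nu\,\partial_r \mathfrak R_1(\dr q)$ costs $\nu \kappa^{-2}$, because $\dr q$ is recovered from the third component $\kappa\dr q$ with a factor $\kappa^{-1}$; the radial term $\mathfrak R_0(q^2/(rh))$ contributes $R^{-1}$; and all genuinely nonlinear terms carry an extra $\varepsilon$. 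Pre-multiplication by $M^{-1}$, whose third and seventh diagonal entries are $\kappa^{-1}$ and whose fifth entry $\tau_\kappa^2(0)^{-1}$ is $\lesssim \kappa^{-2}$, then produces a Lipschitz constant
$$
L \;\lesssim\; \varepsilon\kappa^{-2}\,(1+\nu\kappa^{-1})\,(1+\nu\kappa^{-1}+R^{-1}) \;=\; T_{\rm ODE}^{-1},
$$
as required.

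The main obstacle is the careful bookkeeping of the powers of $\kappa$, $\nu$ and $R$ through the composition of $M^{-1}$, the traces evaluated in $\underline{f_{\rm hyd}}$ and in $\mathcal{D}$, and the derivative $\partial_r$ applied to the already non-local operator $\mathfrak R_1$: each operation is harmless in isolation, but their combination is precisely what generates the power counting above. Once this is achieved, the standard blow-up alternative for ODEs in a Banach space immediately yields the blow-up criterion \eqref{Blow-up-crit}, since the quantities listed there are exactly those that govern both the local Lipschitz constant of $F$ and the lower bounds on $h_{\rm e}$ and $h_{\rm i}$ that define $\mathcal{U}$.
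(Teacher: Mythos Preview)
Your strategy---Cauchy--Lipschitz in $(L^\infty_r)^3\times\mathbb{R}^4$ with careful tracking of the powers of $\kappa,\nu,R,\varepsilon$---is exactly the paper's approach, and your identification of the relevant mapping properties of $\mathfrak R_0,\mathfrak R_1$ and of the radial factor $1/r$ is correct. There is, however, a genuine gap in the final bookkeeping.

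You correctly observe that $\partial_r\mathfrak R_1\zeta$ costs $\kappa^{-1}$ and $\nu\,\partial_r\mathfrak R_1(\dr q)$ costs $\nu\kappa^{-2}$; but these terms are \emph{linear} in $V$ and carry no factor of $\varepsilon$. Consequently the full Lipschitz constant of $F=M^{-1}\mathcal B$ on a ball around $V_0$ is at least of order $\kappa^{-1}(1+\nu\kappa^{-1})$, independently of $\varepsilon$. A plain Picard iteration $V\mapsto V_0+\int_0^{\cdot}F(V)$ then cannot produce the claimed time $T_{\rm ODE}\sim\varepsilon^{-1}\kappa^2$; in particular it would not give global existence for $\varepsilon=0$. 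Your displayed conclusion $L\lesssim\varepsilon\kappa^{-2}(1+\nu\kappa^{-1})(1+\nu\kappa^{-1}+R^{-1})$ therefore does not follow from the line-by-line estimates you listed. The paper closes this gap by separating $M^{-1}\mathcal B$ into its linear part (whose generated group absorbs the $\kappa^{-1}(1+\nu\kappa^{-1})$ contribution) and its nonlinear part $\varepsilon\mathcal N$; only the latter enters the contraction condition, yielding $\kappa^{-1}(1+\nu\kappa^{-1})\cdot\varepsilon(1+\nu\kappa^{-1}+R^{-1})\cdot T\lesssim 1$. You should make this Duhamel/variation-of-constants split explicit.

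A minor point: your claim that the fifth diagonal entry $\tau_\kappa^2(0)^{-1}$ of $M^{-1}$ is $\lesssim\kappa^{-2}$ is far too pessimistic, since $\tau_\kappa^2(0)=\tau_{\rm buoy}^2+\kappa^2+R^2/8\geq\tau_{\rm buoy}^2$ gives $\tau_\kappa^2(0)^{-1}=O(1)$. This over-counting appears to be how you manufactured the missing $\kappa^{-1}$ to reach $T_{\rm ODE}^{-1}$; once the linear/nonlinear split is done properly, no such device is needed.
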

The strategy of the proof of this proposition consists in using the Cauchy-Lipschitz theorem. To do so we will compare two solutions of the system \eqref{eq:ODEgene}, and give an estimate of the linear part. Finally an estimate of the non-linear part will give the estimate of the time existence.
\begin{proof}
Let $V_k = (\zeta_k, q_k, \kappa{\rm{d}}_{r} q_k, \delta_k, \dot{\delta_k}, \underline{\zeta_k}, \kappa\underline{\dot{\zeta_k}})$ in $(L^\infty_r)^3 \times \mathbb{R}^4$ and $h_k = 1 + \eps \zeta_k$, for $k \in \{ 1, 2\}$. Then one has the following space estimates. 
\begin{itemize}
\item According to Lemma \ref{lem:OpDisp}, $\partial_r \mathfrak R _1 : L^\infty_r \mapsto L^\infty_r $ is bounded with a bound of order $\kappa^{-1}$
\begin{align*}
| \partial_r \mathfrak R _1 \mathfrak{f}_1 - \partial_r \mathfrak R _1 \mathfrak{f}_2 |  
&\leq \kappa^{-1} C_1(\|\zeta_1, q_1, \zeta_2, q_2\|_{L^\infty_r})\|(\zeta_1, q_1) - (\zeta_2, q_2)\|_{\left ({L^\infty_r}\right )^2},
\end{align*}
and 
\begin{equation*}
| \nu (\partial_r \mathfrak R _1 {\rm{d}}_r q_1 - \partial_r \mathfrak R _1 {\rm{d}}_r q_2)|  \leq \kappa^{-2} \nu C_2\| \kappa{\rm{d}}_r q_1 - \kappa{\rm{d}}_r q_2 \|_{L^\infty_r},
\end{equation*}
where $C$, $C_1$ and $C_2$ are three constants independent on $\eps$, $\kappa$ and $\nu$.
\item On one hand, by Lemma \ref{lem:OpDisp}, one has  
\begin{align*}
\left |\kappa^2 \dr \mathfrak R_0 \left[  \partial_r \ln(h_1) \dr q_1 - \partial_r \ln(h_2) \dr q_2 \right] \right | &\leq \kappa^{-1}\|\kappa\dr q_1 - \kappa\dr q_2 \|_{L^\infty_r} |\kappa^2 \dr \mathfrak R_0\left[ \partial_r \ln(h_1) - \partial_r \ln(h_2)\right] |\\
& \leq \kappa^{-1}\|\kappa\dr q_1 - \kappa\dr q_2 \|_{L^\infty_r} |\kappa^2 \dr \partial_r \mathfrak R_1 \ln(h_1/h_2)|.
\end{align*}
The definition of $\mathfrak{R}_1$ gives
\begin{align*}
|\kappa^2 \dr \partial_r \mathfrak R_1 \ln(h_1/h_2)| & = |\mathfrak R_1 \ln(h_1/h_2) - \ln(h_1/h_2)|\\
& \leq \|\ln(h_1/h_2)\|_{L^\infty_r}.
\end{align*}
Moreover thanks to the mean value inequality, since $\zeta_1, \zeta_2 \in L^\infty_r$ then
$$
 \|\ln(h_1/h_2)\|_{L^\infty_r} \leq \eps h_{\min}^{-1} \|\zeta_1 - \zeta_2 \|_{L^\infty_r},
$$
and
$$\|\kappa^2 \dr  \mathfrak R_0 \left[ \partial_r \ln(h_1) \dr q_1 - \partial_r \ln(h_2) \dr q_2 \right] \|_{L^\infty_r} \lesssim \kappa^{-2}\eps\|\kappa\dr q_1 - \kappa\dr q_2  \|_{L^\infty_r} \|\zeta_1 - \zeta_2\|_{L^\infty_r}.$$
On the other hand 
\begin{align*}
|\dr \mathfrak R_0 \left( \frac{q_1^2}{rh_1} - \frac{q_2^2}{rh_2}\right )|& \leq \left(\frac{1}{R} + \kappa^{-1}\right)\|\frac{q_1^2}{rh_1} - \frac{q_2^2}{rh_2}\|_{L^\infty_r}\\
& \leq \left(\frac{1}{R} + \kappa^{-1}\right)\|q_1, q_2, \frac{1}{h_1}, \frac{1}{h_2}\|_{L^\infty_r} \|q_1 - q_2\|_{L^\infty_r}.
\end{align*}
\end{itemize}
We deduce that the group generated by the linear part of $M^{-1}\mathcal{B}$ provides a upper bound of type $\kappa^{-1}(1+\nu\kappa^{-1})c$, where $c$ is a constant independent of $\eps$, $\kappa$, $\nu$ and $R$. 
Applied to the non-linearities it leads to obtain for $V_1$, $V_2$ $\in W := \left(L^\infty_r \right)^3 \times \mathbb{R}^{4}$, if we denote $\eps \mathcal{N}[V_k]$ the non-linear part of $M^{-1}\mathcal{B}[V_k]$ ($k\in \{1, 2\}$):
$$
\|\eps \mathcal{N}[V_1] - \eps \mathcal{N}[V_2] \|_{\mathscr{C}^0([0, T), W)} \lesssim \kappa^{-1} (1+\nu \kappa^{-1}) \left( \eps (1+\nu \kappa^{-1}+R^{-1})\right) T \|V_1 - V_2 \|_{W}.
$$
With the previous estimates, the constant in the inequality does not depend on $\kappa\dr q_k$ or $\underline{\zeta_k}, \underline{\dot{\zeta}_k}$. Therefore those terms do not enter in the blow-up criterium.
By local Cauchy-Lipschitz theorem, there exists $c > 0$ of order $1$ such that \eqref{eq:ODEgene} admits a unique solution in $\mathscr{C}^0( [0, cT_{\rm ODE}), (L^\infty_r)^3 \times \mathbb{R}^{4} ))$ with a time existence $T_{\rm ODE} = \frac{\kappa^2 \eps^{-1}}{(1+\nu \kappa^{-1})(1+\nu \kappa^{-1}+R^{-1})}$.
\end{proof}	
In particular the Augmented formula \eqref{eq:AUG} is well-posed in $\mathscr{C}^0( [0, cT_{\rm ODE}), L^\infty_r \times W^{1,\infty}_{r, \kappa} \times \mathbb{R}^{4} )$. Comments over this time existence will be done at a later stage, but one can see that we recover the global well-posedness of the linear system ($\eps = 0$), but not the wellposedness of the Saint-Venant system ($\kappa = 0$).

\begin{remark}\label{rk:LocWPinfty}
The local wellposedness is shown with $(\zeta, q) \in \mathscr{C}^0_t(L^\infty_r \times W^{1, \infty}_{r, \kappa})$. By the same method, thanks to Proposition \ref{prop:commut}, one can show that for initial conditions in $(W^{s,p}_r)^3 \times \mathbb{R}^4$ such that $L^\infty_r \subset W^{s, p}_r$, the system \eqref{eq:ODEgene} is wellposed in $\mathscr{C}^0_t\left ([0, cT_{\rm ODE}), (W^{s, p}_r)^2 \times W^{s,p}_{r, \kappa} \times \mathbb{R}^4\right )$, where $W^{s,p}_{r, \kappa} := \left \{ f \in W^{s-1, p}_{r}, \kappa\partial_r^s f \in L^p_r\right \}$. In particular it proves the existence and the uniqness of solutions of the waves-structure system with $(\zeta, q) \in H^1_r \times H^2_\kappa$.
\end{remark}
 
\begin{remark}
The viscosity $\nu$ could depend on $q$. Under regularity hypothesis on $\nu(q)$ it would change the cofficients, but the same technique would give the well-posedness.
\end{remark}

\begin{remark} 
To generalize to the case of $\mathbb{R}^2$ without symmetries we would like to set dispersive operators which reverse $(1-\kappa^2\nabla {\rm{div}})$. 
For any $f \in L^2\left(\mathbb{R}^2\right)$, we define $\mathfrak{\tilde{R}}_0: L^2(\mathcal{E}) \to H^1({\rm div})$ by
\begin{equation}
u = \mathfrak{\tilde{R}}_0 f \Leftrightarrow
\begin{cases}
 (1 - \kappa^2 \nabla {\rm{div}})u = f   & \text{in } \mathcal{E},\\ 
  \underline{u \cdot \vec{n}} = 0 & \text{on } \Gamma, \\
  \underset{r \rightarrow + \infty }{\lim} u = 0, & 
\end{cases}
\end{equation}
where $\vec{n}$ is the normal exterior vector at the contact line $\Gamma$.
However if we apply $\mathfrak{\tilde{R}}_0$ to equation \eqref{AbbottDL:eq2} we obtain
\begin{equation}
\begin{cases}
\partial_t \zeta +{\rm div}\, Q=0,\\
\partial_t Q+\varepsilon \mathfrak{\tilde{R}}_0{\rm{div}}\, \left( \frac{Q \otimes Q}{h} \right)+ \mathfrak{\tilde{R}}_0\left (h \nabla  \zeta\right )= - \mathfrak{\tilde{R}}_0\left (h \nabla \frac{P}{\varepsilon}  + h \nabla \frac{\nu}{h} {\rm{div}} Q\right ), \\
 h= 1 + \varepsilon \zeta.
\end{cases}
\end{equation}
Contrary to the axisymmetric without swirl case case, the operator $\tilde{\mathfrak{R}}_0$ does not regularize enough, in the sense that $\tilde{\mathfrak{R}}_0(h\nabla \zeta)$ does not belong to $H^1$.
\end{remark}
\subsection{Semi-linear perspective}\label{sec:4.2}
We set in this section $\nu = 0$. Even if it does not add any difficulty, it adds unnecessary technicality (see \ref{rk:WPSL_visc}).
We set $\mathbb{H} := L^2_r \times H^{1}_\kappa $ with $H^{1}_\kappa$ the Sobolev space $H^1_r$ equipped with the norm 
$$
||\cdot||_{H^{1}_\kappa}^2 := ||\cdot||_{L^2_r}^2 + \kappa^2 || {\rm{d}}_{r} \cdot||_{L^2_r}^2.
$$
In the same approach we define \label{def:H} $\mathbb{H}^1 := H^1_r \times H^{2}_\kappa$ where $H^{2}_\kappa$ is the Sobolev space $H^2_r$ equipped with the norm 
$$
||\cdot||_{H^{2}_\kappa}^2 := ||\cdot||_{H^1_r}^2 + \kappa^2 || \partial_r {\rm{d}}_{r} \cdot||_{L^2_r}^2.
$$
We remind that $L^2_r$ (respectively $H^1_r$) stands for the $L^2$ space (respectively the $H^1$ space) on $[R, +\infty)$ with the measure $rdr$.
In this section, we use the notation of the Augmented variable $V = (u, Z)$ and we want to prove the well-posedness with $u$ in $\mathbb{H}^1$ with a time existence larger than with the ODE's perspective (for small values of $\kappa$), that is to say the second point of Theorem \ref{thmf:WP}. 
In fact it is equivalent to the following proposition.
\begin{proposition}\label{prop:WPSL}
Let $\eps \in [0, 1)$, $\kappa > 0$ and $\nu = 0$.
For initial condition in $ \mathbb{H}^1 \times \mathbb{R}^{4} $, there exists $c > 0$ independent of $\eps, \kappa, R$ such that the Augmented system \eqref{eq:AUG} is well-posed in $\mathscr{C}^0( [0, T^*), \mathbb{H}^1 \times \mathbb{R}^{4} )$ for time order $T^* = cT_{\eps, \kappa, R}$, where $T_{\eps, \kappa, R}$ is described in Theorem \ref{thmf:WP}.
\end{proposition}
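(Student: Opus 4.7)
The plan is to implement the semi-linear iterative scheme sketched in the bulleted outline and conclude by a Picard fixed-point argument. First, I would define the iterates $(u^{k+1},Z^{k+1})$ by freezing the nonlinearities at level $k$ and solving a linear dispersive initial-boundary-value problem for $u^{k+1}$ coupled to a linear ODE for $Z^{k+1}$:
\begin{equation*}
\begin{cases}
\begin{pmatrix} 1 & 0 \\ 0 & (1-\kappa^2\partial_r{\rm d}_r) \end{pmatrix}\partial_t u^{k+1} + \begin{pmatrix} 0 & {\rm d}_r \\ \partial_r & 0 \end{pmatrix} u^{k+1} = \varepsilon\,\partial_r\mathcal{F}[u^k] + \varepsilon\,\mathcal{G}[u^k] + S(Z^k),\\
\underline{q^{k+1}} = -\tfrac{R}{2}\dot{\delta}^{k+1},\\
\mathcal{M}\,\partial_t Z^{k+1} + \mathcal{T}Z^{k+1} = P(\underline{\mathbf{f}_{\rm hyd}}[u^{k+1}],F_{\rm ext}) + \varepsilon\,\tilde P(Z^k,\underline{\mathtt{f}_{\rm hyd}}[u^k],F_{\rm ext}),
\end{cases}
\end{equation*}
with initial data $u^{k+1}_{|t=0}=u_0$ and $Z^{k+1}_{|t=0}=Z_0$. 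The existence of each iterate is ensured by the ODE-type result of Subsection 4.1 (Proposition 4.1, see also Remark 4.5), applied to arbitrary regularity: it is the a priori bounds that matter.

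The main work is then to derive $\mathbb{H}^1$-energy estimates for the linear problem. I would test the $\zeta$-equation against $r\zeta$ and the $q$-equation against $rq$ to get the $\mathbb{H}$-control, then differentiate in $r$ (using the commutation ${\rm d}_r\partial_t = \partial_t{\rm d}_r$ and Lemma 2.15) and test again with the appropriate weights to pick up the $H^1_r\times H^2_\kappa$-norm. The key identity is the integration by parts
\begin{equation*}
\int_R^{\infty} \bigl(\partial_r\zeta\bigr) q\, r\,dr + \int_R^{\infty} \zeta\,{\rm d}_r q\, r\,dr = -R\,\underline{\zeta}\,\underline{q},
\end{equation*}
which after using the boundary relation $\underline{q^{k+1}}=-\tfrac{R}{2}\dot{\delta}^{k+1}$ produces the boundary term $\tfrac{R^2}{2}\underline{\zeta^{k+1}}\dot{\delta}^{k+1}$. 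This boundary term is then absorbed into $\tfrac{d}{dt}$ of a quadratic form in $Z^{k+1}$ using the ODE, where the coefficient $\tau_\kappa^2(0)+G$ is strictly positive (this is where the added-mass coefficient plays its stabilizing role). Keeping track of all factors of $\kappa^{-1}$ coming from Proposition 2.16 applied to $\partial_r\mathcal{F}[u^k]$ and $\mathcal{G}[u^k]$, together with factors $R^{-1}$ from the radial source $\mathcal{S}_r$, yields an estimate of the form
\begin{equation*}
\tfrac{d}{dt}E(u^{k+1},Z^{k+1}) \;\lesssim\; \bigl(\sqrt{\alpha}+\max(1,R^{-2})\kappa^{-1}\bigr)E(u^{k+1},Z^{k+1}) + \varepsilon(1+R^{-1})\,\mathcal{N}(u^k,Z^k),
\end{equation*}
where $E \sim \|u\|_{\mathbb{H}^1}^2 + |Z|^2$ and $\mathcal{N}$ collects the nonlinear contributions. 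A Gronwall argument then gives a uniform bound on the iterates on an interval $[0,T^\ast]$ provided the quadratic-like balance
\begin{equation*}
\lambda T + \varepsilon(1+R^{-1})T^2 \lesssim 1,\qquad \lambda := \sqrt{\alpha}+\max(1,R^{-2})\kappa^{-1},
\end{equation*}
holds, whose largest positive root is precisely the $T_{\varepsilon,\kappa,R}$ stated in Theorem 2.4(ii).

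Once the sequence $(u^k,Z^k)$ is bounded uniformly in $\mathscr{C}^0([0,T^\ast],\mathbb{H}^1\times\mathbb{R}^4)$, I would estimate the differences $(u^{k+1}-u^k,Z^{k+1}-Z^k)$ in the weaker norm $\mathbb{H}\times\mathbb{R}^4$: the linear structure of the scheme, together with Moser-type inequalities \eqref{eq:MTineq} and the mapping properties of $\mathfrak{R}_0,\mathfrak{R}_1$ (Proposition 2.16), yields a contraction factor of size $\varepsilon(1+R^{-1})T^\ast\lambda \ll 1$ after shrinking $T^\ast$ by a uniform constant. Passing to the limit in $\mathbb{H}$ gives existence, and uniqueness/continuity in $\mathbb{H}^1$ follows from the standard Bona–Smith approximation argument, using the higher-regularity bound already in hand. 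Finally, the blow-up criterion \eqref{Blow-up-crit} is obtained by Sobolev embedding $\mathbb{H}^1\hookrightarrow L^\infty_r\times W^{1,\infty}_{r,\kappa}$ combined with Proposition 4.1 (and Remark 4.5) applied at any non-singular time: as long as $h_{\rm e},h_{\rm i},\dot\delta,F_{\rm ext}$ stay bounded the ODE-type flow preserves $\mathbb{H}^1$ regularity.

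The principal obstacle is the boundary coupling in the energy estimate: the $\mathbb{H}^1$-norm naturally produces the boundary term $\underline{\zeta}\,\underline{q}$, which through the constraint $\underline{q}=-\tfrac{R}{2}\dot{\delta}$ only closes if one simultaneously integrates the SHODE \eqref{eq:ODETrace} and exploits the positivity of the added-mass coefficient $\tau_\kappa^2(0)+G>0$ from Lemma 2.10. Making this closure compatible with the $\kappa^{-1}$ losses in $\partial_r\mathfrak{R}_1$ (forcing us to absorb $\kappa\,\partial_r{\rm d}_r u$ rather than $\partial_r{\rm d}_r u$) is what forces the weighted $H^2_\kappa$-norm and is the source of the non-uniform-in-$\kappa$ existence time $T_{\varepsilon,\kappa,R}$ listed in Table \ref{tab:T}; a uniform-in-$\kappa$ time would require the quasi-linear scheme mentioned in the third bullet, which we do not pursue here.
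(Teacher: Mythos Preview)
Your overall architecture (freeze nonlinearities, energy estimates on the linear problem, Picard fixed point) matches the paper, and you correctly identify the boundary coupling $\underline{q}=-\tfrac{R}{2}\dot\delta$ and the positivity of $\tau_\kappa^2(0)+G$ as central. However, the differential inequality you state,
\[
\tfrac{d}{dt}E(u^{k+1},Z^{k+1}) \lesssim \bigl(\sqrt{\alpha}+\max(1,R^{-2})\kappa^{-1}\bigr)E + \varepsilon(1+R^{-1})\mathcal N,
\]
is the wrong structure and does not yield $T_{\varepsilon,\kappa,R}$. An inequality with $\lambda E$ on the right forces exponential Gronwall growth $e^{\lambda T}$, which would restrict $T\lesssim\lambda^{-1}\sim\kappa$; the balance $\lambda T+\varepsilon T^2\lesssim 1$ you write down does not follow from it and is in any case not the balance solved by $T_{\varepsilon,\kappa,R}$ (the correct one is $\varepsilon T(T+\lambda)\lesssim 1$, with $\varepsilon$ multiplying the $\lambda T$ term as well).

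The paper obtains the sharper time by exploiting that, at the $\mathbb{H}$ level, the linearized problem is \emph{energy-conservative}: the boundary term $R\underline{\zeta}\,\underline{q}$ cancels exactly against $\tfrac{d}{dt}$ of the solid energy $\tfrac{R}{2}(\delta^2+\tau_\kappa^2(0)\dot\delta^2)$ via the modified Newton equation, giving $\partial_t E\lesssim\varepsilon\mathfrak{F}\sqrt{E}$ with \emph{no} $\lambda E$ term (Lemma~4.5). The $\kappa^{-1}$ loss appears only at the $\mathbb{H}^1$ level and only in front of the $\varepsilon$-nonlinearities, through the integration by parts that trades $\partial_r^2\mathcal{F}$ for $\kappa^{-1}\|\partial_r\mathcal{F}\|_{L^2}\|\kappa\partial_r{\rm d}_r q\|_{L^2}$ (Lemma~4.6). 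The one linear leftover at that level is the cross term $\dot\delta\,\underline{\zeta}$, which is not absorbed into the energy but instead controlled by feeding back the $\mathbb{H}$-bound $|\dot\delta|\leq\sqrt{E_0}+\varepsilon T\|\mathfrak{F}\|$, together with a shift $E'\to E'-\tfrac{R}{2\tau_\kappa^2(0)}\delta\underline{\zeta}$ and a careful Young inequality (Corollary~4.8). This is what produces the additive estimate $\sqrt{E+E'}\lesssim\sqrt{\beta_0}+\varepsilon T(T+\lambda)\|\cdots\|$ and hence the quadratic $\varepsilon T(T+\lambda)\lesssim 1$. Your sketch misses this two-tier structure and the specific handling of $\dot\delta\,\underline{\zeta}$. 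Finally, the paper contracts directly in $\mathbb{H}^1$ (no Bona--Smith step is needed), which is simpler once the correct additive estimate is in hand.
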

The strategy consists in using energy estimates on \eqref{eq:AUG} to use a Picard fixed point. 
To do so we consider the system described as following.
Let $T > 0$ and $V^\ast  := (u^\ast, Z^\ast) \in \mathscr{C}^0_t ([0,T), \mathbb{H}^1 \times \mathbb{R}^4)$, 
we have a look at the solutions to the linear type problem
\begin{equation}\label{eq:SLFormGeneral}
\begin{cases}
\mathcal{D}_\kappa \partial_t u + \mathcal L [u] = \varepsilon \partial_r \begin{pmatrix} 0 \\
\mathcal F[V^\ast] \end{pmatrix} + \varepsilon \begin{pmatrix} 0 \\ \mathcal G[V^\ast] \end{pmatrix} \quad \text{for } r > R,\\
\underline q = - \frac{R}{2}\dot\delta, \\
u_{|_{t = 0}} = u_0,
\end{cases}
\end{equation}
with the dispersive matrix 
$$\mathcal{D}_\kappa =\begin{pmatrix}
1 & 0\\
0 & (1-\kappa^2 \partial_r {\rm{d}}_{r})
\end{pmatrix},
$$
$\mathcal{L}$ the operator
$$\mathcal{L} = \begin{pmatrix}
0 & \rm {\rm{d}}_{r} \\
\partial_r & 0
\end{pmatrix},$$
and $\mathcal{F}$, $\mathcal{G}$ are nonlinear operators. 
Also $\delta$ is described by the following equation.
\begin{equation}\label{eq:ADDMASS}
\tau_\kappa^2(0)\ddot{\delta} + \delta =  \underline \zeta + \kappa^2 \underline{\ddot{\zeta}} + \varepsilon \mathcal{F}_m [V^\ast] + F_{\rm ext},
\end{equation}
where $\mathcal{F}_m$  is a non-linear operator.
One can notice that $\mathcal{F}$, $\mathcal{G}$ and $\mathcal{F}_m$ will be used as
\begin{itemize}
\item $\mathcal{F}[V^*] := \frac{(V^*_1)^2}{2} + \frac{(V^*_2)^2}{1+\eps V_1^*}$,
\item $\mathcal{G}[V^*] := \frac{(V^*_2)^2}{r(1+\eps (V^*_1))}$, for $r > R$,
\item $ \mathcal{F}_m [V^*] = \gamma(V^*)$ with $\gamma$ defined in appendix \ref{app:LinAddM}.
\end{itemize}
We want to do an iterative scheme, which requires that the sources terms have the same regularity as the solutions.
It is translated by imposing $\mathcal{F}[V^\ast]$, $\mathcal{G}[V^\ast]$ $\in \mathscr{C}^0([0, T), H^2_\kappa)$ and $\mathcal{F}_m[V^\ast] \in \mathscr{C}^0([0, T))$ for $V^* \in \mathscr{C}^0([0, T), \mathbb{H}^1 \times \mathbb{R}^4)$. \newline
\indent Moreover by applying Lemma \ref{lem:HidEq} to system \eqref{eq:SLFormGeneral} one has 
\begin{equation}\label{eq:HiddenEq}
\kappa^2 \ddot{\zeta} + \zeta = \mathfrak R_1 \zeta - \ddot{\delta} G(r) + \varepsilon \mathcal{F}[V^\ast] - \varepsilon \mathfrak R_1 \mathcal{F}[V^\ast] + \varepsilon \kappa^2 {\rm{d}}_{r} \mathfrak R_0 \mathcal{G}[V^\ast].
\end{equation}
By taking this at the boundary one obtains 
\begin{equation}\label{eq:HidEqTrace}
\kappa^2 \underline{\ddot{\zeta}} + \underline{\zeta} = \underline{\mathfrak R_1 \zeta} -\kappa \ddot{\delta} G(R) + \varepsilon \underline{\mathcal{F}[V^\ast]} - \varepsilon \underline{\mathfrak R_1 \mathcal{F}[V^\ast] } + \varepsilon \kappa^2 \underline{{\rm{d}}_{r} \mathfrak R_0 \mathcal{G}[V^\ast]}.
\end{equation}
It gives a SHODE like system 
\begin{equation}\label{eq:ODEglob}
\begin{cases}
\tau_\kappa^2(0)\ddot{\delta} + \delta =  \underline \zeta + \kappa^2 \underline{\ddot{\zeta}} + \varepsilon \mathcal{F}_m [V^\ast] + F_{\rm ext},\\
\kappa^2 \underline{\ddot{\zeta}} + \underline{\zeta} = \underline{\mathfrak R_1 \zeta} - \kappa\ddot{\delta} G(R) + \varepsilon \underline{\mathcal{F}[V^\ast]} - \varepsilon \underline{\mathfrak R_1 \mathcal{F}[V^\ast] } + \varepsilon \kappa^2 \underline{ {\rm{d}}_{r} \mathfrak R_0 \mathcal{G}[V^\ast]}.
\end{cases}
\end{equation}
\indent Similarly to the ODE's perspective (see Subsection \ref{sec:estimODE}), one can treat the system \eqref{eq:SLFormGeneral} combined with ODE's system \eqref{eq:ODEglob} as a seven-unknown ODE's. The solution exists and is unique in $\mathscr{C}^0_t([0, T), \mathbb{H}^1\times \mathbb{R}^4)$, for $V^* \in \mathscr{C}^0_t([0, T), \mathbb{H}^1\times \mathbb{R}^4)$ for $T < T_{\rm ODE}$. The goal of this section is to find estimates on the energy of the linear system ($\eps = 0$). Those estimates will be more precised than in the previous section. Then, applying the wanted $\mathcal{F}$, $\mathcal{G}$ and $\mathcal{F}_m$, one will obtain an estimate of the time existence of the solutions of the Augmented formula. One has the estimate given by the following property.
 
\begin{proposition}\label{prop:EstimGlobale}
Let $\alpha = \max(2R^+, 9)$. Let $T > 0$ and $V^\ast  := (u^\ast, Z^\ast) \in \mathscr C^0_t ([0,T), \mathbb{H}^1 \times \mathbb{R}^4)$ and initial condition $(u(0),Z(0)) \in \mathbb{H}^1 \times \mathbb{R}^4$.
There exists a unique solution solution of the system \eqref{eq:SLFormGeneral} combined with ODE's system \eqref{eq:ODEglob} in $\mathscr{C}^0_t([0, T), \mathbb{H}^1\times \mathbb{R}^4)$. Moreover the solution satisfies the energy estimate for $t < T$
\begin{align*}\label{eq:EstimGlobale}
\sqrt{E(t) + E'(t)} \lesssim & \sqrt{\beta_0} + \varepsilon T(T + \sqrt{\alpha})\left(\|\partial_r \mathcal{F}[V^\ast] , \mathcal{G}[V^\ast]\|_{L^\infty([0, T)) L^2_r}^2 + \|\mathcal{F}_m[V^\ast]\|_{L^\infty([0, T))}\right) \\
&+ \varepsilon \kappa^{-1} \int_0^T \left( \max(1, \frac{1}{R^2})\|\mathcal{F}[V^\ast]\|_{H^1_r} + \| \mathcal{G}[V^\ast]\|_{H^1_r} + |\mathcal{F}_m[V^\ast]| \right)  \\
&+ \kappa^{-1} \|F_{\rm ext}\|_{L^1([0, T))},
\end{align*}
with $\beta_0 = E'(0) - \frac{R}{2\tau_\kappa^2(0)}\delta(0)\underline{\zeta}(0) + \frac{\alpha}{2\tau_\kappa^2(0)}E(0)\geq 0$,
and energies given by 
$$
E :=  \frac{||\zeta, q||_{\mathbb{H}}^2 + \frac{R}{2} |\delta, \tau_\kappa^2(0)\dot\delta|^2}{2} 
\quad \text{and}  \quad
E' :=  \frac{||\partial_r \zeta, {\rm{d}}_{r} q||_{\mathbb{H}}^2 + \left( \frac{R}{2\tau_\kappa^2(0)} \right) |\underline{\zeta}, \underline{\dot{\zeta}}|^2}{2}.
$$ 
\end{proposition}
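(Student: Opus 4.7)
The plan is to first obtain existence and uniqueness from the ODE-type argument of Subsection \ref{sec:estimODE}, and then to derive the energy estimate by testing the wave equations against the solution in a way compatible with the boundary condition $\underline q=-\tfrac{R}{2}\dot\delta$ and the solid ODE, with the trace energy controlled via the hidden boundary equation.

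With $V^\ast\in\mathscr C^0_t(\mathbb H^1\times\mathbb R^4)$ fixed, the sources $\mathcal F[V^\ast],\mathcal G[V^\ast]\in H^2_\kappa$ and $\mathcal F_m[V^\ast]\in\mathbb R$ depend continuously on time, and by Proposition \ref{prop:commut} the non-local operators $\mathfrak R_0,\mathfrak R_1$ preserve the $\mathbb H^1$ framework. The coupled linear system \eqref{eq:SLFormGeneral}--\eqref{eq:ODEglob} can therefore be treated exactly as in Proposition \ref{prop:WPODE} (see also Remark \ref{rk:LocWPinfty}), by recasting the unknowns $(\zeta,q,\kappa{\rm d}_r q,\delta,\dot\delta,\underline\zeta,\kappa\underline{\dot\zeta})$ as a seven-component vector satisfying a linear ODE with continuous coefficients on $\mathbb H^1\times\mathbb R^4$; the Cauchy-Lipschitz theorem then furnishes a unique solution in $\mathscr C^0([0,T),\mathbb H^1\times\mathbb R^4)$.

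For the bound on $E$, the first equation of \eqref{eq:SLFormGeneral} is tested against $r\zeta$ and the second against $rq$. After integration by parts, the cross terms $\int_R^\infty r\zeta\,{\rm d}_r q+\int_R^\infty rq\,\partial_r\zeta$ collapse into the boundary contribution $[rq\zeta]_R^\infty=\tfrac{R^2}{2}\underline\zeta\dot\delta$, while the dispersive term $-\kappa^2\int rq\,\partial_r{\rm d}_r\partial_t q$ yields $\tfrac{\kappa^2}{2}\tfrac{d}{dt}\|{\rm d}_r q\|_{L^2_r}^2$ plus a second boundary term involving $\underline{\ddot\zeta}\dot\delta$, via the trace identity $\underline{{\rm d}_r\partial_t q}=-\underline{\ddot\zeta}$ obtained by differentiating $\partial_t\zeta+{\rm d}_r q=0$ at $r=R$. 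Both boundary terms are then exactly compensated by multiplying \eqref{eq:ADDMASS} by a suitable multiple of $\dot\delta$, leaving $\tfrac{d}{dt}E$ controlled by the source terms alone, which are estimated by Cauchy-Schwarz and the $L^2_r$ continuity of the non-local operators provided by Lemmata \ref{lem:OpDisp}--\ref{lem:R1}.

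The estimate on $E'$ follows the same scheme after differentiating the wave system once in $r$ and testing against $r\partial_r\zeta$ and $r\,{\rm d}_r q$; the harder point, and the main obstacle, is the control of the trace energy $|\underline\zeta,\underline{\dot\zeta}|^2$, which is not directly embedded in the $H^1_r$ part of the PDE energy. For this one combines the hidden boundary equation \eqref{eq:HidEqTrace} with \eqref{eq:ADDMASS} into a closed $2\times 2$ linear ODE for $(\underline\zeta,\underline{\dot\zeta})$ driven by non-local forcing in $u$; the prefactor $\tfrac{R}{2\tau_\kappa^2(0)}$ in $E'$ is calibrated precisely so that the indefinite cross term $-\tfrac{R}{2\tau_\kappa^2(0)}\delta(0)\underline\zeta(0)$ can be absorbed into the positive quantity $\beta_0$, the threshold $\alpha=\max(2R^+,9)$ reflecting both the radial Sobolev trace inequality $|\underline\zeta|\lesssim R^{1/2}\|\zeta\|_{H^1_r}$ and the comparison between PDE and ODE contributions. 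The $\kappa^{-1}$ factor in the non-local part of the bound is the one-derivative loss quantified by Proposition \ref{prop:commut}, since the forcing enters through $\partial_r\mathfrak R_1\mathcal F$ and ${\rm d}_r\mathfrak R_0\mathcal G$; the factor $\eps T(T+\sqrt\alpha)$ results from Gronwall integration in time of the ODE source contributions.
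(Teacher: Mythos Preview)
Your overall architecture matches the paper's: existence via the ODE perspective (Remark \ref{rk:LocWPinfty}), a first energy identity for $E$ by testing \eqref{eq:SLFormGeneral} against $(\zeta,q)$ and closing the boundary term $(\kappa^2\underline{\ddot\zeta}+\underline\zeta)\tfrac{R}{2}\dot\delta$ via \eqref{eq:ADDMASS} multiplied by $\dot\delta$ (this is Lemma \ref{lemma:4.1}), and a differentiated estimate for $E'$ followed by Young-type absorption of the cross term $\delta\,\underline\zeta$ (Lemma \ref{lemma:4.2} and Corollary \ref{cor:4.1}). The broad strokes are right.

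However, the mechanism you describe for the $E'$ step is not the one that makes the argument close, and the one you sketch would not obviously work. The trace energy $\tfrac{R}{2\tau_\kappa^2(0)}|\underline\zeta,\kappa\underline{\dot\zeta}|^2$ does \emph{not} come from analysing a separate ``closed $2\times 2$ linear ODE for $(\underline\zeta,\underline{\dot\zeta})$''. In the paper, the differentiated system carries the boundary condition $\underline{q'}=-\underline{\dot\zeta}$, and the energy identity produces the boundary term $(\kappa^2\underline{\ddot\zeta'}+\underline{\zeta'}-\varepsilon\underline{\partial_r\mathcal F})\,\underline{\dot\zeta}$. The crucial step is to take the \emph{spatial} derivative of the hidden equation \eqref{eq:HiddenEq} at $r=R$ and use the explicit identity $\kappa(\partial_r G)(R)=-\tfrac{R}{2}$ to rewrite this boundary term via $\ddot\delta$; substituting $\ddot\delta$ from \eqref{eq:ADDMASS} then \emph{produces} exactly $\tfrac{R}{2\tau_\kappa^2(0)}(\kappa^2\underline{\ddot\zeta}+\underline\zeta)\underline{\dot\zeta}$, which is the time derivative of the trace part of $E'$. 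The prefactor $\tfrac{R}{2\tau_\kappa^2(0)}$ is therefore forced by this computation, not a free calibration.

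Two further misattributions. First, the threshold $\alpha=\max(2R^+,9)$ has nothing to do with a radial Sobolev trace inequality; it arises purely from two applications of Young's inequality to $\delta\,\underline\zeta$: one needs $\alpha>R$ so that $E'-\tfrac{R^2}{2\alpha\tau_\kappa^2(0)}\underline\zeta^2$ still controls $E'$, and $\alpha\ge 9$ so that $\beta_0\ge 0$. Second, the $\kappa^{-1}$ factor does not come from the non-local operators---in \eqref{eq:SLFormGeneral} the sources $\partial_r\mathcal F,\mathcal G$ enter directly, without $\mathfrak R_0,\mathfrak R_1$. The loss appears because, after integrating $\int r\,{\rm d}_r\partial_r\mathcal F\cdot q'$ by parts, one obtains $\|\partial_r\mathcal F\|_{L^2_r}\|\partial_r q'\|_{L^2_r}$, which must be rewritten as $\kappa^{-1}\|\partial_r\mathcal F\|_{L^2_r}\|\kappa\partial_r q'\|_{L^2_r}$ to close against $\sqrt{E'}$.
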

In this proposition $\beta_0$ is positive as it will be shown in the proof of Corollary \ref{cor:4.1}. 
For the sake of clarity we note $G$ rather than $G(R)$, and we will use notations $\mathcal{F}$, $\mathcal{G}$ and $\mathcal{F}_m$ rather than $\mathcal{F}[V^\ast]$, $\mathcal{G}[V^\ast]$ and $\mathcal{F}_m[V^\ast]$.
 
\subsubsection{Proof of Proposition \ref{prop:EstimGlobale}}
As said previously, if $V^* \in \mathscr{C}^0_t([0, T), \mathbb{H}^1\times \mathbb{R}^4)$, solutions of the system \eqref{eq:SLFormGeneral}, \eqref{eq:ODEglob} must exist in $\mathscr{C}^0_t([0, T), \mathbb{H}^1\times \mathbb{R}^4)$ in order to apply a fixed point procedure. The previous estimate is given by Gronwall lemma applied on the two following lemmata.
\begin{lemma} \label{lemma:4.1}
Let $T > 0$ and $V^\ast  := (u^\ast, Z^\ast) \in \mathscr C^0_t ([0,T), \mathbb{H} \times \mathbb{R}^4)$. 
Let $(\zeta, q, \delta, \dot{\delta}, \underline{\zeta}, \underline{\dot{\zeta}})$ a solution of \eqref{eq:SLFormGeneral}, \eqref{eq:ODEglob} and initial conditions $(\zeta, q, \delta, \dot{\delta}, \underline{\zeta}, \underline{\dot{\zeta}})_{|_{t = 0}} \in \mathbb{H}^1 \times \mathbb{R}^4$.
Then $(\zeta, q, \delta, \dot{\delta}, \underline{\zeta}, \underline{\dot{\zeta}})$ satisfies 
$$\forall t \in [0, T), \sqrt{E(t)} \leq \sqrt{E(0)} + \varepsilon ||\mathfrak{F}||_{L^1([0, T))} + ||F_{\rm ext}||_{L^1([0, T))},$$
with $E :=  \frac{||\zeta, q||_{\mathbb{H}}^2 + \frac{R}{2} |\delta, \tau_\kappa^2(0)\dot\delta|^2}{2}$ and $\mathfrak{F}:= |\partial_r \mathcal{F} , \mathcal{G}|_{L^2_r}^2 + \mathcal{F}_m $  time functions.
\end{lemma}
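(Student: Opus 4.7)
The plan is to derive an energy identity for \eqref{eq:SLFormGeneral}: I would test the mass equation against $\zeta$ and the momentum equation against $q$ in $L^2_r$, integrate by parts so the bulk terms collapse into $\tfrac{d}{dt}\|(\zeta,q)\|_{\mathbb H}^2$, and use \eqref{eq:ADDMASS} together with $\underline q = -\tfrac{R}{2}\dot\delta$ to convert the remaining boundary traces into the time derivative of the solid part of $E$.

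Concretely, the radial identity $r\,\dr q = \partial_r(rq)$ yields, after integration by parts on the dispersive and hydrostatic contributions, the clean cross-cancellation $\int_R^\infty(\zeta\,\dr q + q\,\partial_r\zeta)\,r\,dr = -R\,\underline\zeta\,\underline q$, so that the balance reduces to
\begin{equation*}
\tfrac{1}{2}\tfrac{d}{dt}\|(\zeta,q)\|_{\mathbb H}^2 - R\,\underline\zeta\,\underline q + \kappa^2 R\,\underline{\partial_t \dr q}\,\underline q = \eps\int_R^\infty (\partial_r\mathcal F + \mathcal G)\,q\,r\,dr.
\end{equation*}
Tracing $\partial_t\zeta + \dr q = 0$ at $r = R$ gives $\underline{\partial_t\dr q} = -\underline{\ddot\zeta}$, so with $\underline q = -\tfrac{R}{2}\dot\delta$ the two boundary contributions combine into $\tfrac{R^2}{2}\dot\delta\bigl(\underline\zeta + \kappa^2\underline{\ddot\zeta}\bigr)$. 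Substituting \eqref{eq:ADDMASS} turns this expression into the time derivative of the solid part of $E$ plus the sources $-\tfrac{R^2}{2}\dot\delta\,(\eps\mathcal F_m + F_{\rm ext})$, yielding the closed identity $\tfrac{d}{dt}E = \eps\int_R^\infty(\partial_r\mathcal F+\mathcal G)q\,r\,dr + \tfrac{R^2}{2}\dot\delta(\eps\mathcal F_m + F_{\rm ext})$.

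The final step is routine: Cauchy--Schwarz bounds the fluid source by $\eps\,|\partial_r\mathcal F,\mathcal G|_{L^2_r}\sqrt E$, while $\tau_\kappa^2(0)\geq R^2/8$ ensures $R|\dot\delta|\lesssim \sqrt E$, so the solid source is controlled by $(\eps|\mathcal F_m|+|F_{\rm ext}|)\sqrt E$; dividing by $2\sqrt E$ and integrating in $t$ gives the claim. The hard part is the boundary algebra in the previous paragraph: the identity closes only because the dispersive boundary trace $\kappa^2 R\,\underline{\partial_t\dr q}\,\underline q$ and the hydrostatic trace $-R\,\underline\zeta\,\underline q$ assemble, through the Newton equation \eqref{eq:ADDMASS}, into an exact derivative of a positive quantity. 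This is the discrete analogue of Proposition \ref{prop:ConsEn}, and it is this cancellation that motivated the specific choice of $\underline{P_{\rm i}}$ in Section \ref{sec:EqOnPressure}; an alternative boundary condition (see Remark \ref{rk:EnBudg}) would introduce trace terms involving $\underline{h_{\rm e}}^{-1}$ that cannot be absorbed into $E$.
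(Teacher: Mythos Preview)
Your proof is correct and follows essentially the same route as the paper: test \eqref{eq:SLFormGeneral} against $(\zeta,q)$ in $L^2_r$, collect the boundary traces $-R(\underline\zeta+\kappa^2\underline{\ddot\zeta})\underline q$, substitute $\underline q=-\tfrac{R}{2}\dot\delta$ and the modified Newton equation \eqref{eq:ADDMASS} to turn them into the time derivative of the solid part of $E$, then conclude by Cauchy--Schwarz and Gronwall. Your bookkeeping of the factor $R$ from the radial integration $\int_R^\infty \dr f\,r\,dr=-R\underline f$ is in fact cleaner than the paper's, and your observation that $\tau_\kappa^2(0)\geq R^2/8$ is exactly what makes the solid source absorbable by $\sqrt E$ with a uniform constant.
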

\begin{proof}
Let $T > 0$ and $V^\ast  := (u^\ast, Z^\ast) \in \mathscr C^0_t ([0,T), \mathbb{H} \times \mathbb{R}^4)$.
Let $( \zeta, q )$ be a solution of \eqref{eq:SLFormGeneral}, \eqref{eq:ODEglob} and initial conditions $(\zeta, q, \delta, \dot{\delta}, \underline{\zeta}, \underline{\dot{\zeta}})_{|_{t = 0}} \in \mathbb{H}^1 \times \mathbb{R}^4$. 
Taking \eqref{eq:SLFormGeneral} multiplied by $( \zeta, q )$ leads to 
\begin{equation}
\begin{cases} 
\partial_t \frac{|\zeta|^2}{2} + {\rm{d}}_{r}(\zeta q) = q \partial_r \zeta, \\
\partial_t \frac{|q|^2}{2} + \partial_t \frac{|\kappa {\rm{d}}_{r} q|^2}{2} - \kappa ^2 {\rm{d}}_{r}(\partial_t [{\rm{d}}_{r} q ] q) + q \partial_r \zeta = \varepsilon \partial_r \mathcal{F}q + \varepsilon \mathcal{G} q.
\end{cases}
\end{equation}
The sum of those two equations gives
\begin{equation}
\partial_t \frac{|q|_{L^2_r}^2 + |\zeta|_{L^2_r}^2 + |\kappa\dr q|_{L^2_r}^2}{2} + \dr\left ((\kappa^2\ddot{\zeta} + \zeta)q\right ) = \varepsilon \partial_r \mathcal{F}q + \varepsilon \mathcal{G} q.
\end{equation}
Integrating this on $(R, +\infty)$ with the measure $rdr$ gives
\begin{equation}
 \partial_t \frac{||\zeta, q||_{\mathbb{H}}^2}{2} - \left ( \kappa^2 \underline{\ddot{\zeta}} + \underline{\zeta}\right )\underline q \leq \varepsilon |\partial_r \mathcal{F}, \mathcal{G}|_{L^2_r} \sqrt{E}. 
\end{equation}
Replacing $\underline{q}$ by its expression $\underline{q} = -\frac{R}{2}\dot{\delta}$, it leads to the inequality
\begin{equation}\label{eq:FIN1}
 \partial_t \frac{||\zeta, q||_{\mathbb{H}}^2}{2} + \left( \kappa^2 \underline{\ddot{\zeta}} + \underline{\zeta}\right )\frac{R}{2}\dot{\delta} \leq \varepsilon |\partial_r \mathcal{F}, \mathcal{G}|_{L^2_r} \sqrt{E}. 
\end{equation}
The first ODE linking $\ddot{\delta}$ and $\underline{\ddot{\zeta}}$ \eqref{eq:ADDMASS} multiplied by $\dot\delta$ gives
\begin{equation}
\partial_t \left (\frac{(\tau_\kappa^2(0)\dot{\delta})^2}{2} + \frac{\delta^2}{2}\right ) = ( \underline \zeta + \kappa^2 \underline{\ddot{\zeta_e}} ) \dot{\delta} + \left(\eps \mathcal{F}_m + F_{\rm ext} \right) \dot\delta,
\end{equation}
and we deduce from the previous equation \eqref{eq:FIN1}
\begin{equation}
\partial_t E \leq  \left( \varepsilon (|\partial_r \mathcal{F}, \mathcal{G}|_{L^2_r} + \mathcal{F}_m) + F_{\rm ext} \right)\sqrt{E}. 
\end{equation}
By the Gronwall lemma we deduce Lemma \ref{lemma:4.1}.
\end{proof}
If we have a look at the linear system ($\varepsilon = 0$) without external force ($F_{\rm ext} = 0$), one finds out the conservation of energy and it leads to the global wellposedness of the linear system. That is to say that the solutions of the linear \eqref{eq:AUG} stand in $\mathscr{C}^0([0, +\infty), \mathbb{H} \times \mathbb{R}^4)$. However in the general situation ($\eps \neq 0$) we need to deal with $\partial_r \mathcal{F}$ because it contains a space partial derivative of $\zeta$. To do so we use the same kind of estimate about the derivative of the system \eqref{eq:SLFormGeneral}, given by the following lemma.
\begin{lemma} \label{lemma:4.2}
Let $T > 0$ and $V^\ast  := (u^\ast, Z^\ast) \in \mathscr C^0_t ([0,T), \mathbb{H}^1 \times \mathbb{R}^4)$. 
Let initial conditions $(\zeta, q, \delta, \dot{\delta}, \underline{\zeta}, \underline{\dot{\zeta}})_{|_{t = 0}} \in \mathbb{H}^1 \times \mathbb{R}^4$. 
Then for any solution $(\zeta, q, \delta, \dot{\delta}, \underline{\zeta}, \underline{\dot{\zeta}})$ of \eqref{eq:SLFormGeneral}, \eqref{eq:ODEglob} one has 
$$\partial_t \left(E' - \frac{R}{2\tau_\kappa^2(0)} \delta \underline{\zeta} \right)\lesssim  \kappa^{-1} \left( \eps \left (\max(1, \frac{1}{R^2})\|\mathcal F\|_{H^1_r} + |\mathcal{F}_m| +  \| \mathcal{G}\|_{H^1_r} \right ) + |F_{\rm ext}| \right)\sqrt{E'} + \dot \delta \underline{\zeta}.$$
 with $E' :=  \frac{\|\zeta', q', \kappa \partial_r q'\|_{L^2_r}^2 + \frac{R}{2\tau_\kappa^2(0)}|\underline\zeta, \kappa \underline{\dot\zeta}|^2}{2}$, and $\zeta ' = \partial_r \zeta $ and $q' = {\rm{d}}_{r} q$.
\end{lemma}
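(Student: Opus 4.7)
The plan is to derive a differentiated energy identity and then identify exactly which boundary contributions repackage into $E'$ and the shift $-\tfrac{R}{2\tau_\kappa^2(0)}\delta\underline\zeta$, using the SHODE \eqref{eq:ADDMASS} (and, through the source terms, the hidden trace equation \eqref{eq:HidEqTrace}). Concretely, I first apply $\partial_r$ to the continuity equation and ${\rm d}_r$ to the momentum equation of \eqref{eq:SLFormGeneral}, so that for $\zeta' = \partial_r\zeta$ and $q' = {\rm d}_r q$
\begin{align*}
&\partial_t \zeta' + \partial_r q' = 0, \\
&\partial_t q' + {\rm d}_r \partial_r \zeta - \kappa^2 {\rm d}_r \partial_r \partial_t q' = \eps\, {\rm d}_r \partial_r \mathcal{F} + \eps\, {\rm d}_r \mathcal{G}.
\end{align*}
Pairing these against $\zeta'$ and $q'$ in $L^2_r$ and using the adjoint relation $(\partial_r u, v)_{L^2_r} = -R\,\underline u\,\underline v - (u, {\rm d}_r v)_{L^2_r}$, the interior cross-terms $(q', {\rm d}_r \zeta')_{L^2_r}$ and $(\partial_r\zeta,\partial_r q')_{L^2_r}$ combine after one further integration by parts into $-R\,\underline{\partial_r\zeta}\,\underline{q'}$, which cancels the analogous interior-momentum boundary term. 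What survives is
\[
\partial_t\,\tfrac{1}{2}\|\zeta', q', \kappa\partial_r q'\|_{L^2_r}^2 = R\,\underline{q'}\bigl[\underline{\partial_r \zeta} - \kappa^2\,\underline{\partial_r \partial_t q'} - \eps\,\underline{\partial_r \mathcal F} - \eps\,\underline{\mathcal G}\bigr] - \eps(\partial_r \mathcal F + \mathcal G,\,\partial_r q')_{L^2_r}.
\]

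Next, the bracketed boundary quantity equals $\tfrac{R}{2}\ddot\delta$ upon evaluating the momentum equation at $r=R$ together with $\underline q = -\tfrac{R}{2}\dot\delta$, while the trace of continuity yields $\underline{q'} = -\underline{\dot\zeta}$; the entire boundary contribution thus collapses to a multiple of $-\underline{\dot\zeta}\,\ddot\delta$. I then substitute $\ddot\delta$ from the SHODE \eqref{eq:ADDMASS}, producing the terms $\delta\,\underline{\dot\zeta}$, $\underline\zeta\,\underline{\dot\zeta}$, $\kappa^2\,\underline{\dot\zeta}\,\underline{\ddot\zeta}$, and forcing contributions $\mathcal F_m\,\underline{\dot\zeta}$ and $F_{\rm ext}\,\underline{\dot\zeta}$. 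The first three are (up to sign) time derivatives of $\delta\underline\zeta$, $(\underline\zeta)^2$ and $(\kappa\underline{\dot\zeta})^2$ respectively, and are therefore absorbed into the boundary part $\tfrac{R}{2\tau_\kappa^2(0)}|\underline\zeta, \kappa\underline{\dot\zeta}|^2$ of $E'$ and into the shift $-\tfrac{R}{2\tau_\kappa^2(0)}\delta\underline\zeta$. A single non-conservative cross-term proportional to $\dot\delta\,\underline\zeta$ does not assemble into a time derivative; it remains on the right-hand side and corresponds exactly to the last term in the claimed inequality.

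Finally, the remainders are controlled by Cauchy-Schwarz against $\sqrt{E'}$. The interior source pairings $\eps(\partial_r\mathcal F, \partial_r q')_{L^2_r}$ and $\eps(\mathcal G, \partial_r q')_{L^2_r}$ produce $\eps\kappa^{-1}(\|\partial_r\mathcal F\|_{L^2_r} + \|\mathcal G\|_{L^2_r})\sqrt{E'}$ through the $\kappa\,\partial_r q'$ slot of $E'$, while the scalar forcing terms $\eps\mathcal F_m\,\underline{\dot\zeta}$ and $F_{\rm ext}\,\underline{\dot\zeta}$ each pick up a $\kappa^{-1}$ factor from the $\kappa\,\underline{\dot\zeta}$ slot. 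The appearance of the full $H^1_r$-norms of $\mathcal F$ and $\mathcal G$ together with the factor $\max(1, R^{-2})$ comes from the radial trace inequality $|\underline u|^2 \lesssim \max(1, R^{-1})\|u\|_{H^1_r}^2$ applied to the boundary combinations produced by the hidden trace equation \eqref{eq:HidEqTrace} (such as $\underline{\mathfrak R_1\zeta}$, $\underline{\mathfrak R_1\mathcal F}$ and $\kappa^2\,\underline{{\rm d}_r\mathfrak R_0\mathcal G}$), which are themselves controlled via the $L^\infty_r$ mapping properties of Lemmata \ref{lem:OpDisp}--\ref{lem:R1}; the extra power of $R^{-1}$ reflects the interplay between the radial weight $r\,dr$ and the finite boundary radius $R$. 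The main obstacle, in my view, is precisely this last bookkeeping step: verifying that every boundary and forcing contribution either collapses into a time derivative absorbed into $E' - \tfrac{R}{2\tau_\kappa^2(0)}\delta\underline\zeta$ or admits the sharp bound with the explicit $\kappa^{-1}$ and $\max(1, R^{-2})$ dependence, without any uncontrollable piece leaking onto the right-hand side.
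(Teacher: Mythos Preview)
Your core strategy matches the paper's: differentiate the system with $(\partial_r,{\rm d}_r)$, form the $L^2_r$ energy identity for $(\zeta',q')$, identify the boundary contribution as a multiple of $\ddot\delta$, and then substitute $\ddot\delta$ from \eqref{eq:ADDMASS} so that the pieces $\underline\zeta\,\underline{\dot\zeta}$, $\kappa^2\underline{\dot\zeta}\,\underline{\ddot\zeta}$ and $\delta\,\underline{\dot\zeta}$ reassemble into $\partial_t E'$ and $\partial_t\big(\tfrac{R}{2\tau_\kappa^2(0)}\delta\underline\zeta\big)$, leaving the residual $\dot\delta\,\underline\zeta$ term. Your shortcut of reading the boundary bracket $\underline{\partial_r\zeta}-\kappa^2\underline{\partial_r\partial_t q'}-\eps\underline{\partial_r\mathcal F}-\eps\underline{\mathcal G}=\tfrac{R}{2}\ddot\delta$ directly off the momentum equation at $r=R$ is correct and is in fact a bit cleaner than the paper's route, which reaches the same identity through the spatially differentiated hidden equation \eqref{eq:derivTrace} and the relation $\kappa(\partial_r G)(R)=-\tfrac{R}{2}$.

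Where you go astray is the last paragraph. The factor $\max(1,R^{-2})$ has nothing to do with the hidden trace equation \eqref{eq:HidEqTrace} or with bounding quantities like $\underline{\mathfrak R_1\zeta}$, $\underline{\mathfrak R_1\mathcal F}$, $\kappa^2\underline{{\rm d}_r\mathfrak R_0\mathcal G}$; none of those objects appear in this proof, and no trace inequality is invoked. In the paper the $R^{-2}$ enters purely from the \emph{interior} source estimates, via
\[
\|\partial_r\mathcal F\|_{L^2_r}\lesssim \|{\rm d}_r\mathcal F\|_{L^2_r}+\tfrac{1}{R}\|\mathcal F\|_{L^2_r},
\qquad
\|\tfrac{1}{r}\partial_r\mathcal F\|_{L^2_r}\lesssim \tfrac{1}{R}\|{\rm d}_r\mathcal F\|_{L^2_r}+\tfrac{1}{R^2}\|\mathcal F\|_{L^2_r},
\]
which arise because the paper keeps $({\rm d}_r\mathcal G,q')_{L^2_r}$ unintegrated and tracks a $\tfrac{1}{r}\partial_r\mathcal F$ contribution. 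In your own variant you integrate by parts on both source terms and are left with $\eps\kappa^{-1}(\|\partial_r\mathcal F\|_{L^2_r}+\|\mathcal G\|_{L^2_r})\sqrt{E'}$, which is already dominated by $\eps\kappa^{-1}(\|\mathcal F\|_{H^1_r}+\|\mathcal G\|_{H^1_r})\sqrt{E'}$ since $H^1_r$ is defined with $\partial_r$; so you obtain the stated bound (with the harmless overcount $\max(1,R^{-2})\geq 1$) without any appeal to \eqref{eq:HidEqTrace} or to the $\mathfrak R_0,\mathfrak R_1$ mapping lemmas. Drop that discussion and the proof is complete.
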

If we manage to obtain this lemma, and if we deal with the terms $\delta\underline{\zeta}$ and $\dot{\delta} \underline{\zeta}$, then with a Gronwall lemma it is easy to get Proposition \ref{prop:EstimGlobale}. 
We will treat the terms $\delta\underline{\zeta}$ and $\dot{\delta} \underline{\zeta}$ with Young's inequalities after showing Lemma \ref{lemma:4.2}. 
\begin{proof}
Let $T > 0$, and $V^\ast  := (u^\ast, Z^\ast) \in \mathscr C^0_t ([0,T), \mathbb{H}^1 \times \mathbb{R}^4)$. 
Let $(\zeta, q, \delta, \dot{\delta}, \underline{\zeta}, \underline{\dot{\zeta}})$ a solution of \eqref{eq:SLFormGeneral}, \eqref{eq:ODEglob} with initial conditions $(\zeta, q, \delta, \dot{\delta}, \underline{\zeta}, \underline{\dot{\zeta}})_{|_{t = 0}} \in \mathbb{H}^1 \times \mathbb{R}^4$.
If we apply the operator $(\partial_r, {\rm{d}}_{r})$ to the system \eqref{eq:SLFormGeneral}, with the notations $\zeta ' = \partial_r \zeta $ and $q' = {\rm{d}}_{r} q$ we obtain
\begin{equation}
\begin{cases}
\partial_t \zeta'  + \partial_r q' = 0, \\
(1 - \kappa^2 {\rm{d}}_{r}  \partial_r ) \partial_t q' + {\rm{d}}_{r} \zeta' = \varepsilon {\rm{d}}_{r} [\partial_r \mathcal F] + \varepsilon {\rm{d}}_{r} \mathcal G.
\end{cases}
\end{equation}
The system \eqref{eq:SLFormGeneral} gives the boundary condition of the derivated system 
\begin{equation}\label{eq:BCq'}
\underline{q'} =- \underline{\dot \zeta}.
\end{equation}
Then, as in the proof of Lemma \ref{lemma:4.1} one has the following inequality.
\begin{equation}
\partial_t \frac{\|\zeta', q', \kappa\partial_r q'\|_{L^2_r}^2} {2} - (\kappa^2 \underline{\ddot{\zeta'}} + \underline{\zeta'}) \underline{q'} = \varepsilon \int_R^{+\infty}  {{\rm d}}_{r} [\partial_r \mathcal F ]q'rdr + \varepsilon \int_R^{+\infty} {\rm{d}}_{r} \mathcal G q'rdr.
\end{equation}
However we wanted to eliminate higher space derivatives on $\mathcal{F}$.
Integrating by parts on the right hand term gives
\begin{align*}
\partial_t \frac{\|\zeta', q', \kappa\partial_r q'\|_{L^2_r}^2} {2} - (\kappa^2 \underline{\ddot{\zeta}'} + \underline{\zeta'} - \varepsilon \underline{\partial_r \mathcal F}) \underline{q'} \leq & \varepsilon \kappa^{-1} \|\partial_r \mathcal F\|_{L^2_r} \|\kappa \partial_r q'\|_{L^2_r} \\
&+ \varepsilon \left( \|{\rm{d}}_{r} \mathcal{G}\|_{L^2_r} + \|\frac{1}{r} \partial_r \mathcal{F} \|_{L^2_r}\right) \|q'\|_{L^2_r}.
\end{align*}
Injecting the boundary condition \eqref{eq:BCq'} leads to
\begin{align*}
\partial_t \frac{|\zeta', q', \kappa\partial_r q'|_{L^2_r}^2} {2} + (\kappa^2 \underline{\ddot{\zeta}'} + \underline{\zeta'} - \varepsilon \underline{\partial_r \mathcal F}) \underline{\dot \zeta} \leq & \varepsilon \kappa^{-1} \|\partial_r \mathcal F\|_{L^2_r} \|\kappa \partial_r q'\|_{L^2_r} \\
&+ \varepsilon \left( \|{\rm{d}}_{r} \mathcal{G}\|_{L^2_r} + \|\frac{1}{r} \partial_r \mathcal{F} \|_{L^2_r}\right) \|q'\|_{L^2_r}.
\end{align*}
Moreover when we take the trace of the space derivative $\partial_r$ of the hidden equation \eqref{eq:HiddenEq} (in the meaning of $\partial_r$) we get 
\begin{equation}\label{eq:derivTrace}
\kappa^2 \underline{\ddot{\zeta'}} + \underline{\zeta'} - \varepsilon \underline{\partial_r \mathcal F} = - \varepsilon \underline{\mathcal G} - \ddot{\delta} \kappa \left (\partial_r G\right )(R).
\end{equation}
By the definition of $G$ given in Lemma \ref{lem:EqTrace} one deduces 
\begin{align*}
\kappa \partial_r G(r) &= -\frac{R}{2} \frac{K_1(r/\kappa)}{K_1(R/\kappa)},
\end{align*}
and therefore
$$
\kappa \left (\partial_r G\right )(R) = -\frac{R}{2}.
$$
Hence if we inject the expression of $\ddot{\delta}$ given by 
\begin{equation}
\begin{cases}
\tau_\kappa^2(0)\ddot{\delta} + \delta =  \underline \zeta + \kappa^2 \underline{\ddot{\zeta}} + \varepsilon \mathcal{F}_m [V^\ast] + F_{\rm ext},\\
\kappa^2 \underline{\ddot{\zeta}} + \underline{\zeta} = \underline{\mathfrak R_1 \zeta} - \kappa\ddot{\delta} G(R) + \varepsilon \underline{\mathcal{F}[V^\ast]} - \varepsilon \underline{\mathfrak R_1 \mathcal{F}[V^\ast] } + \varepsilon \kappa^2 \underline{ {\rm{d}}_{r} \mathfrak R_0 \mathcal{G}[V^\ast]},
\end{cases}
\end{equation}
into the differentiated hidden equation \eqref{eq:derivTrace} we obtain
\begin{equation}
\kappa^2 \underline{\ddot{\zeta'}} + \underline{\zeta'} - \varepsilon \underline{\partial_r \mathcal F} = \frac{R}{2\tau_\kappa^2(0)} \left(\eps \mathcal{F}_m + F_{\rm ext} - \delta + \kappa^2 \underline{\ddot{\zeta}} + \underline{\zeta} \right) - \varepsilon \underline{\mathcal{G}}.
\end{equation}
We deduce that
\begin{align*}
\partial_t E' - \frac{R}{2\tau_\kappa^2(0)} \delta \underline{\dot \zeta} +  \frac{R}{2\tau_\kappa^2(0)} \left(\eps\mathcal{F}_m + F_{\rm ext}\right) \underline{\dot \zeta} - \varepsilon \underline{\mathcal{G}} \underline{\dot \zeta} \leq & \varepsilon \kappa^{-1} \|\partial_r \mathcal F\|_{L^2_r} \|\kappa \partial_r q'\|_{L^2_r} \\
&+ \varepsilon \left( \|{\rm{d}}_{r} \mathcal{G}\|_{L^2_r} + \|\frac{1}{r} \partial_r \mathcal{F} \|_{L^2_r}\right) \|q'\|_{L^2_r},
\end{align*}
that is to say
\begin{align*}
\partial_t \left(E' - \frac{R}{2\tau_\kappa^2(0)} \delta \underline{\zeta} \right)\lesssim & \big[ \kappa^{-1} \left( \eps\left (\|\partial_r \mathcal F\|_{L^2_r} + |\mathcal{F}_m| + | \underline{\mathcal{G}} | \right ) +|F_{\rm ext}| \right) \\
&+ \left( \|{\rm{d}}_{r} \mathcal{G}\|_{L^2_r} + \|\frac{1}{r} \partial_r \mathcal{F} \|_{L^2_r} \right) \big] \sqrt{E'} + \frac{R}{2\tau_\kappa^2(0)}|\dot \delta \underline{\zeta}|.
\end{align*}
Because of the 2D cylindric situation one has 
$$ \|\partial_r \mathcal{F} \|_{L^2_r} \lesssim \|{\rm{d}}_{r} \mathcal{F} \|_{L^2_r}+ \frac{1}{R}\|\mathcal{F} \|_{L^2_r}, $$
and 
$$\|\frac{1}{r} \partial_r \mathcal{F} \|_{L^2_r} \lesssim \frac{1}{R}\|{\rm{d}}_{r} \mathcal{F} \|_{L^2_r} + \frac{1}{R^2}\|\mathcal{F} \|_{L^2_r}.$$
Hence we write the inequality as
\begin{equation}\label{eq:InegE'_lem}
\partial_t \left(E' - \frac{R}{2\tau_\kappa^2(0)} \delta \underline{\zeta} \right) \lesssim  \kappa^{-1} \left( \eps \left (\max(1, \frac{1}{R^2})\|\mathcal F\|_{H^1_r} + |\mathcal{F}_m| + \| \mathcal{G}\|_{H^1_r}\right )+ |F_{\rm ext}| \right)\sqrt{E'} + \frac{R}{2\tau_\kappa^2(0)}|\dot \delta |\sqrt{E'}.
\end{equation}
\end{proof}
\begin{remark}
It is important to note that we need to use different derivative operators between $\zeta$ and $q$.
Otherwise we would obtain either a commutation term from $\partial_r \dr q$ or a commutation term from $\dr \partial_r \zeta$. Indeed, one has $\partial_r \dr \cdot = \dr \partial_r \cdot - \frac{\cdot}{r^2}$. 
\end{remark}
To conclude we need to deal with the terms $\delta \underline\zeta$ in the left hand-side and $\dot{\delta}\underline{\zeta}$ in the right hand-side of equation \eqref{eq:InegE'_lem}. 
Indeed we want to avoid having a nonlinear term without $\eps$ in front of it. 
This is why we aim at separating $\delta \underline\zeta$ and $\dot{\delta}\underline{\zeta}$. We treat this through the following corollary. If we integrate Lemma \ref{lemma:4.2} with respect to the time we deduce
\begin{cor}\label{cor:4.1}
Let $\alpha = \max(2R^+, 9)$. Let $T> 0$ and $V^\ast  := (u^\ast, Z^\ast) \in \mathscr C^0_t ([0,T), \mathbb{H}^1 \times \mathbb{R}^4)$. 
Let initial conditions $(\zeta, q, \delta, \dot{\delta}, \underline{\zeta}, \underline{\dot{\zeta}})_{|_{t = 0}} \in \mathbb{H}^1 \times \mathbb{R}^4$. 
Then any solution $(\zeta, q, \delta, \dot{\delta}, \underline{\zeta}, \underline{\dot{\zeta}})$ of \eqref{eq:SLFormGeneral} combined with \eqref{eq:ODEglob} satisfies for $0 \leq t < T$
\begin{align*}
\sqrt{E'(t)} \lesssim &\sqrt{\beta_0} + \eps T(T+\sqrt{\alpha})||\mathfrak F||_{L^\infty([0,T))} + \kappa^{-1}||F_{\rm ext}||_{L^1([0,T))} \\& + \varepsilon \kappa^{-1}\int_0^T \left( \max(1, \frac{1}{R^2}) \|\mathcal F\|_{H^1_r} + |\mathcal{F}_m| + \| \mathcal{G}\|_{H^1_r} \right ),
\end{align*}
with $\beta_0 = E'(0) - \frac{R}{2\tau_\kappa^2(0)}\delta(0)\underline{\zeta}(0) + \frac{\alpha}{2\tau_\kappa^2(0)}E(0)$, $E$, $E'$ as in Lemmata \ref{lemma:4.1} and \ref{lemma:4.2}, and $\mathfrak{F}$ is given in Lemma \ref{lemma:4.1}.
\end{cor}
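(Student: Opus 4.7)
The plan is to introduce a modified energy
$$\tilde E(t) := E'(t) - \frac{R}{2\tau_\kappa^2(0)}\delta(t)\underline\zeta(t) + \frac{\alpha}{2\tau_\kappa^2(0)}E(t),$$
that both is nonnegative and dominates $E'(t)$, then to estimate $\partial_t \tilde E$ using Lemmata \ref{lemma:4.1} and \ref{lemma:4.2}, and finally to apply a Gronwall-type argument.

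First, I would show that $\tilde E$ is coercive with respect to $E'$. Since $\frac{R}{4}\delta^2 \le E$ and $\frac{R}{4\tau_\kappa^2(0)}\underline\zeta^2 \le E'$, a weighted Young inequality with parameter $\lambda \in (0,1)$ yields
$$\frac{R}{2\tau_\kappa^2(0)}|\delta\underline\zeta| \le \frac{1}{\lambda\tau_\kappa^2(0)}E + \lambda E'.$$
Choosing $\lambda = 1/2$ and using the fact that $\alpha = \max(2R^+,9) \ge 9$, one obtains $\tilde E \ge \tfrac{1}{2}E' + c E/\tau_\kappa^2(0)$ for some $c>0$. In particular this immediately gives $\beta_0 = \tilde E(0) \ge 0$ as claimed, and it provides the two crucial bounds $\sqrt{E'} \lesssim \sqrt{\tilde E}$ and $\sqrt{E} \lesssim \sqrt{\tilde E/\tau_\kappa^2(0)}$.

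Next, differentiating $\tilde E$ and applying Lemma \ref{lemma:4.2} to control the first two terms and the proof of Lemma \ref{lemma:4.1} (which gives $\partial_t E \lesssim B\sqrt{E}$ with $B := \eps(|\partial_r\mathcal F,\mathcal G|_{L^2_r} + |\mathcal F_m|) + |F_{\rm ext}|$) to control the last term, I obtain
$$\partial_t \tilde E \lesssim \kappa^{-1} A\,\sqrt{E'} + \frac{R}{2\tau_\kappa^2(0)}|\dot\delta|\sqrt{E'} + \frac{\alpha}{2\tau_\kappa^2(0)}B\sqrt{E},$$
where $A := \eps\big(\max(1,R^{-2})\|\mathcal F\|_{H^1_r} + |\mathcal F_m| + \|\mathcal G\|_{H^1_r}\big) + |F_{\rm ext}|$. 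Injecting the coercivity bounds above, the $B\sqrt{E}$ term picks up a factor of $\sqrt{\alpha/(\alpha-4)} \lesssim 1$ (since $\alpha\ge 9$), while the whole right-hand side becomes bounded by a multiple of $\sqrt{\tilde E}$ times $\kappa^{-1}A + \frac{R}{2\tau_\kappa^2(0)}|\dot\delta| + \sqrt{\alpha}\,B$. Dividing by $2\sqrt{\tilde E}$ then gives
$$\frac{d}{dt}\sqrt{\tilde E} \lesssim \kappa^{-1}A + \frac{R}{2\tau_\kappa^2(0)}|\dot\delta| + \sqrt{\alpha}\,B.$$

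Finally I integrate from $0$ to $t<T$. The $\kappa^{-1}A$ and $\sqrt{\alpha}\,B$ terms integrate directly to the desired contributions (using $\int_0^T B \le T\|\eps\mathfrak F + F_{\rm ext}\|_{L^\infty}$ and $\int_0^T \kappa^{-1}|F_{\rm ext}| = \kappa^{-1}\|F_{\rm ext}\|_{L^1}$). The main obstacle is the middle term $\frac{R}{2\tau_\kappa^2(0)}|\dot\delta|$, which I bound using Lemma \ref{lemma:4.1}: $|\dot\delta|(t) \lesssim \frac{1}{\tau_\kappa^2(0)\sqrt{R}}\big(\sqrt{E(0)} + \eps\|\mathfrak F\|_{L^1} + \|F_{\rm ext}\|_{L^1}\big)$. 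Integrating in time yields a factor $T$, and the $\sqrt{R}$ is absorbed via $\sqrt{R^+} \le \sqrt{\alpha/2}$, giving $\eps T^2\sqrt{\alpha}\|\mathfrak F\|_{L^\infty}$ together with the cross term $\eps T\sqrt{\alpha}\|\mathfrak F\|_{L^\infty}$ from the $\sqrt{\alpha}\,B$ contribution. Together these combine into $\eps T(T+\sqrt{\alpha})\|\mathfrak F\|_{L^\infty}$, and invoking $\sqrt{E'}\lesssim \sqrt{\tilde E}$ completes the proof. The tuning of $\alpha = \max(2R^+,9)$ is precisely what is needed to simultaneously ensure $\tilde E \gtrsim E'$ (via the $\alpha\ge 9$ threshold in the Young step) and to absorb the $\sqrt{R}$ from the $|\dot\delta|$ estimate (via the $\alpha \ge 2R^+$ threshold).
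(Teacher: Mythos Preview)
Your proof is correct and follows essentially the same strategy as the paper's: combine Lemmata~\ref{lemma:4.1} and~\ref{lemma:4.2} via a Young inequality on the cross term $\delta\,\underline\zeta$, with $\alpha=\max(2R^+,9)$ playing exactly the same double role you identify. The only difference is organizational: you package everything into a single modified energy $\tilde E=E'-\tfrac{R}{2\tau_\kappa^2(0)}\delta\,\underline\zeta+\tfrac{\alpha}{2\tau_\kappa^2(0)}E$, prove coercivity $\tilde E\gtrsim E'$ upfront, and use the \emph{differential} form $\partial_tE\le B\sqrt E$ from the proof of Lemma~\ref{lemma:4.1}; the paper instead integrates Lemma~\ref{lemma:4.2} first, applies Young only at the endpoints $t$ and $0$, and invokes the \emph{integrated} bound on $\delta(t)^2$ from Lemma~\ref{lemma:4.1} before Gronwall. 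Your route is slightly cleaner but not genuinely different.
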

\begin{proof}
Let $T > 0$, $V^\ast  := (u^\ast, Z^\ast) \in \mathscr C^0_t ([0,T), \mathbb{H}^1 \times \mathbb{R}^4)$ and $(\zeta, q, \delta, \dot{\delta}, \underline{\zeta}, \underline{\dot{\zeta}})$ a solution of \eqref{eq:SLFormGeneral}, \eqref{eq:ODEglob}.
Let $ S := \max(1, \frac{1}{R^2})\|\mathcal F\|_{H^1_r} + \| \mathcal{G}\|_{H^1_r} + |\mathcal{F}_m| + \frac{1}{\eps}|F_{\rm ext}|$. Integrating Lemma \ref{lemma:4.2} with respect to the time over $[0, t)$ gives 
\begin{equation}\label{eq:CorProof1}
E'(t) - \frac{R}{2\tau_\kappa^2(0)} \delta(t) \underline{\zeta}(t) \lesssim E'(0) - \frac{R}{2\tau_\kappa^2(0)}\delta(0) \underline{\zeta}(0) + \varepsilon \kappa^{-1} \int_0^T S \sqrt{E'} + \frac{R}{2\tau_\kappa^2(0)}\int_0^T |\dot{\delta}| \sqrt{E'}.
\end{equation}
In this equation we bounded from above the integrals on $[0, t)$ by the integrals on $[0, T)$ because the quantities are positives. 
On one hand with Lemma \ref{lemma:4.1} one has
$$ |\delta, \dot{\delta}| \leq \sqrt{E(0)} + T\eps ||\mathfrak F||_{L^\infty([0,T))} + ||F_{\rm ext}||_{L^1([0,T))}. $$
On the other hand the term $\delta \underline{\zeta}$ is controlled via a Young's inequality
\begin{equation}
\delta(t) \underline{\zeta}(t) \leq \frac{\alpha}{R} \delta(t)^2 + \frac{R}{\alpha} \underline{\zeta}(t)^2,
\end{equation}
therefore
\begin{equation}
E'(t) - \frac{R}{2\tau_\kappa^2(0)} \delta(t) \underline{\zeta}(t) \geq E'(t) - \frac{\alpha}{2\tau_\kappa^2(0)}\delta(t)^2 - \frac{R^2}{2\alpha\tau_\kappa^2(0)}\underline{\zeta}(t)^2.
\end{equation}
By definition of $E'$ (see Lemma \ref{lemma:4.2}), since $\alpha > R$, 
\begin{equation}
E'(t) \lesssim E'(t) - \frac{R^2}{2\alpha\tau_\kappa^2(0)}\underline{\zeta}(t)^2.
\end{equation}
Injecting the two last equations into \eqref{eq:CorProof1} leads to
\begin{align*}
E'(t) \lesssim & \frac{\alpha}{2\tau_\kappa^2(0)}\left( E(0) + (T\eps ||\mathfrak F||_{L^\infty([0,T))} + ||F_{\rm ext}||_{L^1([0,T))} )^2\right) + E'(0) \\ 
&- \frac{R}{2\tau_\kappa^2(0)}\delta(0) \underline{\zeta}(0) +  \int_0^T \left (\varepsilon \kappa^{-1} S + \frac{R}{2\tau_\kappa^2(0)} |\dot{\delta}|\right ) \sqrt{E'}.
\end{align*}
We set $\beta_0 := \frac{\alpha}{2\tau_\kappa^2(0)} E(0) + E'(0) - \frac{R}{2\tau_\kappa^2(0)}\delta(0) \underline{\zeta}(0)$. Then for $\alpha \geq 9$, $\beta_0$ is non-negative. Indeed one has the following Young's inequality
$$
\delta(0) \underline{\zeta}(0) \leq \frac{\alpha - 1}{4} \delta(0)^2 + \frac{4}{\alpha - 1} \underline{\zeta}(0)^2,
$$
and
$$ 
\frac{\alpha R}{8\tau_\kappa^2(0)}\delta(0)^2 + \frac{R}{4\tau_\kappa^2(0)} \underline{\zeta}(0)^2 - \frac{R}{2\tau_\kappa^2(0)}\delta(0) \underline{\zeta}(0) \geq \frac{\alpha R}{2\tau_\kappa^2(0)}\left ( (\frac{\alpha}{4} - \frac{\alpha - 1}{4})\delta(0)^2 + (\frac{1}{2} - \frac{4}{\alpha - 1})\underline{\zeta}(0)^2\right).
$$
It proves that $\beta_0 \geq 0$ since $\alpha \geq 9$. 
By the Gronwall lemma we obtain
\begin{equation}
\sqrt{E'(t)} \lesssim \sqrt{\beta_0} + \sqrt{\alpha}\left( T \eps \|\mathfrak{F}\|_{L^\infty([0, T))} + \|F_{\rm ext}\|_{L^1([0, T))}\right) +  \int_0^T \left (\varepsilon \kappa^{-1} S + \frac{R}{2\tau_\kappa^2(0)} |\dot{\delta}|\right).
\end{equation}
Finally using Lemma \ref{lemma:4.1}, one obtains 
\begin{equation}\label{eq:int_dotdelta}
\int_0^T |\dot{\delta}| \lesssim T\sqrt{E(0)} + \eps T^2 \|\mathfrak{F}\|_{L^\infty([0, T))} + \|F_{\rm ext}\|_{L^1([0, T))},
\end{equation}
which leads to the lemma. 
\end{proof}
Finally summing Lemma \ref{lemma:4.1} and Corollary \ref{cor:4.1} leads to the inequiality of Proposition \ref{prop:EstimGlobale}. 
\begin{remark}
One could get better estimates by considering the more precise estimate
$$ \delta(t)^2 \leq E(0) + \eps^2 ||\mathfrak{F}||_{L^1([0,T))}^2 + ||F_{\rm ext}||_{L^1([0,T))}^2, $$ instead of $$ \delta(t)^2 \leq E(0) + T^2 \eps^2 ||\mathfrak{F}||_{L^\infty([0,T))}^2 + ||F_{\rm ext}||_{L^1([0,T))}^2,$$ but we did not manage to make it useful.
\end{remark}
\indent Thanks to Proposition \ref{prop:EstimGlobale}, any solution of the system \eqref{eq:SLFormGeneral}, \eqref{eq:ODEglob} where the initial conditions are null satisfies
\begin{equation}\label{eq:InegFP}
|| V ||_{\mathscr{C}_t \left( \mathbb{H}^1 \times \mathbb{R}^4 \right)} \lesssim \varepsilon T \left(T + \max(1, \frac{1}{R^2}) \kappa^{-1} + \sqrt{\alpha} \right)\left ( ||\mathcal{F}, \mathcal{G}||_{L^\infty_t([0, T), H^1_r)}+ \| \mathcal{F}_m \|_{L^\infty_t([0, T))}\right ). 
\end{equation}
 
\subsubsection{Application of the fixed point}
We now go back to the first problem, meaning that we apply the estimate of Proposition \ref{prop:EstimGlobale} with the quantities described at the beginning of Subsection \ref{sec:4.2}. That is to say for $V = (\zeta, q, \delta, \dot{\delta}, \underline{\zeta}, \underline{\dot{\zeta}})$, one uses
\begin{itemize}
\item $\mathcal{F}[V] := \frac{\zeta^2}{2} + \frac{q^2}{h}$,
\item $\mathcal{G}[V] := \frac{q^2}{rh}$, for $r > R$,
\item $\mathcal{F}_m[V] := \gamma = \frac{R^2}{8h_i \left( \tau^2_\kappa (\varepsilon \delta) + G\right) }\delta \left( \eps a(\delta, \underline{\zeta})\dot{\delta}^2 - \eps \underline{f} - \delta + \underline{f_{\rm hyd}} + F_{\rm ext}\right) + a(\delta, \underline{\zeta}) \dot{\delta}^2$.
\end{itemize}
\indent Let $V_1 = (u_1, Z_1)$, $V_2 = (u_2, Z_2)$ $\in \mathscr C^0_t([0, T), \mathbb{H}^1 \times \mathbb{R}^4)$ with the same initial data.
We note $h_k = 1 + \eps \zeta_k$ for $k = 1, 2$.
With Lemma \ref{lem:OpDisp} we obtain from Section \ref{sec:estimODE} the inequality
$$ |\underline{f_{{\rm hyd}, 1}} - \underline{f_{{\rm hyd}, 2}}|\leq \eps c\left(\|\frac{1}{h_1}, \frac{1}{h_2}, q_1, q_2, \zeta_1, \zeta_2\|_{L^\infty}\right)\|V_1 - V_2\|_{\mathbb{H}^1 \times \mathbb{R}^4},$$
where $f_{{\rm hyd}, k}$ stands for $f_{{\rm hyd}}$ for $V_k$, $k \in \{1, 2\}$, and $c$ is a constant independent of $\eps$ and $\kappa$.
With the expression of $a$ is given in Proposition \ref{prop:AddedMass} and thanks to Moser type inequalities \eqref{eq:MTineq} one deduces that
$$|\mathcal{F}_m[V_1] - \mathcal{F}_m[V_2]| \leq \eps C\left( \|\frac{1}{h_1}, q_1, \zeta_1, \frac{1}{h_2}, q_2, \zeta_2\|_{L^\infty_r}, |\dot{\delta}_1, \dot{\delta}_2|, |F_{\rm ext}|\right) || V_1 - V_2 ||_{\mathbb{H}^1 \times \mathbb{R}^4},$$
where $c$ is a constant independent of $\eps$ and $\kappa$.
From Moser-type inequalities \eqref{eq:MTineq} we also obtain that
\begin{equation}
||\mathcal{F}[V_1] - \mathcal{F}[V_2] ||_{H^1_r} \leq C_1 \left(\|\frac{1}{h_1}, q_1, \zeta_1, \frac{1}{h_2}, q_2, \zeta_2\|_{L^\infty_r}, |\dot{\delta}_1, \dot{\delta}_2|, \underset{\mathbb{R}^*_+\setminus\{R\}}{\vinf} \left[\frac{1}{h_1}, \frac{1}{h_2}\right] \right) ||u_1 - u_2||_{\mathbb{H}^1},
\end{equation}
and
\begin{equation}
||\mathcal{G}[V_1] - \mathcal{G}[V_2] ||_{H^1_r}  \leq \frac{1}{R}C_2\left(||u_1, u_2||_{L^\infty_r}, \underset{\mathbb{R}^*_+\setminus\{R\}}{\vinf} \left[\frac{1}{h_1}, \frac{1}{h_2}\right] \right) (||u_1 - u_2||_{\mathbb{H}^1}),
\end{equation}
where $C_1$ and $C_2$ are constant independent of $\eps$ and $\kappa$.
Therefore with inequality \eqref{eq:InegFP}, for $V_1$, $V_2$ solutions of \eqref{eq:AUG} with the same initial condition we obtain
\begin{equation}
||\phi(V_1) - \phi(V_2) ||_{\mathscr{C}^0_t \left( \mathbb{H}^1 \times \mathbb{R}^4 \right)} \lesssim  \varepsilon R^{-1} T \left(T + \max(1, \frac{1}{R^2}) \kappa^{-1} + \sqrt{\alpha} \right) C || V_1 - V_2 ||_{\mathscr{C}^0_t \left( \mathbb{H}^1 \times \mathbb{R}^4 \right)},
\end{equation}
where $C = C\left(||V_1, V_2||_{\mathbb{H}^1 \times \mathbb{R}^4}, \underset{\mathbb{R}^*_+\setminus\{R\}}{\vinf} \left[\frac{1}{h_1}, \frac{1}{h_2}\right], |F_{\rm ext}| \right)$ is a constant independent of $\eps$ and $\kappa$ and $\phi$ stands for 
$$ 
\phi : V^\ast = (u^\ast, Z^\ast) \in \mathscr{C}_t \left( \mathbb{H}^1 \times \mathbb{R}^4 \right) \mapsto V  \in \mathscr{C}_t \left( \mathbb{H}^1 \times \mathbb{R}^4 \right),
$$ 
where $V$ is the solution of the system \eqref{eq:SLFormGeneral} associated with the ODE's system \eqref{eq:ODEglob} and with the non-linearities adapted to \eqref{eq:AUG}
\begin{equation*}
\begin{cases}
\mathcal{D}_\kappa \partial_t u + \mathcal L [u] = \varepsilon \partial_r \begin{pmatrix} 0 \\
\mathcal F[V^\ast] \end{pmatrix} + \varepsilon \begin{pmatrix} 0 \\ \mathcal G[V^\ast] \end{pmatrix}, \\
\underline q = - \frac{R}{2}\dot\delta, \\
\tau_\kappa^2(0)\ddot{\delta} + \delta =  \underline \zeta + \kappa^2 \underline{\ddot{\zeta_e}} + \varepsilon \mathcal{F}_m [V^\ast] + F_{\rm ext},\\
\kappa^2 \underline{\ddot{\zeta}} + \underline{\zeta} = \underline{\mathfrak R_1 \zeta} - \ddot{\delta} G + \varepsilon \underline{\mathcal{F}[V^\ast]} - \varepsilon \underline{\mathfrak R_1 \mathcal{F}[V^\ast] } + \varepsilon \kappa^2 \underline{ {\rm{d}}_{r} \mathfrak R_0 \mathcal{G}[V^\ast]}.
\end{cases}
\end{equation*}
Hence if $T$ satisfies $T = cT_{\eps, \kappa, R}$, with 
$$
T_{\eps, \kappa, R} = \frac{-(\sqrt{\alpha} +\max(1, \frac{1}{R^2})\kappa^{-1}) + \sqrt{(\sqrt{\alpha} +\max(1, \frac{1}{R^2})\kappa^{-1})^2 + 4\eps^{-1}R^{-1}}}{2},
$$
and $c > 0$ of order 1 depending only on the initial conditions, then the upper bound verifies $\varepsilon R^{-1} T \left(T + \max(1, \frac{1}{R^2}) \kappa^{-1} + \sqrt{\alpha} \right) C  < 1$ and $\phi$ is contractant. Therefore with the Picard fixed point theorem one obtains existence and unicity of a fixed point of $\phi$. It proves the wellposedness of the Augmented formula in $\mathscr{C}^0\left([0, T), \mathbb{H}^1 \times \mathbb{R}^4\right)$.
 
One can remark different regimes given by the time existence.
If we are in the regime $\eps \ll \kappa^2$, then $T_{\eps, \kappa, R} \sim \eps^{-1/2}R^{-1/2} - \left(\sqrt{\alpha} + \max(1,\frac{1}{R^2})\kappa^{-1}\right)$. This regime is only due to the boundaries and it is obtained with inequality \eqref{eq:int_dotdelta}. 
On the contrary if $\eps \gg \kappa^2$, then $T_{\eps, \kappa, R} \sim \frac{\eps^{-1/2}R^{-1/2}}{\sqrt{\alpha} + \max(1,\frac{1}{R^2})\kappa^{-1}} $, and it confirms that this method prevents $\kappa$ from tending to 0. However $T_{\eps, \kappa, R}$ tends to $0$ much slower than $T_{\rm ODE}$ when $\kappa \to 0$, in particular when $\nu > 0$. 
This description of the time existence also gives $T \to 0$, for high Helmholtz numbers ($R\to \infty$) and weak Helmholtz numbers ($R \to 0$). Therefore this method does not give either of the limits for the Helmholtz number, to the contrary of $T_{\rm ODE}$ which gives the limit for high Helmholtz numbers limit.
However the solutions given by the ODE's perspective need less regularity than the solutions given by the semi-linear scheme.

Therefore, except for the high Helmholtz numbers limit, $T_{\eps, \kappa, R}$ is better than $T_{\rm ODE}$.
\begin{remark}\label{rk:WPSL_visc}
To consider the viscosity $\nu > 0$ one must consider the component $\nu h\partial_r \frac{\dr q}{h} $. There are two ways to consider it.
\begin{enumerate}
\item We write $h\partial_r \frac{\dr q}{h} = \dr q + \partial_r(\ln(h)) \dr q$ and we consider the term $\partial_r(\ln(h)) \dr q$ in $\mathcal{G}$ in \eqref{eq:SLFormGeneral}. Therefore to obtain an estimate over $\|\mathcal{G}\|_{H^1_r}$ with Moser type inequalities one must have an interest in higher space derivative order as in Lemma \ref{lemma:4.2}.
\item The second way to consider it is to use a quasi-linear scheme and treat the viscous term as 
$$ h^k \partial_r \frac{\dr q ^{k+1}}{h^k}. $$
Therefore it is possible to treat it with the third method explained in the introduction.
\end{enumerate}
\end{remark}
 
\begin{remark}\label{rk:WPSL_ZetaOvH}
If we had considered the same energy balance as in \cite{BeckLannes} (see Remark \ref{rk:EnBudg}) we would have in the modified Newton's equation a factor $\frac{\underline{\ddot{\zeta_e}}}{\underline{h_e}}$
\begin{equation}
\tilde{\tau}_\kappa^2( \varepsilon \delta) \ddot{\delta} + \delta -  \varepsilon \tilde{a}( \varepsilon \delta, \underline{\zeta_e}) \, \dot{\delta}^2
= \kappa^2 \frac{\underline{\ddot{\zeta_e}}}{\underline{h_e}} + \underline{\zeta_e} + F_{\rm ext}, 
\end{equation}
and it would have been much harder to consider a semi-linear scheme. Indeed in this case we cannot separate the non-linear terms over higher order derivatives $\underline{\ddot{\zeta}}$ and $\ddot{\delta}$, and a quasi-linear scheme is needed.
\end{remark}
 
For $k \in \mathbb{N}$ if we apply $\partial_t^k$ to the Augmented formula \eqref{eq:AUG} we think that we can use the same kind of semilinear scheme to show the following conjecture.
\begin{conj}
For any $k \in \mathbb{N}$, the Augmented formula is wellposed in  $\mathscr{C}^k\left([0, cT_{\eps, \kappa, R}), \mathbb{H}^1 \times \mathbb{R}^4\right)$.
\end{conj}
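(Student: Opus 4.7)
The plan is to proceed by induction on $k$, using Proposition \ref{prop:WPSL} as the base case $k=0$ and exploiting the fact that time differentiation preserves the semi-linear structure of the Augmented formulation. The key observation is that the non-local operators $\mathfrak{R}_0, \mathfrak{R}_1$, the spatial operator $\mathcal{L}$, and the dispersive matrix $\mathcal{D}_\kappa$ all commute with $\partial_t$. Hence applying $\partial_t^k$ to \eqref{eq:SLFormGeneral} and \eqref{eq:ODEglob} yields the same system of equations, of unknown $(\partial_t^k u, \partial_t^k Z)$, but with source terms $\partial_t^k \mathcal{F}[V]$, $\partial_t^k \mathcal{G}[V]$, $\partial_t^k \mathcal{F}_m[V]$ in place of $\mathcal{F}[V], \mathcal{G}[V], \mathcal{F}_m[V]$, and with boundary condition $\underline{\partial_t^k q}=-\tfrac{R}{2}\partial_t^{k+1}\delta$.

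The first step is to define consistently the initial data for $\partial_t^j V$ at $t=0$, for $1\leq j\leq k$. Using the equations successively, $\partial_t^j u|_{t=0}$ can be expressed algebraically in terms of $\mathcal{L}$, $\mathcal{D}_\kappa^{-1}$, the non-local operators and the lower-order initial data. This will require upgraded spatial regularity of $(u_0,Z_0)$ together with compatibility conditions at $r=R$ of the form $\underline{\partial_t^j q}|_{t=0}=-\tfrac{R}{2}\partial_t^{j+1}\delta|_{t=0}$ for $0\leq j\leq k-1$; the assumption of the conjecture should therefore be understood as including these compatibility conditions.

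The second step is to establish energy estimates for $\partial_t^k V$ in $\mathbb{H}^1\times\mathbb{R}^4$, following the blueprint of Lemmata \ref{lemma:4.1} and \ref{lemma:4.2} and Corollary \ref{cor:4.1}. Because the linear principal part is unchanged, the same multiplication by $\partial_t^k u$ and by $(\partial_r,\mathrm{d}_r)\partial_t^k u$, combined with the differentiated hidden equation \eqref{eq:HidEqTrace} for $\partial_t^k \zeta$ and the differentiated ODE \eqref{eq:ADDMASS} for $\partial_t^k \delta$, produces an estimate of the form
\[
\sqrt{E_k(t)+E'_k(t)}\ \lesssim\ \sqrt{\beta_0^{(k)}} + \text{(source contributions)},
\]
where $E_k,E'_k$ are the analogues of $E,E'$ built on $\partial_t^k V$. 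The source contributions are controlled, via Moser-type inequalities \eqref{eq:MTineq}, by polynomial expressions in $\|\partial_t^j V\|_{\mathbb{H}^1\times\mathbb{R}^4}$ for $0\leq j\leq k$, with the highest-order norm $\|\partial_t^k V\|_{\mathbb{H}^1\times\mathbb{R}^4}$ appearing only linearly with a prefactor $\varepsilon$ (as in the contraction argument of Section \ref{sec:4.2}). The inductive hypothesis provides control in $\mathscr{C}^0_t$ on all lower-order derivatives over $[0,cT_{\varepsilon,\kappa,R})$.

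Combining the estimates via a Picard fixed-point argument analogous to the one performed for the base case, and taking the time of existence proportional to $T_{\varepsilon,\kappa,R}$, closes the induction and yields $\partial_t^k V\in\mathscr{C}^0([0,cT_{\varepsilon,\kappa,R}),\mathbb{H}^1\times\mathbb{R}^4)$, i.e.\ $V\in\mathscr{C}^k$. The main obstacle I anticipate is the bookkeeping of the compatibility conditions at the boundary $r=R$ for each $j<k$: these arise from matching $\underline{\partial_t^j q}$ with $\partial_t^{j+1}\delta$ and must be propagated through the energy estimate so that $\beta_0^{(k)}$ remains finite under the natural initial data. A secondary obstacle is that the source $\partial_t^k \mathcal{F}_m[V]$ involves $\partial_t^k$ of the non-local trace $\underline{f_{\rm hyd}}$; this is handled using the commutation Lemmata \ref{lem:commut1}--\ref{lem:commut2} and Proposition \ref{prop:commut}, ensuring uniform control in terms of the norms $\|\partial_t^j u\|_{\mathbb{H}^1}$.
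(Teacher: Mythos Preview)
The statement is a \emph{conjecture} in the paper: the authors do not give a proof but only the one-line heuristic ``if we apply $\partial_t^k$ to the Augmented formula \eqref{eq:AUG} we think that we can use the same kind of semilinear scheme''. Your proposal is precisely a fleshed-out version of that heuristic --- induction on $k$, differentiate the system in time, observe that the linear principal part is unchanged, and rerun the energy estimates of Lemmata \ref{lemma:4.1}--\ref{lemma:4.2} and the Picard argument of Section \ref{sec:4.2}. So there is nothing to compare: your approach \emph{is} the paper's suggested approach, and neither side claims a complete proof.

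That said, your sketch correctly identifies the two places where work remains. First, the compatibility conditions $\underline{\partial_t^j q}|_{t=0}=-\tfrac{R}{2}\partial_t^{j+1}\delta|_{t=0}$ for $0\le j\le k-1$ are genuinely needed to make $\beta_0^{(k)}$ finite, and the conjecture as stated in the paper does not impose them; you are right to flag that the statement should be read with these included. Second, controlling $\partial_t^k\mathcal{F}_m$ through the trace $\underline{f_{\rm hyd}}$ requires a trace estimate on $\partial_t^j\zeta$ at $r=R$, which in the base case came from the hidden ODE; checking that this closes uniformly in $\kappa$ at each step of the induction is the substantive point that keeps this a conjecture rather than a theorem.
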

Also for $n\in \mathbb{N}$ if we apply 
$\begin{pmatrix}
\partial_r {\rm{d}}_{r} \cdots {\rm{d}}_{r} \partial_r \\
{\rm{d}}_{r} \partial_r \cdots \partial_r {\rm{d}}_{r}
\end{pmatrix} $ if $n$ is odd and $\begin{pmatrix}
{\rm{d}}_{r} \partial_r \cdots {\rm{d}}_{r} \partial_r \\
\partial_r {\rm{d}}_{r} \cdots \partial_r {\rm{d}}_{r} 
\end{pmatrix} $ if $n$ is even to {\rm \hyperlink{BAr}{(BAr)}}, we think that the following conjecture is true.
 
\begin{conj}
Let $n\geq 2 \in \mathbb{N}$. The Augmented formula \eqref{eq:AUG} wellposed in $\mathscr{C}^k\left([0, T), \mathbb{H}^n \times \mathbb{R}^4\right)$, 
where $ \mathbb{H}^{n}_\kappa $ is a 2D vectorial field equipped with the weighted norm 
\begin{align*}
||u||_{\mathbb{H}^{n}}^2 := &\| u_1 \|_{L^2_r}^2 + \| \partial_r u_1 \|_{L^2_r}^2 + \| \dr \partial_r u_1 \|_{L^2_r}^2 +\cdots +\| (\partial_r \dr)^{\frac{n}{2}} u_1\|_{L^2_r}^2 \\&
+ ||u_2||_{L^2}^2 + \|\dr u_2\|_{L^2_r}^2 + \|\partial_r \dr u_2\|_{L^2_r}^2 + \cdots + \| (\dr \partial_r)^{\frac{n}{2}} u_2\|_{L^2_r}^2 + \kappa^2 \| (\dr \partial_r)^{\frac{n}{2}} \dr u_2\|_{L^2_r}^2,
\end{align*}
if $n$ is even or 
\begin{align*}
||u||_{\mathbb{H}^{n}}^2 := &\| u_1 \|_{L^2_r}^2 + \| \partial_r u_1 \|_{L^2_r}^2 + \| \dr \partial_r u_1 \|_{L^2_r}^2 + \cdots + \| (\dr \partial_r)^{\frac{n-1}{2}} \dr u_1 \|_{L^2_r}^2\\&
+ ||u_2||_{L^2}^2 + \|\dr u_2\|_{L^2_r}^2 + \|\partial_r \dr u_2\|_{L^2_r}^2 + \cdots +\| \dr (\partial_r\dr)^{\frac{n-1}{2}}  u_2\|_{L^2_r}^2 + \kappa^2\| (\partial_r\dr)^{\frac{n+1}{2}}  u_2\|_{L^2_r}^2,
\end{align*}
if $n$ is odd.
\end{conj}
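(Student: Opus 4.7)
The plan is to extend the semi-linear scheme of Section \ref{sec:4.2} to the higher-regularity space $\mathbb{H}^n$ by exploiting the alternating structure of the norm. The key observation is that in polar coordinates $\partial_r$ and ${\rm d}_r$ do not commute (they differ by a $1/r^2$ term), but the composite operators $\partial_r {\rm d}_r$ and ${\rm d}_r \partial_r$ are precisely those naturally adapted to the dispersive inverses $\mathfrak{R}_0, \mathfrak{R}_1$ through the commutation identities of Lemmata \ref{lem:commut1} and \ref{lem:commut2}. Applying to the system \hyperlink{BAr}{(BAr)} the vectors $((\partial_r {\rm d}_r)^{n/2}, ({\rm d}_r \partial_r)^{n/2})^T$ (for $n$ even) or the analogous odd-order combination reduces, at each differentiation step, to an inhomogeneous linear system of the same structure as \eqref{eq:SLFormGeneral} but between ``primed'' quantities $\zeta^{(j)} := (\partial_r {\rm d}_r)^{j} \zeta$ (or $({\rm d}_r\partial_r)^j {\rm d}_r \zeta$) and their counterparts for $q$.

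First, for the linearised problem at level $n$, I would reproduce the argument of Lemmata \ref{lemma:4.1} and \ref{lemma:4.2}: multiply each differentiated equation by its conjugate unknown, integrate against $r\,{\rm d}r$, and sum. Boundary contributions take the form $\underline{q^{(j)}}(\kappa^2 \underline{\ddot{\zeta}^{(j)}} + \underline{\zeta^{(j)}} - \varepsilon \underline{\partial_r \mathcal{F}^{(j)}})$, and the crucial step is to close them by differentiating the hidden equation \eqref{eq:HiddenEq} $j$ times in $r$ and evaluating at the boundary, which yields an identity of the form $\kappa^2 \underline{\ddot{\zeta}^{(j)}} + \underline{\zeta^{(j)}} = \tfrac{R}{2\tau_\kappa^2(0)}\big(\text{lower order}\big) + \mathcal{O}(\varepsilon)$, exactly as in the proof of Corollary \ref{cor:4.1}. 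These boundary quadratic forms in the traces $\underline{\partial_r^j \zeta}, \underline{{\rm d}_r^j q}$ are then absorbed into a modified energy $E^{(j)}$ by the same weighted Young inequalities (with a modified $\alpha$ depending on $n$, $R$), giving an estimate of the form
\begin{equation*}
\sqrt{E^{(n)}(t)} \lesssim \sqrt{\beta_0^{(n)}} + \varepsilon \kappa^{-1} \int_0^T \big( \|\mathcal{F}[V^\ast]\|_{H^n_r} + \|\mathcal{G}[V^\ast]\|_{H^n_r} + |\mathcal{F}_m[V^\ast]| \big) + \text{l.o.t.}
\end{equation*}

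Second, the nonlinear closure is the content of Moser-type estimates \eqref{eq:MTineq} extended to radial spaces, combined with the regularising bounds $\|\mathfrak{R}_i u\|_{W^{s,p}_r} \lesssim \|u\|_{W^{s,p}_r}$ from Proposition \ref{prop:commut}; these guarantee that $V^\ast \mapsto (\mathcal{F}[V^\ast], \mathcal{G}[V^\ast], \mathcal{F}_m[V^\ast])$ maps $\mathscr{C}^0_t(\mathbb{H}^n \times \mathbb{R}^4)$ into itself and is locally Lipschitz, so that a Picard fixed-point argument on the solution operator $\phi$ defined at the end of Subsection \ref{sec:4.2} produces a contraction on a time interval of length comparable to $T_{\varepsilon,\kappa,R}$ (possibly with implicit constants depending on $n$).

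The main obstacle we expect lies in the inductive control of the boundary system generated by differentiation of the hidden equation. At each derivative order a new trace $\underline{\partial_r^j \zeta}$ enters the system, and closing it requires not only Lemma \ref{lem:EqTrace} repeatedly but also a careful bookkeeping of boundary-layer remainders of the form $(\partial_r^j K)(R)$ arising through the kernel $K_1(r/\kappa)/K_1(R/\kappa)$; these grow in $\kappa^{-j}$. A secondary difficulty is that the derivative of $\mathfrak{R}_0 g$ of order $n+1$ loses exactly one power of $\kappa^{-1}$ (Proposition \ref{prop:commut}), so that naive iteration degrades the existence time as $\kappa^{-n}$ unless one uses the top-order $\kappa^2 \partial_r {\rm d}_r$ contribution in $\mathcal{D}_\kappa$ to reabsorb one derivative, analogously to the treatment of $\|\kappa \partial_r q'\|_{L^2_r}$ in Lemma \ref{lemma:4.2}. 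It is precisely this delicate balance between boundary-layer control and dispersive regularisation, together with the quasi-linear strategy sketched in Remark \ref{rk:WPSL_visc}, that motivates stating the result as a conjecture rather than a theorem.
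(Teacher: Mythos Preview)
The statement under consideration is a \emph{conjecture} in the paper, not a theorem: the authors do not prove it. Their only indication of an approach is the single sentence preceding the conjecture, namely that applying the alternating vector of derivatives $(\partial_r {\rm d}_r \cdots, {\rm d}_r \partial_r \cdots)^T$ to \hyperlink{BAr}{(BAr)} should allow the semi-linear scheme of Subsection~\ref{sec:4.2} to be reproduced at higher regularity.

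Your proposal is therefore not competing against a proof but against a one-line heuristic, and your outline is fully aligned with that heuristic: you apply the alternating derivatives, rerun the energy argument of Lemmata~\ref{lemma:4.1}--\ref{lemma:4.2} at each level, close boundary terms by differentiating the hidden equation~\eqref{eq:HiddenEq}, and finish by Moser estimates plus Picard iteration. You also correctly isolate the two reasons the paper leaves this as a conjecture rather than a theorem: the inductive bookkeeping of the boundary traces $\underline{\partial_r^j \zeta}$ (each new derivative of the hidden equation brings in a $(\partial_r^j K)(R)$ term scaling like $\kappa^{-j}$), and the potential degradation of the existence time to order $\kappa^{-n}$ unless the top-order dispersive term in $\mathcal{D}_\kappa$ is used to absorb one derivative. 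These are precisely the obstructions that prevent the argument from being a routine induction, and your final paragraph acknowledging them is the honest endpoint. There is nothing to correct; your sketch is the intended strategy, and the gaps you flag are the genuine ones.
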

 
\section{The case of the return to the equilibrium in the linear situation}\label{sec:RTTE}
The return to equilibrium problem consists in dropping the floating solid in a water at rest without giving it any speed or any force. Thus one considers the following initial conditions
$$
(\zeta, q, \delta, \dot{\delta} )_{\vert_{t=0}} = (0,0, \delta_0 \neq 0, 0).
$$
In the linear case, that is to say $\eps = 0$, the previous section showed that there exists a unique global solution of \eqref{eq:AUG}, and we want to describe the behavior of the motion of the solid at $t \rightarrow +\infty$.
One shows (see Theorem \ref{thm:pt5.1.1}) that the decay cannot be faster than $\mathcal{O}(t^{-1/2})$ in 2D without viscosity. With viscosity, Theorem \ref{thm:pt5.1.2} shows that the decay cannot be faster than $\mathcal{O}(t^{-3/2})$. 
 However, the method shows that in 2D it is impossible to obtain exponential decreasing of $\delta$.
 Another issue consists in obtaining an ODE on the vertical motion of the floating cylinder without computing the Boussinesq's equation: the Cumming's equation.
In the 1D linear Boussinesq regime without viscosity, such an equation was derived in \cite{Lannes_float} and can be written
\begin{equation}\label{eq:Cummins-1d}
m \ddot{\delta} + \left( \frac{J_1(t/\kappa)}{t} \right) \ast_t \dot{\delta} + \delta = 0,
\end{equation}
where $J_1$ is a Bessel function and $\ast_t$ is the convolution with respect to the time. 
In the 2D case with viscosity, one shows that one has
\begin{equation}\label{eq:Cummins-2d}
[\tau_\kappa^2(0) + k_B \ast] \ddot{\delta} + \nu k_{0} \ast_t  \partial_t^{\frac{1}{2}} \dot{\delta} + k_{1} \ast_t  \dot{\delta} + \delta = 0,
\end{equation}
where the kernels $k_B,k_0,k_1$ have explicit expressions in the Laplace domain and $\partial_t^{\frac{1}{2}}$ stands for the half-derivative. Note that when both the viscosity and the dispersion vanish, one recovers the equation given in \cite{Bocchi}. It will be more convient to work in the Laplace domain where the kernels of the Cummin's equation \eqref{eq:Cummins-2d} have an explicit expression. Thus one needs to introduce some notations. The Laplace transform with respect to the time variable of a locally integrable function $u$ is defined by
$$
 \hat{u}(s) := \mathcal{L} [u] (s) := \displaystyle \int_0^{\infty} u(t) e^{-st} dt,
$$
for all complex numbers $s = \eta + i \omega$ such that this integral absolutely converges. We will seek to explain the behavior of the solid by explicitly knowing the Laplace transform of $\delta$. To do so, we will use the following theorem.
 
\begin{thm}\label{thm:Bergman} [Paley-Wiener Theorem for Bergman spaces.]
Let $\alpha \geq -1$,
the Laplace transform 
$$
\mathcal{L}: L^2\left (t^{-(\alpha+1)}dt, \mathbb{R}^+ \right ) \to \mathcal{B}_{\alpha}
$$ 
is a linear continous isomorphism between Hilbert on the holomorphic functions space $Hol(\mathbb{C})$ where the Bergman spaces are defined by
\[
\mathcal{B}_{\alpha}:=\left \{F \in Hol(\mathbb{C}_0) \, | \, \|F\|_{\mathcal{B}_{\alpha}}^2:=\int_{0}^\infty \int_{\mathbb R} |F(\eta+i\omega)|^2 \, \eta^{\alpha} \,d\omega\,d\eta<\infty\right \},
\]
for any $\alpha > -1$ and
\[
\mathcal{B}_{-1}:=\Big\{F \in Hol(\mathbb{C}_0) \, | \, \|F\|_{\mathcal{B}_{-1}}^2:= \sup_{\eta>0}\int_{\mathbb{R}} |F(\eta+i\omega)|^2 \,d\omega <\infty\Big\},
\]
and where
$$
\mathbb{C}_{0} := \{ s \in \mathbb{C}, \, \Re(s) > 0 \}.
$$
\end{thm}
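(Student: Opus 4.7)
The plan is to deduce the theorem from the classical Paley--Wiener theorem for the Hardy space $H^2(\mathbb{C}_0)$ (the case $\alpha=-1$, which I would quote) by means of an explicit computation involving a Gamma integral. The core identity is Plancherel's theorem applied slice-by-slice: for $u \in L^2(t^{-(\alpha+1)}dt,\mathbb{R}^+)$ and any fixed $\eta>0$, the function $t\mapsto u(t)e^{-\eta t}\mathbf{1}_{\mathbb{R}^+}(t)$ lies in $L^2(\mathbb{R})$, and its Fourier transform in $\omega$ is precisely $(\mathcal{L}u)(\eta+i\omega)$. Holomorphy of $\mathcal{L}u$ on $\mathbb{C}_0$ follows from dominated convergence on compact subsets.

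First I would establish the isometric identity for $\alpha>-1$. Plancherel gives
\begin{equation*}
\int_\mathbb{R} |(\mathcal{L}u)(\eta+i\omega)|^2\,d\omega = 2\pi\int_0^\infty |u(t)|^2 e^{-2\eta t}\,dt.
\end{equation*}
Multiplying by $\eta^\alpha$, integrating in $\eta$ over $(0,\infty)$, and applying Fubini together with the Gamma integral $\int_0^\infty e^{-2\eta t}\eta^\alpha d\eta = \Gamma(\alpha+1)(2t)^{-(\alpha+1)}$, one obtains
\begin{equation*}
\|\mathcal{L}u\|_{\mathcal{B}_\alpha}^2 = \frac{2\pi\,\Gamma(\alpha+1)}{2^{\alpha+1}}\,\|u\|_{L^2(t^{-(\alpha+1)}dt)}^2.
\end{equation*}
For $\alpha=-1$, the identity above (for fixed $\eta$) is nondecreasing as $\eta\downarrow 0$ with supremum $2\pi\|u\|_{L^2(\mathbb{R}^+)}^2$ by monotone convergence, giving $\|\mathcal{L}u\|_{\mathcal{B}_{-1}}^2 = 2\pi\|u\|_{L^2(\mathbb{R}^+)}^2$. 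In both cases this proves injectivity, continuity, and closedness of the range.

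The remaining task is surjectivity. Once the image is known to be closed, it suffices to exhibit a dense family of $\mathcal{B}_\alpha$ inside the range of $\mathcal{L}$. For this I would use the reproducing kernels: for each $z\in\mathbb{C}_0$, the function $s\mapsto (s+\bar{z})^{-(\alpha+2)}$ belongs to $\mathcal{B}_\alpha$, and by a direct computation is the Laplace transform of $c_\alpha\, t^{\alpha+1}e^{-\bar{z}t}\mathbf{1}_{\mathbb{R}^+}(t)$, which lies in $L^2(t^{-(\alpha+1)}dt)$. The finite linear span of these kernels is total in $\mathcal{B}_\alpha$ (a standard consequence of the reproducing property, together with the fact that a holomorphic function vanishing against all kernels vanishes identically); since this span is contained in the image of $\mathcal{L}$ and the image is closed, it is all of $\mathcal{B}_\alpha$. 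For $\alpha=-1$ the analogue with the Cauchy kernel reduces to classical Hardy space theory.

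The main obstacle is packaging surjectivity without having to show a priori that a function built from the $L^2$ slices $F(\eta_0+i\cdot)$ is independent of $\eta_0$ and controls the weighted norm. The reproducing-kernel route above bypasses this entirely by reducing surjectivity to a density statement in $\mathcal{B}_\alpha$ plus the closedness already furnished by the isometry, which is the cleanest way to avoid the delicate boundary-value analysis that would otherwise be needed.
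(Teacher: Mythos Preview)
Your proposal is correct and follows essentially the same approach as the paper: the isometry identity via Plancherel on each vertical line followed by the Gamma integral and Fubini, and surjectivity via the reproducing kernels of $\mathcal{B}_\alpha$. You are in fact slightly more explicit than the paper, which uses the orthogonal-complement formulation but does not spell out that the reproducing kernels lie in the range of $\mathcal{L}$; your identification of $(s+\bar z)^{-(\alpha+2)}$ as the Laplace transform of $c_\alpha t^{\alpha+1}e^{-\bar z t}$ fills exactly that gap.
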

Proof of this theorem and precisions on Bergman spaces are given in appendix \ref{app-Laplace}. Laplace transform is also useful to define non-local time operators. For example the half-derivative is defined as the operator associated to the symbol $\sqrt{s}$:
$$
\forall u \in L^2, \, \partial_t^{\frac{1}{2}} u := \mathcal{L}^{-1} [ \sqrt{s} \widehat{u} ].
$$ 
Note that $ \sqrt{s}$ is defined as the square root of $s$ with a non-negative real part :
\begin{equation}\label{def:sqrt_sep}
\forall s \notin \mathbb{R}^-, \, \sqrt{s} =  \sqrt{ \frac{ |s| + \Re(s) }{2} } + i \, \mathrm{sgn}[\Im(s)] \, \sqrt{ \frac{ |s| - \Re(s) }{2} }.
\end{equation}
It means that the branching cut for the square root is $\mathbb{R}^-$.
Such a choice imposes that the branching cut for the logarithm also is $\mathbb{R}^-$. Thus the modified Bessel functions are defined as in \cite{A&S} as
\begin{equation}\label{eq:defK}
s \in \mathbb{C}\setminus\mathbb{R}^-, \, K_\alpha(s) = \int_0^\infty e^{-s \, \text{cosh}(t)} \text{cosh} (\alpha t) dt,
\end{equation}
for any $\alpha \in \mathbb{R}$. One can easily verify that this definition is in accordance with Definition \ref{def:K}, with a time dependance rather than a space dependence.
 
We will show in Section \ref{sec:DeltaCummins} that $\delta$ satisfies an equation of type \eqref{eq:Cummins-2d} by solving the Boussinesq equation in the Laplace domain. 
In particular equation \eqref{eq:Cummins-2d} reads in the Laplace domain 
\begin{equation}
\widehat{\delta}(s) = \widehat{H}(s)\delta_0,
\end{equation}
where the transfer function $\widehat{H}$ is defined by
\begin{equation}\label{eq:transferH}
\widehat{H}(s) := \frac{2\tau_\kappa^2(0)s + 2\nu + R\sqrt{1+\nu s + \kappa^2 s^2} B(s) }{s\left(2\tau_\kappa^2(0)s + 2\nu + R\sqrt{1+\nu s + \kappa^2 s^2} B(s)\right) + 2}, \quad
B(s) :=  \frac{K_0\left(\frac{Rs}{\sqrt{1+\nu s + \kappa^2 s^2}}\right)}{K_1\left(\frac{Rs}{\sqrt{1+\nu s + \kappa^2 s^2}}\right)},
\end{equation}
and where $K_0$ and $K_1$ denotes the modified Bessel function of second kind of order 0 and 1 defined above. Due to the presence of branching cuts, the expression \eqref{eq:transferH} of the transfert function is valid only for complex numbers lying in
\begin{equation}\label{def:Dkapnu}
\mathbb{D}_{\kappa, \nu} = \{  s \in \mathbb{C} \, | \,  1+\nu s + \kappa^2 s^2\notin \mathbb{R}^- \,\text{ and }\, \frac{s}{\sqrt{1+\nu s + \kappa^2 s^2}} \notin \mathbb{R}^- \}.
\end{equation}
Then, in Section \ref{sec:5.2} we will deduce from the Cummin's equation and the Paley-Wiener Theorem the expected decay. 
Unfortunately, one cannot succeed to show that the denominator of the transfer function $\widehat{H}$ has no zero in some half complex plane even if numerical evidences go with our believes. 
This is why one must introduce an hypothesis:
\begin{equation}\label{conjecture-denom}
\exists \mathcal{P}_{\min} > 0, \, \forall s \in \overline{\mathbb{C}_0}, |\mathcal{P}(s) | \geq \mathcal{P}_{\min}, \quad
\mathcal{P}(s) := 2\tau_\kappa^2(0)s^2 + 2\nu s + Rs\sqrt{1+\nu s + \kappa^2 s^2} B(s) + 2.
\end{equation}

We remind that in all this section we work in the linear regime $\eps = 0$.
Let $(\zeta, q, \delta, \dot{\delta}, \underline{\zeta}, \underline{\dot{\zeta}})$ the solution of the Augmented formula \eqref{eq:AUG}  with initial conditions $(0,0,\delta_0 \neq 0, 0, 0, 0)$ given by Theorem \ref{thmf:WP}, where the initial conditions satisfy the compatibility conditions at $r = R$.
Then the following results on $\delta$ stand. 
\begin{thm}\label{thm:pt5.1.0}
Let $\kappa,\nu \geq 0$, then the vertical motion satisfies $\delta \in H^{2}(\mathbb{R}^+)$.
In particular
$$ 
\|\delta\|_{L^2(\mathbb{R}^+)} \lesssim \kappa_\ast^{-\frac{1}{2}} (\kappa_\ast^{-1} + 1+R(1+ \nu \kappa_\ast^{-1})) \mathcal{P}_{\min}^{-1} + \kappa_\ast^{-\frac{1}{2}},
$$
$$
\|{\dot{\delta}} \|_{L^2(\mathbb{R}^+)} \lesssim \kappa_\ast^{-\frac{1}{2}} \mathcal{P}_{\min}^{-1} + \frac{\kappa_\ast^{-\frac{3}{2}}}{\tau_{\rm buoy}^2}
\quad \text{and} \quad
\| {\ddot{\delta}} \|_{L^2(\mathbb{R}^+)} \lesssim \kappa_\ast^{-\frac{1}{2}} \mathcal{P}_{\min}^{-1} + \frac{\kappa_\ast^{-\frac{1}{2}}}{\tau_{\rm buoy}^2},
$$
with 
$$
\kappa_\ast= \begin{cases} \kappa & \text{if $\kappa >0$}, \\ 1 & \text{if $\kappa =0$}. \end{cases}
$$
\end{thm}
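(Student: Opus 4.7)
The plan is to pass to the Laplace domain and invoke the Paley--Wiener isomorphism $L^2(\mathbb{R}^+) \simeq \mathcal{B}_{-1}$ of Theorem \ref{thm:Bergman}. First I would derive the Cummin-type formula for $\widehat\delta$: applying the Laplace transform in time to the linearized exterior system \eqref{BAr:eq1}--\eqref{BAr:eq2} (with $\eps=0$) reduces it to an inhomogeneous radial modified-Bessel equation in $r$, whose unique $L^2_r$-decaying solution is explicit in terms of $K_0$ and $K_1$. Substituting its traces into the SHODE \eqref{eq:ODETrace} specialized to $\eps=0$ and to the return-to-equilibrium data $(\delta,\dot\delta)(0)=(\delta_0,0)$ yields $\widehat\delta(s) = \widehat H(s)\,\delta_0$ with $\widehat H$ as in \eqref{eq:transferH}; this is the computation the paper defers to Section \ref{sec:DeltaCummins}.

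From the algebraic identity $\mathcal{P}(s) = s\,N(s) + 2$ with $N(s) := 2\tau_\kappa^2(0)s + 2\nu + R\sqrt{1+\nu s+\kappa^2 s^2}\,B(s)$, together with the Laplace rules $\mathcal{L}[\dot\delta]=s\widehat\delta - \delta_0$ and $\mathcal{L}[\ddot\delta]=s^2\widehat\delta - s\delta_0$ (using $\dot\delta(0)=0$), I would obtain the clean representations
\[
\widehat\delta(s) = \frac{N(s)}{\mathcal{P}(s)}\,\delta_0, \qquad \widehat{\dot\delta}(s) = -\frac{2\,\delta_0}{\mathcal{P}(s)}, \qquad \widehat{\ddot\delta}(s) = -\frac{2\,s\,\delta_0}{\mathcal{P}(s)}.
\]
By Paley--Wiener, the three $L^2$ bounds of the theorem are then equivalent to uniform control of the Bergman norms $\sup_{\eta>0}\int_\mathbb{R}|\widehat{\delta^{(k)}}(\eta+i\omega)|^2\,d\omega$ for $k=0,1,2$, which in turn reduces to a sharp pointwise lower bound on $|\mathcal{P}(s)|$ throughout $\overline{\mathbb{C}_0}$.

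To produce such a bound I would combine two ingredients. (i) The technical assumption \eqref{conjecture-denom} supplies $|\mathcal{P}(s)| \geq \mathcal{P}_{\min}$ on the whole closed right half-plane and governs the low-frequency regime. (ii) The Debye asymptotics for $K_0,K_1$ (already used in Section \ref{sec:InvDispOpe}) show that $B(s)$ stays bounded as $|s|\to\infty$ in $\overline{\mathbb{C}_0}$, so that $\mathcal{P}(s) \sim 2\tau_\kappa^2(0)\,s^2$ at infinity. A continuity argument on the closed half-plane merges (i) and (ii) into an estimate of the form $|\mathcal{P}(s)|^{-1} \lesssim \mathcal{P}_{\min}^{-1}\mathbf{1}_{|s|\leq s_\ast} + (\tau_{\rm buoy}^2|s|^2)^{-1}\mathbf{1}_{|s|>s_\ast}$, for a threshold $s_\ast$ chosen to balance the two regimes, with the implicit constants tracked in terms of $\kappa_\ast$, $R$ and $\nu$.

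Inserting this pointwise bound into each Bergman integral, splitting the contour at $|\omega|=s_\ast$, would give the result. On the low-frequency side the $\mathcal{P}_{\min}$-estimate produces the $\mathcal{P}_{\min}^{-1}$-terms of the statement, where the extra geometric prefactor $\kappa_\ast^{-1}+1+R(1+\nu\kappa_\ast^{-1})$ for $\|\delta\|_{L^2}$ comes from the additional factor $|N(s)|$ in the numerator (which I would further rewrite as $(\mathcal{P}(s)-2)/s$ to exhibit its growth). On the high-frequency side, the estimate $|\mathcal{P}|\gtrsim \tau_{\rm buoy}^2|s|^2$ combined with the explicit integrals $\int |s|^{2k-4}d\omega$ (which converge uniformly in $\eta>0$ once $|s|\gg 1$ and $k\leq 2$) yields the $\tau_{\rm buoy}^{-2}\kappa_\ast^{-1/2}$-type terms. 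The main obstacle I foresee is tracking all the small-parameter dependencies simultaneously through this asymptotic analysis: in particular the qualitative transition at $\kappa=0^+$, where the Bessel argument $Rs/\sqrt{1+\nu s+\kappa^2s^2}$ ceases to remain bounded at infinity, will require handling the cases $\kappa>0$ and $\kappa=0$ separately in order to produce a single bound with the stated $\kappa_\ast$-dependence.
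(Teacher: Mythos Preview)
Your proposal is correct and follows essentially the same route as the paper: both derive the explicit transfer functions $\widehat\delta=N/\mathcal P$, $\widehat{\dot\delta}\propto 1/\mathcal P$, $\widehat{\ddot\delta}\propto s/\mathcal P$, invoke the Paley--Wiener isomorphism of Theorem~\ref{thm:Bergman}, and control the Hardy norm by splitting into low and high frequencies using respectively the assumption~\eqref{conjecture-denom} and the quadratic growth $|\mathcal P|\gtrsim \tau_{\rm buoy}^2|s|^2$. The only refinement in the paper is that it fixes the threshold $s_\ast=\kappa_\ast^{-1}$ and works directly on the imaginary axis via the continuous extension $\widehat H^\ast$, exploiting the clean inequality $\omega^2\tau_\kappa^2(0)\geq 1+\omega^2\tau_{\rm buoy}^2$ for $|\omega|>\kappa_\ast^{-1}$ (coming from $\tau_\kappa^2(0)\geq \tau_{\rm buoy}^2+\kappa^2$), which produces the stated $\kappa_\ast$-dependence without a separate case analysis.
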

Furthermore, $\delta$ satisfies the following behaviors for large time scales, depending on the regime.
\begin{thm}[The dispersive non-viscous case]\label{thm:pt5.1.1}
Let $\nu = 0$ and $\kappa > 0$. Then for $k \in \left\{ 0, 1, 2 \right\}$, for all $\beta \in (0,2]$,
$$ 
\delta^{(k)} \notin L^2(t^{\beta}dt, \mathbb{R}^+).
$$
Furthermore for all $\rho \in ( 0, \frac{1}{2})$, for all $T_0 > 0$ and $c > 0$, there exists $t > T_0$ such that for $k \in \left\{ 0, 1, 2 \right\}$
$$ |\delta^{(k)}(t)| \geq c t^{-\frac{1}{2} - \rho}.$$
\end{thm}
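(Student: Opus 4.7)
The starting point is the Cummins-type identity $\widehat{\delta}(s) = \widehat{H}(s)\delta_0$ coming from the Laplace transform of \eqref{eq:Cummins-2d}, with $\widehat H$ given explicitly by \eqref{eq:transferH} (specialized to $\nu=0$). Hypothesis \eqref{conjecture-denom} ensures that $\mathcal P$ does not vanish on $\overline{\mathbb C_0}$, so $\widehat H$ extends continuously to $\overline{\mathbb C_0}\setminus\{\pm i/\kappa\}$, the excluded points being the branch points of $\sqrt{1+\kappa^2 s^2}$. Theorems \ref{thm:pt5.1.0} and \ref{thm:Bergman} together place $s^k\widehat H\delta_0$ (minus the polynomial generated by the initial data) in the Hardy space $\mathcal B_{-1}$, for $k\in\{0,1,2\}$.

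The next step is to pin down the sharp local behavior of $\widehat H$ at $s=\pm i/\kappa$. Using the factorization $1+\kappa^2 s^2 = \kappa^2(s-i/\kappa)(s+i/\kappa)$ together with the Debye expansion $K_0(z)/K_1(z) = 1 - 1/(2z) + O(z^{-2})$, which applies since $Rs/\sqrt{1+\kappa^2 s^2}\to\infty$ as $s\to\pm i/\kappa$, a direct computation of the numerator and denominator in \eqref{eq:transferH} yields
\[
\widehat H(s) = \widehat H(\pm i/\kappa) + A_\pm\sqrt{s\mp i/\kappa} + O(s\mp i/\kappa),
\]
with explicit, nonvanishing coefficients $A_\pm$ (the nonvanishing being ensured by $\mathcal{P}(\pm i/\kappa)\neq 0$). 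This square-root singularity, preserved after multiplication by $s^k$, will be the sole obstruction to improved regularity of the boundary traces on the imaginary axis.

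For part~(a) I argue by contradiction: assume $\delta^{(k)}\in L^2(t^\beta dt,\mathbb R^+)$ for some $k\in\{0,1,2\}$ and some $\beta\in(0,2]$. The identity $\|f\|_{L^2(t^\beta dt)}^2 = \|t^{\beta/2}f\|_{L^2}^2$, combined with Theorem \ref{thm:Bergman} and the Laplace identities $\mathcal L[t^n f] = (-1)^n \widehat{f}^{(n)}$ complex-interpolated between the integer endpoints $n=0$ and $n=1$ to cover fractional $\beta/2\in(0,1]$, translates the hypothesis into the requirement that the $\beta/2$-fractional derivative of $s^k\widehat H\delta_0$ (minus the initial-data polynomial) lies in $\mathcal B_{-1}$, equivalently has boundary trace in $L^2(d\omega)$ on $i\mathbb R$. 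Differentiating the expansion of the previous step produces a contribution of order $(s\mp i/\kappa)^{(1-\beta)/2}$ at each branch point; combined with the jump of the boundary values across the imaginary-axis cut $\{i\omega : |\omega|>1/\kappa\}$ inherited from the sign flip of $\sqrt{1+\kappa^2 s^2}$, the resulting trace fails $L^2$-integrability in every neighbourhood of $\omega=\pm 1/\kappa$ as soon as $\beta>0$, yielding the contradiction.

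Part~(b) follows directly from~(a). Suppose, for some $k\in\{0,1,2\}$, $\rho\in(0,1/2)$, $c>0$ and $T_0>0$, that $|\delta^{(k)}(t)|\leq c t^{-1/2-\rho}$ holds for every $t>T_0$; pick any $\beta\in(0,2\rho)$. Then
\[
\int_{T_0}^\infty |\delta^{(k)}(t)|^2 t^\beta dt \leq c^2\int_{T_0}^\infty t^{\beta-1-2\rho}\,dt < \infty,
\]
and combined with the continuity of $\delta^{(k)}$ on $[0,T_0]$ guaranteed by Theorem~\ref{thmf:WP}, this forces $\delta^{(k)}\in L^2(t^\beta dt,\mathbb R^+)$, contradicting~(a). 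The main technical obstacle lies in part~(a): rigorously implementing the fractional Paley-Wiener transfer and verifying that the joint contribution of the two branch points and of the cut connecting them to infinity cannot cancel, so that the $\sqrt{s\mp i/\kappa}$ obstruction genuinely prevents the required boundary $L^2$ control.
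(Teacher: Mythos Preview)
Your local expansion $\widehat H(s) = \widehat H(\pm i/\kappa) + A_\pm\sqrt{s\mp i/\kappa} + O(s\mp i/\kappa)$ with $A_\pm\neq 0$ is correct and captures exactly the obstruction the paper isolates, namely the decomposition $\tfrac{d\widehat H}{ds} = \tfrac{1}{\sqrt{1+\kappa^2 s^2}}\tilde H + \breve H$ with $\tilde H(\pm i/\kappa)\neq 0$ (Lemma~\ref{prop:derivH}, and analogously for $\widehat I,\widehat J$). Part~(b) is also deduced from~(a) in the paper just as you do, via the elementary Lemma~\ref{prop:DecrL2}.

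The gap is in your mechanism for part~(a). You reduce the question to whether a \emph{fractional} derivative of order $\beta/2\in(0,1]$ of $\widehat{\delta^{(k)}}$ lies in the Hardy space $\mathcal B_{-1}$, invoking ``complex interpolation between $n=0$ and $n=1$''. This step is not developed, and the subsequent claim that fractional differentiation produces a contribution $\sim (s\mp i/\kappa)^{(1-\beta)/2}$ which ``fails $L^2$-integrability \dots\ as soon as $\beta>0$'' is wrong as stated: for $\beta\in(0,1]$ the exponent $(1-\beta)/2\geq 0$ makes that term \emph{bounded} near the branch point, so there is no local $L^2$ obstruction in that range. The branch-cut jump you invoke is also not directly usable, since Hardy-space membership only sees the one-sided boundary trace from inside $\mathbb C_0$.

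The paper avoids all of this by taking a single \emph{ordinary} derivative and exploiting the full Bergman scale rather than interpolation. The identity $\mathcal L[t\,\delta^{(k)}] = -\tfrac{d}{ds}\widehat{\delta^{(k)}}$ combined with Theorem~\ref{thm:Bergman} gives
\[
\delta^{(k)}\in L^2(t^\beta dt) \iff t\,\delta^{(k)}\in L^2(t^{-(\alpha+1)}dt) \iff \tfrac{d}{ds}\widehat{\delta^{(k)}}\in \mathcal B_\alpha,\qquad \beta=1-\alpha,
\]
so that $\beta\in(0,2]$ corresponds exactly to $\alpha\in[-1,1)$. Proposition~\ref{prop:dBergmann_nv_d} then shows $\tfrac{d}{ds}\widehat{\delta^{(k)}}\notin\mathcal B_\alpha$ for every such $\alpha$ because of the $\tfrac{1}{\sqrt{1+\kappa^2 s^2}}$ factor. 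This delivers the full range of $\beta$ with only integer differentiation and no interpolation argument.
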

\begin{thm}[The viscous or non-dispersive case]\label{thm:pt5.1.2}
Let $\nu > 0$ or $\kappa = 0$. Then for $k \in \left\{ 0, 1, 2 \right\}$, for all $ \beta \in (0, 2) $,
$$ 
t^{k}\delta^{(k)} \in L^2(t^{\beta}dt, \mathbb{R}^+),
\quad \text{ but } \quad
t^{k+1}\delta^{(k)} \notin L^2(\mathbb{R}^+).
$$
Furthermore for all $\rho \in ( 0, \frac{1}{2})$, for all $T_0 > 0$ and $C > 0$, there exists $t > T_0$ such that for $k \in \left\{ 0, 1 \right\}$
$$ C t^{-\frac{3}{2} - k - \rho} \leq |\delta^{(k)}(t)| \leq C t^{-k-1}, \quad \text{and} \quad C t^{-\frac{7}{2} - \rho} \leq |\ddot{\delta}(t)|.$$
\end{thm}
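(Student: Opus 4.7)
The proof hinges on the Laplace-domain formula $\widehat{\delta}(s) = \widehat{H}(s)\delta_0$ established earlier in this section, together with a careful asymptotic analysis of $\widehat{H}$ at $s=0$ and $s=\infty$, combined with the Paley-Wiener isomorphisms of Theorem \ref{thm:Bergman}. The first step is to isolate the singular contribution of $\widehat{H}(s)$ at the origin. Using the small-argument expansions of the modified Bessel functions $K_0(z) = -\log(z/2) - \gamma + O(z^2 \log z)$ and $K_1(z) = 1/z + O(z\log z)$, I would compute $B(s) = -Rs\log s\cdot(1+o(1))$ as $s \to 0$, and expand the numerator and denominator of \eqref{eq:transferH}. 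In the present regime ($\nu>0$ or $\kappa=0$), this yields $\widehat{H}(s) = \nu + c_1 s\log s + c_2 s + O(s^2 \log s)$ with explicit constants $c_1, c_2$; the $s\log s$ term encodes precisely the algebraic decay rate of $\delta$. At infinity, $\widehat{H}(s) \sim 1/s$ and its successive derivatives are smooth and decay correspondingly. Assumption \eqref{conjecture-denom} ensures that $\mathcal{P}(s)^{-1}$ is uniformly bounded on $\overline{\mathbb{C}_0}$, so the only delicate behavior is the logarithmic one at $s = 0$.

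For the positive membership $t^k \delta^{(k)} \in L^2(t^\beta dt)$ with $\beta \in (0,2)$, one uses $\mathcal{L}[t^k \delta^{(k)}](s) = (-1)^k \partial_s^k(s^k\widehat{H}(s)\delta_0 - s^{k-1}\delta_0)$, which combines regular polynomial terms with at most one effective derivative of $s \log s$. I would bound the corresponding Bergman norms for integer indices $\alpha \in \{-1,0,1,\ldots\}$ via Theorem \ref{thm:Bergman}, and interpolate between consecutive integer weights to cover the non-integer weight $t^\beta dt$ for $\beta \in (0,2)$. The resulting pointwise upper bound $|\delta^{(k)}(t)| \leq Ct^{-k-1}$ for $k \in \{0,1\}$ then follows from combining $t^k\delta^{(k)}, t^{k+1}\delta^{(k+1)} \in L^2(t^\beta dt)$ with $\beta$ close to $2$, via the identity $|f(t)|^2 = -2\int_t^\infty f(s)f'(s)\,ds$ and Cauchy-Schwarz with suitable weights.

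For the non-membership, I would argue by contradiction. Suppose $t^{k+1}\delta^{(k)} \in L^2(\mathbb{R}^+)$. Then the Hardy-space case ($\alpha = -1$) of Theorem \ref{thm:Bergman} forces $\partial_s^{k+1}(s^k\widehat{H}(s)\delta_0 - s^{k-1}\delta_0) \in \mathcal{B}_{-1}$. However, $(k+1)$-fold differentiation of the dominant singular contribution $c_1 s^{k+1}\log s$ in $s^k\widehat{H}(s)\delta_0$ produces a term that behaves like $c_1\delta_0(k+1)!\log s$ near the origin, plus a bounded remainder. Since $\int_\mathbb{R} |\log(\eta+i\omega)|^2\,d\omega = +\infty$ for every $\eta > 0$, the function $\log s$ does not belong to $\mathcal{B}_{-1}$, which yields the contradiction. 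The pointwise lower bound $|\delta^{(k)}(t)| \geq Ct^{-3/2-k-\rho}$ on arbitrarily large times then follows by the standard argument that, were the reverse inequality to hold on all of $(T_0,\infty)$ for some $T_0$, then $t^{k+1}\delta^{(k)}$ would lie in $L^2$, contradicting the non-membership just established.

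The main obstacle is the mismatch between the weights $t^\beta dt$ with $\beta \in (0,2)$ demanded by the statement and the restriction $\alpha \geq -1$ in Theorem \ref{thm:Bergman}, which provides direct access only to weights $t^{-m}dt$ with $m \geq 0$. Bridging this gap requires an interpolation argument between Bergman spaces of consecutive integer indices (or, equivalently, relating fractional powers $t^{\beta/2}$ to fractional $s$-derivatives of $\widehat{\delta}$). Keeping track of the precise logarithmic corrections produced by each differentiation of $\widehat{H}$, and handling uniformly the two subcases $\nu > 0$ and $\kappa = 0,\, \nu = 0$, which yield slightly different constants $c_1, c_2$ but the same algebraic structure, is the technical heart of the argument.
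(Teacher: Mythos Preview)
Your overall strategy---identify the $\log s$ singularity of the transfer functions at the origin, show the relevant $s$-derivatives lie in $\mathcal{B}_\alpha$ for $\alpha\in(-1,1)$ but not in $\mathcal{B}_{-1}$, then invoke the Paley--Wiener isomorphism and the elementary pointwise lemma---is exactly the route the paper takes. The non-membership argument (the $\log s$ contribution after the right number of derivatives is not in the Hardy space) and the deduction of the pointwise bounds are also the same.

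Where you go astray is the paragraph you flag as ``the main obstacle''. There is no mismatch and no interpolation is needed. The weight $t^\beta$ with $\beta\in(0,2)$ is reached directly by absorbing one extra factor of $t$ into the function: the statement $t^{k}\delta^{(k)}\in L^2(t^{\beta}dt)$ is \emph{identical} to $t^{k+1}\delta^{(k)}\in L^2(t^{\beta-2}dt)$, and now $\beta-2\in(-2,0)$ corresponds to $\alpha=-(\beta-2)-1\in(-1,1)$, which is squarely in the range of Theorem~\ref{thm:Bergman}. Consequently the object whose Bergman membership one must analyse is $\mathcal{L}[t^{k+1}\delta^{(k)}]=(-1)^{k+1}\partial_s^{\,k+1}\widehat{\delta^{(k)}}$, i.e.\ one more $s$-derivative than you take. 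This is precisely why the paper studies $\widehat H'$ for $k=0$, $\widehat I''$ for $k=1$, and $\widehat J'''$ for $k=2$ (with $\widehat I=1/\mathcal P$ and $\widehat J=s/\mathcal P$ the transfer functions for $\dot\delta$ and $\ddot\delta$), rather than $\partial_s^k(s^k\widehat H-\dots)$. Each of these $(k{+}1)$-th derivatives behaves like $\log|s|$ near $0$ and decays at infinity, which gives simultaneously the membership in $\mathcal{B}_\alpha$ for $\alpha\in(-1,1)$ and the failure at $\alpha=-1$---no interpolation between integer indices enters anywhere. Once you make this shift of one power of $t$, your sketch becomes the paper's proof.
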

Those theorems are quite similar to the one found by the first author in \cite{BeckLannes} in 1D when $\nu = 0$, but we obtain a stronger decay in $\mathcal O(t^{-1/2})$ rather than $\mathcal O(t^{-3/2})$.  
However the method that we use in this paper also works for the 1D case and one can show that the decay is also in $\mathcal{O}(t^{-1/2})$ (see Section \ref{sec:1D}). In particular in 1D the Assumption \eqref{conjecture-denom} is true (see \cite{BeckLannes}).

In all the following subsections we assume that $(\zeta, q, \delta, \dot{\delta}, \underline{\zeta}, \underline{\dot{\zeta}})$ is the solution of the Augmented formula \eqref{eq:AUG}  with initial conditions $(0,0,\delta_0 \neq 0, 0, 0, 0)$ given by Theorem \ref{thmf:WP}, where the initial conditions satisfy the compatibility conditions at $r = R$.
 
\subsection{Cummin's equation}\label{sec:DeltaCummins}
First of all, one shows that $\delta$ follows a Cummin's equation as \eqref{eq:Cummins-2d}. 
To do so we will begin by giving an expression for $\widehat{\zeta}$ and $\widehat{q}$.
Afterwards we are going to discuss about the space $\mathbb{D}_{\kappa,\nu}$ and its branching points and branching cuts.
Finally we will show the Theorem \ref{thm:pt5.1.0}.
\begin{thm}\label{thm:DeltaCummins}
Let $\kappa, \nu \geq 0$.
Then $\delta$ satisfies the Cummin's equation
\begin{equation}\label{eq:CumminsDelta}
[\tau_\kappa^2(0) + \kappa k_B \ast] \ddot{\delta} + \sqrt{\nu} (k_{0} \ast k_B) \ast  \partial_t^{\frac{1}{2}} \dot{\delta} + \nu \dot{\delta}+ (k_{1} \ast k_B) \ast  \dot{\delta} + \delta = 0,
\end{equation}
where the kernels $k_B,  k_{0}, k_{1}$ have the following explicit expressions in the Laplace domain 
$$ 
\widehat{k_B} = \frac{R}{2}B(s),
\,
\widehat{k_0} = \frac{1}{\sqrt{1 + \frac{\kappa^2}{\nu} s} + \frac{\kappa}{\sqrt{\nu}} \sqrt{s}},
\, \text{ and } \,
\widehat{k_1} = \frac{1}{\sqrt{1+\nu s + \kappa^2 s^2} + \sqrt{\nu s + \kappa^2 s^2}}.
$$
and the following expression in time domain
\begin{center}
\begin{tabular}{|c|c|c|c|}
  \hline
  {\rm Parameters} & $\nu =0$, $\kappa > 0$ & $\nu >0$, $\kappa =0$ & $\nu >0$, $\kappa > 0$ \\ \hline
  $k_0$ & $k_0 = 0$ & Dirac mass & $k_0(t) = \frac{\kappa}{\sqrt[4]{\nu}} \frac{1 - e^{-t\frac{\nu}{\kappa^2}}}{2\pi t^3} \in L^1$ \\ \hline
  $k_1$ & $k_1(t)=\frac{1}{t}J_1(\frac{t}{\kappa}) \in L^1$ (see \cite{BeckLannes}) & $ k_1(t)=\sqrt{\nu} \frac{1-e^{-\frac{t}{\nu}}}{\sqrt{2\pi t^3}}\in L^1 $ & $k_1 \in L^2(e^{t\frac{\nu}{2\kappa^2}}dt)\subset L^1$ \\ \hline
\end{tabular}
\captionof{table}{Expression of $k_0$ and $k_1$ depending on $\nu$ and $\kappa$.}
\label{tab:kk}
\end{center}
\end{thm}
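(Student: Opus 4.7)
The plan is to work entirely in the Laplace domain: I will solve the linearized exterior problem explicitly as a modified Bessel equation, express its trace in terms of $\widehat{\delta}$ using the transmission condition $\underline q = -\frac{R}{2}\dot\delta$, plug this into the Laplace transform of the modified Newton equation \eqref{AddedMass} (with $\varepsilon=0$), and finally match the resulting single equation in $\widehat\delta$ with the Laplace image of \eqref{eq:CumminsDelta}.

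\emph{Step 1 (Exterior problem in the Laplace domain).} Linearize \eqref{BAr:eq1}--\eqref{BAr:eq2} by setting $\varepsilon=0$ and take the Laplace transform in time. With the zero initial data $(\zeta,q)_{|t=0}=(0,0)$, this yields, for $r>R$,
\begin{equation*}
s\widehat\zeta+\mathrm d_r\widehat q=0,\qquad s(1-\kappa^2\partial_r\mathrm d_r)\widehat q+\partial_r\widehat\zeta-\nu\partial_r\mathrm d_r\widehat q=0.
\end{equation*}
Eliminating $\widehat\zeta$ via the first equation, $\widehat q$ solves $\partial_r\mathrm d_r\widehat q=\alpha(s)^2\widehat q$ with $\alpha(s)=s/\sqrt{1+\nu s+\kappa^2 s^2}$. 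The computation $\mathrm d_r K_1(\alpha r)=-\alpha K_0(\alpha r)$ shows that, among solutions decaying at infinity, $\widehat q(r,s)=A(s)K_1(\alpha(s)r)$, and consequently $\widehat\zeta(r,s)=\frac{\alpha(s)}{s}A(s)K_0(\alpha(s)r)$.

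\emph{Step 2 (Transmission condition and closed formula for $\widehat\delta$).} The conservation of mass \eqref{eq:ConsMass} reads in Laplace as $\underline{\widehat q}(s)=-\tfrac{R}{2}(s\widehat\delta-\delta_0)$, which fixes $A(s)=-\tfrac{R(s\widehat\delta-\delta_0)}{2K_1(\alpha R)}$. Evaluating the trace of $\widehat\zeta$, one obtains the key boundary identity
\begin{equation*}
(1+\nu s+\kappa^2 s^2)\underline{\widehat\zeta}(s)=-\tfrac{R}{2}\sqrt{1+\nu s+\kappa^2 s^2}\,B(s)\,(s\widehat\delta-\delta_0),
\end{equation*}
with $B$ as in \eqref{eq:transferH}. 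Taking the Laplace transform of the linearised Newton equation \eqref{AddedMass} with $F_{\rm ext}=0$ and with the initial conditions $\dot\delta(0)=0$, $\underline\zeta(0)=\underline{\dot\zeta}(0)=0$, substituting the previous identity, and isolating the dynamical terms gives
\begin{equation*}
\left[\tau_\kappa^2(0)s+\nu+\tfrac{R}{2}\sqrt{1+\nu s+\kappa^2 s^2}\,B(s)\right]\mathcal L[\dot\delta](s)+\widehat\delta(s)=0.
\end{equation*}

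\emph{Step 3 (Matching the Cummins form).} Define $\widehat{k_B}=\tfrac{R}{2}B$, $\widehat{k_0}=(\sqrt{1+\kappa^2 s/\nu}+\kappa\sqrt{s}/\sqrt\nu)^{-1}$, $\widehat{k_1}=(\sqrt{1+\nu s+\kappa^2 s^2}+\sqrt{\nu s+\kappa^2 s^2})^{-1}$. Rationalising yields $\widehat{k_1}=\sqrt{1+\nu s+\kappa^2 s^2}-\sqrt{\nu s+\kappa^2 s^2}$ and $\sqrt{\nu s}\,\widehat{k_0}=\sqrt{\nu s+\kappa^2 s^2}-\kappa s$, so that
\begin{equation*}
\sqrt{1+\nu s+\kappa^2 s^2}=\kappa s+\sqrt{\nu s}\,\widehat{k_0}+\widehat{k_1}.
\end{equation*}
Inserting this decomposition into the equation of Step 2 gives exactly the Laplace transform of \eqref{eq:CumminsDelta}, since $s\mathcal L[\dot\delta]=\mathcal L[\ddot\delta]$ and the symbol $\sqrt{s}$ corresponds to $\partial_t^{1/2}$. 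Inverting via the Paley--Wiener isomorphism (Theorem \ref{thm:Bergman}), which is legitimate because each factor will be shown to multiply a function in an appropriate Bergman space, yields the time-domain Cummins equation \eqref{eq:CumminsDelta}.

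\emph{Step 4 (Time-domain expressions for the kernels).} Finally I compute the inverse Laplace transforms of $\widehat{k_0}$ and $\widehat{k_1}$ case by case. When $\nu=0$, $\kappa>0$, the classical identity $\int_0^\infty\tfrac{J_1(u)}{u}e^{-pu}du=\sqrt{1+p^2}-p$ gives $k_1(t)=\tfrac{1}{t}J_1(t/\kappa)$, while $\widehat{k_0}$ vanishes. When $\kappa=0$, $\nu>0$, $\widehat{k_0}=1$ (Dirac mass) and $\widehat{k_1}=\sqrt{1+\nu s}-\sqrt{\nu s}$ inverts to $k_1(t)=\sqrt\nu\,(1-e^{-t/\nu})/\sqrt{2\pi t^3}$ via the standard transforms $\mathcal L^{-1}[\sqrt s]$ and $\mathcal L^{-1}[\sqrt{s+a}-\sqrt s]$. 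In the generic viscous dispersive case, the algebraic identity $\widehat{k_0}(s)=\tfrac{\kappa}{\sqrt[4]\nu}\tfrac{1}{\sqrt s}\tfrac{\sqrt s-\sqrt{s+\nu/\kappa^2}}{-1}$ after rationalisation together with $\mathcal L^{-1}[s^{-1/2}(1-e^{-(\nu/\kappa^2)/(\cdot)})\ldots]$ yields the stated explicit expression; the $L^2(e^{\nu t/(2\kappa^2)}dt)$ membership of $k_1$ follows from the location of the branch points of $\widehat{k_1}$, which lie in $\{\Re s<-\nu/(2\kappa^2)\}$, via a Paley--Wiener argument. The main obstacle is precisely this last inversion step: one must track carefully the branch cuts of $\sqrt{\cdot}$ (fixed by \eqref{def:sqrt_sep}) to justify contour deformations, which is why the compatibility of $\mathbb D_{\kappa,\nu}$ with the decomposition of Step 3 has to be checked before applying Paley--Wiener.
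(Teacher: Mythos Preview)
Your proposal is correct and follows essentially the same route as the paper: solve the linearized exterior problem in the Laplace domain as a modified Bessel equation (this is the paper's Proposition~\ref{prop:zeta_q_Lap}), insert the resulting trace into the Laplace transform of the modified Newton equation \eqref{AddedMass}, and then use the algebraic decomposition $\sqrt{1+\nu s+\kappa^2 s^2}=\kappa s+\sqrt{\nu s}\,\widehat{k_0}+\widehat{k_1}$ (the paper's identity \eqref{eq:sqrt_sep}) to read off the Cummins form. Your Step~4 actually goes further than the paper, which merely states the time-domain kernels in a table without giving the inversion computations.
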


\begin{remark}
In 1D one has $B = 1$ and $k_B$ is the Dirac mass, thus the Cummin's equation \eqref{eq:CumminsDelta} is the same equation as in \cite{BeckLannes} in the 1D non-viscous case. In 2D non-viscous non-dispersive case, the Cummin's equation \eqref{eq:CumminsDelta} is the same equation as in \cite{Bocchi}.
\end{remark}
 
To show such a theorem one needs the expression of $\widehat{\zeta}$ and $\widehat{q}$. It is given by the following proposition.
 
\begin{proposition}\label{prop:zeta_q_Lap}
Let $\kappa, \nu \geq 0$. 
In Laplace domain, for all $s \in \mathbb{D}_{\kappa, \nu}$ (where $\mathbb{D}_{\kappa, \nu}$ has been introduced earlier, see \eqref{def:Dkapnu}), the radial discharge and the surface elevation satisfy the following explicit expressions
\begin{equation}\label{eq:zetaqLap}
\begin{cases}
\widehat{q}(r, s) = -\frac{R \widehat{\dot{\delta}}(s)}{2} K_1\left(\frac{rs}{\sqrt{1+\nu s + \kappa^2 s^2}}\right)/K_1\left(\frac{Rs}{\sqrt{1+\nu s + \kappa^2 s^2}}\right),\\
\widehat{\zeta}(r, s) =- \frac{R}{2} \frac{\widehat{\dot{\delta}}(s)}{\sqrt{1+\nu s + \kappa^2 s^2}} K_0\left(\frac{rs}{\sqrt{1+\nu s + \kappa^2 s^2}}\right)/K_1\left(\frac{Rs}{\sqrt{1+\nu s + \kappa^2 s^2}}\right).
\end{cases}
\end{equation}
\end{proposition}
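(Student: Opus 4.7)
}

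The plan is to solve the linearized ($\varepsilon=0$) radial Boussinesq-Abbott system in the Laplace variable $s$, treating $r$ as the unique remaining variable. With zero initial data for $(\zeta,q)$ (the return-to-equilibrium setting), applying $\mathcal{L}$ to \eqref{BAr:eq1}--\eqref{BAr:eq2} yields
\begin{equation*}
s\widehat{\zeta} + \mathrm{d}_r \widehat{q} = 0,
\qquad
s(1-\kappa^2 \partial_r \mathrm{d}_r)\widehat{q} + \partial_r \widehat{\zeta} = \nu\,\partial_r \mathrm{d}_r \widehat{q}.
\end{equation*}
First I would substitute $\widehat{\zeta} = -\frac{1}{s}\mathrm{d}_r \widehat{q}$ (from the first equation) into the second, multiply through by $s$, and collect terms, to obtain the single scalar equation
\begin{equation*}
\partial_r \mathrm{d}_r \widehat{q} \;=\; \mu(s)^2\,\widehat{q},
\qquad
\mu(s) := \frac{s}{\sqrt{1+\nu s+\kappa^2 s^2}},
\end{equation*}
with the principal branch of $\sqrt{\cdot}$ fixed by \eqref{def:sqrt_sep}; the restriction $s\in\mathbb{D}_{\kappa,\nu}$ is precisely what makes $\mu(s)$ well-defined and $\Re\mu(s)>0$.

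Next I would rewrite $\partial_r \mathrm{d}_r = \partial_r^2 + \frac{1}{r}\partial_r - \frac{1}{r^2}$ and do the change of variable $x=\mu(s)\,r$ to arrive at the modified Bessel equation of order $1$,
\begin{equation*}
x^2 \widehat{q}'' + x \widehat{q}' - (x^2+1)\widehat{q} \;=\; 0.
\end{equation*}
Its solution space is spanned by $I_1(\mu r)$ and $K_1(\mu r)$ (Definition \ref{def:K}). The integrability/decay of $\widehat{q}(\cdot,s)$ (inherited from $q(t,\cdot)\in \mathbb{H}^1$ via the Laplace transform and Theorem \ref{thmf:WP}) together with the exponential growth of $I_1$ for $\Re(\mu r)>0$ forces the $I_1$-coefficient to vanish. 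Hence $\widehat{q}(r,s) = C(s) K_1(\mu(s) r)$, and the boundary condition $\underline{q} = -\frac{R}{2}\dot\delta$, Laplace-transformed into $\widehat{q}(R,s) = -\frac{R}{2}\widehat{\dot\delta}(s)$, fixes
\begin{equation*}
C(s) = -\frac{R\,\widehat{\dot\delta}(s)}{2\,K_1(\mu(s) R)},
\end{equation*}
which is the formula for $\widehat{q}$.

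To recover $\widehat{\zeta}$, I plug this back into $\widehat{\zeta} = -\frac{1}{s}\mathrm{d}_r \widehat{q}$ and use the Bessel identity $K_1'(x) = -K_0(x) - K_1(x)/x$. A direct computation gives the cancellation
\begin{equation*}
\mathrm{d}_r\bigl[K_1(\mu r)\bigr]
= \mu K_1'(\mu r) + \tfrac{1}{r} K_1(\mu r)
= -\mu\, K_0(\mu r),
\end{equation*}
so $\widehat{\zeta}(r,s) = -\frac{R\,\widehat{\dot\delta}(s)}{2s}\,\mu(s)\,\dfrac{K_0(\mu(s) r)}{K_1(\mu(s) R)}$, and $\mu(s)/s = 1/\sqrt{1+\nu s+\kappa^2 s^2}$ yields \eqref{eq:zetaqLap}.

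The only genuinely delicate point is the branch/selection argument for keeping $K_1$ rather than $I_1$: one must check that the Laplace transform of the global solution provided by Theorem \ref{thmf:WP} indeed lies in an appropriate Bergman space for each fixed $r$, so that the $I_1$-mode (which blows up as $r\to\infty$ in the relevant half-plane) is excluded. Everything else reduces to identifying the modified Bessel ODE and using standard identities; no further estimate is required.
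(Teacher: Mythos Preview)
Your proposal is correct and follows essentially the same approach as the paper: Laplace-transform the linear system with zero initial data, eliminate $\widehat{\zeta}$ to obtain the modified Bessel equation of order $1$ for $\widehat{q}$, select $K_1$ by decay at infinity, fix the constant via the boundary condition, and recover $\widehat{\zeta}$ from the first equation. Your write-up is in fact slightly more explicit than the paper's (which simply states ``one can verify'' the final formulas), notably in the use of the identity $K_1'(x)=-K_0(x)-K_1(x)/x$ and in the discussion of the branch/selection argument.
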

 
\begin{proof}
Applying Laplace transform to the Boussinesq-Abbott system {\rm \hyperlink{BAr}{(BAr)}} reads 
\begin{equation}\label{eq:zetaqLapEQ}
\forall s \in \mathbb{C}_0, \,
\begin{cases}
s \widehat\zeta + \dr \widehat q = 0, \\
(1 - \kappa^2 \partial_r \dr) s \widehat q + \partial_r \widehat\zeta - \nu \partial_r \dr \widehat q = 0.
\end{cases}
\end{equation}
The boundary condition still is
\begin{equation}\label{eq:BCq}
\underline{\widehat q} = - \frac{R}{2} \widehat{\dot{\delta}},
\end{equation}
where $\delta$ satisfies the ODE
\begin{equation}
\tau_\kappa^2(0)\widehat{\ddot{\delta}} +\nu\widehat{\dot{\delta}} + \widehat{\delta} =\kappa^2 \underline{\widehat{\ddot{\zeta}}} + \nu \underline{\widehat{\dot{\zeta}}} + \underline{\widehat{\zeta}}.
\end{equation}
From \eqref{eq:zetaqLapEQ} we deduce that
\begin{equation}
\begin{cases}
\widehat\zeta =- \frac{1}{s} \dr \widehat q, \\
(s^2 - (\kappa^2 s^2 + \nu s + 1) \partial_r \dr) \widehat q = 0.
\end{cases}
\end{equation}
Hence $\widehat{q}$ satisfies
$$
\partial_r ^2 \widehat q + \frac{1}{r} \partial_r \widehat q - \left (\frac{1}{r^2} + \frac{s^2}{1+\nu s +\kappa^2 s^2}\right ) \widehat{q} = 0.
$$
The solutions of this equation are exactly the modified Bessel functions.
Therefore one can verify that the explicite solution of \eqref{eq:zetaqLapEQ} is
\begin{equation}
\forall s \in \mathbb{D}_{\kappa, \nu}, \,
\begin{cases}
\widehat{q}(r, s) = -\frac{R \widehat{\dot{\delta}}(s)}{2} K_1\left(\frac{rs}{\sqrt{1+\nu s + \kappa^2 s^2}}\right)/K_1\left(\frac{Rs}{\sqrt{1+\nu s + \kappa^2 s^2}}\right),\\
\widehat{\zeta}(r, s) =- \frac{R}{2} \frac{\widehat{\dot{\delta}}(s)}{\sqrt{1+\nu s + \kappa^2 s^2}} K_0\left(\frac{rs}{\sqrt{1+\nu s + \kappa^2 s^2}}\right)/K_1\left(\frac{Rs}{\sqrt{1+\nu s + \kappa^2 s^2}}\right).
\end{cases}
\end{equation}
\end{proof}

Now we know the expression of $\widehat{\zeta}$ and by using the modified Newton's equation \eqref{AddedMass} we can prove Theorem \ref{thm:DeltaCummins}.
 
\begin{proof}[Proof of Theorem \ref{thm:DeltaCummins}]
The vertical motion satisfies the ODE given by the modified Newton's equation \eqref{AddedMass}
\begin{equation}\label{eq:delta_RTTE}
\tau_\kappa^2(0)\ddot{\delta} + \nu \dot{\delta} + \delta = \kappa^2 \underline{\ddot{\zeta}} + \nu \underline{\dot{\zeta}} + \underline{\zeta}.
\end{equation}
Thus with the explicit expression of the surface elevation in the Laplace domain given by Proposition \ref{prop:zeta_q_Lap}, one can express the right hand-side of \eqref{eq:delta_RTTE} as a non-local function of $\delta$ and its derivative. More precisely, for all complex numbers $s \in \mathbb{D}_{\kappa,\nu}$, the ODE \eqref{eq:delta_RTTE} reads in the Laplace domain
\begin{equation}\label{prop:5.3}
\mathcal{L} (\tau_\kappa^2(0)\ddot{\delta} + \nu \dot{\delta} + \delta )(s) = -\frac{R}{2}\sqrt{1+\nu s + \kappa^2 s^2} B(s)\widehat{\dot{\delta}}(s)
\quad \text{where} \quad
B(s) :=  \frac{K_0\left(\frac{Rs}{\sqrt{1+\nu s + \kappa^2 s^2}}\right)}{K_1\left(\frac{Rs}{\sqrt{1+\nu s + \kappa^2 s^2}}\right)}.
\end{equation}
Moreover the right hand term of \eqref{prop:5.3} is the symbol of an operator of order at least one. Indeed for all $s \in \mathbb{D}_{\kappa,\nu}$, one has
\begin{equation}\label{eq:sqrt_sep}
\sqrt{1+\nu s + \kappa^2 s^2} = \kappa s + \sqrt{\nu}\frac{\sqrt{s}}{\sqrt{1 + \frac{\kappa^2}{\nu} s} + \frac{\kappa}{\sqrt{\nu}} \sqrt{s}} + \frac{1}{\sqrt{1+\nu s + \kappa^2 s^2} + \sqrt{\nu s + \kappa^2 s^2}}.
\end{equation}
By injecting the identity \eqref{eq:sqrt_sep} into equation \eqref{prop:5.3} we deduce that
$$
\mathcal{L} (\tau_\kappa^2(0)\ddot{\delta} + \nu \dot{\delta} + \delta )(s) = 
- \widehat{k_B} \left( 
\kappa s + \sqrt{\nu s} \widehat{k_0} + \widehat{k_1} \right)\widehat{\dot{\delta}},
$$
where $\mathcal{L}$ is the the Laplace transform. This equation describes Cummin's type equation \eqref{eq:CumminsDelta} in the Laplace domain.
\end{proof}
The end of this section is devoted to show properties on $\delta$, $\dot{\delta}$ and $\ddot{\delta}$ in the different regimes. In particular we will show Theorems \ref{thm:pt5.1.0}, \ref{thm:pt5.1.1} and \ref{thm:pt5.1.2}.
 
\subsection{Decay of the vertical motion}\label{sec:5.2}
Firstly we will use the explicit expression \eqref{eq:transferH} of $\delta$ in the Laplace domain and the Paley-Wiener Theorem \ref{thm:Bergman} for Bergman spaces to show some decay properties. First of all, one needs to describe more precisely the domain $\mathbb{D}_{\kappa, \nu}$ where the transfer function \eqref{eq:transferH} is well-defined.
 
\begin{proposition}\label{prop:C0DKV}
Let $\kappa, \nu \geq 0$. The set $\mathbb{D}_{\kappa, \nu}$ is the complex plane without the branch cuts $\mathbb{R}^{\ast}_{-}$ and $\Gamma_{\kappa,\nu}$ where $\Gamma_{\kappa,\nu}$ is given by the table
\begin{center}
\begin{tabular}{|c|c|c|c|c|}
  \hline
  {\rm Parameters} & $\kappa$ = 0 & $\nu = 0$ & $\nu \geq 2 \kappa$ & $\nu < 2\kappa$ \\ \hline
  $s_{\rm b}$ & $-\frac{1}{\nu} $ & $\pm i\kappa^{-1}$ & $ r_\pm := \frac{-\nu \pm \sqrt{\nu^2 - 4\kappa^2}}{2\kappa^2} $ & $ \frac{-\nu \pm i \sqrt{4\kappa^2 - \nu^2}}{2\kappa^2} $ \\ \hline 
  $\Gamma_{\kappa,\nu}$ & $\left \{ \eta, \eta \leq -\frac{1}{\nu}\right \}$ & $\left \{ i\omega, |\omega| > \kappa^{-1} \right \}$ & $ \left \{ \eta \in [r_-; r_+] \right\}$ & $\big\{ -\frac{\nu}{2\kappa^2} + i\omega,$ \\ & & & $ \cup \left\{-\frac{\nu}{2\kappa^2} + i\omega, \omega \in \mathbb{R} \right\} $ & $ |\omega| > \frac{1}{2\kappa^2} \sqrt{4\kappa^2 - \nu^2}\big\}$ \\ \hline 
\end{tabular}
\captionof{table}{Branching points $s_{\rm b}$ and $\Gamma_{\nu, \kappa}$ depending on $\nu$ and $\kappa$.}
\label{tab:BPBC}
\end{center}
and the branching points $s_{\rm b}$ solve $1+ \nu s_{\rm b} + \kappa^2 s_{\rm b}^2 =0$.
In particular, one has $\mathbb{C}_0 \subset \mathbb{D}_{\kappa, \nu}$ but $\mathbb{C}_{-\nu_0} \nsubseteq \mathbb{D}_{\kappa, \nu}$ for any $\nu_0 > 0$.
\end{proposition}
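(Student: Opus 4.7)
The plan is to analyze separately the two conditions defining $\mathbb{D}_{\kappa,\nu}$, namely
\begin{equation*}
(\mathrm{C}_1)\ P(s):=1+\nu s+\kappa^2 s^2\notin \mathbb{R}^-
\qquad\text{and}\qquad
(\mathrm{C}_2)\ \frac{s}{\sqrt{P(s)}}\notin \mathbb{R}^-,
\end{equation*}
and to show that the forbidden set is exactly $\mathbb{R}^-_\ast\cup\Gamma_{\kappa,\nu}$. First, I would treat $(\mathrm{C}_1)$ by writing $s=\eta+i\omega$ and computing
\begin{equation*}
P(s)=1+\nu\eta+\kappa^2(\eta^2-\omega^2)+i\omega(\nu+2\kappa^2\eta).
\end{equation*}
The imaginary part vanishes exactly when $\omega=0$ or (if $\kappa>0$) when $\eta=-\nu/(2\kappa^2)$, which cleanly splits the branch cut associated with $P$ into a horizontal piece on the real axis and a vertical piece on the line $\Re(s)=-\nu/(2\kappa^2)$. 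I would then substitute into the real part to determine on which portions $P(s)\le 0$ actually holds: on the real axis this requires $\eta\in[r_-,r_+]$, which is non-empty iff $\nu\ge 2\kappa$; on the vertical line it requires $\kappa^2\omega^2\ge(4\kappa^2-\nu^2)/(4\kappa^2)$, which is always true when $\nu\ge 2\kappa$ (giving the whole line) and yields only $|\omega|>\tfrac{1}{2\kappa^2}\sqrt{4\kappa^2-\nu^2}$ when $\nu<2\kappa$. The degenerate cases $\kappa=0$ and $\nu=0$ are verified directly from the expression of $P$, recovering the first two columns of the table.

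Next, I would handle $(\mathrm{C}_2)$. For any $s\notin\Gamma_{\kappa,\nu}$, the square root $\sqrt{P(s)}$ is well defined via the principal branch (cut along $\mathbb{R}^-$), so $s/\sqrt{P(s)}$ is a holomorphic function on $\mathbb{C}\setminus\Gamma_{\kappa,\nu}$. On the negative real axis $s=\eta<0$ with $\eta\notin\Gamma_{\kappa,\nu}$, $P(\eta)$ is a positive real number, hence $\sqrt{P(\eta)}>0$ and $s/\sqrt{P(s)}\in\mathbb{R}^-_\ast$; so $\mathbb{R}^-_\ast$ is entirely forbidden by $(\mathrm{C}_2)$. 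Conversely, off $\mathbb{R}^-_\ast\cup\Gamma_{\kappa,\nu}$, I would argue that $s/\sqrt{P(s)}\notin\mathbb{R}^-$: using continuity and the fact that the map $s\mapsto s/\sqrt{P(s)}$ cannot reach $\mathbb{R}^-$ without either $s\in\mathbb{R}^-$ (hence $s\in\mathbb{R}^-_\ast$) or $\sqrt{P(s)}$ crossing its own cut (which only happens on $\Gamma_{\kappa,\nu}$). A careful but routine case-by-case check on the four regimes then confirms that no spurious exclusions appear.

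Finally, to obtain the two inclusions stated at the end, I note that for $\Re(s)>0$ the imaginary part $\omega(\nu+2\kappa^2\eta)$ of $P(s)$ vanishes only if $\omega=0$ (since $\eta=-\nu/(2\kappa^2)<0$ is excluded); but then $P(\eta)=1+\nu\eta+\kappa^2\eta^2>0$ for $\eta>0$, so $(\mathrm{C}_1)$ holds, and $s\notin\mathbb{R}^-_\ast$ gives $(\mathrm{C}_2)$. Hence $\mathbb{C}_0\subset\mathbb{D}_{\kappa,\nu}$. Conversely, given any $\nu_0>0$, the point $s=-\nu_0/2$ lies in $\mathbb{C}_{-\nu_0}\cap\mathbb{R}^-_\ast$, which is excluded by $(\mathrm{C}_2)$.

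The computations themselves are elementary; the only delicate point I foresee is the bookkeeping of the branching in the intermediate regime $\nu<2\kappa$, where one has to verify that the vertical branch cut truly stops at $|\omega|=\tfrac{1}{2\kappa^2}\sqrt{4\kappa^2-\nu^2}$ and does not reconnect with anything on the real axis; this should follow from the factorization $P(s)=\kappa^2(s-r_+)(s-r_-)$ with conjugate roots and a continuity argument along the vertical line.
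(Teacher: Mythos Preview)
Your treatment of $(\mathrm{C}_1)$ matches the paper exactly: write $s=\eta+i\omega$, set $\Im P(s)=\omega(\nu+2\kappa^2\eta)=0$, and substitute back into $\Re P(s)$ to read off $\Gamma_{\kappa,\nu}$ in each regime. The argument for $\mathbb{C}_0\subset\mathbb{D}_{\kappa,\nu}$ and the counterexample $s=-\nu_0/2$ for the non-inclusion are also fine.

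The gap is in $(\mathrm{C}_2)$. Your claim that $s/\sqrt{P(s)}$ ``cannot reach $\mathbb{R}^-$ without either $s\in\mathbb{R}^-$ or $\sqrt{P(s)}$ crossing its own cut'' is not a proof: a holomorphic function can take real negative values at points where neither its argument nor any intermediate quantity lies on a branch cut. Concretely, $s/\sqrt{P(s)}=-t<0$ gives the quadratic $(1-\kappa^2 t^2)s^2-\nu t^2 s-t^2=0$, whose roots can a priori be non-real, so one must actually check that any such root with $\Im s\neq 0$ violates $\Re\sqrt{P(s)}\ge 0$. Deferring this to a ``routine case-by-case check'' is where the real work hides.

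The paper handles this by introducing $R=\Re\sqrt{P(s)}\ge 0$ and $I=\Im\sqrt{P(s)}$ and rewriting $s/\sqrt{P(s)}\in\mathbb{R}^-$ as the pair of scalar conditions $\eta R+\omega I\le 0$ and $\omega R-\eta I=0$. A sign analysis on $\eta$ then closes each case: for $\eta>0$ the two conditions force $\omega$ and $I$ to have both the same and opposite signs, hence $\omega=I=0$ and then $p(s)>0$; for $\eta=0$ one reduces to $\omega=0$; for $\eta<0$ and $\omega\neq 0$ one squares to obtain $\omega^2(|P|+\Re P)=-\eta^2(|P|-\Re P)$, which forces $|s|^2\le|\omega^2-\eta^2|$ and has no solution. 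This explicit $R,I$ decomposition is the missing ingredient in your sketch.
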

 
\begin{figure}[h!]	
\includegraphics[scale=0.5]{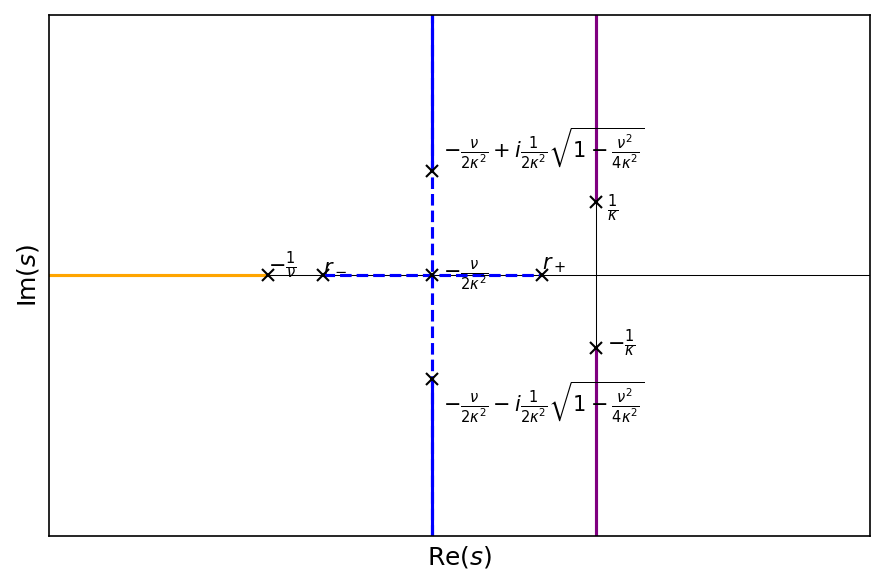}
\caption{Branching points of $\sqrt{1+\nu s + \kappa^2 s^2}$ depending on $\kappa$ and $\nu$.}
\label{fig:branch_cuts}
\end{figure}
Figure \ref{fig:branch_cuts} represents the branching points of $\sqrt{1+\nu s + \kappa^2 s^2}$ depending on $\kappa$ and $\nu$. The orange line is the case $\kappa = 0$, the purple ones are for $\nu = 0$ and the blue ones are for $\kappa \neq 0$, $\nu \neq 0$. The dotted blue lines are the case $\nu \geq 2\kappa$.
The proof of the proposition is given in Appendix \ref{app:Proof5.5}. In particular it implies that the Laplace transform of the vertical motion $\widehat{\delta}$ cannot be holomorphic on $\mathbb{C}_{-\nu_0}$ for any $\nu_0 > 0$, and thus cannot lie in any $\mathcal{B}_\alpha(\mathbb{C}_{-\nu_0})$ for $\alpha \geq -1$. The Paley-Wiener Theorem on Bergman spaces \ref{thm:Bergman} implies that $\delta \notin L^2(t^{-(\alpha+1)} e^{\nu_0 t}dt, \mathbb{R}^+)$. Such a situation is a consequence of the presence of the modified Bessel function in the expression of the transfer function. However in 1D, $B = 1$ and thus $\mathbb{D}_{\kappa, \nu}^{1D} := \{  s \in \mathbb{C} \, | 1+\nu s + \kappa^2 s^2 \notin \mathbb{R}^- \} = \mathbb{C} / \Gamma_{\kappa,\nu}$. Unlike the 2D case, in 1D with $\nu > 0$ there exists a $\nu_0 > 0$ such that
$$
  \mathbb{C}_{-\nu_0} := \left\{ s \in \mathbb{C} \, | \,  \Re(s) > -\nu_0 \right\} \subset \mathbb{D}_{\kappa, \nu}^{1D}.
$$
Therefore, in 1D, one can show with the same proof that one has $\hat{\delta}\in \mathcal{B}_{-1}(\mathbb{C}_{-\nu_0})$. Then, by Theorem \ref{thm:Bergman}, $\delta \in L^2(e^{\nu_0t} dt)$ and thus $\delta$ is exponentially decreasing. 
 
Secondly, one has to show that the transfer function lies in some Bergman spaces. To do so, one needs to show that the transfer function is holomorphic on $\mathbb{C}_0$. The first step is to obtain some properties on the modified Bessel functions and on the coefficient $B$.
 
\subsubsection{Preliminaries on $K$ and $B$}
 
In the following sections we need the behavior of the modified Bessel functions $K_0$ and $K_1$. They are given by the following lemma, according to \cite{A&S}.
\begin{lemma}\label{lem:BehavK}
The modified Bessel functions $K_0$ and $K_1$ satisfy the following behaviors.
\begin{itemize}
\item Near $0$, one has
$$
K_0(z) \underset{|z|\to 0}{=} - \ln(\frac{z}{2})-\gamma + o(z)
\quad \text{and} \quad
K_1(z) \underset{|z|\to 0}{=} \frac{1}{z} + o(z), \quad z \notin \mathbb{R}^-,
$$
where $\gamma$ is the Euler constant.
\item When $|z| \to \infty$ one has for $\alpha = 0, 1$
\begin{equation*}\label{eq:equivK_alpha}
\forall z \notin \mathbb{R}^- ,~ K_\alpha(z) \underset{|z|\to +\infty}{=} \sqrt{\frac{\pi}{2z}} e^{-z}\left (1 + \frac{4\alpha^2 - 1}{8z} + \frac{(4\alpha^2 - 1)(4\alpha^2 - 9)}{2(8z)^2} + o(\frac{1}{|z|^2})\right ).
\end{equation*}
\end{itemize}
\end{lemma}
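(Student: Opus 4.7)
The plan is to start from the integral representation given in \eqref{eq:defK},
\[
K_\alpha(z) = \int_0^\infty e^{-z\cosh t}\cosh(\alpha t)\,dt,\qquad z\notin\mathbb{R}^-,
\]
and to treat the two regimes separately. For the large-argument asymptotic I would apply Watson's lemma (Laplace method) around the minimum $t=0$ of $\cosh t$. Writing $\cosh t = 1 + u$ with $u = \cosh t - 1$, the change of variable $u = \tfrac{t^2}{2}(1 + \tfrac{t^2}{12} + \cdots)$ gives
\[
K_\alpha(z) = e^{-z}\int_0^\infty e^{-zu}\,\varphi_\alpha(u)\,du,
\]
where $\varphi_\alpha$ is analytic at $0$ with $\varphi_\alpha(u) = \frac{1}{\sqrt{2u}}\bigl(1 + c_1^{(\alpha)} u + c_2^{(\alpha)} u^2 + \cdots\bigr)$; the coefficients $c_j^{(\alpha)}$ come from expanding $\cosh(\alpha t)$ in powers of $t^2$ and re-expressing $t^2$ in powers of $u$. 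Integrating term by term via $\int_0^\infty e^{-zu} u^{k-1/2}\,du = \Gamma(k+\tfrac12) z^{-k-1/2}$ produces exactly the expansion stated in the lemma, with the coefficient of $z^{-1}$ equal to $\tfrac{4\alpha^2-1}{8}$ and of $z^{-2}$ equal to $\tfrac{(4\alpha^2-1)(4\alpha^2-9)}{2\cdot 64}$. To extend the asymptotic from $z>0$ to $z\in\mathbb{C}\setminus\mathbb{R}^-$ I would deform the contour $[0,\infty)$ into a ray of the form $\{t e^{i\theta}, t\geq 0\}$ consistent with the phase of $z$, using analyticity of the integrand and decay of $e^{-z\cosh t}$; this is the only mildly delicate point.

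For the behavior near $0$, I would not use the integral representation directly (it diverges when $z\to 0$ in the case $\alpha=0$) but instead the standard series definitions
\[
K_0(z) = -\bigl(\ln(z/2) + \gamma\bigr) I_0(z) + \sum_{k\geq 1}\frac{H_k}{(k!)^2}\Bigl(\frac{z}{2}\Bigr)^{2k},
\qquad
K_1(z) = \frac{1}{z} + \text{regular terms involving }\ln(z/2),
\]
where $I_0(z) = 1 + O(z^2)$, $H_k$ is the $k$-th harmonic number, and the $K_1$ expansion can be obtained from the recurrence $K_1(z) = -K_0'(z)$. Keeping only the leading singular term in each case gives exactly $K_0(z) = -\ln(z/2)-\gamma + o(z)$ and $K_1(z) = 1/z + o(z)$.

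The main obstacle is really the rigorous justification of the Watson-lemma expansion off the positive real axis; once the contour deformation argument is set up, the rest is bookkeeping with Gamma-function moments. Since these are textbook facts, one could alternatively cite \cite{A&S} directly, which is the route taken in the excerpt.
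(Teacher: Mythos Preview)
Your proposal is more than what the paper actually does: the paper gives no proof of this lemma and simply attributes the expansions to \cite{A&S} (the sentence introducing the lemma reads ``They are given by the following lemma, according to \cite{A&S}''). So there is nothing to compare against beyond the citation, and you already anticipated this in your last paragraph.

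Your sketch via Watson's lemma for the large-$|z|$ regime and via the series/recurrence $K_1=-K_0'$ for the small-$|z|$ regime is the standard textbook route and is correct in outline. The only point worth flagging is the one you yourself identified: extending the Laplace-method expansion from $z>0$ to the full cut plane $\mathbb{C}\setminus\mathbb{R}^-$ requires a contour-rotation argument (or an appeal to the sector-uniform version of Watson's lemma), and this is where the work lies if one wants a self-contained proof rather than a citation.
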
 
In accordance with the definition of the complex logarithm with a non-negative real part we use the principal argument
$$ 
\forall z \in \mathbb{C}\setminus \mathbb{R}^-, ~\ln(z) = \ln(|z|) + i {\rm Arg}(z),
$$
where ${\rm Arg}(z) \in (-\pi, \pi )$.
 
From this lemma one obtains the asymptotic analysis of $B$. In particular it allows to show the following proposition.
\begin{proposition}\label{prop:BehavB}
The function $B$ is holomorphic in $\mathbb{C}_0$ and satisfies $|B(s)|  \leq 1$. Moreover for $\nu =0$ we extend continuously $B$ on the imaginary axis $i\mathbb{R}$ with
$$
\left\{\begin{matrix}
 B(i\omega) = \frac{K_0(\frac{R|\omega|}{\sqrt{\kappa^2\omega^2-1}})}{K_1(\frac{R|\omega|}{\sqrt{\kappa^2\omega^2-1}})} \quad \text{ if } |\omega| > \frac{1}{\kappa}, \\
 B(0) = 0.
\end{matrix}\right.
$$
\end{proposition}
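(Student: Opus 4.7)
\emph{Holomorphy.} My first step is to use Proposition \ref{prop:C0DKV} to obtain $\mathbb{C}_0 \subset \mathbb{D}_{\kappa,\nu}$. By the very definition of $\mathbb{D}_{\kappa,\nu}$, both $1+\nu s + \kappa^2 s^2$ and $z(s) := Rs/\sqrt{1+\nu s + \kappa^2 s^2}$ avoid $\mathbb{R}^-$ for every $s\in\mathbb{C}_0$, so with the branch convention \eqref{def:sqrt_sep} the map $z$ is holomorphic from $\mathbb{C}_0$ into $\mathbb{C}\setminus\mathbb{R}^-$. The integral formula \eqref{eq:defK}, analytically continued, shows that $K_0$ and $K_1$ are holomorphic on $\mathbb{C}\setminus\mathbb{R}^-$, and the same representation implies that $K_1$ has no zero there. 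Hence $B = (K_0\circ z)/(K_1\circ z)$ is holomorphic on $\mathbb{C}_0$.

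\emph{The pointwise bound.} Next, I would establish $|B|\leq 1$ by a Phragm\'en--Lindel\"of argument on $\mathbb{C}_0$. From Lemma \ref{lem:BehavK} one sees that $|B|$ stays bounded on $\mathbb{C}_0$: as $|s|\to\infty$ in $\mathbb{C}_0$, either $z(s)\to R/\kappa$ (if $\kappa>0$), giving $B\to K_0(R/\kappa)/K_1(R/\kappa)\in(0,1)$, or $|z(s)|\to\infty$ in a sector (if $\kappa=0$), giving $|B|\to 1$ via the asymptotic $K_0\sim K_1$. It therefore suffices to check $|B|\leq 1$ on $i\mathbb{R}$. Where $z(i\omega)$ is real positive (on the branch cut, for $\nu=0$, $|\omega|>1/\kappa$), the bound is immediate from $K_0<K_1$ on $\mathbb{R}_+^{\ast}$. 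Where $z(i\omega)$ is purely imaginary, the classical identity $K_\alpha(iy)= -\tfrac{i\pi}{2}e^{-i\alpha\pi/2}H_\alpha^{(2)}(y)$ reduces the bound to $J_0^2(y)+Y_0^2(y)\leq J_1^2(y)+Y_1^2(y)$, which follows from Nicholson's integral formula
\[
J_\alpha^2(y)+Y_\alpha^2(y)=\frac{8}{\pi^2}\int_0^\infty K_0(2y\sinh t)\cosh(2\alpha t)\,dt,
\]
whose right-hand side is manifestly increasing in $\alpha\geq 0$.

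\emph{Continuous extension at $\nu=0$.} Finally, I would obtain the continuous extension by passing to the limit $s=\eta+i\omega$, $\eta\to 0^+$. For $|\omega|>1/\kappa$, $1+\kappa^2 s^2\to 1-\kappa^2\omega^2<0$, approached from above when $\omega>0$ and from below when $\omega<0$, so the branch convention \eqref{def:sqrt_sep} forces $\sqrt{1+\kappa^2 s^2}\to i\,\mathrm{sgn}(\omega)\sqrt{\kappa^2\omega^2-1}$; substitution gives $z(i\omega) = R|\omega|/\sqrt{\kappa^2\omega^2-1}\in\mathbb{R}_+^{\ast}$ and yields the announced formula. At $s=0$, $z(s)\to 0$ and Lemma \ref{lem:BehavK} give $K_0(z)\sim -\ln(z/2)$ while $K_1(z)\sim 1/z$, hence $B(s)\to 0$.

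\emph{Main obstacle.} The delicate part of the argument is the boundary bound. The inequality $|B|\leq 1$ is trivial at real positive argument but genuinely non-trivial at complex argument since the integrand in \eqref{eq:defK} oscillates and a direct triangle inequality is too crude. The route sketched above sidesteps this difficulty by verifying the bound only on $i\mathbb{R}$, where it reduces via Nicholson's formula to an explicit monotonicity statement, and then transferring the bound to the whole half-plane through Phragm\'en--Lindel\"of once the uniform boundedness of $|B|$ on $\mathbb{C}_0$ has been read off from the asymptotics of Lemma \ref{lem:BehavK}.
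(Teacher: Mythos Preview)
Your holomorphy argument and the continuous extension at $s=0$ and for $|\omega|>\kappa^{-1}$ follow the paper's line. You do omit the check at the branch points $\pm i\kappa^{-1}$: the paper treats these separately by observing that $|p(i\omega)|\to\infty$ as $\omega\to\pm\kappa^{-1}$ from either side and then reading off from the large-argument asymptotics of Lemma~\ref{lem:BehavK} that both one-sided limits of $B$ equal $1$. Without this, the claim that $B$ extends continuously to \emph{all} of $i\mathbb{R}$ is incomplete.

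For $|B|\le 1$ your route is genuinely different. The paper dispatches this in one line from the integral representation, writing $|K_0(z)|=\big|\int_0^\infty e^{-z\cosh t}\,dt\big|<\big|\int_0^\infty e^{-z\cosh t}\cosh t\,dt\big|=|K_1(z)|$ on the grounds that $\cosh t\ge 1$. Your Phragm\'en--Lindel\"of reduction to $i\mathbb{R}$, together with Nicholson's formula for the purely imaginary case, is more elaborate but sidesteps that step---which, as you note, is not self-evident for complex $z$, since multiplying an oscillatory integrand by a weight $\ge 1$ does not in general increase the modulus of the integral.

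There is, however, a genuine gap in your boundary analysis: it only covers the situations where $z(i\omega)$ is real positive or purely imaginary, and these exhaust $i\mathbb{R}$ only when $\nu=0$. For $\nu>0$ one has $z(i\omega)=Ri\omega/\sqrt{1+i\nu\omega-\kappa^2\omega^2}$, and since $\Im(1+i\nu\omega-\kappa^2\omega^2)=\nu\omega\ne 0$, the argument of $z(i\omega)$ lies strictly inside an open quadrant for every $\omega\ne 0$. Neither the real-variable inequality $K_0<K_1$ nor Nicholson's identity (which concerns $|H_\alpha^{(2)}(y)|^2$ at real $y$) applies there, so your Phragm\'en--Lindel\"of argument does not close for $\nu>0$. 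To salvage it you would need a sectorial inequality $|K_0(z)|\le|K_1(z)|$ valid for $|\arg z|<\pi/2$, or some other control of $B$ on the imaginary axis in the viscous case.
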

\begin{proof}
By definition of the modified Bessel functions $K_0$ and $K_1$ given by \eqref{eq:defK}, $B$ is holomorphic on $\mathbb{C}_0$. Furthermore for any $t \geq 0$, $\cosh(t) \geq 1$. Therefore one has
$$ 
| K_0(z)| = |\int_0^\infty e^{-\cosh(t)z}dt| < |\int_0^\infty e^{-\cosh(t)z}\cosh(t)dt| \leq | K_1(z)|.
$$
It results that $|B(s)| \leq 1$ for all $s \in \mathbb{C}_0$.

For any complex $s \in \mathbb{C}_0$, one denotes $p(s) = \frac{s}{\sqrt{1+\nu s + \kappa^2 s^2}}$ such that
$$
B(s) = \frac{K_0(p(s))}{K_1(p(s))}.
$$
Firstly for any $s \in \mathbb{C}_0$ one has
$$
p(s) \underset{|s|\to 0}{=} s + o(s).
$$
Therefore thanks to Lemma \ref{lem:BehavK} we deduce that
\begin{equation}\label{eq:equivB0}
B(s) \underset{|s|\to 0}{=} -s\ln(\frac{|s|}{2})-\gamma s - i s {\rm Arg}(s) + o(s^2),
\end{equation}
and
$$
\underset{|s|\to 0}{\lim} B(s) = 0.
$$
Thus $B$ can be extended by continuity in $0$ by $B(0) = 0$.

Secondly if $\nu = 0$ then we have a look at the situation $\eta = 0$. 
It is enough to do this case for the limit $|s|\to \pm i\kappa^{-1}$ because $B$ is holomorphic on the complex plan $\mathbb{C}_{0}$ and $\pm i\kappa^{-1}$ are isolated singularities.\\
\indent In a first place we will only use the asymptotic expansion \eqref{eq:equivK_alpha} from Lemma \ref{lem:BehavK} at order 0
\begin{equation}
\forall z \notin \mathbb{R}^- ,~ K_\alpha(z) \underset{|z|\to +\infty}{=} \sqrt{\frac{\pi}{2z}} e^{-z}\left (1 + o(1)\right ).
\end{equation}
\indent However when $\nu = 0$ and $\Re(s) = 0$, if $\omega < \frac{1}{\kappa}$ then $p(s) = i\frac{\omega}{\sqrt{1-\kappa^2\omega^2}}$, with $\omega = \Im(s)$. 
In particular $\frac{\omega}{\sqrt{1-\kappa^2\omega^2}} \underset{\omega \to \frac{1}{\kappa}^-}{\longrightarrow} \infty$ and $p(s) \in i\mathbb{R}$. 
Therefore for $\alpha = 0, 1$ 
$$
K_\alpha(p(i\omega)) \underset{\omega\to \frac{1}{\kappa}^-}{=} \sqrt{\frac{\pi}{2p(i\omega)}} e^{-p(i\omega)}\left (1 + o(1)\right ),
$$
and
$$
\underset{\omega \to \frac{1}{\kappa}^-}{\lim} B(i\omega) = 1.
$$
For $\omega > \frac{1}{\kappa}$, $p(i\omega) = \frac{\omega}{\sqrt{\kappa^2\omega^2-1}}$, the asymptotic expansion leads to
$$
\underset{\omega \to \frac{1}{\kappa}^+}{\lim} B(i\omega) = 1.
$$
For the same reasons we have
$$
\underset{\omega \to -\frac{1}{\kappa}^+}{\lim} B(i\omega) = \underset{\omega \to -\frac{1}{\kappa}^-}{\lim} B(i\omega) = 1.
$$
Therefore $B$ can be extended continuously in $\pm \frac{1}{\kappa}$ by $B(\pm \frac{1}{\kappa}) = 1$, and on $i\mathbb{R}$.
\end{proof}
Proposition \ref{prop:BehavB} combinated with Assumption \ref{conjecture-denom} leads to the holomorphy of $\widehat{H}$ on the half-plane $\mathbb{C}_0$. This property is necessary for the use of Bergman spaces. It is given by the following corollary.
\begin{cor}[Holomorphy of $\widehat{H}$]\label{hol-H}
If Assumption \eqref{conjecture-denom} is satisfied, then $\widehat{H}$ is holomorphic and bounded on the complex half-plan $\mathbb{C}_0$ and it modulus decreases faster than $|s|^{-1}$ for large $|s|$, in the sense that the decrease can be better depending on the regime between $\kappa$ and $\nu$.
\end{cor}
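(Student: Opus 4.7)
The plan is to combine the building blocks already established in Proposition \ref{prop:C0DKV} and Proposition \ref{prop:BehavB} with Assumption \eqref{conjecture-denom}. First, by Proposition \ref{prop:C0DKV}, one has $\mathbb{C}_0 \subset \mathbb{D}_{\kappa,\nu}$, so the branch cuts of $\sqrt{1+\nu s + \kappa^2 s^2}$ and of $\ln$ appearing in the modified Bessel functions never intersect $\mathbb{C}_0$. Hence $s \mapsto \sqrt{1+\nu s + \kappa^2 s^2}$ is holomorphic on $\mathbb{C}_0$, and by composition with the holomorphic maps $K_0$ and $K_1$ on $\mathbb{C}\setminus\mathbb{R}^-$, the function $B$ is holomorphic on $\mathbb{C}_0$, as recalled in Proposition \ref{prop:BehavB}. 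Consequently the numerator of $\widehat{H}$ and the polynomial $\mathcal{P}(s) = s \cdot (\text{numerator}) + 2$ in the denominator are both holomorphic on $\mathbb{C}_0$.

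Next, holomorphy of $\widehat{H}$ on $\mathbb{C}_0$ follows from Assumption \eqref{conjecture-denom}, which guarantees $|\mathcal{P}(s)| \geq \mathcal{P}_{\min} > 0$ on $\overline{\mathbb{C}_0}$, so no pole is created by the division. This reduces the corollary to controlling the growth of the numerator and a lower bound on the denominator in the regime $|s| \to \infty$.

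For the decay estimate, I would exploit three ingredients: the uniform bound $|B(s)| \leq 1$ from Proposition \ref{prop:BehavB}, the asymptotic $|\sqrt{1+\nu s + \kappa^2 s^2}| \lesssim 1 + \sqrt{\nu |s|} + \kappa|s|$ for $s \in \mathbb{C}_0$, and the fact that $\tau_\kappa^2(0) = \tau_{\mathrm{buoy}}^2 + \kappa^2 + R^2/8 > 0$ is bounded away from zero. These give
\begin{equation*}
\bigl|2\tau_\kappa^2(0)s + 2\nu + R\sqrt{1+\nu s + \kappa^2 s^2}\, B(s)\bigr| \lesssim (1+|s|),
\end{equation*}
on $\mathbb{C}_0$, while the dominant $2\tau_\kappa^2(0)s^2$ term in $\mathcal{P}$ forces $|\mathcal{P}(s)| \gtrsim |s|^2$ for $|s|$ large. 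Combining these estimates with the global lower bound $|\mathcal{P}(s)| \geq \mathcal{P}_{\min}$ (used to handle the moderate range of $|s|$) yields $|\widehat{H}(s)| \lesssim (1+|s|)^{-1}$ uniformly on $\mathbb{C}_0$, which is the claimed decay. Sharper rates in the viscous ($\nu>0$) or purely dispersive ($\nu=0$, $\kappa>0$) regimes come from keeping the leading $\sqrt{\nu|s|}$ or $\kappa|s|$ term in the numerator when forming the ratio.

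The only genuine difficulty is the non-vanishing of $\mathcal{P}$ on $\overline{\mathbb{C}_0}$, but this is precisely what Assumption \eqref{conjecture-denom} provides; all other steps are bookkeeping on holomorphy and on the asymptotic bounds for Bessel functions and for the square root, and therefore are routine.
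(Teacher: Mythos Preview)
Your proposal is correct and follows essentially the same approach as the paper, which treats the corollary as an immediate consequence of Proposition~\ref{prop:C0DKV}, Proposition~\ref{prop:BehavB}, and Assumption~\eqref{conjecture-denom} without spelling out the details. One minor simplification worth noting: since the denominator is $\mathcal{P}(s)=s\cdot N(s)+2$ with $N$ the numerator of $\widehat H$, one can write $\widehat H(s)=\frac{1}{s}-\frac{2}{s\,\mathcal{P}(s)}$, which together with $|\mathcal{P}|\ge\mathcal{P}_{\min}$ yields the $|s|^{-1}$ decay directly and bypasses the need to argue that the $2\tau_\kappa^2(0)s^2$ term dominates in $\mathcal{P}$; this is effectively how the paper obtains the bound $|\widehat H^\ast(i\omega)|\le |\omega|^{-1}(\kappa_\ast^{-1}+2\mathcal{P}_{\min}^{-1})$ in the proof of Proposition~\ref{prop:H}.
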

Assumption \eqref{conjecture-denom} is motivated by several points. Firstly one can compute the heat maps of the real part and the imaginary part of the denominator $\mathcal{P}$, and see the points where they both vanish (see Figure \ref{fig:nu_crit}).
\begin{center}
\includegraphics[width=1\textwidth]{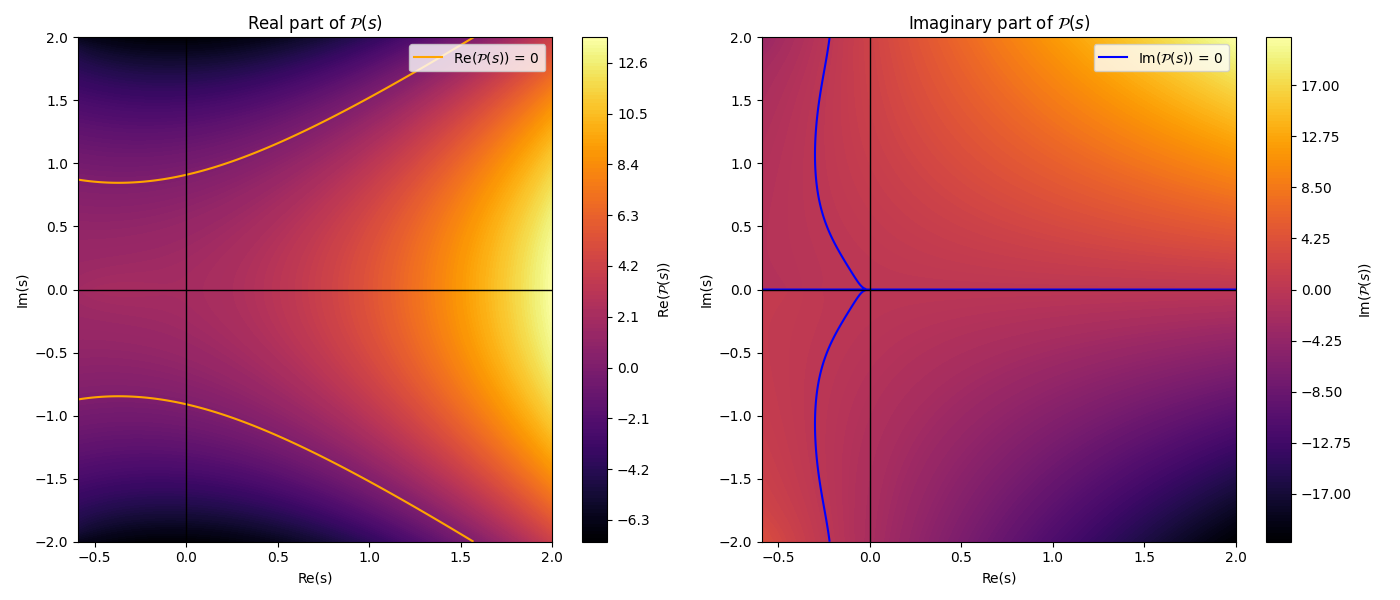}
\captionof{figure}{Real part and imaginary part of $\mathcal{P}$ and points where they both cancel, with $\nu = 0.3$, $\kappa = 0.5$.}
\label{fig:nu_crit}
\end{center}
We intuit from Figure \ref{fig:nu_crit} that $\Im(\mathcal{P}(s)) = 0$ implies that $s \in \mathbb{R}$.
However one has the following proposition.
\begin{proposition}\label{prop:0ofP}
Let $s = \eta \in \mathbb{R}$ such that $\eta > 0$.
Then $\mathcal{P}(s) \neq 0.$
\end{proposition}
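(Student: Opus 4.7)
The plan is to observe that for $s = \eta > 0$ real, every term appearing in $\mathcal{P}(s)$ is manifestly non-negative real, so the constant term $2$ prevents cancellation. The argument has no subtle analytic obstruction; the only thing to check carefully is the sign/reality of the Bessel ratio $B(\eta)$ and of the square root.

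First I would note that since $\nu, \kappa \geq 0$ and $\eta > 0$, the quantity $1 + \nu\eta + \kappa^2\eta^2$ is a strictly positive real number, so (using the branch convention \eqref{def:sqrt_sep}) $\sqrt{1+\nu\eta+\kappa^2\eta^2}$ is a positive real number. Consequently $p(\eta) := \eta/\sqrt{1+\nu\eta+\kappa^2\eta^2}$ is a positive real number, and in particular lies in $\mathbb{D}_{\kappa,\nu}$ so $B(\eta)$ is well defined.

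Next I would invoke the integral representation \eqref{eq:defK} of the modified Bessel functions. For any $z > 0$ real and any $\alpha \in \mathbb{R}$,
\begin{equation*}
K_\alpha(z) = \int_0^\infty e^{-z\cosh(t)}\cosh(\alpha t)\,dt > 0,
\end{equation*}
since the integrand is strictly positive. Therefore $K_0(p(\eta)) > 0$ and $K_1(p(\eta)) > 0$, so $B(\eta) > 0$ is a positive real number.

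Finally, assembling the four terms of
\begin{equation*}
\mathcal{P}(\eta) = 2\tau_\kappa^2(0)\eta^2 + 2\nu\eta + R\eta\sqrt{1+\nu\eta+\kappa^2\eta^2}\,B(\eta) + 2,
\end{equation*}
each is a non-negative real number (using $\tau_\kappa^2(0) > 0$, $\nu \geq 0$, $R > 0$, $\eta > 0$), and the constant $2$ is strictly positive. Hence $\mathcal{P}(\eta) \geq 2 > 0$, which in particular yields $\mathcal{P}(\eta) \neq 0$. There is no real obstacle here; the statement is essentially a sanity check that on the positive real axis (where everything is real and positive) no spurious cancellation can produce a zero of the denominator of the transfer function $\widehat{H}$. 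The genuinely difficult regions — the imaginary axis and the parts of $\mathbb{C}_0$ where $B$ becomes complex — are not addressed by this proposition and are the motivation for Assumption \eqref{conjecture-denom}.
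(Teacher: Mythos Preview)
Your proof is correct and follows exactly the approach indicated by the paper, which simply states that the proposition ``is due to the positivity of the modified Bessel functions of second kind on $\mathbb{R}^\ast_+$.'' You have merely spelled out the details of that one-line remark, using the integral representation \eqref{eq:defK} to justify the positivity of $K_0$ and $K_1$ on the positive reals and then observing that every summand in $\mathcal{P}(\eta)$ is non-negative with the constant term strictly positive.
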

This proposition is due to the positivity of the modified Bessel functions of second kind on $\mathbb{R}^\ast_+$.
Moreover assuming that Assumption \eqref{conjecture-denom} is true we show in following parts with the Paley-Wiener Theorem that $\delta$ is in $H^{2}(\mathbb{R}^+)$. 
By using Theorem \ref{thm:Bergman}, in Subsection \ref{sec:4.3} we show that the strongest decay allowed on $\delta$ is $\mathcal{O}(t^{-(1/2^+)})$ in the case $\nu = 0$. 
This result is better than the one found in \cite{BeckLannes} with $\nu = 0$, but the method is also adapted for the 1D case (see Section \ref{sec:1D}).  
However in \cite{BeckLannes}, $B = 1$ and $\mathcal{P}$ is an algebraic holomorphic function. 
In this case Assumption \eqref{conjecture-denom} is no longer a conjecture and can be shown. 
Furthermore Theorem \ref{thm:Bergman} allows us to show that in 1D with viscosity,  $\delta$ is exponentially decreasing (see Section \ref{sec:1D}).
Those comparisons to the 1D case are other points making us believe that the conjecture is true. 
 
We assume for all following subsections that Assumption \eqref{conjecture-denom} is satisfied.
 
\subsubsection{Vertical motion in $H^2$}\label{sec:5.2.1}
 
Let $\kappa,\nu \geq 0$.
When $\nu=0$ the denominator $\mathcal{P}$ of the transfer function is singular on the branching cuts $\{ i \omega \, | \, | \omega | > \kappa^{-1} \}$ and holomorphic on $(-\kappa^{-1},\kappa^{-1})$. When $\nu >0$, there is  no singularity on the imaginary axis. Therefore in all cases, $\mathcal{P}$ can be extended continuously on $i\mathbb{R}$ by
\begin{equation}\label{extension of P}
\mathcal{P}^\ast(i\omega) = \left\{\begin{matrix}
i\omega\left(2i\tau_\kappa^2(0)\omega + i {\rm sgn}(\omega) R\sqrt{\kappa^2 \omega^2 - 1} B(i\omega)\right) + 2, & \text{for} \, \nu=0, \, \kappa \neq 0, \, |\omega| > \frac{1}{\kappa}, \\
i\omega\left(2i\tau_\kappa^2(0)\omega + R\sqrt{1 + i \nu \omega - \kappa^2 \omega^2} B(i\omega)\right) + 2, & \text{in other cases},
\end{matrix}\right.
\end{equation} 
where in the case $\nu = 0$, $\kappa >0$ and $|\omega| > \kappa^{-1}$, $B$ was extended in Proposition \ref{prop:BehavB}.
Furthermore $\mathcal{P}^*$ is bounded from bellow by $\mathcal{P}_{\min}$ on $\mathbb{R}$ as a consequence of Assumption \eqref{conjecture-denom}.
Thus the vertical motion belongs to $L^2$ with the following proposition.
 
\begin{proposition}\label{prop:H} The transfert function $\widehat{H}$ lies in the Hardy space and satisfies
\begin{equation}
 \| \widehat{H} \|_{\mathcal{B}_{-1}} \lesssim \kappa_\ast^{-1/2}  (1+ R^2\sqrt{1+ \nu \kappa_\ast^{-1}} + \kappa_\ast^{-1} ) \mathcal{P}_{\min}^{-1} + \sqrt{\kappa_\ast^{-1}},
\end{equation}
where
$$
\kappa_\ast= \begin{cases} \kappa & \text{if $\kappa >0$}, \\ 1 & \text{if $\kappa =0$}. \end{cases}
$$ 
\end{proposition}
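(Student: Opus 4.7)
The plan is to exploit that $\widehat{H}=N/\mathcal{P}$ with
$$N(s)=2\tau_\kappa^2(0)s+2\nu+R\sqrt{1+\nu s+\kappa^2 s^2}\,B(s),$$
and to reduce the computation of $\|\widehat{H}\|_{\mathcal{B}_{-1}}$ to an $L^2$ estimate on the imaginary axis. By Corollary \ref{hol-H}, $\widehat{H}$ is holomorphic and bounded on $\mathbb{C}_0$, and by Proposition \ref{prop:BehavB} together with the extension $\mathcal P^\ast$ in \eqref{extension of P}, $\widehat{H}$ extends continuously to $\overline{\mathbb C_0}$. The classical Paley–Wiener identification $\mathcal{B}_{-1}=H^2(\mathbb{C}_0)$ (Theorem \ref{thm:Bergman}) then gives
$$\|\widehat{H}\|_{\mathcal{B}_{-1}}^2=\int_{\mathbb R}|\widehat{H}(i\omega)|^2\,d\omega,$$
since the subharmonicity of $|\widehat H|^2$ forces the supremum over $\eta>0$ to be attained as $\eta\to 0^+$.

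The next step is to bound the boundary integral by splitting at the natural scale $|\omega|=\kappa_\ast^{-1}$. For the low-frequency part $|\omega|\le \kappa_\ast^{-1}$, I would use Assumption \eqref{conjecture-denom} in the form $|\mathcal P^\ast(i\omega)|\ge \mathcal P_{\min}$, together with $|B(i\omega)|\le 1$ from Proposition \ref{prop:BehavB} and the elementary bound
$$|\sqrt{1+i\nu\omega-\kappa^2\omega^2}|=\bigl((1-\kappa^2\omega^2)^2+\nu^2\omega^2\bigr)^{1/4}\lesssim \sqrt{1+\nu|\omega|+\kappa^2\omega^2},$$
to estimate
$$|N(i\omega)|\lesssim \tau_\kappa^2(0)|\omega|+\nu+R\sqrt{1+\nu|\omega|+\kappa^2\omega^2}.$$
Using $\tau_\kappa^2(0)\lesssim 1+\kappa^2+R^2$ and the bound $|\omega|\le \kappa_\ast^{-1}$, then integrating yields, after taking square roots, a contribution of the form
$$\kappa_\ast^{-1/2}\bigl(1+\kappa_\ast^{-1}+R^2\sqrt{1+\nu\kappa_\ast^{-1}}\bigr)\mathcal P_{\min}^{-1}.$$

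For the high-frequency part $|\omega|>\kappa_\ast^{-1}$, I would use that the denominator is dominated by its leading term: $|\mathcal P(i\omega)|\gtrsim |\omega|^2$ as $|\omega|\to\infty$ (together with the uniform lower bound $\mathcal P_{\min}$ on the compact complement), which implies $|\widehat H(i\omega)|\lesssim 1/|\omega|$ — this is exactly the decay recorded in Corollary \ref{hol-H}. Then
$$\int_{|\omega|>\kappa_\ast^{-1}}|\widehat{H}(i\omega)|^2\,d\omega\lesssim\int_{\kappa_\ast^{-1}}^{\infty}\frac{d\omega}{\omega^2}\lesssim\kappa_\ast,$$
and in the physical regime $\kappa_\ast\le 1$ this is absorbed into the stated $\sqrt{\kappa_\ast^{-1}}$. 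Summing the two regimes gives the bound of Proposition \ref{prop:H}.

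The main obstacle is the uniform handling of the coefficient $B(s)$ and of $\sqrt{1+\nu s+\kappa^2s^2}$ across the different parameter regimes, especially the delicate case $\nu=0$, $\kappa>0$, where the square root vanishes at $s=\pm i/\kappa$ and the modified Bessel functions approach a unit modulus rather than decaying; here one must use the continuous extension of $B$ given by Proposition \ref{prop:BehavB} and verify that the integrand remains $L^2$-integrable across the branch points. A secondary technical point is tracking the explicit dependence on $R$ through $\tau_\kappa^2(0)=\tau_{\rm buoy}^2+\kappa^2+R^2/8$ so as to produce the factor $R^2\sqrt{1+\nu\kappa_\ast^{-1}}$ with the correct power, rather than an overestimate.
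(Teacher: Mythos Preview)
Your proposal is correct and follows essentially the same route as the paper: both reduce $\|\widehat H\|_{\mathcal B_{-1}}$ to the $L^2$ integral on the imaginary axis via the Hardy space identification, split at $|\omega|=\kappa_\ast^{-1}$, bound the low-frequency part using $|\mathcal P^\ast|\ge\mathcal P_{\min}$ together with $|B|\le1$, and control the high-frequency part via the $|\widehat H(i\omega)|\lesssim|\omega|^{-1}$ decay (which the paper obtains from the identity $\widehat H=\tfrac{1}{s}-\tfrac{2}{s\mathcal P}$). Your treatment is in fact slightly more careful than the paper's in tracking the $R$-dependence through $\tau_\kappa^2(0)$ and in noting that the high-frequency contribution is actually $O(\kappa_\ast)\le O(\kappa_\ast^{-1})$.
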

\begin{proof}
Thanks to Proposition \ref{prop:BehavB}, extension \eqref{extension of P} and Assumption \eqref{conjecture-denom}, $\widehat{H}$ is bounded and can be extended by continuity over the imaginary axis by
\begin{equation}\label{eq:H*}
\widehat{H}^\ast(i \omega) = \left\{\begin{matrix}
\frac{2i\tau_\kappa^2(0)\omega + i {\rm sgn}(\omega) R\sqrt{\kappa^2 \omega^2 - 1} B(i\omega)}{\mathcal{P}^\ast(i\omega)}, & \text{for $\nu=0, \kappa \neq 0$ and } |\omega| > \frac{1}{\kappa}, \\
\frac{2i\tau_\kappa^2(0)\omega + R\sqrt{1 + i \nu \omega - \kappa^2 \omega^2} B(i\omega)}{\mathcal{P}^\ast(i\omega)}, & \text{in other cases}.
\end{matrix}\right.
\end{equation}
By definition of the Hardy space norm, one has
$$ 
\|\widehat{H}\|^2_{ \mathcal{B}_{-1}} = 
\int_{- \infty}^{-\kappa_\ast^{-1}} |\widehat{H}^\ast(i\omega)|^2d\omega 
+ \int_{-\kappa_\ast^{-1}}^{\kappa_\ast^{-1}} |\widehat{H}^\ast(i\omega)|^2d\omega
+ \int_{\kappa_\ast^{-1}}^{+\infty } |\widehat{H}^\ast(i\omega)|^2d\omega.
$$
Thanks to the expression of $\widehat{H}^\ast$ given by \eqref{eq:H*}, one has
$$
\int_{-\kappa_\ast^{-1}}^{\kappa_\ast^{-1}} |\widehat{H}^\ast(i\omega)|^2d\omega \lesssim \kappa_\ast^{-1}  (1+R \sqrt{1+ \nu \kappa_\ast^{-1}})^2 \mathcal{P}_{\min}^{-2},
$$
Furthermore for all $|\omega| > \kappa_\ast^{-1}$, the transfer function is bounded by
$$
|\widehat{H}^\ast(i\omega)| \leq \frac{1}{|\omega|} \left (\kappa_*^{-1} + 2 \mathcal{P}_{\min}^{-1} \right ).
$$
Thus the integral satisfies
$$
\int_{|\omega| > \kappa_\ast^{-1}} |\widehat{H}^\ast(i\omega)|^2d\omega \leq 2  \kappa_\ast^{-1}  \left (1 + 2 \mathcal{P}_{\min}^{-1} \right )^2.
$$
It shows that
$$ 
 \| \widehat{H} \|_{\mathcal{B}_{-1}}^2 \lesssim \kappa_\ast^{-1}  (1+R \sqrt{1+ \nu \kappa_\ast^{-1}})^2 \mathcal{P}_{\min}^{-2} + \kappa_\ast^{-1}  \left (1 +  \mathcal{P}_{\min}^{-1} \right )^2.
$$ 
\end{proof}
 
We introduce $ \widehat I$ and $\widehat J$ defined by
\begin{equation}\label{def-widehatIJ}
\forall s \in \mathbb{C}_0, \, \widehat{\dot{\delta}}(s) = \widehat I(s) \delta_0 :=\frac{\delta_0}{\mathcal{P}}
, \quad \text{and} \quad \widehat{\ddot{\delta}}(s) = \widehat J(s) \delta_0 := \frac{s  \delta_0}{\mathcal{P}}.
\end{equation}
 
Thus we do an analogue work on $ \widehat{I}$ and $ \widehat{J}$ to show that $ \widehat{\dot{\delta}}, \widehat{\ddot{\delta}} \in \mathcal{B}_{-1}$. Furthermore, $\mathcal{P}^\ast$ satisfies for $\omega > \frac{1}{\kappa}$
$$
\frac{1}{|\mathcal{P}^\ast(i\omega)|} \leq \frac{1}{\omega^2 \tau_\kappa^2(0) - 1} \leq \frac{1}{\omega^2 \tau_{\rm buoy}^2}.
$$ 
Indeed, thanks to the expression of $\tau_\kappa^2(0)$ given in Proposition \ref{prop:AddedMass}, for $|\omega| > \kappa_*^{-1}$, $\omega^2 \tau_\kappa^2(0) > \omega^2 \tau_{\rm buoy}^2 + 1$.
Therefore one has
$$
\int_{\kappa_\ast^{-1}}^\infty \frac{1}{|\mathcal{P}(i\omega)|^2} \leq \frac{2\kappa_\ast^{3}}{3(\tau_{\rm buoy}^2)^2}
\quad
\text{and}
\quad
\int_{\kappa_\ast^{-1}}^\infty \frac{\omega^2}{|\mathcal{P}(i\omega)|^2} \leq \frac{2\kappa_\ast}{(\tau_{\rm buoy}^2)^2}.
$$ 
We deduce that
\begin{equation}\label{eq:EstimHardy}
\| \widehat{I} \|_{\mathcal{B}_{-1}}^2 \leq 2\kappa_\ast^{-1} \mathcal{P}_{\min}^{-2} + \frac{2\kappa_\ast^3}{3(\tau_{\rm buoy}^2)^2}
\quad
\text{and}
\quad
\| \widehat{J} \|_{\mathcal{B}_{-1}}^2 \leq 2\kappa_\ast^{-1} \mathcal{P}_{\min}^{-2} + \frac{2\kappa_\ast}{(\tau_{\rm buoy}^2)^2}.
\end{equation}
 
Consequently, thanks to the Paley-Wiener Theorem on Bergman spaces \ref{thm:Bergman}, Proposition \ref{prop:H} and estimates \ref{eq:EstimHardy} conclude the proof of Theorem \ref{thm:pt5.1.0}.
 
\subsection{Proof of Theorems \ref{thm:pt5.1.1} and \ref{thm:pt5.1.2}}\label{sec:4.3}
 
To show Theorems \ref{thm:pt5.1.1} and \ref{thm:pt5.1.2}, we will use the derivatives of the transfer functions $\widehat{H}$, $\widehat{I}$ and $\widehat{J}$, and the Paley-Wiener Theorem for the Bergman spaces \ref{thm:Bergman}. In order to do so, some preliminary results on the derivatives of $B$ and $\mathcal{P}$ are needed, in particular on their asymptotic behaviors.
 
\subsubsection{Some preliminary results}
Firstly one has the following lemma on the derivates of $B$ (see \eqref{eq:transferH}).
 
\begin{lemma}\label{lem:DeriveesB}
For $\nu, \kappa \geq 0$, one has
\begin{equation}
\forall s \in \mathbb{C}_0, \, B'(s) = \frac{1+\frac{\nu}{2}s}{(1+\nu s + \kappa^2 s^2)^{3/2}}\left ( B(s)^2 - 1 + \frac{\sqrt{1+\nu s + \kappa^2 s^2}}{s}B(s)\right ).
\end{equation} 
$B'$ is holomorphic on $\mathbb{C}_0$, bounded on $\mathbb{C}_0 \setminus B(0, \rho\min(1, \kappa^{-1}))$ for $0 < \rho \ll 1$, and we have the following behaviors:
\begin{itemize}
\item Near $0$ one has
\begin{equation}
B'(s) \underset{|s| \to 0}{\sim} \ln(|s|), 
\quad 
B''(s) \underset{|s| \to 0}{\sim} \frac{\ln(|s|)}{s},
\quad \text{and} \quad
B^{(3)}(s) \underset{|s| \to 0}{\sim} \frac{\ln(|s|)}{s^2}
\quad \text{for } s \in \mathbb{C}_0.
\end{equation}
\item When $|s| \to \infty$, if $\kappa = \nu = 0$, $B'$ decreases in $|s|^{-2}$ at infinity. If $\kappa =0$ and $ \nu > 0$, then $B'$ decreases in $|s|^{-3/2}$ at infinity. If $\kappa > 0$, $B'$ decreases in $|s|^{-2}$ at infinity if $\nu >0$ and in $|s|^{-3}$ is $\nu = 0$. Moreover $B''(s) \underset{|s| \to \infty}{=} \mathcal{O}(1)$ and $B^{(3)}(s) \underset{|s| \to \infty}{=} \mathcal{O}(1)$.
\end{itemize}
\end{lemma}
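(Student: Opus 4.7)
The plan is to compute $B'$ explicitly using the chain rule through the auxiliary variable $p(s) := \frac{Rs}{\sqrt{1+\nu s+\kappa^2 s^2}}$, and then use the known series and asymptotic expansions of $K_0, K_1$ recalled in Lemma \ref{lem:BehavK} to extract the behavior of $B', B'', B^{(3)}$ at the endpoints $s=0$ and $|s|\to\infty$. First I would differentiate $p$, obtaining
\[
p'(s) = \frac{R(1+\nu s/2)}{(1+\nu s+\kappa^2 s^2)^{3/2}},
\]
which already contains the prefactor appearing in the claimed formula. Then, writing $B(s) = K_0(p(s))/K_1(p(s))$ and using the standard identities $K_0'(z) = -K_1(z)$ and $K_1'(z) = -K_0(z) - K_1(z)/z$, the quotient rule gives
\[
B'(s) = p'(s)\left(B(s)^2 - 1 + \frac{B(s)}{p(s)}\right),
\]
which is exactly the stated formula once $1/p(s) = \sqrt{1+\nu s+\kappa^2 s^2}/(Rs)$ is substituted.

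Holomorphy of $B'$ on $\mathbb{C}_0$ is then inherited from that of $B$ (proved in Proposition \ref{prop:BehavB}), noting that $K_1$ does not vanish on $\mathbb{C}_0$ since $p(\mathbb{C}_0) \subset \mathbb{C}\setminus\mathbb{R}^-$. Boundedness on $\mathbb{C}_0 \setminus B(0,\rho\min(1,\kappa^{-1}))$ would then follow from the a priori bound $|B(s)|\leq 1$ of Proposition \ref{prop:BehavB}, together with the fact that the denominator $(1+\nu s+\kappa^2 s^2)^{3/2}$ stays away from zero on this region (whose excluded ball is large enough to isolate the origin from the branching points $s_{\rm b}$ of Proposition \ref{prop:C0DKV}). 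The only term requiring care is $B(s)/s$, which is controlled by the near-0 analysis below.

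For the behavior near $0$, I would plug the expansions $K_0(z) = -\ln(z/2)-\gamma + O(z^2\ln z)$ and $K_1(z) = 1/z + O(z\ln z)$ from Lemma \ref{lem:BehavK} into $B = K_0(p)/K_1(p)$ and use $p(s) \sim Rs$ to get $B(s) = -Rs\ln(Rs/2)-\gamma R s + O(s^3\ln^2 s)$. Substituting into the formula for $B'$, the prefactor tends to $1$, the term $B^2-1$ tends to $-1$, and $\sqrt{1+\nu s+\kappa^2 s^2}\,B(s)/s \sim -R\ln|s|$ dominates, giving $B'(s) \sim \ln|s|$. Successive differentiation of the closed-form expression for $B'$ (applying the chain rule once more and tracking only the logarithmic singularity coming from $B/s$) yields the claimed orders for $B''$ and $B^{(3)}$.

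For the behavior at infinity, the analysis splits according to whether $p(s)$ stays bounded or escapes to infinity. If $\kappa>0$, then $p(s) \to R/\kappa$ as $|s|\to\infty$, so $B(s) \to B(R/\kappa)$ and the parenthesis in the formula for $B'$ converges to a finite constant; the decay rate of $B'$ is then dictated purely by $p'(s)$, which is $O(|s|^{-2})$ if $\nu>0$ and $O(|s|^{-3})$ if $\nu=0$. If $\kappa=0$, then $|p(s)|\to\infty$, and using the Debye expansion $K_\alpha(z) \sim \sqrt{\pi/(2z)}\,e^{-z}(1+(4\alpha^2-1)/(8z)+\cdots)$ one gets $B(z) = 1 - 1/(2p) + O(1/p^2)$, so the combination $B^2-1 + B/p$ exhibits a cancellation of its leading $1/p$-term and is $O(1/p^2)$; combined with the corresponding decay of $p'$ ($O(|s|^{-1/2})$ if $\nu>0$, constant if $\nu=0$), this gives $O(|s|^{-3/2})$ and $O(|s|^{-2})$ respectively. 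The main technical obstacle, in my view, is this last cancellation in $B^2-1+B/p$: pushing the Debye expansion to sufficient order without losing track of remainders uniformly in the excluded ball near $0$ and near the branching points is where most of the bookkeeping lies; the bounds on $B''$ and $B^{(3)}$ at infinity then follow by differentiating the closed form for $B'$ and using the same asymptotic template.
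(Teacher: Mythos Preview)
Your approach is essentially identical to the paper's: you derive $B'$ via the chain rule through $p(s)$ using the Bessel identities $K_0'=-K_1$ and $K_1'=-K_0-K_1/z$, analyze the near-zero behavior from the small-argument expansions of Lemma~\ref{lem:BehavK}, and at infinity split into the case $\kappa>0$ (where $p$ stays bounded and the decay is read off $p'$) versus $\kappa=0$ (where you push the Debye expansion to second order to see the cancellation in $B^2-1+B/p$). The paper follows exactly this template, including the second-order Debye computation you flag as the main obstacle; the only cosmetic difference is that the paper silently drops the factor $R$ in its definition of $p$, whereas you keep it.
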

 
\begin{proof}
We remind that near $0$, according to equation \eqref{eq:equivB0}, 
$$
\forall s \in \mathbb{C}_0, \, B(s) \underset{|s|\to 0}{\sim} s\ln(|s|).
$$
One can compute the derivative of $B$ to obtain 
$$
\forall s \in \mathbb{C}_0, \, B'(s) = \frac{1+\frac{\nu}{2}s}{(1+\nu s + \kappa^2 s^2)^{3/2}}\left ( B(s)^2 - 1 + \frac{\sqrt{1+\nu s + \kappa^2 s^2}}{s}B(s)\right ).
$$
Therefore we have the first point of the lemma. Let us remind that $p(s) = \frac{s}{\sqrt{1+\nu s +\kappa^2 s^2}}$.
Let $\kappa = 0$ and $\nu \geq 0$.
When $|s| \to \infty $, $p(s) \to \infty$, and thanks to the asymptotic expansion \eqref{eq:equivK_alpha} one has 
\begin{align*}
B(s) & \underset{|s| \to \infty}{=} \frac{1- \frac{1}{8p(s)} + \frac{9}{2(8p(s))^2}}{1+ \frac{3}{8p(s)} - \frac{15}{2(8p(s))^2}} + o\left ( \frac{1}{p(s)^2}\right )
\\& \underset{|s| \to \infty}{=} \left (1- \frac{1}{8p(s)} + \frac{9}{2(8p(s))^2}\right )\left (1- \frac{3}{8p(s)} + \frac{15}{2(8p(s))^2} + o\left ( \frac{1}{p(s)^2}\right )\right )
\\& \underset{|s| \to \infty}{=} 1 - \frac{1}{2p(s)} + \frac{15}{64p(s)^2} + o\left (\frac{1}{p(s)^2}\right ).
\end{align*}
We deduce that 
\begin{equation}
B(s)^2 \underset{|s| \to \infty}{=} 1 - \frac{1}{p(s)} + \frac{15}{32p(s)^2} + o\left (\frac{1}{p(s)^2}\right ),
\end{equation}
and it leads to
\begin{equation}
B(s)^2 - 1 + \frac{1}{p(s)}B(s) \underset{|s| \to \infty}{=} - \frac{1+\frac{\nu}{2}s}{32 s^2} + o\left ( \frac{1}{p(s)^2}\right ).
\end{equation}
If $\nu = 0$, then 
\begin{equation}
B(s)^2 - 1 + \frac{1}{p(s)}B(s) \underset{|s| \to \infty}{=} - \frac{1}{32 s^2} + o\left ( \frac{1}{s^2}\right ),
\end{equation}
and $B'$ decreases in $|s|^{-2}$ at infinity.
Else if $\nu > 0$, one has
\begin{equation}
B(s)^2 - 1 + \frac{1}{p(s)}B(s) \underset{|s| \to \infty}{=} - \frac{\nu}{64 s} + o\left ( \frac{1}{s}\right ).
\end{equation} 
Moreover $\frac{1+\frac{\nu}{2}s}{(1+\nu s)^{3/2}} \underset{|s| \to \infty}{=} \mathcal{O}(|s|^{-1/2})$, and we deduce that $B'$ decreases in $|s|^{-3/2}$ at infinity. \\
\indent Now if $\kappa > 0$, then $p(s) \underset{|s| \to \infty}{=} \mathcal{O}(1)$ and $B(s)^2 - 1 + \frac{1}{p(s)}B(s)$ is bounded when $|s|\to +\infty$. However $\frac{1+\frac{\nu}{2}s}{(1+\nu s + \kappa^2 s^2)^{3/2}}$ decreases in $|s|^{-2}$ if $\nu > 0$, and in $|s|^{-3}$ if $\nu = 0$. We deduce that $B'$ decreases in $|s|^{-2}$ or $|s|^{-3}$ at infinity. 
Furthermore thanks to the asymptotic expension \eqref{eq:equivB0}, one deduces that
\begin{equation}
B'(s) \underset{|s| \to 0}{\sim} \ln(|s|).
\end{equation}
The second derivative of $B$ reads
\begin{align*}
B''(s) = & \left (-\frac{\nu/2 + (\nu^2/4 + \kappa^2) s + \nu \kappa^2 s^2 / 2}{(1+\nu s + \kappa^2 s^2)^\frac{5}{2}} - \frac{\nu/2 + 2\kappa^2 s + \frac{\nu}{2} \kappa^2 s^2}{(1+\nu s + \kappa^2 s^2)^3} \right )\left (B(s)^2 - 1 + \frac{1}{p(s)}B(s)\right ) \\ & + \frac{1+\nu/2 s}{(1+\nu s + \kappa^2 s^2)^2}\left (2B'(s) B(s) - \frac{p'(s)}{p(s)}B(s) + \frac{1}{p(s)}B'(s)\right ).
\end{align*}
Thanks to the expression of $B'$ it can be put under the form 
\begin{align*}
B''(s) =  \frac{\tilde{B}(s)}{p(s)^2}B(s) + \breve{B}(s),
\end{align*}
where $\tilde{B}(s)$ and $\breve{B}(s)$ admit finite limits in $0$ and in $|s|\to \infty$. Furthermore $\underset{|s|\to 0}{\lim}\tilde{B}(s) $ is non-null.
Therefore, near $0$, the fastest coefficient to diverge in $B''$ is $\frac{1}{p(s)^2}B(s)$ and $B''(s) \underset{|s| \to 0}{\sim} \frac{\ln(|s|)}{s}$. 
Moreover $B''(s) \underset{|s| \to \infty}{=} \mathcal{O}(1)$. 
Similarly if we differentiate this expression one more time, $B^{(3)}$ can be put under a similar form 
\begin{equation}
B^{(3)}(s) =  \frac{\tilde{\mathscr{B}}(s)}{p(s)^3}B(s) + \breve{\mathscr{B}}(s),
\end{equation}
where $\underset{|s|\to 0}{\lim}\tilde{\mathscr{B}}(s) $ is finite and non-null. Thus the strongest coefficient in $B^{(3)}$ in $0$ is $\frac{1}{p(s)^3}B(s)$ and $B^{(3)}(s) \underset{|s| \to 0}{\sim} \frac{\ln(|s|)}{s^2}$. Also we have that $B^{(3)}(s) \underset{|s| \to \infty}{=} \mathcal{O}(1)$. 
\end{proof}
The decrease of $B''$ and $B^{(3)}$ at infinity can be better depending on the regime (i-e in function of the relation between $\kappa$ and $\nu$), as for $B'$, but it is sufficient to have this behavior.
This lemma gives the behavior of $\mathcal{P}$ and its derivatives by the following corollary.
 
\begin{cor}\label{cor:derivP}
$\mathcal{P}$ and its derivatives satisfy the following behaviors
\begin{itemize}
\item Near $0$, one has
\begin{equation}
\forall s \in \mathbb{C}_0, \, \mathcal{P}'(s) \underset{|s| \to 0}{\sim} s\ln(|s|), 
\quad
\mathcal{P}''(s) \underset{|s| \to 0}{\sim} \ln(|s|),
\quad \text{and} \quad
\mathcal{P}^{(3)}(s) \underset{|s| \to 0}{\sim} \ln(|s|)/s.
\end{equation}
\item When $|s| \to \infty$, one has
\begin{equation}
\forall s \in \mathbb{C}_0, \, \mathcal{P}'(s)\underset{|s| \to \infty}{\sim} s,
\quad
\mathcal{P}''(s)\underset{|s| \to \infty}{=} \mathcal{O}(|s|^2),
\quad \text{and} \quad
\mathcal{P}^{(3)}(s)\underset{|s| \to \infty}{=} \mathcal{O}(|s|^2).
\end{equation}
\end{itemize}
Furthermore if $\nu = 0$ and $\kappa > 0$, $\mathcal{P}'$ can be written
$$
\forall s \in \mathbb{C}_0, \ \mathcal{P}' = \frac{1}{\sqrt{1+\kappa^2 s^2}} \tilde{\mathcal{P}} + \breve{\mathcal{P}},
$$
where $\tilde{\mathcal{P}}$, $\tilde{\mathcal{P}}$ are holomorphic on $\mathbb{C}_0$ and $\tilde{\mathcal{P}}(\pm i\kappa^{-1}) \neq 0$.
\end{cor}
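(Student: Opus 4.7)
The plan is to obtain the behaviors of $\mathcal{P}$ and its first three derivatives by direct differentiation of the explicit expression
$$\mathcal{P}(s) = 2\tau_\kappa^2(0)s^2 + 2\nu s + Rs\sqrt{1+\nu s + \kappa^2 s^2}\,B(s) + 2,$$
combined with the asymptotic information on $B$, $B'$, $B''$, $B^{(3)}$ recorded in Lemma \ref{lem:DeriveesB} and in equation \eqref{eq:equivB0}. The only transcendental factor is $B(s)$; the prefactor $s\sqrt{1+\nu s+\kappa^2 s^2}$ and its derivatives are elementary and smooth on $\mathbb{C}_0$.

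For the behavior near $0$, I would expand each derivative of $s\sqrt{1+\nu s+\kappa^2 s^2}B(s)$ using the Leibniz rule, keeping in mind that $\sqrt{1+\nu s+\kappa^2 s^2}\to 1$ and its derivatives stay bounded. The equivalent $B(s)\sim s\ln|s|$ from \eqref{eq:equivB0} plus the equivalents $B'(s)\sim \ln|s|$, $B''(s)\sim \ln|s|/s$, $B^{(3)}(s)\sim \ln|s|/s^2$ of Lemma \ref{lem:DeriveesB} then give the dominant terms: one derivative produces a logarithm multiplied by $s$ (from differentiating $s^2\ln|s|$), two derivatives produce a pure $\ln|s|$, three derivatives produce $\ln|s|/s$. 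All polynomial contributions from $4\tau_\kappa^2(0)s$, the constants and the prefactor are subdominant to the logarithmic singularities.

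For the behavior at infinity, if $\kappa>0$ then $\sqrt{1+\nu s+\kappa^2 s^2}\sim \kappa s$ and $B(s)\to 1$, so $\mathcal{P}(s)\sim (2\tau_\kappa^2(0)+R\kappa)s^2$ and $\mathcal{P}'(s)\sim 2(2\tau_\kappa^2(0)+R\kappa)s$, which is linear. For $\mathcal{P}''$ and $\mathcal{P}^{(3)}$ the worst contributions come from the Leibniz terms $s\sqrt{1+\nu s+\kappa^2 s^2}B^{(k)}(s)$, which are bounded by $|s|^2$ since Lemma \ref{lem:DeriveesB} gives $B''(s), B^{(3)}(s) = \mathcal{O}(1)$. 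The cases $\kappa=0$ (with $\nu\geq 0$) reduce to smaller prefactors and give an even sharper bound, so the stated $\mathcal{O}(|s|^2)$ holds uniformly.

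For the last assertion with $\nu=0$, $\kappa>0$, I would perform an explicit algebraic rearrangement. Using the formula for $B'$ from Lemma \ref{lem:DeriveesB} with $\nu=0$, one finds
$$\mathcal{P}'(s) = 4\tau_\kappa^2(0)s + \frac{R(1+2\kappa^2 s^2)}{\sqrt{1+\kappa^2 s^2}}B(s) + \frac{R\bigl(sB(s)^2 - s + B(s)\bigr)}{1+\kappa^2 s^2},$$
so one reads off $\tilde{\mathcal{P}}(s) = R(1+2\kappa^2 s^2)B(s)$ and $\breve{\mathcal{P}}(s) = 4\tau_\kappa^2(0)s + \frac{R(sB^2-s+B)}{1+\kappa^2 s^2}$; both are holomorphic on $\mathbb{C}_0$ (the singularities $\pm i\kappa^{-1}$ of $1/(1+\kappa^2 s^2)$ sit on the imaginary axis, not in $\mathbb{C}_0$). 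The non-vanishing condition follows from the continuous extension of $B$ established in Proposition \ref{prop:BehavB}: $B(\pm i\kappa^{-1}) = 1$, hence $\tilde{\mathcal{P}}(\pm i\kappa^{-1}) = R(1-2)\cdot 1 = -R \neq 0$. The main (minor) obstacle is organizing the Leibniz expansions carefully enough that the leading logarithmic singularities at $0$ are identified without cancellation, and recognizing which term in the algebraic rearrangement of $\mathcal{P}'$ at $\nu=0$ carries the non-holomorphic $1/\sqrt{1+\kappa^2 s^2}$ factor versus the holomorphic $1/(1+\kappa^2 s^2)$ factor.
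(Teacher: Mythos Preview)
Your approach is essentially the paper's: compute $\mathcal{P}',\mathcal{P}'',\mathcal{P}^{(3)}$ explicitly by the Leibniz rule and read off the asymptotics from Lemma~\ref{lem:DeriveesB}. The near-$0$ and $|s|\to\infty$ parts are handled exactly as in the paper.

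For the $\nu=0$ decomposition, two remarks. First, your displayed formula for $\mathcal{P}'$ contains a small slip: substituting the expression for $B'$ into $Rs\sqrt{1+\kappa^2 s^2}B'(s)$ produces $\dfrac{Rs(B^2-1)}{1+\kappa^2 s^2}+\dfrac{RB}{\sqrt{1+\kappa^2 s^2}}$, so the stray $+B$ in your numerator $sB^2-s+B$ should instead feed back into the $1/\sqrt{1+\kappa^2 s^2}$ part. This does not affect your final check, since either way one finds $\tilde{\mathcal{P}}(\pm i\kappa^{-1})=-R\neq 0$. Second, the paper takes the shorter route of \emph{not} substituting $B'$: from the raw product rule
\[
\mathcal{P}'(s)=4\tau_\kappa^2(0)s+R\sqrt{1+\kappa^2 s^2}\,B+\frac{R\kappa^2 s^2}{\sqrt{1+\kappa^2 s^2}}B+Rs\sqrt{1+\kappa^2 s^2}\,B',
\]
it simply reads off $\tilde{\mathcal{P}}=R\kappa^2 s^2 B$ from the only visibly singular term, and puts the rest into $\breve{\mathcal{P}}$; since Lemma~\ref{lem:DeriveesB} guarantees $B'$ is bounded on $\mathbb{C}_0$ away from $0$, this $\breve{\mathcal{P}}$ is manifestly tame near $\pm i\kappa^{-1}$. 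Your substitution of $B'$ buys nothing here and introduces the $\tfrac{1}{1+\kappa^2 s^2}$ denominator whose behavior at the branch points is less transparent (though, as you note, still holomorphic on $\mathbb{C}_0$).
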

\begin{proof}
We deduce this corollary from the expression of the derivatives of $\mathcal{P}$. Indeed one has
\begin{align*}
\forall s \in \mathbb{C}_0, \, \mathcal{P}'(s) = & 4 s \tau_\kappa^2(0) + 2\nu + R \sqrt{1+\nu s + \kappa^2 s^2} B(s) + R \frac{\frac{\nu}{2}s + \kappa^2 s^2}{\sqrt{1+\nu s + \kappa^2 s^2}}B(s) \\& + Rs \sqrt{1+\nu s + \kappa^2 s^2}B'(s),
\end{align*}
and
\begin{align*}
\forall s \in \mathbb{C}_0, \, \mathcal{P}''(s) = & 4 \tau_\kappa^2(0) + 2R \frac{\frac{\nu}{2} + \kappa^2 s}{\sqrt{1+\nu s + \kappa^2 s^2}}\left (B(s) + sB'(s)\right ) + 2R \sqrt{1+\nu s + \kappa^2 s^2}B'(s) \\&
+ Rs \sqrt{1+\nu s + \kappa^2 s^2}B''(s) + Rs\frac{\kappa^2 - \frac{\nu^2}{4}}{(1+\nu s +\kappa^2 s^2)^{3/2}}B(s).
\end{align*}
Therefore if $\nu = 0$, $\mathcal{P}'$ can be written under the form
$$
\mathcal{P}' = \frac{1}{\sqrt{1+\kappa^2 s^2}} \tilde{\mathcal{P}} + \breve{\mathcal{P}},
$$
with $\tilde{\mathcal{P}} = R (\nu s + \kappa^2 s^2)B(s)$, and  $\tilde{\mathcal{P}}(\pm i\kappa^{-1}) \neq 0$ thanks to Lemma \ref{prop:BehavB}.
Also by Lemma \ref{lem:DeriveesB} one has
\begin{equation}
\mathcal{P}'(s) \underset{|s| \to 0}{\sim} s\ln(|s|)
\quad \text{and} \quad 
\mathcal{P}''(s) \underset{|s| \to 0}{\sim} \ln(|s|).
\end{equation}
Similarly if we differentiate $\mathcal{P}$ one more time we obtain
\begin{align*}
\forall s \in \mathbb{C}_0, \, \mathcal{P}^{(3)}(s) = & R\sqrt{1+\nu s + \kappa^2 s^2}\left (3B''(s) + sB^{(3)}\right ) + 3 R \frac{\nu/2 + \kappa^2 s}{\sqrt{1+\nu s + \kappa^2 s^2}}\left (2B'(s) + sB''(s)\right ) \\
& + 3R \frac{\kappa^2 - \frac{\nu^2}{4}}{(1+\nu s +\kappa^2 s^2)^{3/2}}\left (B(s) + sB'(s) \right ) - 3 R\frac{(\kappa^2 - \frac{\nu^2}{4})(\nu/2 + \kappa^2 s)}{(1+\nu s + \kappa^2s^2)^{5/2}}B(s).
\end{align*}
Thus the fastest diverging coefficient near $0$ in $\mathcal{P}^{(3)}$ is $Rs \sqrt{1+\nu s + \kappa^2 s^2}B^{(3)}(s)$ and $\mathcal{P}^{(3)}(s) \underset{|s| \to 0}{\sim} \ln(|s|)/s$.
Lemma \ref{lem:DeriveesB} also gives behaviors when $|s|\to \infty$. 
\end{proof}
Now we can give characterizations of the derivatives of the transfer functions $\widehat{H}, \, \widehat{I}$ and $\widehat{J}$.
 
\subsubsection{Characterizations of the derivatives of $\widehat{H}$, $\widehat{I}$ and $\widehat{J}$}
 
\begin{lemma}[Characterization of the derivative of $\widehat{H}$]\label{prop:derivH}
For all $s \in \mathbb{C}_0$ the derivative of the transfer function can be written
\begin{equation}
\frac{d \widehat{H}}{ds} = \frac{1}{\sqrt{1+\nu s + \kappa^2s^2}} \tilde{H}(s) + \breve H(s),
\end{equation}
where $\tilde{H}$ is given by
\begin{equation}
\forall s \in \mathbb{C}_0, \, \tilde{H}(s) = \frac{R(\nu + 2\kappa^2s)B(s)}{\mathcal{P}(s)^2}.
\end{equation} 
Moreover $\tilde{H}$ is holomorphic, uniformly bounded on $\mathbb{C}_0$ and $\tilde{H}( \pm i \kappa) \neq 0$. $\breve{H}$ is bounded on $\mathbb{C}_0 \setminus B(0, \rho \min(1, \kappa^{-1}))$, for $0 < \rho \ll 1$ and satisfies on the singularity at 0 
$$
\forall s \in \mathbb{C}_0, \, \breve{H}(s) \underset{|s| \to 0}{\sim} \ln(|s|).
$$
$\tilde{H}$ and $\breve{H}$ satisfy the asymptotic behavior
$$
\forall s \in \mathbb{C}_0, \, \tilde{H}(s) \underset{|s| \to \infty}{\sim} \frac{1}{|s|^3}
\quad
\text{and}
\quad
\breve H(s) \underset{|s| \to \infty}{\sim} \frac{1}{|s|^2}.
$$
\end{lemma}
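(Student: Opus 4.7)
The plan is to differentiate $\widehat H = N/\mathcal P$, where $N(s) := 2\tau_\kappa^2(0)s + 2\nu + R\sqrt{1+\nu s + \kappa^2 s^2}\,B(s)$, and to exploit the algebraic identity $\mathcal P(s) = sN(s) + 2$ which, upon differentiation, gives $\mathcal P'(s) = N(s) + sN'(s)$. Substituting this into the quotient rule and cancelling collapses the expression to
\begin{equation*}
\widehat H'(s) = \frac{N'(s)\mathcal P(s) - N(s)\mathcal P'(s)}{\mathcal P(s)^2} = \frac{2N'(s) - N(s)^2}{\mathcal P(s)^2}.
\end{equation*}
This is the key simplification: it replaces an expression involving $\mathcal P'$ (which through $B'$ already carries the branch-cut singularity and a logarithmic singularity at $0$) by one where the irrational factor $1/\sqrt{1+\nu s+\kappa^2 s^2}$ appears in exactly one place.

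Next I would compute
\begin{equation*}
N'(s) = 2\tau_\kappa^2(0) + R\frac{\nu/2 + \kappa^2 s}{\sqrt{1+\nu s+\kappa^2 s^2}}\,B(s) + R\sqrt{1+\nu s+\kappa^2 s^2}\,B'(s),
\end{equation*}
and observe that only the middle term carries $(1+\nu s+\kappa^2 s^2)^{-1/2}$. This forces the decomposition $\widehat H' = \tilde H/\sqrt{1+\nu s+\kappa^2 s^2} + \breve H$ with
\begin{equation*}
\tilde H(s) = \frac{R(\nu + 2\kappa^2 s)\,B(s)}{\mathcal P(s)^2}, \qquad \breve H(s) = \frac{4\tau_\kappa^2(0) + 2R\sqrt{1+\nu s+\kappa^2 s^2}\,B'(s) - N(s)^2}{\mathcal P(s)^2},
\end{equation*}
matching the formula announced in the lemma.

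The qualitative statements then follow from combining Proposition \ref{prop:BehavB}, Lemma \ref{lem:DeriveesB} and Corollary \ref{cor:derivP} with the lower bound $|\mathcal P|\geq \mathcal P_{\min}$ of Assumption \eqref{conjecture-denom}. Holomorphy of $\tilde H$ on $\mathbb{C}_0$ is immediate, since $B$ is holomorphic there and $\mathcal P$ never vanishes. For uniform boundedness, $|B|\leq 1$ and the numerator grows at most linearly, while by Corollary \ref{cor:derivP} one has $|\mathcal P(s)|\gtrsim |s|^2$ at infinity (the dominant term being $2\tau_\kappa^2(0)s^2$, possibly augmented by $R\kappa s^2$ when $\kappa>0$); together with $\mathcal P_{\min}$ this gives $\tilde H = \mathcal O(|s|^{-3})$ at infinity and boundedness near $0$ (since $B(s)\sim s\ln|s|\to 0$). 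Nonvanishing of $\tilde H$ at the branch points follows from $B(\pm i\kappa^{-1})=1$, using the continuous extension of $B$ to the imaginary axis from Proposition \ref{prop:BehavB}, together with $\nu + 2\kappa^2(\pm i\kappa^{-1})=\nu\pm 2i\kappa\neq 0$. For $\breve H$, Lemma \ref{lem:DeriveesB} gives $B'(s)\sim \ln|s|$ at $0$, while $N(s)\to 2\nu$ and $\mathcal P(s)\to 2$, so the logarithmic divergence of the single term $R\sqrt{1+\nu s + \kappa^2 s^2}\,B'(s)$ governs the behavior and yields $\breve H(s)\sim \ln|s|$; at infinity $N(s)^2 = \mathcal O(|s|^2)$ dominates the numerator (since by Lemma \ref{lem:DeriveesB} the product $\sqrt{1+\nu s+\kappa^2 s^2}\,B'(s)$ is at worst of order $|s|^{-1}$ in every regime), giving $\breve H = \mathcal O(|s|^{-2})$.

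There is no real obstacle beyond the algebraic rearrangement; the main conceptual point is the cancellation $N'\mathcal P - N\mathcal P' = 2N' - N^2$, which isolates the branch-cut singularity of $\widehat H'$ into a single rational factor $(1+\nu s+\kappa^2 s^2)^{-1/2}$. This clean splitting is precisely what will later permit, in the proofs of Theorems \ref{thm:pt5.1.1} and \ref{thm:pt5.1.2}, separate Bergman-space estimates for the piece $\tilde H/\sqrt{1+\nu s+\kappa^2 s^2}$, which inherits the branch behavior and dictates the sharp decay rate, and the essentially regular piece $\breve H$.
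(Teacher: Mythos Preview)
Your proof is correct and follows essentially the same route as the paper: differentiate $\widehat H$, isolate the unique term carrying the factor $(1+\nu s+\kappa^2 s^2)^{-1/2}$, and read off the qualitative behavior from Proposition~\ref{prop:BehavB}, Lemma~\ref{lem:DeriveesB}, and Assumption~\eqref{conjecture-denom}. Your use of the structural identity $\mathcal P = sN + 2$ to collapse the quotient rule to $\widehat H' = (2N' - N^2)/\mathcal P^2$ is a cleaner route than the paper's brute-force expansion of $\widehat H'$, and makes the origin of each piece of $\breve H$ more transparent, but the decomposition and the analysis are identical.
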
 
\begin{proof}
Let $s \in \mathbb{C}_0$.
After calculus, one gets
\begin{align*}
\frac{d\widehat{H}}{ds}(s) = \frac{1}{\mathcal{P}(s)^2} &\left[ 4\tau_\kappa^2(0)\left ( 1 + (\nu +\tau_\kappa^2(0))s^2 - 3 \nu s\right ) - 4\nu^2 \right .\\
&\left . + 2R\sqrt{1+\nu s + \kappa^2 s^2}B'(s) - 4(\tau_\kappa^2(0) s + \nu)R\sqrt{1+\nu s + \kappa^2 s^2}B(s) \right.\\
&\left . - R^2(1+\nu s + \kappa^2 s^2)B(s)^2 + R \frac{\nu + 2\kappa^2 s}{\sqrt{1+\nu s +\kappa^2 s^2}} B(s) \right].
\end{align*}
Therefore, we set
$$
\tilde{H}(s) = R\frac{\nu + 2 \kappa^2 s }{\mathcal{P}(s)^2}B(s),
$$
and
\begin{align*}
\breve{H}(s) = \frac{1}{\mathcal{P}(s)^2} &\left[ 4\tau_\kappa^2(0)\left ( 1 + (\nu +\tau_\kappa^2(0))s^2 - 3 \nu s\right ) - 4\nu^2 + 2R\sqrt{1+\nu s + \kappa^2 s^2}B'(s)\right .\\
&\left . - 4(\tau_\kappa^2(0) s + \nu)R\sqrt{1+\nu s + \kappa^2 s^2}B(s) - R^2(1+\nu s + \kappa^2 s^2)B(s)^2 \right].
\end{align*}
Thanks to Lemma \ref{lem:DeriveesB} and Assumption \eqref{conjecture-denom}, $\tilde{H}$ is holomorphic on $\mathbb{C}_0$ and uniformaly bounded. Because of the presence of $B'$ in the expression of $\breve{H}$, $\breve{H}$ is bounded on $\mathbb{C}_0 \setminus B(0, \rho \min(1, \kappa^{-1}))$, for $0 < \rho \ll 1$, and satisfy the expected behavior. Also at $|s|\to \infty$ thanks to Lemma \ref{lem:DeriveesB} and Corollary \ref{cor:derivP} one has
$$
\tilde{H}(s) \underset{|s| \to \infty}{\sim} \frac{1}{|s|^3}
\quad
\text{and}
\quad
\breve H(s) \underset{|s| \to \infty}{\sim} \frac{1}{|s|^2} .
$$
\end{proof}
The same kind of characterizations stand for $\widehat{I}$ and $\widehat{J}$ and their derivatives.
\begin{lemma}[Characterization of the derivatives of $\widehat{I}$]\label{prop:derivI}
Let $\widehat{I}$ defined in \eqref{def-widehatIJ}.
For all $s \in \mathbb{C}_0$ the derivatives of the transfer function $\widehat{I}$ can be written
\begin{equation}
\frac{d \widehat{I}}{ds} = \frac{1}{\sqrt{1+\nu s + \kappa^2s^2}} \tilde{I}(s) + \breve I(s),
\end{equation}
and
\begin{equation}
\frac{d^2 \widehat{I}}{ds^2} = \frac{1}{(1+\nu s + \kappa^2s^2)^\frac{3}{2}} \tilde{\mathfrak I}(s) + \breve{ \mathfrak I}(s),
\end{equation}
where the different functions satisfy
\begin{enumerate}
\item $\breve{I}$ and $\breve{\mathfrak{I}} $ are holomorhic on $\mathbb{C}_0 \setminus \mathcal{B}(0, \rho\min(1, \kappa^{-1}))$ for $0 < \rho \ll 1$, and satisfy near $0$:
$$
\forall s \in \mathbb{C}_0, \, \breve{I}(s) \underset{|s| \to 0}{\sim} s\ln(|s|)
\quad \text{and} \quad
\breve{\mathfrak I}(s) \underset{|s| \to 0}{\sim} \ln(|s|).
$$
\item $\tilde{I}$, $\tilde{\mathfrak I}$, $\breve{I}$ are holomorphic and uniformly bounded on $\mathbb{C}_0$ and satisfy the asymptotic expansions
$$
\forall s \in \mathbb{C}_0, \, \tilde{I}(s) \underset{|s| \to 0}{\sim} s 
\quad \text{and} \quad
\tilde{\mathfrak I}(s) \underset{|s| \to 0}{=} \mathcal{O}(1).
$$
\item If $\kappa > 0$, $\tilde{I}(\pm i\kappa^{-1}) \neq 0$.
\end{enumerate}
Furthermore $\frac{d \widehat{I}}{ds}$ and $\frac{d \widehat{I}}{ds}$ satisfy the asymptotic expansions 
$$
\forall s \in \mathbb{C}_0, \, \frac{d \widehat{I}}{ds} \underset{|s| \to \infty}{\sim} |s|^{-3}
\quad \text{and} \quad
\frac{d \widehat{I}}{ds} \underset{|s| \to \infty}{\sim} |s|^{-2},
$$
\end{lemma}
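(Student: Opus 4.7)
The plan is to reduce everything to the decomposition of $\mathcal{P}'$ and $\mathcal{P}''$ obtained in Corollary \ref{cor:derivP} and its proof. Since $\widehat{I}(s) = 1/\mathcal{P}(s)$ by \eqref{def-widehatIJ}, the quotient rule gives
\begin{equation*}
\frac{d\widehat{I}}{ds} = -\frac{\mathcal{P}'(s)}{\mathcal{P}(s)^2}, \qquad \frac{d^{2}\widehat{I}}{ds^{2}} = -\frac{\mathcal{P}''(s)}{\mathcal{P}(s)^2} + 2\,\frac{(\mathcal{P}'(s))^{2}}{\mathcal{P}(s)^{3}}.
\end{equation*}
Assumption \eqref{conjecture-denom} tells us that $1/\mathcal{P}$ is bounded on $\overline{\mathbb{C}_{0}}$, so the qualitative behavior of these derivatives is driven entirely by that of $\mathcal{P}'$, $\mathcal{P}''$ through Lemma \ref{lem:DeriveesB} and Corollary \ref{cor:derivP}.

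For the first derivative, I would use the explicit expansion of $\mathcal{P}'$ written in the proof of Corollary \ref{cor:derivP}: exactly one summand carries the factor $1/\sqrt{W(s)}$ with $W(s)=1+\nu s+\kappa^{2}s^{2}$, namely $R(\tfrac{\nu}{2}s+\kappa^{2}s^{2})B(s)/\sqrt{W(s)}$. Collecting this term leads naturally to the choice
\begin{equation*}
\tilde{I}(s) = -\frac{R\bigl(\tfrac{\nu}{2}s+\kappa^{2}s^{2}\bigr)B(s)}{\mathcal{P}(s)^{2}}, \qquad \breve{I}(s) = -\frac{\mathcal{P}'(s) - \tilde{\mathcal{P}}(s)/\sqrt{W(s)}}{\mathcal{P}(s)^{2}},
\end{equation*}
with $\tilde{\mathcal{P}}$ as in Corollary \ref{cor:derivP}. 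Holomorphy and boundedness of $\tilde{I}$ on $\mathbb{C}_{0}$ follow from Assumption \eqref{conjecture-denom}, Proposition \ref{prop:BehavB} (which gives $|B|\leq 1$ and the continuous extension to $i\mathbb{R}$) and the holomorphy of the polynomial factor. The non-vanishing $\tilde{I}(\pm i\kappa^{-1})\neq 0$ when $\kappa>0$ comes from the identity $\tfrac{\nu}{2}(\pm i\kappa^{-1})+\kappa^{2}(-\kappa^{-2})=\pm \tfrac{i\nu}{2\kappa}-1\neq 0$ together with $B(\pm i\kappa^{-1})=1$ (Proposition \ref{prop:BehavB}) when $\nu=0$, and is immediate when $\nu>0$. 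The behavior of $\breve{I}$ near $0$ is inherited from the $\ln|s|$ singularity of $B'$ through $Rs\sqrt{W(s)}B'(s)$ (the only logarithmically singular piece of $\mathcal{P}'$), while the asymptotic decay of $d\widehat{I}/ds$ at infinity comes directly from $\mathcal{P}'(s)\sim s$ and $|\mathcal{P}(s)|\sim |s|^{2}$ given in Corollary \ref{cor:derivP}.

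For the second derivative the key observation is that $(\mathcal{P}')^{2}/\mathcal{P}^{3}$ produces terms of order $1/W$ and $1/\sqrt{W}$ but never of order $1/W^{3/2}$, so the entire $1/W^{3/2}$ singular part must come from the last summand of $\mathcal{P}''$ in the proof of Corollary \ref{cor:derivP}, namely $Rs(\kappa^{2}-\nu^{2}/4)B(s)/W(s)^{3/2}$. This fixes
\begin{equation*}
\tilde{\mathfrak{I}}(s) = -\frac{Rs\bigl(\kappa^{2}-\tfrac{\nu^{2}}{4}\bigr)B(s)}{\mathcal{P}(s)^{2}},
\end{equation*}
and $\breve{\mathfrak{I}}(s)$ is obtained by collecting every remaining contribution from $-\mathcal{P}''/\mathcal{P}^{2}$ and $2(\mathcal{P}')^{2}/\mathcal{P}^{3}$. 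The $B(s)\sim -s\ln|s|$ expansion from \eqref{eq:equivB0} yields $\tilde{\mathfrak{I}}(s)=\mathcal{O}(1)$ near $0$; holomorphy of $\tilde{\mathfrak{I}}$ on $\mathbb{C}_{0}$ follows again from Assumption \eqref{conjecture-denom}; and the fact that $\breve{\mathfrak{I}}$ is only holomorphic on $\mathbb{C}_{0}\setminus B(0,\rho\min(1,\kappa^{-1}))$ reflects the logarithmic blow-up $B''(s)\sim\ln|s|/s$ and $B^{(3)}(s)\sim \ln|s|/s^{2}$ from Lemma \ref{lem:DeriveesB}, which propagate to $\breve{\mathfrak{I}}$ through $\mathcal{P}''$ and $\mathcal{P}^{(3)}$. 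The asymptotic estimates $d\widehat{I}/ds=\mathcal{O}(|s|^{-3})$ and $d^{2}\widehat{I}/ds^{2}=\mathcal{O}(|s|^{-2})$ at infinity are then read off from the large-$|s|$ behaviors of $\mathcal{P}',\mathcal{P}''$ in Corollary \ref{cor:derivP}.

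The expected main obstacle is bookkeeping: one must verify that no $1/W^{3/2}$ contribution is hidden in the expanded square $(\mathcal{P}')^{2}$ after dividing by $\mathcal{P}^{3}$, and that every logarithmic factor in $B$ and its derivatives is correctly matched to the stated singular behavior of $\breve{I}$ and $\breve{\mathfrak{I}}$ at the origin. Once this is done, all announced properties follow by combining Lemmata \ref{lem:BehavK}--\ref{lem:DeriveesB}, Proposition \ref{prop:BehavB} and Corollary \ref{cor:derivP} in the same spirit as in the proof of Lemma \ref{prop:derivH}.
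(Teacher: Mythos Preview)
Your proposal is correct and follows essentially the same approach as the paper: write $d\widehat{I}/ds$ and $d^{2}\widehat{I}/ds^{2}$ in terms of $\mathcal{P}'/\mathcal{P}^{2}$ and $\mathcal{P}''/\mathcal{P}^{2}-2(\mathcal{P}')^{2}/\mathcal{P}^{3}$, then invoke Corollary~\ref{cor:derivP}. The paper's own proof is in fact a one-liner that only displays these two quotient-rule formulas and then says the desired decompositions and asymptotics follow from Corollary~\ref{cor:derivP}; you have simply carried out explicitly what the paper leaves to the reader, including the concrete choice of $\tilde{I}$ and $\tilde{\mathfrak{I}}$ and the verification that the $1/W^{3/2}$ contribution comes solely from the last summand of $\mathcal{P}''$.
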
 
\begin{proof}
Let $s \in \mathbb{C}_0$.
Since $\widehat{I}$ is defined as $\widehat{I} = -\frac{1}{\mathcal{P}}$, its derivatives reads
$$
\forall s \in \mathbb{C}_0, \, \frac{d\widehat{I}}{ds} = \frac{\mathcal{P}'(s)}{\mathcal{P}(s)^2}
\quad \text{and} \quad 
\frac{d^2\widehat{I}}{ds^2} = \frac{\mathcal{P}''(s)}{\mathcal{P}(s)^2} - 2\frac{\mathcal{P}'(s)^2}{\mathcal{P}(s)^3}.
$$
Thanks to Corollary \ref{cor:derivP}, one obtains the form that we want as much as the behaviors in $0$ and when $|s| \to \infty$.
\end{proof}
\begin{lemma}[Characterization of the derivatives of $\widehat{J}$]\label{prop:derivJ}
Let $\widehat{J}$ defined in \eqref{def-widehatIJ}.
For all $s \in \mathbb{C}_0$ one can write the first derivative of $\widehat{J}$ as
\begin{equation}
\frac{d \widehat{J}}{ds} = \frac{1}{\sqrt{1+\nu s + \kappa^2s^2}} \tilde{J}(s) + \breve J(s),	
\end{equation}
where 
\begin{enumerate}
\item $\tilde{J}$ and $\breve{J}$ are bounded and holomorphic on $\mathbb{C}_0$,
\item for all $s \in \mathbb{C}_0$, $\tilde{J}(s) \underset{|s| \to \infty}{\sim} |s|^{-2}$, $\breve{J}(s) \underset{|s| \to \infty}{\sim} |s|^{-2}$,
\item if $\nu = 0 = \kappa$, $\tilde{J} = 0$, if $\kappa > 0$, $\tilde{J}(\pm i\kappa^{-1}) \neq 0$.
\end{enumerate}
Furthermore in the case $\nu > 0$ or $\kappa = 0$, $\frac{d^2\widehat{J}}{ds^2}$ is uniformly bounded and holomorphic on $\mathbb{C}_0$, and the third derivative of $\widehat{J}$ satisfies the asymptotic behaviors
\begin{equation}
\forall s \in \mathbb{C}_0, \, \frac{d^3\widehat{J}}{ds^3}(s) \underset{|s| \to 0}{\sim} \ln(|s|), \quad \frac{d^3\widehat{J}}{ds^3}(s) \underset{|s| \to \infty}{=} \mathcal{O}(|s|^2).
\end{equation}
\end{lemma}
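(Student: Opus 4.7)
The plan is to adapt the strategy used for $\widehat{I}$ in Lemma \ref{prop:derivI}: differentiate $\widehat{J}(s)=s/\mathcal{P}(s)$ explicitly, substitute the decomposition of $\mathcal{P}'$ produced in the proof of Corollary \ref{cor:derivP}, and read off $\tilde{J}$ and $\breve{J}$. Writing $\mathcal{P}'(s) = \tilde{\mathcal{P}}(s)/\sqrt{1+\nu s+\kappa^2 s^2} + \breve{\mathcal{P}}(s)$ with $\breve{\mathcal{P}}(s) := 4\tau_\kappa^2(0)s + 2\nu$ and $\tilde{\mathcal{P}}(s) := R(1+\nu s+\kappa^2 s^2)(B(s)+sB'(s)) + R(\tfrac{\nu}{2}s+\kappa^2 s^2)B(s)$, the identity $\tfrac{d\widehat{J}}{ds} = (\mathcal{P}-s\mathcal{P}')/\mathcal{P}^2$ immediately yields the desired splitting with $\tilde{J}(s) := -s\tilde{\mathcal{P}}(s)/\mathcal{P}(s)^2$ and $\breve{J}(s) := (\mathcal{P}(s)-s\breve{\mathcal{P}}(s))/\mathcal{P}(s)^2$.

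For property (1), Assumption \eqref{conjecture-denom} bounds $1/|\mathcal{P}|$ uniformly by $\mathcal{P}_{\min}^{-1}$ on $\overline{\mathbb{C}_0}$, and Proposition \ref{prop:BehavB} combined with Lemma \ref{lem:DeriveesB} delivers the regularity of $B$ and $B'$: both are holomorphic on $\mathbb{C}_0$, $B$ is uniformly bounded, and the only singularity is $B'(s)\sim \ln|s|$ near $0$. Using $B(s)\sim s\ln|s|$ from \eqref{eq:equivB0} and $sB'(s)\sim s\ln|s|$, one gets $\tilde{\mathcal{P}}(s) = O(s\ln|s|)$ and consequently $\tilde{J}(s) = O(s^2\ln|s|)\to 0$; likewise $\mathcal{P}(s)-s\breve{\mathcal{P}}(s) \to 2$, giving that $\breve{J}$ extends holomorphically and is bounded. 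For property (2), combining $\mathcal{P}(s)\sim 2\tau_\kappa^2(0)s^2$ with the polynomial bounds for $\tilde{\mathcal{P}}$ and $\breve{\mathcal{P}}$ from Corollary \ref{cor:derivP} delivers the claimed $|s|^{-2}$ decay of each piece. For (3), when $\kappa=\nu=0$ the square root equals $1$ identically, so the first piece can be absorbed into $\breve{J}$ giving $\tilde{J}\equiv 0$; when $\kappa>0$ and $\nu=0$, the factor $1+\kappa^2 s^2$ vanishes at $s=\pm i\kappa^{-1}$, killing the first bracket of $\tilde{\mathcal{P}}$ and leaving $R\kappa^2 s^2 B(\pm i\kappa^{-1}) = -R B(\pm i\kappa^{-1}) = -R \neq 0$ by Proposition \ref{prop:BehavB}, hence $\tilde{J}(\pm i\kappa^{-1})\neq 0$; the general $\nu>0$ case is a direct substitution.

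For the higher derivatives when $\nu>0$ or $\kappa=0$, I iterate and invoke the asymptotics of $\mathcal{P}^{(2)}$ and $\mathcal{P}^{(3)}$ from Corollary \ref{cor:derivP}. Near $s=0$, $s\mathcal{P}''(s)\sim s\ln|s|$ and $(\mathcal{P}'(s))^2\sim s^2(\ln|s|)^2$, both bounded, so $d^2\widehat{J}/ds^2$ remains bounded; for $d^3\widehat{J}/ds^3$ the worst contribution is $s\mathcal{P}^{(3)}(s)/\mathcal{P}(s)^2\sim \ln|s|$, producing the announced logarithmic behavior. Near infinity, the polynomial bounds $\mathcal{P}^{(k)}(s) = O(|s|^2)$ together with $|\mathcal{P}(s)|\geq \mathcal{P}_{\min}$ yield the uniform boundedness of $d^2\widehat{J}/ds^2$ and the $O(|s|^2)$ control on $d^3\widehat{J}/ds^3$.

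The main delicacy is the bookkeeping of the logarithmic singularities of $B'$, $B''$, $B^{(3)}$ at the origin: each time a derivative is taken, one must verify that these log factors come multiplied by a sufficient power of $s$ to preserve boundedness of $\tilde{J}$, $\breve{J}$ and $d^2\widehat{J}/ds^2$ on all of $\mathbb{C}_0$, while only the third derivative inherits the residual $\ln|s|$ growth. The non-vanishing of $\tilde{J}$ at $\pm i\kappa^{-1}$ relies crucially on the identity $B(\pm i\kappa^{-1})=1$ established in Proposition \ref{prop:BehavB}.
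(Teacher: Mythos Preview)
Your approach is essentially the same as the paper's: compute $\tfrac{d\widehat{J}}{ds}=(\mathcal{P}-s\mathcal{P}')/\mathcal{P}^2$, split off the singular factor $1/\sqrt{1+\nu s+\kappa^2 s^2}$ coming from $\mathcal{P}'$ via Corollary~\ref{cor:derivP}, and read off the asymptotics of each piece from that corollary and Lemma~\ref{lem:DeriveesB}; for the second and third derivatives you write out the quotient-rule expressions and check the leading terms near $0$ and at infinity. The paper's own proof is in fact terser than yours and carries out exactly this computation.

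One small correction: in your last paragraph you justify boundedness of $d^2\widehat{J}/ds^2$ at infinity using only $|\mathcal{P}(s)|\geq \mathcal{P}_{\min}$. That lower bound alone is not enough, since terms like $s\mathcal{P}''(s)/\mathcal{P}(s)^2$ would then be $O(|s|^3)$. You need the growth $|\mathcal{P}(s)|\sim 2\tau_\kappa^2(0)|s|^2$ at infinity (immediate from the definition of $\mathcal{P}$), which then gives the required decay. With that fix the argument is complete.
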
 
 
\begin{proof}
Thanks to the definition of $\widehat{J}$ one has in $\mathbb{C}_0$
$$
\frac{d\widehat{J}}{ds}(s)= - \frac{1}{\mathcal{P}(s)} + \frac{s\mathcal{P}'(s)}{\mathcal{P}(s)^2}.
$$
One more time thanks to Corollary \ref{cor:derivP} we deduce the point $(1)$. One has
$$
\frac{d^2\widehat{J}}{ds^2}(s) = 2\frac{\mathcal{P}'(s)}{\mathcal{P}(s)^2}+ \frac{s\mathcal{P}''(s)}{\mathcal{P}(s)^2}-2\frac{s\mathcal{P}'(s)^2}{\mathcal{P}(s)^3},
$$
and $\frac{d^2\widehat{J}}{ds^2}$ is holomorphic and bounded thanks to Corollary \ref{cor:derivP}. Finally 
$$	
\frac{d^3\widehat{J}}{ds^3}(s) = 3\frac{\mathcal{P}''(s)}{\mathcal{P}(s)^2}- 6 \frac{\mathcal{P}'(s)^2}{\mathcal{P}(s)^3}+ \frac{s\mathcal{P}'''(s)}{\mathcal{P}(s)^2}-6\frac{s\mathcal{P}''(s)\mathcal{P}'(s)}{\mathcal{P}(s)^3}+6\frac{s\mathcal{P}'(s)^3}{\mathcal{P}(s)^4},
$$
and with Corollary \ref{cor:derivP} we obtain the behavior of $\frac{d^3\widehat{J}}{ds^3}$ when $|s|\to 0$ and $|s|\to \infty$.
\end{proof}
One more time, the different decreases when $|s|\to\infty$ can be more precise depending on $\nu$ and $\kappa$. 
We can now separate two cases. In the nonviscous dispersive case, i-e $ \nu = 0$ and $\kappa >0$, the square root admits two singularities in $\pm i \kappa^{-1}$ and the derivatives cannot lie in any Bergman space. 
However when we add viscosity, or if there is no dispersion, then the square root is welldefined on $\mathbb{C}_0$. 
In this case the derivatives can lie in some Bergman spaces, which translates in the temporal domain by Theorems \ref{thm:pt5.1.1} and \ref{thm:pt5.1.2}.
 
\begin{proposition}\label{prop:dBergmann_nv_d}
For all $\nu = 0$ and $\kappa > 0$, one has that $\frac{d\widehat{H}}{ds}, \, \frac{d \widehat{I}}{ds}, \, \frac{d \widehat{J}}{ds} \, \notin \, \mathcal{B}_{\alpha}$ for $\alpha \in [-1, 1)$.
\end{proposition}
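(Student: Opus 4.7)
The plan is a proof by contradiction. Suppose, for some $\widehat F \in \{\widehat H, \widehat I, \widehat J\}$ and some $\alpha\in [-1,1)$, that $\frac{d\widehat F}{ds}\in\mathcal{B}_\alpha$. By Lemmas \ref{prop:derivH}, \ref{prop:derivI}, \ref{prop:derivJ}, write
$$\frac{d\widehat F}{ds} = \frac{\tilde F(s)}{\sqrt{1+\kappa^2 s^2}} + \breve F(s),$$
where $\tilde F$ is holomorphic and uniformly bounded on $\mathbb{C}_0$ with $\tilde F(\pm i\kappa^{-1})\neq 0$, and $\breve F$ is bounded away from $s=0$, has at most a logarithmic singularity at the origin, and decays at least like $|s|^{-2}$ at infinity. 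The argument then splits into controlling each term.

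First, I would verify that $\breve F\in\mathcal{B}_\alpha$ for every $\alpha\in[-1,1)$. Near $s=0$, the integrand $|\breve F|^2\eta^\alpha$ is dominated by $(\log|s|)^2\eta^\alpha$, which is locally integrable in $\mathbb{C}_0$ for any $\alpha>-1$. Near infinity, $|\breve F|^2 \eta^\alpha \sim \eta^\alpha|s|^{-4}$ gives a polar integrand $r^{\alpha-3}\cos^\alpha\theta$ that converges for $\alpha<2$. The Hardy case $\alpha=-1$ is handled similarly by estimating $\sup_{\eta>0}\int_\mathbb{R}|\breve F(\eta+i\omega)|^2 d\omega$. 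By linearity of the Bergman norm, the branching part $\tilde F/\sqrt{1+\kappa^2 s^2}$ must then also lie in $\mathcal{B}_\alpha$.

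Second and most delicate, I would show this branching part cannot lie in $\mathcal{B}_\alpha$. Factoring $1+\kappa^2 s^2 = \kappa^2(s-s_0)(s+s_0)$ with $s_0 = i\kappa^{-1}$, and using that $\tilde F(s_0)\neq 0$, one obtains the local behavior
$$\left|\frac{\tilde F(s)}{\sqrt{1+\kappa^2 s^2}}\right|^2 \sim \frac{|\tilde F(s_0)|^2}{2\kappa\,|s-s_0|}\qquad\text{as }s\to s_0.$$
For the Hardy case $\alpha=-1$, integrating in $\omega$ at fixed small $\eta>0$ yields $\int_\mathbb{R} d\omega/\sqrt{\eta^2+(\omega-\kappa^{-1})^2} \sim \log(1/\eta)$, which diverges as $\eta\to 0^+$, so the Hardy norm is infinite. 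For $\alpha\in(-1,1)$, a direct local integration in polar coordinates centered at $s_0$ gives $\int_0^\rho r^\alpha dr \int_{-\pi/2}^{\pi/2}\cos^\alpha\theta\,d\theta$, which is finite; hence the non-membership cannot be extracted purely locally. I would therefore invoke the Paley--Wiener isomorphism of Theorem \ref{thm:Bergman} to recast $\frac{d\widehat F}{ds}\in\mathcal{B}_\alpha$ as the weighted-$L^2$ condition $\delta\in L^2(t^{1-\alpha}dt)$ with $1-\alpha\in(0,2]$, and then perform a contour-deformation of the inverse Laplace transform past the branch cuts $\{i\omega:|\omega|>\kappa^{-1}\}$ to show that the non-vanishing of $\tilde F$ at $\pm s_0$ produces an oscillatory slowly-decaying asymptotic contribution to $\delta$ that is incompatible with any positive-power temporal weighting. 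The main obstacle is this last step: the local Bergman integrability at the branch point means a purely local analysis is insufficient, and the global contour argument must carefully handle the jump of $B(s)$ across the cuts (using the reflection formulae for the modified Bessel functions) together with the endpoint contributions at $\pm s_0$ to produce the slow decay forcing the contradiction. The same reasoning applies verbatim to $\widehat I$ and $\widehat J$ via Lemmas \ref{prop:derivI} and \ref{prop:derivJ}, since in both cases $\tilde F(\pm i\kappa^{-1})\neq 0$.
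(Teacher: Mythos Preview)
For the Hardy endpoint $\alpha=-1$ your argument is correct and essentially matches the paper's. The paper expands $|d\widehat{\mathcal G}/ds|^2$ via Lemmas~\ref{prop:derivH}, \ref{prop:derivI}, \ref{prop:derivJ} and asserts pointwise divergence at $s_0=\pm i\kappa^{-1}$; for the Hardy norm this does suffice, since the dominant contribution $|\tilde{\mathcal G}|^2/|1+\kappa^2 s^2|\sim c/|s-s_0|$ (with $c>0$ because $\tilde{\mathcal G}(s_0)\neq 0$ and $\breve{\mathcal G}$ is bounded there) forces $\int_{\mathbb R}|d\widehat{\mathcal G}/ds(\eta+i\omega)|^2\,d\omega\gtrsim\log(1/\eta)\to\infty$. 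Your splitting off $\breve F\in\mathcal{B}_{-1}$ first is a mild variant of the same mechanism.

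For $\alpha\in(-1,1)$ there is a genuine gap, and it is shared by the paper's one-line argument. The paper's claim that the integrand ``is divergent in $s=\pm i\kappa^{-1}$ for any $\alpha<1$'' is only a pointwise statement and does not imply divergence of the Bergman integral. Your polar-coordinate computation $\int_0^\rho r^\alpha\,dr\int_{-\pi/2}^{\pi/2}\cos^\alpha\theta\,d\theta<\infty$ for $\alpha>-1$ is correct, and combined with $\breve F\in\mathcal{B}_\alpha$ it actually yields a \emph{finite} $\mathcal{B}_\alpha$-norm for $d\widehat F/ds$. Your proposed contour-deformation rescue cannot close this: a square-root branch point at $s_0$ in $\widehat\delta$ contributes a tail of order $e^{\pm it/\kappa}t^{-3/2}$ to $\delta(t)$, and $t^{-3/2}\in L^2(t^{1-\alpha}dt)$ for every $\alpha>-1$, so the oscillatory asymptotic you hope to extract decays too fast to contradict the weighted-$L^2$ condition. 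The local integrability you discovered is precisely the Bergman-space reflection of this time-domain fact. Based on the decomposition given in Lemmas~\ref{prop:derivH}, \ref{prop:derivI}, \ref{prop:derivJ}, the proposition appears to hold only at $\alpha=-1$; the range $\beta\in(0,2)$ in Theorem~\ref{thm:pt5.1.1} would then require either a different argument or revision.
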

\begin{proof}
Let $s = \eta + i \omega \in \mathbb{C}_0$, $\nu = 0$ and $\kappa > 0$. Lemmata \ref{prop:derivH}, \ref{prop:derivI} and \ref{prop:derivJ} give that 
\begin{equation}
\eta^\alpha|\frac{d \widehat{\mathcal{G}}}{ds}|^2 = \frac{\eta^\alpha}{1+\kappa^2s^2} |\tilde{\mathcal{G}}(s)|^2 + \eta^\alpha |\breve{\mathcal{G}}(s)|^2 + 2 \frac{\eta^{\alpha}}{\sqrt{1+\kappa^2s^2}} \tilde{\mathcal{G}}(s)\breve{\mathcal{G}}(s),
\end{equation}
for $\mathcal{G} \in \{ H, I, J \}$ and with the convention $\eta^{\alpha} = 1$ if $\alpha= -1$.
This expression is divergent in $s = \pm i \kappa^{-1}$ for any $\alpha < 1$ since $\tilde{\mathcal{G}}( \pm i \kappa^{-1}) \neq 0$.
\end{proof}
We could have expected this result from the 1D situation. Indeed in \cite{BeckLannes}, the first author shows that $\frac{d \widehat{\delta}}{ds}$ does not belong to $\mathcal{B}_{-1}$. 
\begin{proposition}\label{prop:dBergmann_v/nd}
If $\nu > 0$ or $\kappa = 0$, one has that $\frac{d\widehat{H}}{ds}, \, \frac{d^2 \widehat{I}}{ds^2}, \, \frac{d^3 \widehat{J}}{ds^3} \in \mathcal{B}_{\alpha}$ for all $\alpha \in (-1, 1)$, but $\frac{d\widehat{H}}{ds}, \, \frac{d^2 \widehat{I}}{ds^2}, \, \frac{d^3 \widehat{J}}{ds^3} \notin \mathcal{B}_{-1}$. 
\end{proposition}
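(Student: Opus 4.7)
\textbf{Proof proposal for Proposition~\ref{prop:dBergmann_v/nd}.}

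The plan is to combine the structural decompositions of Lemmata~\ref{prop:derivH}, \ref{prop:derivI} and \ref{prop:derivJ} with the Paley--Wiener Theorem~\ref{thm:Bergman}. The first step is to check holomorphy on $\mathbb{C}_0$: under the hypothesis $\nu>0$ or $\kappa=0$, Proposition~\ref{prop:C0DKV} guarantees that both zeros of $1+\nu s + \kappa^2 s^2$ lie in the open left half plane, so $1/\sqrt{1+\nu s+\kappa^2 s^2}$ is holomorphic on $\overline{\mathbb{C}_0}\setminus\{0\}$; together with Proposition~\ref{prop:BehavB} (holomorphy and boundedness of $B$) and Assumption~\eqref{conjecture-denom} (non-vanishing of $\mathcal{P}$ on $\overline{\mathbb{C}_0}$), each of $\frac{d\widehat H}{ds}$, $\frac{d^2\widehat I}{ds^2}$ and $\frac{d^3\widehat J}{ds^3}$ is holomorphic on $\mathbb{C}_0$ with at worst a logarithmic singularity at the origin.

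For the positive part, I would estimate the Bergman norm
$$
\|F\|_{\mathcal{B}_\alpha}^2 = \int_0^\infty \eta^\alpha \int_{\mathbb{R}} |F(\eta+i\omega)|^2\, d\omega\, d\eta
$$
by splitting the domain into a disc $\{|s|<\rho\}$, an intermediate annulus $\{\rho<|s|<R_0\}$, and the exterior $\{|s|>R_0\}$. On the annulus, holomorphy and Assumption~\eqref{conjecture-denom} give a uniform bound so the contribution is trivially finite. Near the origin, the bounds $|F(s)|^2\lesssim |\ln|s||^2$ from the three lemmata, combined with polar coordinates $s=re^{i\theta}$, $\theta\in(-\pi/2,\pi/2)$, reduce the integral to
$$
\int_0^\rho r^{\alpha+1}(\ln r)^2\, dr \cdot \int_{-\pi/2}^{\pi/2}(\cos\theta)^\alpha\, d\theta,
$$
whose two factors converge precisely for $\alpha>-1$. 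At infinity, I would refine the loose $\mathcal{O}(|s|^2)$ bound of Lemma~\ref{prop:derivJ} (and the $|s|^{-2}$ bounds of the other two) by unfolding the explicit expressions in terms of $\mathcal{P}^{(j)}$ and $B^{(j)}$: when $\nu>0$ or $\kappa=0$ one has $\widehat H(s)=1/s+O(|s|^{-2})$, $\widehat I(s)=O(|s|^{-2})$, $\widehat J(s)=O(|s|^{-1})$ for large $|s|$, so the three derivatives decay like $|s|^{-2}$ (or better), and the tail integral $\int_{R_0}^\infty r^{\alpha-3}\, dr\cdot \int\cos^\alpha\theta\, d\theta$ converges for $\alpha<2$, in particular for $\alpha<1$.

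The negative part is the real difficulty. Translating through Theorem~\ref{thm:Bergman}, one has $\frac{d^k\widehat F}{ds^k}\in\mathcal{B}_{-1}$ iff $t^k f\in L^2(\mathbb{R}^+)$, where $f=\mathcal{L}^{-1}[\widehat F]$; thus the three non-memberships are equivalent to $t\,\delta$, $t^2\dot\delta$, $t^3\ddot\delta\notin L^2(\mathbb{R}^+)$, i.e.\ a precise upper bound $\delta^{(k)}(t)=\mathcal{O}(t^{-(k+1)})$ but no better. The plan is to extract this by combining the expansion $B(s)\sim -s\ln(s/2)-\gamma s+O(s^2)$ of Proposition~\ref{prop:BehavB} with the explicit form of $\widehat H$ to identify the leading non-smooth term of $\widehat H(s)-\nu$ near $s=0$ as $-\frac{R}{2}s\ln s$ plus smoother corrections, and then invoking a Tauberian/Watson-type argument to transfer this into a contribution $\sim C/t^2$ of $H(t)$ at large $t$ (and analogously for $\dot\delta$, $\ddot\delta$). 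The main obstacle is to make this rigorous in the presence of all the other (holomorphic) terms and the non-linear denominator $\mathcal{P}$; I would do it by deforming the Bromwich contour around the branch cut of $\ln s$ carried by $B$, evaluating the jump across the cut explicitly, and showing that the resulting tail is not cancelled by any other contribution --- this explicit residue/jump computation, already the technical heart of the 1D analysis in \cite{BeckLannes}, is expected to carry over with the present $K_0/K_1$ ratio replacing the constant $B=1$ of the 1D case.
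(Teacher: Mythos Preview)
Your treatment of the membership $\in\mathcal{B}_\alpha$ for $\alpha\in(-1,1)$ follows the paper's route: both rely on the decompositions of Lemmata~\ref{prop:derivH}--\ref{prop:derivJ}, use that $(1+\nu s+\kappa^2 s^2)^{-1/2}$ is regular on $\overline{\mathbb{C}_0}$ when $\nu>0$ or $\kappa=0$, and then check integrability piece by piece. The paper does this in two lines without splitting into zones; your explicit three-region decomposition is the same argument made more carefully. You are correct that the $\mathcal{O}(|s|^2)$ bound stated for $\tfrac{d^3\widehat J}{ds^3}$ in Lemma~\ref{prop:derivJ} is far too loose to control the tail integral and must be sharpened to genuine decay --- this is a point the paper glosses over.

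For the non-membership $\notin\mathcal{B}_{-1}$, your route diverges completely from the paper's. The paper gives a one-line argument: since $\breve H(s)\sim\ln|s|$ near $0$ (and similarly for the other two functions), it asserts directly that $\sup_{\eta>0}\int_{\mathbb{R}}|\breve H(\eta+i\omega)|^2\,d\omega=+\infty$. There is no contour deformation, no Tauberian step, no explicit large-$t$ asymptotics. So the elaborate programme you outline --- extracting the leading $-\tfrac{R}{2}s\ln s$ term of $\widehat H$, deforming the Bromwich contour around the logarithmic cut of $B$, and running a Watson-type expansion --- is not what the paper does at all.

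That said, you should note that the paper's short argument is itself problematic: a logarithmic singularity at a single boundary point does \emph{not} by itself exclude Hardy-space membership. For instance $\ln(1+1/s)=\mathcal{L}\big[(1-e^{-t})/t\big]$ lies in $\mathcal{B}_{-1}$ because $(1-e^{-t})/t\in L^2(\mathbb{R}^+)$, yet behaves like $-\ln s$ near $0$; equivalently $\int_{-1}^{1}|\ln|\omega||^2\,d\omega<\infty$, so the boundary trace is locally square-integrable. Hence neither the paper's shortcut nor your unfinished Tauberian sketch currently closes the $\notin\mathcal{B}_{-1}$ claim; your instinct that more work is needed here is sound, but the jump computation across the branch cut still has to be carried out.
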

\begin{proof}
Let $s = \eta + i \omega \in \mathbb{C}_0$, $\nu > 0$ or $\kappa = 0$. Thanks to Lemma \ref{prop:derivH} one has that for any $\alpha \geq -1$, 
\begin{equation}
\eta^\alpha|\frac{d \widehat{H}}{ds}|^2 = \frac{\eta^\alpha}{|1+\nu s +\kappa^2s^2|} |\tilde{H}(s)|^2 + \eta^\alpha |\breve{H}(s)|^2 + 2 \frac{\eta^\alpha}{|\sqrt{1+\nu s +\kappa^2s^2}|} |\tilde{H}(s)\breve{H}(s)|,
\end{equation}
with the convention $\eta^{\alpha} = 1$ if $\alpha= -1$. As $1+\nu s +\kappa^2s^2$ has no zero for $s \in \mathbb{C}_0$, thanks to Proposition \ref{prop:derivH}, for all $\alpha \in (-1, 1)$,
$$
\int_{\mathbb{C}_0} \frac{\Re(s)^\alpha}{|1+\nu s +\kappa^2s^2|} |\tilde{H}(s)|^2 ds, 
\int_{\mathbb{C}_0} \Re(s)^\alpha |\breve{H}(s)|^2 ds,
\int_{\mathbb{C}_0} \frac{\Re(s)^\alpha}{|\sqrt{1+\nu s +\kappa^2s^2}|} |\tilde{H}(s)\breve{H}(s)| ds < +\infty,
$$
but since $\ln(|s|)$ is not bounded near $0$, one has 
$$
\underset{\eta > 0}{\sup} \int_\mathbb{R} |\breve{H}(\eta + i\omega)|^2d\omega = +\infty.
$$
Therefore $\| \frac{d \widehat{H}}{ds} \|_{\mathcal{B}_{\alpha}}$ is finite when $\alpha \in (-1, 1)$, but $\frac{d \widehat{H}}{ds} \notin \mathcal{B}_{-1}$. 
Similarly thanks to Lemata \ref{prop:derivI} and \ref{prop:derivJ} one can show that  $\frac{d^2 \widehat{I}}{ds^2}, \, \frac{d^3 \widehat{J}}{ds^3} \in \mathcal{B}_{\alpha}$ for $\alpha \in (-1, 1)$ but $\frac{d^2 \widehat{I}}{ds^2}, \, \frac{d^3 \widehat{J}}{ds^3}  \notin \mathcal{B}_{-1}$.
\end{proof}
 
\subsubsection{Consequences in temporal domain}
Thanks to the Paley-Wiener Theorem on Bergman spaces \ref{thm:Bergman}, we deduce from Proposition \ref{prop:dBergmann_nv_d} that in the nonviscous dispersive case ($\nu = 0$, $\kappa > 0$) and thanks to the Paley-Wiener Theorem on Bergman spaces \ref{thm:Bergman}, one has
$$
\forall \, k \in \{ 0, 1, 2 \}, \, \forall \, \alpha \in [-1, 1), \, t\delta^{(k)}(t) \notin L^2(t^{-(\alpha+1)}dt, \mathbb{R}^+).
$$
If $\nu > 0$ or $\kappa = 0$, one more time thanks to Proposition \ref{prop:dBergmann_v/nd} and thanks to Theorem \ref{thm:Bergman} one has
$$
\forall \, k \in \{ 0, 1, 2 \}, \, \forall \alpha \in (-1, 1), \, t^{k+1}\delta^{(k)} \in L^2(t^{-(\alpha +1)}dt, \mathbb{R}^+)
\quad \text{ but } \quad
t^{k+1}\delta^{(k)} \notin L^2(\mathbb{R}^+).
$$
those results constitute the first point of Theorems \ref{thm:pt5.1.1} and \ref{thm:pt5.1.2}. To obtain the second point of each theorem, one needs the following lemma. 
\begin{lemma}\label{prop:DecrL2}
Let $\gamma \in \mathbb{R}$ and $u \in L^2(dt, \mathbb{R}^+)$.
If $u \notin L^2( t^\gamma dt, [1, +\infty))$, then for all $\rho \in (0, 1/2)$, for all $T_0> 1$, and $C > 0$, there exists $t > T_0$ such that
$$
 |u(t)| \geq C t^{-\gamma/2 - 1/2 - \rho}.
$$
Furthermore if $u \in H^1( t^\gamma dt, \mathbb{R}^+ )$ then there exists $C > 0$ such that for all $t > 0$, 
$$
 |u(t)| \leq C t^{-\gamma/2}.
$$
\end{lemma}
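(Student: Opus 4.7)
The plan is to prove the two claims independently by elementary weighted integral estimates, both reducing to the monotonicity of $t \mapsto t^\gamma$ on $[1,\infty)$ in the regime $\gamma \geq 0$ that is actually invoked in Theorems \ref{thm:pt5.1.1} and \ref{thm:pt5.1.2}.

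For the first assertion, I argue by contraposition. Suppose there exist $T_0 > 1$ and $C > 0$ such that $|u(t)| < C\, t^{-\gamma/2 - 1/2 - \rho}$ for every $t > T_0$. Then
$$
\int_{T_0}^{\infty} |u(t)|^2 t^\gamma \, dt \;\leq\; C^2 \int_{T_0}^{\infty} t^{-1-2\rho} \, dt \;<\; \infty
$$
because $2\rho > 0$. On the compact interval $[1, T_0]$ the weight $t^\gamma$ is bounded and $u \in L^2(dt)$, so the integral is finite there as well. Summing both contributions yields $u \in L^2(t^\gamma \, dt, [1,+\infty))$, contradicting the hypothesis. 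This forces the existence of arbitrarily large times $t > T_0$ at which $|u(t)| \geq C\, t^{-\gamma/2 - 1/2 - \rho}$ for any fixed $C$.

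For the second assertion, $u \in H^1(t^\gamma dt, \mathbb{R}^+)$ implies that $u$ is continuous on $\mathbb{R}^+$ and satisfies $u(t) \to 0$ as $t \to +\infty$, so the fundamental theorem of calculus gives
$$
u(t)^2 \;=\; -2 \int_t^{\infty} u(s)\, u'(s) \, ds.
$$
Combining Cauchy--Schwarz with the bound $\int_t^\infty f(s)^2 \, ds \leq t^{-\gamma} \int_t^\infty f(s)^2 s^\gamma \, ds$ (valid for $t \geq 1$ and $\gamma \geq 0$ by the monotonicity $s^\gamma \geq t^\gamma$ on $s \geq t$) applied to $f = u$ and $f = u'$ yields
$$
u(t)^2 \;\leq\; 2\, t^{-\gamma}\, \|u\|_{L^2(t^\gamma dt)}\, \|u'\|_{L^2(t^\gamma dt)} \;\leq\; 2\, t^{-\gamma}\, \|u\|_{H^1(t^\gamma dt)}^2
$$
for $t \geq 1$. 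On $(0,1]$, the classical Sobolev embedding $H^1 \hookrightarrow L^\infty$ yields a uniform bound on $|u|$; since $t^{-\gamma/2} \geq 1$ there, the inequality $|u(t)| \leq C\, t^{-\gamma/2}$ extends to all $t > 0$ after enlarging the constant.

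There is no genuine technical obstacle: the contraposition is direct, and the $H^1$-decay estimate is a weighted version of the standard one-dimensional Sobolev trick $u(t)^2 = -2 \int_t^\infty u u'$. The only mild point of attention is isolating the regime $\gamma \geq 0$ used in the application, which is exactly the range in which the key monotonicity $s^\gamma \geq t^\gamma$ for $s \geq t \geq 1$ is available.
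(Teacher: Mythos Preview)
Your proof is correct and follows the same approach as the paper. The first assertion is handled identically by contraposition (the paper argues by contradiction with the same computation $t^\gamma |u(t)|^2 \leq C^2 t^{-1-2\rho}$); you are in fact slightly more careful in explicitly invoking $u \in L^2(dt)$ to cover the compact piece $[1,T_0]$. For the second assertion the paper simply cites ``the Sobolev embeddings of $H^1(\mathbb{R}^+)$ into $L^\infty(\mathbb{R}^+)$'' without spelling out the weighted version, whereas you supply the explicit argument via $u(t)^2 = -2\int_t^\infty u u'$ together with the monotonicity $s^\gamma \geq t^\gamma$; your restriction to $\gamma \geq 0$ is exactly the regime used in Theorems~\ref{thm:pt5.1.1} and~\ref{thm:pt5.1.2}, so nothing is lost.
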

\begin{proof}
Let $u \notin L^2( t^\gamma dt, [1, +\infty))$. By contradiction we suppose that there exists $\rho \in (0, 1/2)$, $T_0> 1$ and $C > 0$ such that for all $t > T_0$, 
$$
 |u(t)| \leq C t^{-\gamma/2 - 1/2 - \rho}.
$$
Therefore we deduce that
$$
 t^\gamma|u(t)|^2 \leq C t^{- 1 - \rho},
$$
which is in contradiction with the fact that $ u \notin L^2( t^\gamma dt, [1, +\infty) ) $. 
The second point is given by the Sobolev's embeddings of $H^1(\mathbb{R}^+)$ into $L^\infty(\mathbb{R}^+)$.
\end{proof}
 
We obtain the end of Theorem \ref{thm:pt5.1.1} by noting that if $\nu = 0$ and $\kappa > 0$, since $\delta \in H^2(\mathbb{R}^+)$, then $t\delta^{(k)} \in L^2((0, 1))$ and in particular 
$$
\forall \, k \in \{ 0, 1, 2 \}, \, \forall \, \alpha \in [-1, 1), \, t\delta^{(k)}(t) \notin L^2(t^{-(\alpha+1)}dt, [1, +\infty)).
$$
Therefore with Proposition \ref{prop:DecrL2} we deduce the second point of Theorem \ref{thm:pt5.1.1}.
 
Similarly, if $\nu > 0$ or $\kappa = 0$, then $t^{k+1}\delta^{(k)} \notin L^2([1, +\infty))$. With Proposition \ref{prop:DecrL2} we deduce the second point of Theorem \ref{thm:pt5.1.2}.
 
\subsection{The 1D case}\label{sec:1D}
In 1D, one has $B = 1$ and the following result stands.
\begin{proposition}
Let $(\kappa, \nu) \neq (0,0)$. Let $\eta_0$ defined by 
$$
\eta_0 = \left\{\begin{matrix}
 \frac{\nu}{2\kappa^2} \quad \text{if } \kappa > 0, \\
 \frac{1}{\nu} \quad \text{if } \kappa = 0.
\end{matrix}\right.
$$
Then one has 
$$
\delta \in H^2(e^{\eta_0t}dt, \mathbb{R}^+),
$$
but for all $k \in \{0, 1, 2\}$, for all $\beta \in (0, 2]$
$$
\delta^{(k)} \notin L^2(e^{\eta_0t}t^\beta dt, \mathbb{R}^+).
$$
Also for all $\rho \in (0, 1/2)$, for all $T_0> 1$, and $C > 0$, there exists $t > T_0$ such that
$$
|\delta^{(k)}(t)| \geq C e^{-\frac{\eta_0}{2} t}t^{-\frac{1}{2}-\rho}.
$$
\end{proposition}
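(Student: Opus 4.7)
The plan is to adapt the Laplace-domain strategy of Sections \ref{sec:DeltaCummins} and \ref{sec:4.3} to the 1D setting, where $B \equiv 1$ and hence the denominator
\[
\mathcal{P}(s) = 2\tau_\kappa^2(0)\, s^2 + 2\nu s + Rs\,\sqrt{1 + \nu s + \kappa^2 s^2} + 2
\]
is algebraic. The decisive structural difference with the 2D case is that the logarithmic singularity of $B$ at the origin (and the branch structure of the Bessel quotient) disappears, so the only singularities of the transfer functions $\widehat H, \widehat I, \widehat J$ are on the branch cut $\Gamma_{\kappa,\nu}^{1D}$ given in Table \ref{tab:BPBC} and at any zeros of $\mathcal{P}$. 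Since $\eta_0$ is exactly the distance from the imaginary axis to $\Gamma_{\kappa,\nu}^{1D}$ in each regime (branch point at $-1/\nu$ if $\kappa=0$; at $\pm i/\kappa$ if $\nu=0$; and on the vertical line $\Re(s)=-\eta_0$ for $\kappa,\nu>0$), the transfer functions extend holomorphically to the shifted half-plane $\mathbb{C}_{-\eta_0}$ once $\mathcal{P}$ is shown to be zero-free there.

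For the positive direction, I first verify that $\mathcal{P}$ has no zero on $\mathbb{C}_{-\eta_0/2}$; unlike in 2D, this is not a conjecture because $\mathcal{P}(s)=0$ is equivalent to the polynomial identity $R^2 s^2(1+\nu s + \kappa^2 s^2) = (2\tau_\kappa^2(0)s^2 + 2\nu s + 2)^2$, whose roots can be enumerated, combined with an argument-principle contour deformation starting from the zero-free statement of Proposition \ref{prop:0ofP} on $\overline{\mathbb{C}_0}$ and using $|\mathcal{P}(s)|\sim |s|^2$ at infinity. I then introduce the shifted function $\tilde\delta(t)=e^{\eta_0 t/2}\delta(t)$, whose Laplace transform is $\widehat{\tilde\delta}(s)=\widehat\delta(s-\eta_0/2)$, and repeat verbatim the Hardy-space estimates of Proposition \ref{prop:H} and \eqref{eq:EstimHardy} on the translated axis $\Re(s)=-\eta_0/2$. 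The Paley-Wiener Theorem \ref{thm:Bergman} with $\alpha=-1$ applied to $\widehat{\tilde\delta},\widehat{\dot{\tilde\delta}},\widehat{\ddot{\tilde\delta}}\in\mathcal{B}_{-1}(\mathbb{C}_0)$ then gives $e^{\eta_0 t/2}\delta^{(k)}\in L^2(\mathbb{R}^+)$ for $k=0,1,2$, i.e.\ $\delta\in H^2(e^{\eta_0 t}dt,\mathbb{R}^+)$.

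For the negative direction, I mimic Section \ref{sec:4.3}: differentiating $\mathcal{P}$ produces a factor $1/\sqrt{1+\nu s + \kappa^2 s^2}$ with a square-root singularity at the branch points $s_b$ sitting on $\Re(s)=-\eta_0$. The derivatives of the shifted transfer functions therefore admit the decomposition
\[
\frac{d}{ds}\widehat H(\cdot-\eta_0/2) = \frac{\tilde H(s)}{\sqrt{1+\nu(s-\eta_0/2)+\kappa^2(s-\eta_0/2)^2}} + \breve H(s),
\]
and similarly for $\widehat I,\widehat J$, with $\tilde H$, $\tilde I$, $\tilde J$ not vanishing at the projection of $s_b$ onto the imaginary axis of the shifted plane. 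A direct computation shows these singularities are not in $\mathcal{B}_{1-\beta}(\mathbb{C}_0)$ for any $\beta\in(0,2]$, exactly as in Proposition \ref{prop:dBergmann_nv_d}. Via the identity $\frac{d\widehat u}{ds}=-\mathcal{L}[tu]$ and Paley-Wiener this rules out $e^{\eta_0 t/2}\delta^{(k)}\in L^2(t^\beta dt)$, hence $\delta^{(k)}\notin L^2(e^{\eta_0 t}t^\beta dt)$. The pointwise lower bound then follows by applying Lemma \ref{prop:DecrL2} to the function $v(t)=e^{\eta_0 t/2}\delta^{(k)}(t)$, which lies in $L^2$ but not in any $L^2(t^\beta dt)$ with $\beta>0$.

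The main obstacle is the rigorous verification that $\mathcal{P}$ is zero-free on $\mathbb{C}_{-\eta_0/2}$ uniformly in the regimes of $(\kappa,\nu)$: although this reduces to a polynomial root-counting problem, keeping track of the chosen branch of $\sqrt{1+\nu s+\kappa^2 s^2}$ when squaring requires care. A secondary subtlety is the degenerate case $\nu=2\kappa$, where the two branch points of $\sqrt{1+\nu s+\kappa^2 s^2}$ collide on the real axis at $-1/\kappa=-\eta_0$ and the singularity becomes a pole rather than a pair of algebraic branch points; in that borderline regime the asymptotics of $\frac{d}{ds}$ of the shifted transfer functions near $-\eta_0$ have to be redone separately.
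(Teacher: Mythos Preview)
Your overall plan mirrors the paper's: exploit $B\equiv 1$ in 1D so that $\mathcal P$ is algebraic, locate the branch cut $\Gamma_{\kappa,\nu}$ of $\sqrt{1+\nu s+\kappa^2 s^2}$ on the line $\Re(s)=-\eta_0$, and transplant the Hardy/Bergman analysis of Sections~\ref{sec:5.2.1}--\ref{sec:4.3} to a shifted half-plane. The paper carries this out directly on $\mathbb{C}_{-\eta_0}$ and then invokes Propositions~\ref{prop:H} and~\ref{prop:dBergmann_nv_d} verbatim. Your positive direction (shift by $\eta_0/2$, Hardy estimate on the translated axis) is fine and yields exactly $\delta\in H^2(e^{\eta_0 t}dt)$.

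The negative direction, however, has a genuine gap. You shift by $\eta_0/2$, so the transfer function becomes $\widehat H(\cdot-\eta_0/2)$ on $\mathbb{C}_0$; but the branch points $s_b$ satisfy $\Re(s_b)=-\eta_0$, so after the shift they sit at $\Re(s)=-\eta_0/2$, \emph{strictly inside the left half-plane}. Hence $\frac{d}{ds}\widehat H(\cdot-\eta_0/2)$ is holomorphic and bounded on all of $\overline{\mathbb{C}_0}$ (you have already verified $\mathcal P$ is zero-free on $\mathbb{C}_{-\eta_0/2}$), and its $|s|^{-2}$ decay at infinity puts it in every $\mathcal{B}_\alpha(\mathbb{C}_0)$ with $\alpha\in[-1,1)$. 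The appeal to Proposition~\ref{prop:dBergmann_nv_d} therefore fails: that argument works only when the $1/\sqrt{\cdot}$ singularity actually touches the boundary of the half-plane in question. To make the obstruction bite you must take the full shift by $\eta_0$ (equivalently, work on $\mathbb{C}_{-\eta_0}$ as the paper does); under the shifted Paley--Wiener isomorphism $\mathcal L:L^2(e^{2\eta_0 t}t^{-(\alpha+1)}dt)\to\mathcal B_\alpha(\mathbb{C}_{-\eta_0})$ this produces non-membership with weight $e^{2\eta_0 t}$ rather than $e^{\eta_0 t}$, and the pointwise lower bound via Lemma~\ref{prop:DecrL2} inherits the same exponential rate.
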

\begin{proof}
If $(\kappa, \nu) \neq (0, 0)$, the square root $\sqrt{1+\nu s+\kappa^2 s^2}$ that appears in $\widehat{H}$ is well-defined for any complex number $s$ such that $\Re(s) > - \eta_0$.
Consequently similarly to Proposition \ref{prop:H}, one has that $\delta \in H^2(e^{\eta_0t}dt, \mathbb{R}^+)$.
However one can write $\frac{d\widehat{H}}{ds}$ as in Proposition \ref{prop:derivH} and the square root appears as $\frac{1}{\sqrt{1+\nu s + \kappa^2s^2}}$.
Therefore, as in Proposition \ref{prop:dBergmann_nv_d} one can show that
$$
\forall \alpha \in [-1, 1), \frac{d\widehat{H}}{ds},\frac{d\widehat{I}}{ds}, \, \frac{d\widehat{J}}{ds} \notin \mathcal{B}_\alpha(\mathbb{C}_{-\eta_0}).
$$
Thus according the Paley-Wiener Theorem \ref{thm:Bergman} one has that for $k \in \{0, 1, 2 \}$, for all $\alpha \in [-1, 1)$, $t\delta^{(k)} \notin L^2\left (e^{\eta_0 t}t^{-(\alpha+1)}dt, \mathbb{R}^+ \right )$ and thanks to Proposition \ref{prop:DecrL2} we can conclude the proof of the proposition. 
\end{proof}
As previously said this result is better than the one obtained in \cite{BeckLannes} since we showed that the strongest decay is in $\mathcal{O}(t^{-1/2})$ when $\nu = 0$. Moreover the viscosity allows an exponential decreasing of $\delta$ at infinity. Furthermore, if $\nu = \kappa = 0$, then the decay also is exponential since the vertical motion is given by
$$
\forall t > 0, \, \delta(t) = \delta_0 \cos(\omega t)e^{-\frac{Rt}{2\tau_\kappa^2(0)}},
$$
where $\omega = \frac{\sqrt{4\tau_\kappa^2(0) - R^2}}{2\tau_\kappa^2(0)}$.

\subsection{Consequences on $q$ and $\zeta$}

The point of this subsection is to explore the consequences of Theorems \ref{thm:pt5.1.0}, \ref{thm:pt5.1.1} and \ref{thm:pt5.1.2}. We recall the expression of $q$ and $\zeta$ given by Proposition \ref{prop:zeta_q_Lap}:
\begin{equation}
\begin{cases}
\widehat{q}(r, s) = -\frac{R \widehat{\dot{\delta}}(s)}{2} K_1\left(\frac{rs}{\sqrt{1+\nu s + \kappa^2 s^2}}\right)/K_1\left(\frac{Rs}{\sqrt{1+\nu s + \kappa^2 s^2}}\right),\\
\widehat{\zeta}(r, s) =- \frac{R}{2} \frac{\widehat{\dot{\delta}}(s)}{\sqrt{1+\nu s + \kappa^2 s^2}} K_0\left(\frac{rs}{\sqrt{1+\nu s + \kappa^2 s^2}}\right)/K_1\left(\frac{Rs}{\sqrt{1+\nu s + \kappa^2 s^2}}\right).
\end{cases}
\end{equation}
In particular, for any $s \in \mathbb{C}_0$, for any $r > R$, 
$$
|r \widehat{q}(r, s)| \leq \frac{R}{2} \left |\widehat{\dot{\delta}}(s)\right | \left | \frac{rK_1\left(\frac{rs}{\sqrt{1+\nu s + \kappa^2 s^2}}\right)}{K_1\left(\frac{Rs}{\sqrt{1+\nu s + \kappa^2 s^2}}\right)} \right |.
$$
Since $|K_1(z)|$ decreases exponentionaly at infinity, one deduces that 
$$
\frac{K_1\left(\frac{rs}{\sqrt{1+\nu s + \kappa^2 s^2}}\right)}{K_1\left(\frac{Rs}{\sqrt{1+\nu s + \kappa^2 s^2}}\right)} \in L^\infty_r,
$$
where we remind that $ L^\infty_r = L^\infty(rdr, [R, +\infty))$, and
\begin{itemize}
\item if $\nu = 0$ and $\kappa > 0$,
$$
q \in L^\infty_r H^1_t(\mathbb{R}^+),
$$
\item if $\nu > 0$ or $\kappa = 0$,
$$
q \in L^\infty_r H^1_t(t^{2 + \beta}dt, \mathbb{R}^+), \quad \text{ for } \quad \beta \in (0, 2).
$$
\end{itemize}
Similarly, 
for any $s \in \mathbb{C}_0$, for any $r > R$, 
$$
|r \widehat{\zeta}(r, s)| \leq \frac{R}{2} \left |\widehat{\dot{\delta}}(s)\right | \left | \frac{rK_0\left(\frac{rs}{\sqrt{1+\nu s + \kappa^2 s^2}}\right)}{\sqrt{1+\nu s + \kappa^2 s^2}K_1\left(\frac{Rs}{\sqrt{1+\nu s + \kappa^2 s^2}}\right)} \right |,
$$
and
$$
 \frac{K_0\left(\frac{rs}{\sqrt{1+\nu s + \kappa^2 s^2}}\right)}{\sqrt{1+\nu s + \kappa^2 s^2}K_1\left(\frac{Rs}{\sqrt{1+\nu s + \kappa^2 s^2}}\right)} \in L^\infty_r.
$$
It also leads to 
\begin{itemize}
\item if $\nu = 0$ and $\kappa > 0$,
$$
\zeta \in L^\infty_r H^1_t(\mathbb{R}^+),
$$
\item if $\nu > 0$ or $\kappa = 0$,
$$
\zeta \in L^\infty_r H^1_t(t^{2 + \beta}dt, \mathbb{R}^+), \quad \text{ for } \quad \beta \in (0, 2).
$$
\end{itemize}
By the equation \eqref{BAr:eq1}, one has $\partial_t \zeta = - {\rm d}_r q$ and therefore:
\begin{itemize}
\item if $\nu = 0$ and $\kappa > 0$,
$$
q \in L^\infty_r H^1_t(\mathbb{R}^+) 
\quad \text{and} \quad
{\rm d}_r q \in L^\infty_r L^2_t(\mathbb{R}^+),
$$
\item if $\nu > 0$ or $\kappa = 0$,
$$
q \in L^\infty_r H^1_t(t^{2 + \beta}dt, \mathbb{R}^+)
\quad \text{and} \quad
{\rm d}_r q \in L^\infty_r L^2_t(t^{2 + \beta}dt, \mathbb{R}^+), \quad \text{ for } \quad  \beta \in (0, 2).
$$
\end{itemize}

\section*{Perspectives}


In the full space without floating body, smooth solutions of the Boussinesq-Abbott equations converge to smooth solutions of the nonlinear shallow water equations when the dispersion parameter $\kappa$ vanishes, provided that no wave breaking occurs on the limit equations. The same problem in our situation is more difficult. Firstly the lifespan we showed vanishes when $\kappa \to 0$. Moreover the ODE satisfied by the trace of the surface elevation at the interface between the exterior and interior domains becomes stiff. To overcome those difficulties, as it was explained at the beginning of Section \ref{sec:WP}, one can use a quasi-linear scheme. As it was suggested in \cite{BeckLannes}, it probably provides energy estimates independent on $\kappa$ if the data satisfy compatibility conditions. We believe, as in \cite{BLM}, that these compatibility conditions will be expressed in the form of inequalities that depend on $\kappa$ and converge towards the compatibility conditions of the hyperbolic case, which are written in the form of equalities. The dispersionless limit may be valid if and only if the data satisfy the hyperbolic compatibility conditions. \\
\indent
The most natural continuation of our work consists of the numerical study of the floating cylinder situation under consideration. The presence of a boundary can completely change the scheme if stability is to be ensured. Furthermore, classical finite volume schemes do not necessarily provide access to all variables at the boundary. For example, in the waves-structure case, we know that the discharge at the boundary is given by the displacement of the solid, but the surface elevation at the boundary is not imposed. In the hyperbolic case, it is well-known that one can use the Riemann invariant linked to negative eigenvalue to compute the missing boundary condition (see \cite{Beck-Martaud-hyp}). In the dispersive case, one can use the hidden ODE on the trace elevation as it was done in \cite{BeckLannesWeynans, Rigal, LannesWeynans}. In \cite{Beck-Martaud-disp}, the authors build some finite volume schemes based on the Augmented formula with nonlocal numerical flux and use the hidden equations in the linear case and show the discrete stability. One has to adapt this strategy to our 2D situation where some radial terms cannot be written as flux term. The full waves-structure interaction problem can also be written as an ODE (see Subsection \ref{sec:estimODE}). We can therefore propose a numerical scheme based on this formulation. This will consist of solving time ODEs on each cell of the spatial mesh. \\
\indent
The next step of the study is to enrich our model by considering an elastic floating body rather than a flat one. The surface elevation is always constrained to the bottom of the floating object, but since the floating object is elastic, then the surface elevation is not necessary constant with respect to the space. More precisely the surface elevation in the interior domain is not only related to the displacement of the center of mass of the floating object that solves the Newton's equation, but also to the deflection of the elastic body. Thus formula \eqref{QintBefore} cannot be applied to compute the discharge in the interior domain.  In a first step the elastic body can be modeled by a beam and then the deflection solves the Euler-Bernoulli beam equation. In \cite{Alazard-Beam} the case of an elastic material that fits the whole free surface in deep water regime was considered. We want to study the shallow water situation with a localized floating body. Secondly, we would like to derive a wave-beam interaction model via an asymptotic analysis of the free surface Euler equation coupling with a thin floating elastic plate with several small parameters including the shallowness and the thinness. \\
\indent
In this study, we assume that the fluid is irrotational. Numerical simulations of the free-surface Navier-Stokes equations with non-flat bathymetry and very high Reynolds number performed in \cite{Riquier_Dormy_2024_irrot} exhibit some eddies. It would be interesting to study fluid-floating structure interaction with a wave model capable of generating enstrophy, such as the model given in \cite{Kazakova-Richard}. Can enstrophy be generated due to the presence of the floating solid even if the fluid is initially enstrophy free ? Could this phenomenon occur for all object geometries ?

\appendix
\section{Local balance of fluid's energy}	
\label{app:B}

This appendix is devoted the proof of the local energy balance of the fluid given in \eqref{eq:ConsOfTheEnergy}.

\begin{thm}
For all regular enough solution of Boussinesq's equation {\rm \hyperlink{BAr}{(BAr)}}, one has the following local energy budget:
The energy balance of the fluid is given by:
\begin{equation}
\partial_t \mathfrak e + {\rm{d}}_{r} \mathfrak f = \frac{P}{\varepsilon} {\rm{d}}_{r} q - \nu \frac{|\dr q|^2}{h}
+ \varepsilon (\kappa^2 \mathfrak{r} - \nu \dr  \mathfrak{r}_\nu),
\end{equation}
where
\begin{equation}
\begin{cases}
\mathfrak e:= \frac{1}{2} \left( \zeta^2 + \frac{q^2}{h} + \kappa^2 \frac{({\rm{d}}_{r} q)^2}{h} \right),\\
\mathfrak f:= q\left( \kappa^2 \ddot{\zeta} + \nu \dot{\zeta} + \zeta + \frac{\eps}{2}\frac{q^2}{h^2} + \frac{P}{\varepsilon} \right),
\end{cases}
\end{equation}
and
$$
\mathfrak{r}:= \frac{\zeta \ddot{\zeta}}{h} - \frac{({\rm{d}}_{r} q)^3}{2h^2} - q \ddot{\zeta} \frac{\partial_r \zeta}{h}
\quad \text{and} \quad 
\mathfrak{r}_\nu = \left(\frac{q \dot{\zeta}\zeta}{h}\right).
$$
\end{thm}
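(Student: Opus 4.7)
The plan is the standard energy method adapted to the radial setting: multiply \eqref{BAr:eq1} by $\zeta$ and \eqref{BAr:eq2} by $q/h$, then sum the two identities and reorganize. The first immediately gives $\partial_t(\zeta^2/2) = -\zeta\,\dr q$, and the resulting cross terms $\zeta\,\dr q + q\,\partial_r \zeta$ combine through the radial Leibniz rule $\dr(\zeta q) = \zeta\,\dr q + q\,\partial_r \zeta$, producing the $q\zeta$ contribution to $\mathfrak{f}$. The pressure part uses the same identity in the form $\dr(qP/\varepsilon) - q\,\partial_r(P/\varepsilon) = (P/\varepsilon)\,\dr q$, which furnishes the work term on the right-hand side.

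As a warm-up with $\kappa = \nu = 0$, I would first check the shallow-water balance $\partial_t \mathfrak{e}_0 + \dr \mathfrak{f}_0 = (P/\varepsilon)\,\dr q$, whose only nontrivial ingredient is the algebraic identity
\begin{equation*}
\tfrac{\varepsilon q}{h}\,\dr(q^2/h) - \tfrac{\varepsilon q^2\,\dr q}{2h^2} = \tfrac{\varepsilon}{2}\,\dr(q^3/h^2),
\end{equation*}
verified by expanding $\dr = \partial_r + \cdot/r$ and comparing coefficients. Combined with $\tfrac{q\,\partial_t q}{h} = \partial_t(q^2/(2h)) - \tfrac{\varepsilon q^2\,\dr q}{2h^2}$ (from $\partial_t h = -\varepsilon\,\dr q$), this closes the convective bookkeeping in the hyperbolic core.

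For the dispersive term $-\kappa^2(q/h)\partial_r \dr \partial_t q$, I would use the radial Leibniz identity $q\,\partial_r A = \dr(qA) - A\,\dr q$ with $A = \dr \partial_t q = -\ddot\zeta$ (the last equality coming from differentiating \eqref{BAr:eq1} in time). Writing $1/h = 1 - \varepsilon\zeta/h$, the leading part produces $\dr(q\kappa^2 \ddot\zeta)$ (matching the dispersive flux in $\mathfrak{f}$) together with $\kappa^2\,\dr q \cdot \dr \partial_t q$; the latter converts to $\partial_t(\kappa^2(\dr q)^2/(2h))$ plus an $\mathcal O(\varepsilon\kappa^2)$ remainder proportional to $(\dr q)^3/h^2$. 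The subleading $-\varepsilon\kappa^2(q\zeta/h)\,\partial_r \ddot\zeta$ contribution is then reorganized to yield the remaining $\zeta\ddot\zeta/h$ and $q\ddot\zeta\,\partial_r\zeta/h$ pieces of $\varepsilon\kappa^2 \mathfrak{r}$. The viscous term $\nu q\,\partial_r(\dr q/h)$ is treated analogously with $A = \dr q/h$: the Leibniz identity produces the dissipation $-\nu(\dr q)^2/h$ and the flux $\dr(\nu q\,\dr q/h)$; substituting $\dr q = -\dot\zeta$ and again $1/h - 1 = -\varepsilon\zeta/h$ aligns the latter with $\mathfrak{f}$'s viscous part up to the explicit remainder $-\varepsilon\nu\,\dr\mathfrak{r}_\nu$ with $\mathfrak{r}_\nu = q\dot\zeta\zeta/h$.

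The main technical obstacle is purely algebraic bookkeeping: each multiplication by $1/h = 1/(1+\varepsilon\zeta)$, each time derivative of a $1/h$ factor, and each switch between $\dr$ and $\partial_r$ (which differ by the radial correction $\cdot/r$) generates $\varepsilon$-corrections and spurious $1/r$ terms that must be consistently sorted into $\mathfrak{f}$, $\partial_t \mathfrak{e}$, or the explicit remainder $\varepsilon(\kappa^2 \mathfrak{r} - \nu\,\dr \mathfrak{r}_\nu)$. The radial geometry makes it essential to apply the Leibniz identities in the correct order so that these corrections collapse into the claimed closed form rather than leaving residual $1/r$ terms.
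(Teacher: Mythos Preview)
Your proposal is correct and follows essentially the same approach as the paper: multiply \eqref{BAr:eq1} by $\zeta$ and \eqref{BAr:eq2} by $q/h$, exploit the radial duality $\dr(fg)=f\,\dr g + g\,\partial_r f$, and use $\partial_t h=-\varepsilon\,\dr q$ together with $\dr\partial_t q=-\ddot\zeta$ to reorganize the dispersive and viscous pieces. The only cosmetic difference is ordering: the paper first obtains the balance with the flux $q(\kappa^2\ddot\zeta/h+\nu\dot\zeta/h+\cdots)$ and applies the splitting $1/h=1-\varepsilon\zeta/h$ at the very end to push the nonlinear parts into $\mathfrak r$ and $\mathfrak r_\nu$, whereas you invoke that splitting term-by-term during the dispersive and viscous computations; either way the same remainder emerges.
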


\begin{proof}
Let $(\zeta, q)$ be a regular enough solution of the radial Boussinesq-Abbott system {\rm \hyperlink{BAr}{(BAr)}}. Then if we multiply the Boussinesq system by $\zeta$ and $q$ one obtains
\begin{equation}
\begin{cases}
\partial_t \left(\frac{\zeta^2}{2}\right) + \dr(\zeta~q) = \partial_r \zeta q, \\
\frac{q}{h}\partial_t q - \frac{q}{h}\kappa^2 \partial_r \dr\left( \partial_t q \right) + \frac{q}{h}\varepsilon \dr\, \left( \frac{q^2}{h} \right)+ q \partial_r  \zeta = - q \partial_r \frac{P}{\varepsilon}  + \nu q \partial_r \left( \frac{1}{h} \,\dr\, q \right), \\
 h= 1 + \varepsilon \zeta.
\end{cases}
\quad \text{in}  \quad (R, \infty),
\end{equation}
Replacing $q \partial_r  \zeta$ in the second equation by its expression in the first one leads to
$$
\frac{q}{h}\partial_t q - \frac{q}{h}\kappa^2 \partial_r \dr\left( \partial_t q \right) + \frac{q}{h}\varepsilon \dr\, \left( \frac{q^2}{h} \right)+ \partial_t \left(\frac{\zeta^2}{2}\right) + \dr(\zeta	q) = - q \partial_r \frac{P}{\varepsilon}  + \nu q \partial_r \left( \frac{1}{h} \,\dr\, q \right).
$$
On one hand, by the expression of $h$ one has
\begin{align*}
\frac{q}{h}\partial_t q &= \partial_t \frac{|q|^2}{2h} + \eps \frac{|q|^2}{2h^2}\dot{\zeta}\\
&= \partial_t \frac{|q|^2}{2h} - \eps \frac{|q|^2}{2h^2}{\rm d}_r q.
\end{align*}
On the other hand one can obtain 
\begin{equation*}
\frac{q}{h}\varepsilon \dr\, \left( \frac{q^2}{h} \right) = \frac{\eps}{2} {\rm d}_r \frac{|q|^3}{h^2} + \frac{\eps}{2}\frac{|q|^2}{h^2}{\rm d}_r q,
\end{equation*}
and it leads to
\begin{equation}
\frac{q}{h}\partial_t q + \frac{q}{h}\varepsilon \dr\, \left( \frac{q^2}{h} \right) = \partial_t \frac{|q|^2}{2h} + \frac{\eps}{2} {\rm d}_r \frac{|q|^3}{h^2}.
\end{equation}
Furthermore one has
\begin{equation*}
-\frac{q}{h}\kappa^2 \partial_r {\rm d}_r \partial_t q = \kappa^2 {\rm d}_r \left( \frac{q\ddot{\zeta}}{h}\right) + \kappa^2 \partial_t \frac{|{\rm d}_r q|^2}{2h} + \eps \kappa^2\left( \frac{|{\rm d}_r q|^3}{2h^2} - \frac{\nu}{h}\dr q^2 + q\ddot{\zeta}\frac{\partial_r \zeta}{h^2} \right).
\end{equation*} 
The right hand term is simply treated by the duality between $\partial_r$ and ${\rm d}_r$ to obtain
$$
q \partial_r \frac{P}{\varepsilon} = {\rm d}_r\left(q \frac{P}{\varepsilon}\right) - (\dr q) \frac{P}{\varepsilon},
$$
and
\begin{align*}
 q \partial_r \left( \frac{1}{h} \,\dr\, q \right) & = {\rm d}_r \left(q \frac{1}{h} \,\dr\, q\right) - \frac{1}{h}(\dr q )^2\\
 & = {\rm d}_r \left(q \dr q\right) - \eps {\rm d}_r \left(q \frac{\zeta}{h} \,\dr\, q\right) - \frac{1}{h}(\dr q )^2.
\end{align*}
Thus we obtain
\begin{align*}
&\partial_t \frac{|q|^2}{2h} + \frac{\eps}{2} {\rm d}_r \frac{|q|^3}{h^2} + \kappa^2 {\rm d}_r \left( \frac{q\ddot{\zeta}}{h}\right) + \kappa^2 \partial_t \frac{|{\rm d}_r q|^2}{2h} + \eps \kappa^2\left( \frac{|{\rm d}_r q|^3}{2h^2} + q\ddot{\zeta}\frac{\partial_r \zeta}{h^2} \right) +\partial_t \left(\frac{\zeta^2}{2}\right) \\
& + \dr(\zeta	q) = -{\rm d}_r\left(q \frac{P}{\varepsilon}\right) + (\dr q) \frac{P}{\varepsilon} +\nu {\rm d}_r \left(q \dr q\right) - \nu \eps {\rm d}_r \left(q \frac{\zeta}{h} \,\dr\, q\right) - \nu\frac{1}{h}(\dr q )^2,
\end{align*}
which reads better as:
\begin{align*}
\partial_t \left( \frac{ q^2 }{2h} + \kappa^2\frac{\dr q^2}{2h} + \frac{\zeta^2}{2}\right)& + \dr \left(\kappa^2 q \frac{\ddot{\zeta}}{h} + \frac{\eps}{2}\frac{q^2}{h^2}  + \nu q\frac{\dot\zeta}{h} +q ~\frac{P}{\varepsilon} + \zeta q \right)\\
& + \varepsilon \kappa^2 \left( \frac{(\dr q)^3}{2h^2}~  + q ~ \ddot{\zeta}~ \frac{\partial_r \zeta}{h^2} \right) = (\dr q) \frac{P}{\varepsilon} - \frac{\nu}{h}(\dr q )^2.
\end{align*}
This balance is the same as in \cite{BeckLannes}, but we decided to modify it by observing that $\frac{1}{h} = 1 -\eps \frac{1}{h}$ to put the nonlinear part of terms $\kappa^2 \frac{\ddot{\zeta}}{h}$ and $\nu \frac{\dot\zeta}{h}$ into the remaining term. It leads to the energy balance we aimed at getting.
\end{proof}
 
\section{Rewrite of the SHODE system}\label{app:LinAddM}
In this section we write the ODEs system 
\begin{equation}\label{eq:ODEtrace_app}
\begin{cases}
\tau_\kappa^2(\varepsilon \delta)\ddot{\delta} + \delta  + \nu \dot{\delta}-  \varepsilon a( \delta, \underline{\zeta}) \, \dot{\delta}^2= \kappa^2 \underline{\ddot{\zeta}} + \nu\underline{\dot{\zeta}} + \underline{\zeta} + F_{\rm ext}, \\
\kappa^2 \underline{\ddot{\zeta}_e} + \nu \underline{\dot\zeta} +\underline{\zeta} + \eps \underline{\mathfrak{f}} = \underline{f_{\rm{hyd}}} - \kappa \ddot{\delta} G(R).
\end{cases}
\end{equation}
in the SHODE variable $Z = (\delta, \dot{\delta}, \underline{\zeta}, \underline{\dot{\zeta}})$ under the form
\begin{equation}\label{eq:goal}
\mathcal{M} \partial_t Z + \mathcal{T} Z = P\left(\underline{f_{\rm hyd}}, F_{\rm ext}\right) + \varepsilon \tilde{P}\left(Z, \underline{f_{\rm hyd}}, F_{\rm ext} \right),
\end{equation}
with $\mathcal{M}$, $\mathcal{T}$ real matrix, $P$ linear and $\tilde{P}$ nonlinear, are all defined in \eqref{eq:ODEOper}. 
Firstly one can remark that with the expression of $\tau_\kappa^2$ given in \ref{AddedMass} we have
\begin{equation}\label{eq:AML1_app}
\tau^2_\kappa(0) - \tau^2_\kappa(\varepsilon \delta) = \varepsilon \frac{R^2}{8h_{\rm i}}\delta.
\end{equation}
Therefore we can rephrase the first equation of \eqref{eq:ODEtrace_app} as
\begin{equation}\label{eq:AddMEq2_app}
\tau^2_\kappa (0)  \ddot{\delta} + \varepsilon \frac{R^2}{8h_{\rm i}}\delta \ddot{\delta} + \delta - \eps a(\delta, \underline{\zeta}) \dot{\delta}^2 = \kappa^2 \underline{\ddot{\zeta}} + \underline{\zeta} + F_{\rm ext}.
\end{equation}
\indent In this equation nonlinear terms are exposed in front of $\eps$, but we still have to eliminate nonlinearities over $\ddot{\delta}$. To do so we mix both ODEs of system \eqref{eq:ODEtrace_app}. Indeed by replacing $\kappa^2 \underline{\ddot{\zeta}} + \nu\underline{\dot{\zeta}} + \underline{\zeta}$ by its expression in the second equation we obtain 

\begin{equation}\label{eq:AML2_app}
\tau_\kappa^2(\varepsilon \delta)\ddot{\delta} +\nu\dot{\delta} + \delta - \eps a(\delta, \underline{\zeta}) \dot{\delta}^2 = \underline{f_{\rm{hyd}}} - \kappa \ddot{\delta} G(R) - \eps \underline{\tilde{\mathfrak{f}}} + F_{\rm ext}.
\end{equation}

Consequently injecting \eqref{eq:AML2_app} into \eqref{eq:AddMEq2_app} brings us to write the modified Newton's equation as
\begin{equation}\label{eq:AddMassEq_app}
\tau^2_\kappa (0)  \ddot{\delta} +\nu\dot{\delta} + \delta = \kappa^2 \underline{\ddot{\zeta}} + \nu\underline{\dot{\zeta}} + \underline{\zeta} + \eps \left(\gamma_{loc}(\delta, \dot{\delta}, \underline{\zeta})+ a(\delta, \underline{\zeta}) \dot{\delta}^2\right) + \eps \gamma_{nloc}(\underline{f_{\rm{hyd}}}, \delta) + \eps \tilde{\gamma}(\delta)F_{\rm ext} + F_{\rm ext},
\end{equation}
with $\gamma_{loc}(\delta, \dot{\delta}, \underline{\zeta}) = \frac{R^2}{8h_{\rm i} \left( \tau^2_\kappa (\varepsilon \delta) + \kappa G(R)\right) }\delta \left( \eps a(\delta, \underline{\zeta})\dot{\delta}^2 - \eps \underline{\mathfrak f}- \delta - \nu\dot\delta\right) $,
$ \gamma_{nloc}(f_{\rm{hyd}}, \delta) = \frac{R^2}{8h_{\rm i} \left( \tau^2_\kappa (\varepsilon \delta) + \kappa G(R)\right) }\delta \underline{f_{\rm{hyd}}} $ and $\tilde{\gamma}(\delta) = \frac{R^2}{8h_{\rm i} \left( \tau^2_\kappa (\varepsilon \delta) + \kappa G(R)\right) }\delta $. $\gamma_{loc}$ only contains local terms. In the opposite $\gamma_{nloc}$ is nonlocal through the dependence on $f_{\rm{hyd}}$. We want to underline the fact that from an additive force we obtained a multiplicative force when the force applied to the solid interacts with the force exerced by the water under the solid.\\
\indent For sake of clarity we note $\gamma(\delta, \dot{\delta}, \underline{\zeta}, \underline{\dot{\zeta}}, f_{\rm{hyd}}) = \gamma_{loc}(\delta, \dot{\delta}, \underline{\zeta}) + \gamma_{nloc}(f_{\rm{hyd}}, \delta) + \tilde{\gamma}(\delta)F_{\rm ext}$.
The dependence on $(\zeta, q)$ with the intermediate variable $f_{\rm{hyd}}$ is remarked since it is made of nonlocal components. 
We have now the system of ODEs
\begin{equation}
\begin{cases}
\left( \tau^2_\kappa (0) + \kappa G(R) \right) \ddot{\delta} + \nu \dot{\delta}+ \delta = \underline{f_{\rm hyd}} - \eps\underline{{\mathfrak{f}}} + \eps \gamma  + F_{\rm ext},\\
\left( 1 + \frac{\kappa G(R)}{\tau_\kappa^2(0)}\right)\left(\kappa^2 \underline{\ddot{\zeta}} +\nu \underline{\zeta} + \underline{\zeta} + \eps\underline{{\mathfrak{f}}}\right) = \underline{f_{\rm{hyd}}} + \frac{\kappa G(R)}{\tau_\kappa^2(0)}\left( \nu \dot{\delta} + \delta - F_{\rm ext} - \eps \gamma\right).
\end{cases}
\end{equation}
We deduce easily the form \eqref{eq:goal}.
 
\section{Domain of a the transfer function}\label{app:Proof5.5}

Proposition \ref{prop:C0DKV} is a corollary of the two following ones. In a first time we will discuss about branching points and branching cuts linked to $1+\nu s + \kappa^2 s^2 \notin \mathbb{R}^-$, and in a second time about branching points and branching cuts linked to $\frac{s}{\sqrt{1+\nu s + \kappa^2 s^2}} \notin \mathbb{R}^-$.
\begin{proposition}
The fonction $\sqrt{1+\nu s + \kappa^2 s^2}$ admits branching points $s_{\rm b}$ and branching cuts $\Gamma_{\kappa,\nu}$ where  $s_{\rm b}$ and $\Gamma_{\kappa,\nu}$ are given in Table \ref{tab:BPBC}.
\end{proposition}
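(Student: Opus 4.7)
The plan is to analyze the quadratic polynomial $P(s) := 1+\nu s+\kappa^2 s^2$ whose square root is in question. By the convention \eqref{def:sqrt_sep}, the principal square root is holomorphic on $\mathbb{C}\setminus\mathbb{R}^-$, so the branching points of $\sqrt{P(s)}$ are precisely the zeros of $P$ in $\mathbb{C}$, and the branching cuts are the curves along which $P(s)\in\mathbb{R}^-$. Computing the zeros is routine: for $\kappa=0$ we solve the linear equation $1+\nu s=0$, for $\nu=0$ the pure biquadratic $1+\kappa^2 s^2=0$, and otherwise the discriminant $\nu^2-4\kappa^2$ governs whether the two roots are real (case $\nu\geq 2\kappa$) or complex conjugate with common real part $-\nu/(2\kappa^2)$ (case $\nu<2\kappa$). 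This immediately produces the column $s_{\rm b}$ of Table \ref{tab:BPBC}.

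The core of the argument is the explicit description of $\Gamma_{\kappa,\nu}=P^{-1}(\mathbb{R}^-)$. Writing $s=\eta+i\omega$, a direct expansion gives
\begin{equation*}
P(s)=\bigl(1+\nu\eta+\kappa^2(\eta^2-\omega^2)\bigr)+i\,\omega(\nu+2\kappa^2\eta).
\end{equation*}
Hence $\Im P(s)=0$ if and only if $\omega=0$ or $\eta=-\nu/(2\kappa^2)$ (when $\kappa>0$), which cleanly splits the analysis. On the horizontal axis $\omega=0$, the condition $\Re P(s)<0$ becomes the real quadratic inequality $1+\nu\eta+\kappa^2\eta^2<0$, whose solution set is respectively $(-\infty,-1/\nu)$ if $\kappa=0$, the bounded interval $(r_-,r_+)$ if $\nu\geq 2\kappa$, and is empty otherwise. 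On the vertical line $\eta=-\nu/(2\kappa^2)$, a substitution reduces the negativity condition to $\kappa^2\omega^2>1-\nu^2/(4\kappa^2)$, giving $\omega\in\mathbb{R}$ if $\nu=0$, the full line if $\nu\geq 2\kappa$, and $|\omega|>\sqrt{4\kappa^2-\nu^2}/(2\kappa^2)$ if $0<\nu<2\kappa$. Assembling these pieces reproduces exactly the four entries of Table \ref{tab:BPBC}.

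The last thing to verify is that these curves are genuine branching cuts rather than just loci of real-negative values along which the two determinations of $\sqrt{P}$ happen to coincide. This follows from the fact that $\Im P=\omega(\nu+2\kappa^2\eta)$ is the product of the two affine functions whose vanishing sets are exactly the two candidate branches; across each branch, $\Im P$ changes sign transversely, so the argument of $P$ jumps by $2\pi$ around the corresponding curve and the two determinations of $\sqrt{P}$ differ by a sign. I expect the only delicate point to be bookkeeping: making sure that the segments I retain are exactly those on which $\Re P<0$, and checking that their endpoints coincide with the branching points $s_{\rm b}$ already identified, which is the geometric reason the cuts terminate there.
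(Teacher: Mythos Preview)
Your approach is essentially identical to the paper's: write $s=\eta+i\omega$, separate real and imaginary parts of $P(s)=1+\nu s+\kappa^2 s^2$, obtain the factorization $\Im P(s)=\omega(\nu+2\kappa^2\eta)$, and then read off the negativity condition on $\Re P$ along each of the two resulting lines. The paper does exactly this, case by case, without your final paragraph on transversality (which is a nice addition but not needed for the statement as formulated).

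One slip to fix: on the vertical line $\eta=-\nu/(2\kappa^2)$ you correctly reduce the condition to $\kappa^2\omega^2>1-\nu^2/(4\kappa^2)$, but then write ``giving $\omega\in\mathbb{R}$ if $\nu=0$''. When $\nu=0$ this inequality reads $\kappa^2\omega^2>1$, i.e.\ $|\omega|>\kappa^{-1}$, which is precisely the entry in Table~\ref{tab:BPBC}; the full line only appears in the case $\nu\geq 2\kappa$, where the right-hand side is nonpositive.
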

\begin{proof}
In the case $\kappa = 0$ the branching cut is given by the line of equation $s = \eta$, $\eta < -\nu^{-1}$ and the branching point is $s = -\nu^{-1}$. 
$\nu = 0$ is the same as in the 1D case \cite{BeckLannes} and it provides the branching cuts $s = i\omega$ with $|\omega| > \kappa^{-1}$. The associated branching points are $s = \pm i \kappa^{-1}$.
We do not consider the case $\kappa = \nu = 0$ here because if it is so the expressions of $\widehat{\zeta}$ and $\widehat q$ do not depend on the symbol $\sqrt{1+\nu s + \kappa^2 s^2}$.
If $\kappa \neq 0$ and $\nu \neq 0$ the zeros of the polynomial $1+\nu s + \kappa^2 s^2$ give the branching points. Also the definition of the complex square root gives:
\begin{equation}
\begin{cases}
1 + \nu \eta + \kappa^2(\eta^2 - \omega^2) \leq 0,\\
\omega(\nu + 2\kappa^2 \eta) = 0.
\end{cases}
\end{equation}
For $\omega = 0$, the solutions are $\eta \in [\frac{-\nu - \sqrt{\nu^2 - 4\kappa^2}}{2\kappa^2}; \frac{-\nu + \sqrt{\nu^2 - 4\kappa^2}}{2\kappa^2}]$, only if $\nu \geq 2\kappa^2$. For $\omega \neq 0$, it leads to 
$$ 
\eta = -\frac{\nu}{2\kappa^2}
\quad
\text{and}
\quad
\omega^2 \geq \frac{1}{4\kappa^4}\left(4\kappa^2-\nu^2\right).
$$

In the case where $\nu > 2\kappa$ the branching cut is the whole line of equation $s = -\frac{\nu}{2\kappa^2} + i \omega$ for $\omega \in \mathbb{R}$. If $\nu = 2\kappa$ then one just has to remove the point $\omega =0$. In the other case the branching cuts are the lines $ \left\{ -\frac{\nu}{2\kappa^2} + i \omega, \omega > \frac{1}{2\kappa^2} \sqrt{4\kappa^2 - \nu^2} \right\} $ and $\left\{ -\frac{\nu}{2\kappa^2} - i \omega, \omega > \frac{1}{2\kappa^2} \sqrt{4\kappa^2 - \nu^2} \right\}$ (see Figure \ref{fig:branch_cuts}). 
 \end{proof}
 
 \begin{proposition}
 The branching cut associated to the modified Bessel function $K_{\alpha}$ is 
 $$\{ s \in \mathbb{C} \setminus \Gamma_{\kappa,\nu}  \, | \, \frac{s}{\sqrt{1+\nu s + \kappa^2 s^2}} \notin \mathbb{R}_{-}^{\ast} \} = \mathbb{R}_{-}^{\ast}.$$
 \end{proposition}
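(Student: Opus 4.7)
The plan is to establish the set equality by proving both inclusions. The statement essentially says that $p(s):=s/\sqrt{1+\nu s+\kappa^2 s^2}$ sends the slit domain $\mathbb{C}\setminus\Gamma_{\kappa,\nu}$ into $\mathbb{C}\setminus\mathbb{R}^-$ except for the real negative axis itself, so the only way for $K_\alpha(p(s))$ to touch its own branch cut is to take $s$ real negative.

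First I would handle the easy inclusion: suppose $s=\eta<0$ and $s\notin\Gamma_{\kappa,\nu}$. Then $1+\nu\eta+\kappa^2\eta^2$ is a real number, and by Table~\ref{tab:BPBC} it lies in $\mathbb{R}_+^*$ (the real part of $\Gamma_{\kappa,\nu}$ having been excluded). Hence $\sqrt{1+\nu\eta+\kappa^2\eta^2}>0$ by the convention \eqref{def:sqrt_sep}, and $p(\eta)=\eta/\sqrt{\cdots}\in\mathbb{R}^-_*$. Thus $\mathbb{R}^-_*$ (intersected with the domain) is contained in the set on the left.

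For the reverse inclusion, suppose $s\in\mathbb{C}\setminus\Gamma_{\kappa,\nu}$ and $r:=p(s)\in\mathbb{R}^-_*$. Set $z:=1+\nu s+\kappa^2 s^2$, so that $s=r\sqrt{z}$. I would exploit two features of \eqref{def:sqrt_sep}: $\Re(\sqrt{z})\ge 0$, with equality iff $z\in\mathbb{R}^-_*$, and $\mathrm{sgn}\,\Im(\sqrt{z})=\mathrm{sgn}\,\Im(z)$. Since $s\notin\Gamma_{\kappa,\nu}$, the first feature forces $\Re(\sqrt{z})>0$ and therefore $\Re(s)=r\Re(\sqrt{z})<0$. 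The second, combined with $\Im(z)=\Im(s)(\nu+2\kappa^2\Re(s))$ and the identity $\Im(s)=r\Im(\sqrt{z})$, shows that if $\Im(s)\neq 0$ then $\nu+2\kappa^2\Re(s)<0$. Squaring $s=r\sqrt{z}$ next gives the polynomial identity
\begin{equation*}
(1-r^2\kappa^2)\,s^2-r^2\nu\,s-r^2=0,
\end{equation*}
whose coefficients are real, so the solutions are either real or form a complex-conjugate pair. The degenerate case $r^2\kappa^2=1$ reduces the equation to $\nu s+1=0$, producing $s=-1/\nu\in\mathbb{R}^-_*$ directly. In the non-degenerate case one reads off the discriminant $\Delta=r^2\nu^2+4r^2(1-r^2\kappa^2)=4+r^2(\nu^2-4\kappa^2)$ and computes that $\Delta\ge 0$ outside the regime $\{\nu<2\kappa$ and $|r|>2/\sqrt{4\kappa^2-\nu^2}\}$. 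When $\Delta\ge 0$ the roots are real, and combined with $\Re(s)<0$ this places $s$ in $\mathbb{R}^-_*$ as desired.

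The main obstacle is closing the argument when $\Delta<0$: one must show that the complex-conjugate pair of roots of the quadratic in that regime actually lies on the vertical branch cut $\{\Re=-\nu/(2\kappa^2)\}$ listed in Table~\ref{tab:BPBC}, thereby contradicting the hypothesis $s\notin\Gamma_{\kappa,\nu}$. This is where the bookkeeping becomes delicate, because one must cross-reference the location of the roots, the sign constraint $\nu+2\kappa^2\Re(s)<0$ obtained above, and the four geometric sub-cases of Table~\ref{tab:BPBC} (namely $\kappa=0$, $\nu=0$, $\nu\geq 2\kappa$, and $\nu<2\kappa$) to exhaust every possibility. Once that case analysis is completed, the two inclusions combine to yield the claimed equality, from which Proposition~\ref{prop:C0DKV} follows by intersecting with the branch-cut description of $\sqrt{1+\nu s+\kappa^2 s^2}$ established in the previous proposition.
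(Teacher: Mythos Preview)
Your approach via the quadratic $(1-r^2\kappa^2)s^2-r^2\nu s-r^2=0$ is genuinely different from the paper's, which works directly with the real and imaginary parts $R,I$ of $\sqrt{1+\nu s+\kappa^2 s^2}$ and argues by cases on the sign of $\eta=\Re(s)$. Both routes establish cleanly that $p(s)\in\mathbb{R}^-_*$ forces $\Re(s)<0$, and this is the part that is actually used downstream, namely $\mathbb{C}_0\subset\mathbb{D}_{\kappa,\nu}$ in Proposition~\ref{prop:C0DKV}.

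Your hesitation about the case $\Delta<0$ is well founded, and in fact the gap cannot be closed: the proposition is false as stated. Take $\kappa=\nu=1$, so that $\Gamma_{1,1}=\{-\tfrac12+i\omega:|\omega|>\sqrt3/2\}$, and set $s=-1+i$. Then $1+s+s^2=-i$, $\sqrt{-i}=\tfrac{1}{\sqrt2}(1-i)$ by the convention \eqref{def:sqrt_sep}, and
\[
p(s)=\frac{-1+i}{\tfrac{1}{\sqrt2}(1-i)}=\sqrt2\cdot\frac{-1+i}{1-i}=-\sqrt2\in\mathbb{R}^-_*,
\]
yet $s\notin\mathbb{R}^-_*$ and $s\notin\Gamma_{1,1}$ since $\Re(s)=-1\neq-\tfrac12$. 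This $s$ is precisely a complex root of your quadratic with $r=-\sqrt2$ (here $r^2\kappa^2=2$ and $\Delta<0$), and it does \emph{not} sit on $\Gamma_{\kappa,\nu}$, so your proposed endgame of pushing the complex-conjugate roots onto the branch cut cannot succeed. The paper's own argument in this regime is also defective: its displayed ``polynomial equation'' carries a spurious minus sign, and the inequality $\omega^2+\eta^2\le|\omega^2-\eta^2|$ asserted immediately after does not follow from the corrected version. What both arguments do prove correctly is the weaker inclusion $\{s:p(s)\in\mathbb{R}^-_*\}\subset\{\Re s<0\}$, and this suffices for every use made of the proposition later in the paper.
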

  \begin{proof}
For any $s \in \mathbb{C} / \Gamma_{\kappa,\nu}$, let us denote $p(s) := \frac{s}{\sqrt{1+\nu s + \kappa^2 s^2}}$, $R = \Re(\sqrt{1+\nu s + \kappa^2 s^2}) \geq 0$ and $I = \Im(\sqrt{1+\nu s + \kappa^2 s^2})$. One has $p(s) \in \mathbb{R}^-$ if and only if
\begin{equation}
\left\{ \begin{matrix}
\Re\left ( \frac{s}{\sqrt{1+\nu s + \kappa^2 s^2}}\right) \leq 0, \\
\Im\left (\frac{s}{\sqrt{1+\nu s + \kappa^2 s^2}}\right ) = 0,
\end{matrix}\right.
\end{equation}
which also reads:
\begin{empheq}[left=\empheqlbrace]{align}
&\eta R + \omega I \leq 0, \label{cond:p1}\\
&\omega R - \eta I = 0, \label{cond:p2}
\end{empheq}
where $\eta:=\Re(s)$ and $\omega:= \Im(s)$.

Firstly if $\eta > 0$ then \eqref{cond:p2} shows that $I$ and $\omega$ must have the same symbol whereas \eqref{cond:p1} shows that $I$ and $\omega$ must have the opposite symbol. Thus $\omega R= \eta I = 0$.
Since $\eta > 0$ we obtain that $R = 0$ and then $\omega = 0$. Then $p(s) > 0$. Hence for $\Re(s) > 0$, $\frac{s}{\sqrt{1+\nu s + \kappa^2 s^2}} \notin \mathbb{R}^-$.

If $\eta = 0$ then \eqref{cond:p2} shows that $\omega = 0$ or $R = 0$. If $\omega = 0$, then $p(s) = 0$. One deduces that $0 \in \left \{ s \in \mathbb{C}, p(s) \in \mathbb{R}^-\right \}$. If $R = 0$, then $(iI)^2 = -I^2$ is real. However by definition 
$$
(iI)^2 = \sqrt{1+i \nu \omega + \kappa^2 \omega^2}^2 = 1+i \nu \omega - \kappa^2 \omega^2.
$$  
Therefore in the case $\nu > 0$ it implies that $\omega = 0$ and we already know this case. 
If $\nu = 0$, then $p(s) = \frac{i\omega}{\sqrt{1-\kappa^2 \omega^2}}$. 
If $|\omega| < \frac{1}{\kappa}$, $p(s)$ is complex. 
Else by the definition of the square root we have 
$$
p(s) = \frac{i\omega}{i \text{sgn}(\omega) \sqrt{\kappa^2 \omega^2 - 1}} = \frac{|\omega|}{\sqrt{\kappa^2 \omega^2 - 1}} > 0.
$$
Hence, in the case where $\eta = 0$, one has $p(s) \in \mathbb{R}^-$ if and only if $\omega = 0$.

If $\eta < 0$ then \eqref{cond:p2} implies that $\omega$ and $I$ have opposite sign. In that case \eqref{cond:p1} is always satisfied.  In particular, if $\omega = 0$ then $I = 0$ and therefore $p(s) \in \mathbb{R}^{\ast}_{-}$. If $\omega \neq 0$, then the complex numbers $s$ such that $p(s) \in \mathbb{R}^{\ast}_{-}$ are included in the set of solutions of the polynomial equation
\begin{equation}\label{eq-polyn}
\omega^2 (|1+\nu s + \kappa^2 s^2| +  \Re(1+\nu s + \kappa^2 s^2))= - \eta^2 (|1+\nu s + \kappa^2 s^2| - \Re(1+\nu s + \kappa^2 s^2)).
\end{equation}
Since the modulus is higher than the real part, the solutions of \eqref{eq-polyn} must satisfies 
\begin{equation}\label{eq-polyn-2}
\omega^2 + \eta^2 \leq | \omega^2 - \eta^2 |.
\end{equation}
On one hand, if $\omega^2 > \eta^2 >0$, then \eqref{eq-polyn-2} implies that $2 \eta^2 \leq 0$ which is incompatible with $\eta < 0$. On the other hand, if $0 < \omega^2 < \eta^2$, then \eqref{eq-polyn-2} implies that $2 \omega^2 \leq 0$ which is incompatible with $\omega = 0$. If $\omega^2 = \eta^2>0$ then the solution of \eqref{eq-polyn} must satisfies $(1 + \nu \eta)^2 + \omega^2 (\nu + 2 \kappa^2 \eta)^2 =0$ which admits no solutions. Thus \eqref{eq-polyn} have no solution for $\eta <0$ and $\omega \neq 0$.
\end{proof}

\section{Laplace transform and Bergman spaces}\label{app-Laplace}

The set of holomorphic function on $\mathbb{C}_{0}$ is denoted by ${\mbox{Hol }}(\mathbb{C}_{0})$,
the Bergman spaces by
\[
\mathcal{B}_{\alpha}=\Big\{F\in{\mbox{Hol }}(\mathbb{C}_{0}):\|F\|_{\mathcal{B}_{\alpha}}^2:=\int_{\eta>0}\int_{\omega\in\mathbb R} |F(\eta+i\omega)|^2 \,\eta^{\alpha}\,d\omega\,d\eta<\infty\Big\},
\]
for any $\alpha > -1$ and the Hardy space by
\[
\mathcal{B}_{-1}=\Big\{F\in{\mbox{Hol }}(\mathbb{C}_{0}):\|F\|_{\mathcal{B}_{-1}}^2:= \sup_{\eta>0}\int_{\omega\in\mathbb R} |F(\eta+i\omega)|^2 \,\eta^{\alpha}\,d\omega <\infty\Big\}.
\]
It is well known that Bergman and Hardy spaces are reproducing kernel Hilbert spaces (see \cite{hedenmalm1999bergman}).
Laplace transform with respect to the time variable of locally integrable function $u$ is defined by
$$
 \hat{u}(s) := \mathcal{L} [u] (s) := \displaystyle \int_0^{\infty} u(t) e^{-st} {\rm d}t,
$$
for all complex number $s = \eta + i \omega$ such that this integral converges absolutely. The goal of the following theorem is to describe asymptotic behavior of a time domain function from its Laplace transform. 
\begin{thm}[Paley-Wiener Theorem for Bergman spaces] 
Let $\alpha \geq -1$.
    The Laplace transform $\mathcal{L}:L^2\big(t^{-(\alpha+1)}\,dt\big) \to \mathcal{B}_{\alpha} $ is a linear isomorphism between Hilbert spaces. Moreover for $\alpha > -1$, one has for all $u \in L^2\big(t^{-(\alpha+1)}\,dt\big)$ the Plancherel type equality
    $$
    \|\hat{u}\|_{\mathcal{B}_{\alpha}}^2 = 2\pi\frac{\Gamma(\alpha+1)}{2^{\alpha+1}} 
    \int_{\mathbb{R}^+} | u(t)|^2 t^{-(\alpha+1)}\,dt,
    $$
    and the Laplace transform $\mathcal{L}$ is an isometry for $\alpha=-1$.
\end{thm}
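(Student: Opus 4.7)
The plan is to split the statement into two parts: first, the Plancherel-type isometric identity (from which injectivity follows), and second, the surjectivity onto $\mathcal{B}_\alpha$. For $\alpha > -1$ both pieces are treated by a single Fubini/Plancherel computation plus a density argument; the boundary case $\alpha = -1$ is the classical Paley--Wiener theorem for the Hardy space $H^2(\mathbb{C}_0)$ and will be obtained as a limit of the $\alpha > -1$ case (monotone convergence in $\eta$).

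For the isometry in the range $\alpha > -1$, I would first check that for $u \in L^2(t^{-(\alpha+1)}\,dt)$ the integral $\widehat u(s) = \int_0^\infty u(t)\, e^{-st}\, dt$ converges absolutely for every $s \in \mathbb{C}_0$ by Cauchy--Schwarz against the weight $t^{\alpha+1} e^{-2\eta t}$, which is integrable on $(0,\infty)$ precisely because $\alpha+1>0$; standard differentiation under the integral then gives $\widehat u \in \mathrm{Hol}(\mathbb{C}_0)$. For each fixed $\eta>0$, the map $\omega \mapsto \widehat u(\eta+i\omega)$ is the classical Fourier transform of $t \mapsto \mathbf{1}_{\{t>0\}}u(t)\,e^{-\eta t} \in L^1 \cap L^2(\mathbb{R})$. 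Plancherel's theorem then yields
\begin{equation*}
\int_{\mathbb{R}} |\widehat u(\eta+i\omega)|^2\, d\omega = 2\pi \int_0^\infty |u(t)|^2 e^{-2\eta t}\, dt.
\end{equation*}
Multiplying by $\eta^\alpha$, integrating in $\eta$ and swapping integrals by Tonelli (the integrand is non-negative), the inner integral becomes the Gamma-function identity $\int_0^\infty \eta^\alpha e^{-2\eta t}\, d\eta = \Gamma(\alpha+1)(2t)^{-(\alpha+1)}$, producing the announced constant $2\pi \Gamma(\alpha+1)/2^{\alpha+1}$ in front of $\int_0^\infty |u(t)|^2 t^{-(\alpha+1)}\,dt$.

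The main obstacle is surjectivity. My plan is to exhibit a dense subclass of $L^2(t^{-(\alpha+1)}\,dt)$ whose Laplace transforms are dense in $\mathcal{B}_\alpha$, and close by the isometry. A convenient choice is the family $u_\mu(t) = e^{-\mu t}$ with $\mu>0$: their Laplace transforms are the simple fractions $(s+\mu)^{-1}$, whose finite linear combinations generate a subspace of $\mathcal{B}_\alpha$ that can be identified with the range of the reproducing kernel $K_w(s) = c_\alpha\, (s+\bar w)^{-(\alpha+2)}$ of $\mathcal{B}_\alpha$ (one builds $K_w$ by taking successive complex derivatives of $(s+\mu)^{-1}$ in $\mu$). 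Since reproducing kernels span a dense subspace of any reproducing-kernel Hilbert space, the image $\mathcal{L}(\mathrm{span}\{u_\mu\})$ is dense in $\mathcal{B}_\alpha$; on the source side, using the Plancherel identity already established, $\{u_\mu\}_{\mu>0}$ is itself total in $L^2(t^{-(\alpha+1)}\,dt)$ (a non-trivial vector in the orthogonal complement would force its Laplace transform to annihilate all $K_w$ and hence vanish identically). The isometry of paragraph 2 then extends $\mathcal{L}$ to a unitary isomorphism, up to the explicit multiplicative constant.

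Finally, for $\alpha=-1$ the identity of paragraph 2 degenerates, but the slice equality
\begin{equation*}
\int_{\mathbb{R}} |\widehat u(\eta+i\omega)|^2\, d\omega = 2\pi \int_0^\infty |u(t)|^2 e^{-2\eta t}\, dt
\end{equation*}
shows that the right-hand side is monotone decreasing in $\eta$ and tends to $2\pi \|u\|_{L^2(\mathbb{R}^+)}^2$ as $\eta \to 0^+$ by monotone convergence, so the supremum defining $\|\widehat u\|_{\mathcal{B}_{-1}}$ equals $2\pi\|u\|_{L^2}^2$, giving the claimed isometry. Surjectivity in this Hardy case is classical: every $F\in \mathcal{B}_{-1}$ admits non-tangential $L^2$ boundary values on $i\mathbb{R}$ whose inverse Fourier transform is supported on $\mathbb{R}^+$ by a Phragm\'en--Lindel\"of/contour argument, and that pre-image is the desired $u$. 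The only genuinely delicate step in the whole argument is thus the density/reproducing-kernel claim of paragraph 3; every other step is a routine Fubini computation or a classical complex-analysis fact.
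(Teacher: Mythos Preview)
Your isometry argument is identical to the paper's: fix $\eta>0$, apply the ordinary Plancherel theorem to the Fourier transform of $t\mapsto u(t)e^{-\eta t}$, multiply by $\eta^\alpha$, swap integrals by Tonelli, and evaluate $\int_0^\infty \eta^\alpha e^{-2\eta t}\,d\eta$ as a Gamma integral. The paper starts from smooth compactly supported $u$ and extends by density; you instead justify absolute convergence directly by Cauchy--Schwarz against $t^{\alpha+1}e^{-2\eta t}$, which is a harmless cosmetic difference.

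For surjectivity both arguments rest on the reproducing-kernel structure of $\mathcal{B}_\alpha$, but the paper takes a shorter route than you do. The paper's Step~2 simply takes $\widehat G\in\mathcal{B}_\alpha$ orthogonal to the (closed) range of $\mathcal{L}$ and tests against the reproducing kernels $\widehat u_p$, obtaining $\widehat G(p)=0$ for all $p\in\mathbb{C}_0$. The implicit point is that each reproducing kernel $K_w(s)=c_\alpha(s+\bar w)^{-(\alpha+2)}$ \emph{is itself} the Laplace transform of the explicit function $t\mapsto c_\alpha' \, t^{\alpha+1}e^{-\bar w t}\in L^2(t^{-(\alpha+1)}dt)$, so the kernels lie in the range and the orthogonal complement is trivial. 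Your detour through $u_\mu(t)=e^{-\mu t}$ and the simple fractions $(s+\mu)^{-1}$ is unnecessary and in fact has a small gap: building $(s+\bar w)^{-(\alpha+2)}$ from $(s+\mu)^{-1}$ by ``successive complex derivatives in $\mu$'' only works when $\alpha$ is an integer. You can repair this either by observing the direct preimage $t^{\alpha+1}e^{-\bar w t}$ above, or by a separate density argument for the span of $\{(s+\mu)^{-1}\}$, but the paper's one-line use of the reproducing property is cleaner.

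For $\alpha=-1$ the paper simply invokes the classical Paley--Wiener theorem for the Hardy space; your sketch via monotone convergence on the slice identity and non-tangential boundary values is a correct expansion of that citation.
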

 
\begin{proof}
For $\alpha=-1$ the theorem is known as Paley-Wiener Theorem. We show its generalisation for any $\alpha > -1$. \\
 
{\it{Step 1 :}} Continuity of $\mathcal{L}:L^2\big(t^{-(\alpha+1)}\,dt\big) \to \mathcal{B}_{\alpha} $ \\
 
 Let $u$ a regular function with compact support. For any $\eta>0$, $\hat{u}(\eta+ i \cdot)$ is the Fourier transform of $e^{-\eta t}u(t)$. Then by the Plancherel theorem one has 
\[
\int_{-\infty}^{\infty}|\widehat{u}(\eta+i\omega)|^2\,d\omega
=2\pi\int_0^\infty e^{-2\eta t}|u(t)|^2\,dt.
\]
Multiply both sides by $\eta^{\alpha}\ge 0$ and integrate in $\eta\in(0,\infty)$. As every integrand is nonnegative one can apply Fubini-Tonelli to exchange the order of integration:
\[
\int_{0}^{\infty} \left( \int_{-\infty}^{\infty} |\widehat{u}(\eta+i\omega)|^2\,\eta^{\alpha}\,d\omega \right)\,d\eta
=2\pi\int_0^\infty |u(t)|^2\Big(\int_0^\infty \eta^{\alpha} e^{-2\eta t}\,d\eta\Big)\,dt.
\]
The inner integral is the Gamma integral:
\[
\int_0^\infty \eta^{\alpha} e^{-2\eta t}\,d\eta
=\frac{\Gamma(\alpha+1)}{(2t)^{\alpha+1}}.
\]
Therefore, one has
\[
\|\widehat{u}\|_{\mathcal{B}_{\alpha}}^2
=2\pi\frac{\Gamma(\alpha+1)}{2^{\alpha+1}}\int_0^\infty |u(t)|^2 t^{-(\alpha+1)}\,dt.
\]
Since the space of regular functions with compact is dense in $L^2\big(t^{-(\alpha+1)}\,dt\big)$, then the previous equality is also true for any $u \in L^2\big(t^{-(\alpha+1)}\,dt\big)$. In particular the linear map $\mathcal{L}:L^2\big(t^{-(\alpha+1)}\,dt\big) \to \mathcal{B}_{\alpha}$ is bounded and injective. \\
 
{\it{Step 2 :}} Surjectivity of $\mathcal{L}:L^2\big(t^{-(\alpha+1)}\,dt\big) \to \mathcal{B}_{\alpha} $ \\
 
Let $ \hat{G} \in \mathcal{B}_{\alpha}$ such that
\[
\int_{\mathbb{C}_0} \widehat{G}(s)\overline{\widehat{u}(s)}\, \Re(s)^{\alpha}\,ds=0,
\]
for any $\hat{u} \in \mathcal{B}_{\alpha}$. If one takes in particular $(\widehat{u}_p)_{p \in \mathbb{C}_0}$ the reproducing kernel of the Bergman space $\mathcal{B}_{\alpha}$, one has by definition of reproducing kernel
\[
\forall p \in \mathbb{C}_0, \, \widehat{G}(p) =\int_{\mathbb{C}_0} \widehat{G}(s)\overline{\widehat{u}_p(s)}\, \Re(s)^{\alpha}\,ds=0.
\]
One has $\widehat{G}=0$ and thus the map $\mathcal{L}:L^2\big(t^{-(\alpha+1)}\,dt\big) \to \mathcal{B}_{\alpha} $ is surjective.
 
\end{proof}

\section*{Acknowledgement}
The first author is supported by the BOURGEONS project, grant ANR-23-CE40-0014-01 of the French National Research Agency (ANR). The second author is funded by the Simons Foundation Award ID: 651475 related to Simons Collaboration on Wave Turbulence.
 
 \section*{Conflict of Interest}

The authors declare that they have no conflict of interest.

\section*{Declaration statements like data availability}

No data were generated or analysed in this study.

\end{document}